\providecommand{\bigsqcap}{%
  \mathop{%
    \mathpalette\@updown\bigsqcup
  }%
}
\newcommand*{\@updown}[2]{%
  \rotatebox[origin=c]{180}{$\m@th#1#2$}%
}
\newtheorem{theorem}{Theorem}[section]
\newtheorem{lemma}[theorem]{Lemma}
\newtheorem{corollary}[theorem]{Corollary}
\newtheorem{proposition}[theorem]{Proposition}
\newtheorem{fact}[theorem]{Fact}
\theoremstyle{definition}
\newtheorem{definition}[theorem]{Definition}
\newtheorem{example}[theorem]{Example}
\newtheorem{remark}[theorem]{Remark}
\newtheorem{convention}[theorem]{Convention}
\newtheorem{question}[theorem]{Question}
\def\cU{\mathcal{U}}
\def\cL{\mathcal{L}}
\def\tp{\operatorname{tp}}
\def\Av{\operatorname{Av}}
\def\Th{\operatorname{Th}}
\def\SL{\operatorname{SL}}
\def\st{\operatorname{st}}
\def\id{\operatorname{id}}
\def\supp{\operatorname{supp}}
\def\ex{\operatorname{ex}}
\def\dcl{\operatorname{dcl}}
\def\Aut{\operatorname{Aut}}
\def\inv{\operatorname{inv}}
\def\ext{\operatorname{ext}}
\def\fs{\operatorname{fs}}
\def\stab{\operatorname{Stab}}
\def\conv{\operatorname{conv}}
\def\Ind{\setbox0=\hbox{$x$}\kern\wd0\hbox to 0pt{\hss$\mid$\hss}
\lower.9\ht0\hbox to 0pt{\hss$\smile$\hss}\kern\wd0}
\def\Notind{\setbox0=\hbox{$x$}\kern\wd0\hbox to 0pt{\mathchardef
\nn=12854\hss$\nn$\kern1.4\wd0\hss}\hbox to
0pt{\hss$\mid$\hss}\lower.9\ht0 \hbox to 0pt{\hss$\smile$\hss}\kern\wd0}
\title{Definable convolution and idempotent Keisler measures II}
\author{Artem Chernikov}
\author{Kyle Gannon}
\address{Department of Mathematics\\
University of California Los Angeles\\
Los Angeles, CA 90095-1555, USA}
\email{chernikov@math.ucla.edu}
\address{Department of Mathematics\\
University of California Los Angeles\\
Los Angeles, CA 90095-1555, USA}
\email{gannon@math.ucla.edu}
\begin{document}

\begin{abstract} 
We study convolution semigroups of invariant/finitely satisfiable Keisler measures in NIP groups. We show that the ideal (Ellis) subgroups are always trivial and describe minimal left ideals in the definably amenable case, demonstrating that they always form a Bauer simplex. Under some assumptions, we give an explicit construction of a minimal left ideal in the semigroup of measures from a minimal left ideal in the corresponding semigroup of types (this includes the case of SL$_{2}(\mathbb{R})$, which is not definably amenable). We also show that the canonical push-forward map is a homomorphism from definable convolution  on $\mathcal{G}$ to classical convolution on the compact group $\mathcal{G}/\mathcal{G}^{00}$, and use it to classify $\mathcal{G}^{00}$-invariant idempotent measures.
\end{abstract}

\dedicatory{With gratitude to Ehud Hrushovski, whose beautiful ideas have deeply influenced the authors.}

\maketitle

\section{Introduction}

This paper is a continuation of \cite{ChGan}, but with a focus on NIP groups and the dynamical systems associated to the definable convolution operation. It was demonstrated in \cite{ChGan} that when $T$ is an NIP theory expanding a group, $\mathcal{G}$ is a monster model of $T$, and $G \prec \mathcal{G}$,  the spaces of global $\Aut(\mathcal{G}/G)$-invariant Keisler measures  and Keisler measures which are finitely satisfiable in $G$ (denoted $\mathfrak{M}_{x}^{\inv}(\mathcal{G},G)$ and $\mathfrak{M}_{x}^{\fs}(\mathcal{G},G)$,  respectively) form left-continuous compact Hausdorff semigroups under definable convolution $\ast$ (see Fact \ref{fac: conv is a semigroup in NIP}). Equivalently, the semigroup $\left(\mathfrak{M}_{x}^{\fs}(\mathcal{G},G), \ast \right)$ can be described as the Ellis semigroup of the dynamical system given by the action of $\conv(G)$, the convex hull of $G$ in the space of global measures finitely satisfiable in $G$, on the space of measures $\mathfrak{M}_{x}^{\fs}(\mathcal{G},G)$ (see \cite[Theorem 6.10 and Remark 6.11]{ChGan}).
The main purpose of this paper is to study the structure of these semigroups, as well as to provide a description of idempotent measures via type-definable subgroups in some cases. 

In Section \ref{sec: Prelims} we review some preliminaries and basic facts on convolution in compact topological groups (Section \ref{sec: prelim class conv}), Keisler measures (Section \ref{sec: Keisl meas prelim}), definable convolution in NIP groups (Section \ref{sec: def conv NIP prelims}), Ellis semigroups (Section \ref{sec: Ellis theory prelims}) and Choquet theory (Section \ref{sec: Choquet theory prelims}).

In Section \ref{sec: def conv vs conv} we study the relationship between the semigroups $\mathfrak{M}_{x}^{\inv}(\mathcal{G},G)$ and $\mathfrak{M}_{x}^{\fs}(\mathcal{G},G)$ (under definable convolution)  and the classical convolution semigroup of regular Borel probability measures on the compact topological group $\mathcal{G}/\mathcal{G}^{00}$. We demonstrate that the push-forward along the quotient map is a surjective, continuous, semigroup homomorphisms from definable convolution to classical convolution on $\mathcal{G}/\mathcal{G}^{00}$ (Theorem \ref{invar:conv}), mapping  idempotent Keisler measures onto   idempotent Borel measures on $\mathcal{G}/\mathcal{G}^{00}$ (Corollaries \ref{cor:wendel} and \ref{cor: pushf idemp onto}).

We have shown in \cite[Theorem 5.8]{ChGan} that, by analogy to the classical theorem of Kawada-Ito/Wendel for compact groups (Fact \ref{compact:corr}), there is a one-to-one correspondence between idempotent measures on a stable group and its type-definable subgroups (namely, every idempotent measure is the unique translation-invariant measure on its type-definable stabilizer group). In NIP groups, this correspondence fails (Example \ref{example:pair}), but revised versions of this statement can be recovered in some cases.
In particular, using the results of Section \ref{sec: def conv vs conv}, we demonstrate in Section \ref{sec: G00 inv idempt class} that a $\mathcal{G}^{00}$-invariant idempotent measure in an NIP group $\mathcal{G}$ is a (not necessarily unique) invariant measure on its type-definable stabilizer group.
In future work, we examine further cases of the classification of idempotent measures in NIP groups, including the generically stable case. 

 In Section \ref{sec: struct of conv semigrps} we study the semigroups $\left(\mathfrak{M}^{\inv}_{x}(\mathcal{G},G),* \right)$ and $\left(\mathfrak{M}^{\fs}_{x}(\mathcal{G},G),* \right)$ for an NIP group $\mathcal{G}$ through the lens of Ellis Theory. We demonstrate that the ideal subgroups of any minimal left ideal (in either $\mathfrak{M}^{\fs}_{x}(\mathcal{G},G)$ or $\mathfrak{M}^{\inv}_{x}(\mathcal{G},G)$) are always trivial (Proposition \ref{trivial}). This is due to the presence of the convex structure, in contrast to the case of types in definably amenable NIP groups (where, due to the proof of the Ellis group conjecture in \cite{ChSi}, the ideal subgroups are isomorphic to $\mathcal{G}/\mathcal{G}^{00}$). 
 We also classify minimal left ideals in both $\mathfrak{M}^{\fs}_{x}(\mathcal{G},G)$ and $\mathfrak{M}^{\inv}_{x}(\mathcal{G},G)$ when $\mathcal{G}$ is definably amenable. In this case, any minimal left ideal in $\mathfrak{M}_{x}^{\fs}(\mathcal{G},G)$ is also trivial (Proposition \ref{fs:I}), while $\mathfrak{M}_{x}^{\inv}(\mathcal{G},G)$ contains a unique minimal left ideal (which is also two-sided). This unique ideal is precisely the collection of measures in $\mathfrak{M}_{x}^{\inv}(\mathcal{G},G)$ which are $\mathcal{G}$-right-invariant (Proposition \ref{ideal:inv}; this is in contrast to minimal left ideals in $\mathfrak{M}_{x}^{\fs}(\mathcal{G},G)$ corresponding to $G$-\emph{left}-invariant measures). It is also a compact convex set, and moreover a \emph{Bauer simplex} (see Definition \ref{def: Bauer simplex}). In particular the set of its extreme points is closed, and consists precisely of the lifts  $\mu_p$ of the Haar measure on $\mathcal{G}/\mathcal{G}^{00}$ for $p \in S_x^{\inv}(\mathcal{G},G)$ an $f$-generic type of $\mathcal{G}$ (Corollary \ref{cor: ex inv f-generic}). If the group $\mathcal{G}$ is fsg, this minimal ideal is also trivial (Corollary \ref{cor: fsg}). We also observe that if $\mathcal{G}$ is not definably amenable, then the minimal left ideal of $\mathfrak{M}_{x}(\mathcal{G},G)$ has infinitely many extreme points (Remark \ref{prop: many idemp in non def am}). 
 See Theorem \ref{thm: main props of min ideals} for a more precise summary of the results of the section.

In Section \ref{sec: CIGS} we isolate certain conditions on $\mathcal{G}$, applicable in particular to some non-definably amenable groups,  which allows us to 
describe a minimal left ideal of $\mathfrak{M}_{x}^{\dagger}(\mathcal{G},G)$ for $\dagger \in \{\fs, \inv\}$ in terms of a minimal ideal in the corresponding semigroup of types.
We prove the following two results.  Suppose that $I$ is a minimal left ideal of $S_{x}^{\dagger}(\mathcal{G},G)$ and $u$ is an idempotent in $I$ such that $u * I$ is a compact group under the induced topology (we refer to this condition as \emph{CIG1}, see Definition \ref{def: CIG1}). Then $\mathfrak{M}(I) * \mu_{u * I}$ is a minimal left ideal of $\mathfrak{M}_{x}^{\dagger}(\mathcal{G},G)$, where $\mu_{u * I}$ is the Keisler measure corresponding to the normalized Haar measure on $u*I$ and $\mathfrak{M}(I)$ is the set of Keisler measures supported on $I$ (Theorem \ref{thm: CIG1 min ideal}). 
Under a stronger assumption \emph{CIG2} on $\mathcal{G}$ (see Definition \ref{def: CIG2}), we show that a minimal left ideal of $\mathfrak{M}_{x}^{\dagger}(\mathcal{G},G)$ is affinely homeomorphic to a collection of regular Borel probability measures over a natural quotient of $I$, in particular it is a Bauer simplex (Theorem \ref{thm: min ideal in CIG2 affine}).
In particular, $\SL_{2}(\mathbb{R})$ falls into both of these categories (Example \ref{ex: SL2R}).

\subsection*{Acknowledgements}
We are very grateful to the referee for a particularly detailed and insightful report and many valuable suggestions on improving the paper.

We thank Sergei Starchenko and Anand Pillay for helpful conversations. Both authors were partially supported by the NSF CAREER grant DMS-1651321, and Chernikov was additionally supported by  a Simons fellowship.

\section{Preliminaries}\label{sec: Prelims}
Given $r_1, r_2 \in \mathbb{R}$ and $\varepsilon \in \mathbb{R}_{>0}$, we  write $r_1 \approx_{\varepsilon} r_2 $ if $|r_1 - r_2| < \varepsilon$. For $n \in \mathbb{N}_{\geq 1}$, $[n] = \{1, 2, \ldots, n \}$.

\subsection{The classical setting} \label{sec: prelim class conv}
Before discussing the model theoretic setting, we recall some classical facts concerning compact Hausdorff spaces, measures, and compact topological groups.

\begin{fact}\label{meas:facts} Let $X,Y$ be compact Hausdorff spaces and $f:X \to Y$.  
\begin{enumerate}[$(i)$]
\item Let $\mathcal{M}(X)$ be the set of all regular Borel probability measures on $X$. Then $\mathcal{M}(X)$ is a compact Hausdorff space under the weak--$^*$ topology, with the basic open sets of the form
\begin{equation*} \bigcap_{i=1}^{n} \left\{\mu \in \mathcal{M}(X): r_i < \int_{X} f_i d\mu < s_i \right\}
\end{equation*} 
for $n \in \mathbb{N}$, $r_i < s_i \in \mathbb{R}$, $f_i : X \to \mathbb{R}$ continuous for $i \in [n]$.
\item A net of measures $(\mu_i)_{i \in I}$ in $\mathcal{M}(X)$ converges to a measure $\mu$ if and only if for any continuous  $f: X \to \mathbb{R}$, 
\begin{equation*} \lim_{i \in I} \int_{X} f d\mu_{i} = \int_{X} f d\mu.
\end{equation*} 
\item A (Borel) measurable map $f: X \to Y$ induces the \emph{push-forward} map $f_{*}:\mathcal{M}(X) \to \mathcal{M}(Y)$ defined by: for any Borel  $A \subseteq Y$, $f_{*}(\mu)(A) = \mu(f^{-1}(A))$. Then for any Borel function $h: Y \to \mathbb{R}$ such that $h \in L^{1}(f_{*}(\mu))$,
\begin{equation*} \int_{Y} h df_{*}(\mu) = \int_{X} \left( h \circ f  \right) d\mu. 
\end{equation*} 
Moreover the map $f_{*}$ is \emph{affine}: for any $r_1, \ldots, r_n \in [0,1]$ with $\sum_{i=1}^{n} r_i = 1$  and $\mu_1, \ldots, \mu_n \in \mathcal{M}(X)$,
\begin{equation*} f_{*}\left( \sum_{i =1}^{n} r_i \mu_i \right) = \sum_{i=1}^{n} r_i f_{*}(\mu_i). 
\end{equation*}
\item If $f:X \to Y$ is continuous, then $f_{*}: \mathcal{M}(X) \to \mathcal{M}(Y)$ is continuous. If $f:X \to Y$ is also surjective, then $f_{*}$ is also surjective. 
\end{enumerate}
\end{fact}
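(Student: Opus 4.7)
The plan is to verify the four standard items separately via classical functional-analytic arguments.

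For $(i)$ and $(ii)$: I would identify $\mathcal{M}(X)$, via the Riesz representation theorem, with the set of positive unital linear functionals on $C(X)$, equipped with the restriction of the weak-$*$ topology from $C(X)^*$. The form of the basic open sets and the net-convergence characterization in $(ii)$ are then immediate translations of the definition of the weak-$*$ topology in terms of pointwise evaluation at continuous functions. Compactness follows by combining the Banach--Alaoglu theorem (applied to the closed unit ball of $C(X)^*$) with the fact that $\mathcal{M}(X)$ is a weak-$*$ closed subset, being cut out by the closed conditions $\phi(1)=1$ and $\phi(f) \geq 0$ for each non-negative $f \in C(X)$. Hausdorffness is inherited from $C(X)^*$.

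For $(iii)$: Well-definedness of $f_{*}(\mu)$ as a Borel probability measure on $Y$ follows from $f$ being Borel measurable. The change-of-variables formula is proved by the usual ladder: verify it for $h = \mathbf{1}_A$ directly from the definition, extend to simple functions by linearity, to bounded non-negative Borel functions by monotone convergence, and finally to $L^{1}(f_{*}(\mu))$ by writing $h = h^{+} - h^{-}$ and applying dominated convergence. Affinity of $f_{*}$ is immediate from the linearity of $\mu \mapsto \mu(f^{-1}(A))$ in $\mu$.

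For $(iv)$: Continuity of $f_{*}$ when $f$ is continuous is immediate from $(ii)$ and $(iii)$: for any $h \in C(Y)$ the composition $h \circ f$ lies in $C(X)$, so $\int h\, df_{*}(\mu_i) = \int (h \circ f)\, d\mu_i$ converges to $\int (h \circ f)\, d\mu = \int h\, df_{*}(\mu)$, giving $f_{*}(\mu_i) \to f_{*}(\mu)$. For surjectivity when $f$ is moreover surjective, I would argue as follows. Every Dirac $\delta_y$ on $Y$ lies in the image, because picking $x \in f^{-1}(y)$ (non-empty by surjectivity) gives $f_{*}(\delta_x) = \delta_y$; by affinity, every finitely supported probability measure on $Y$ is in the image. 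The image is closed in $\mathcal{M}(Y)$, being the continuous image of the compact space $\mathcal{M}(X)$. The main step to justify carefully is the weak-$*$ density of finitely supported probability measures in $\mathcal{M}(Y)$; I would deduce this either from the Krein--Milman theorem (the extreme points of the compact convex set $\mathcal{M}(Y)$ are exactly the Diracs, and $\mathcal{M}(Y)$ coincides with the closed convex hull of its extreme points) or by a direct approximation argument using the Riesz representation theorem applied to a finite tuple of continuous test functions defining a basic weak-$*$ neighborhood.
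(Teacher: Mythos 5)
Your argument is correct and is the standard one; the paper states this result as a background Fact without proof (it is classical functional analysis), so there is no in-paper argument to compare against --- indeed the paper itself later reuses your Krein--Milman/Dirac-measure density argument for surjectivity in the proof of Proposition \ref{surject}. The only point you gloss over is that for a merely Borel (non-continuous) $f$ in part $(iii)$ one should also verify that $f_{*}(\mu)$ is \emph{regular} and not just a Borel probability measure; this is automatic when $f$ is continuous (the only case the paper actually uses), but requires care on non-metrizable compact Hausdorff spaces.
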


\begin{remark} Let $X$ be any compact Hausdorff space and let $C(X)$ be the collection of continuous functions from $X$ to $\mathbb{R}$. 
We consider $C(X)$ as a normed vector space with the $||\cdot||_{\infty}$ norm, 
i.e. $||f||_\infty = \sup_{x \in X} |f(x)|$.   The dual of $C(X)$, denoted $C(X)^{*}$, is the space of all linear functionals, i.e.~maps from $C(X)$ to $\mathbb{R}$ which are continuous with respect to the norm topology on $C(X)$. The weak--$^*$ topology on $C(X)^{*}$ is the coarsest topology such that for any $a \in X$, the map $E_{a}:C(X) \to \mathbb{R}$ given by $E_{a}(f) = f(a)$ is continuous. We remark that $\mathcal{M}(X)$ can be naturally viewed as a subset of $C(X)^{*}$ via $\mu \mapsto \int - d\mu$. The topology induced from $C(X)^{*}$ on $\mathcal{M}(X)$ is both compact and Hausdorff. Moreover, $\mathcal{M}(X)$ forms a convex subset of $C(X)^{*}$. 
\end{remark} 

\begin{convention} If $f:X \to \mathbb{R}$ is a measurable function, we sometimes write $\int_{X}f d\mu$ simply as $\mu(f)$. 
\end{convention}

\begin{definition} Let $X$ be a compact Hausdorff space and $\mu \in \mathcal{M}(X)$. The \emph{support of $\mu$} is $\supp(\mu) := \{a \in X:$ for any open neighborhood $U$ of $a$, $\mu(U) > 0\}$. Then $\supp(\mu)$  is a non-empty closed subset of $X$. We remark that $\mu(\supp(\mu)) = 1$. 
\end{definition} 

By a \textit{compact group} we mean a compact Hausdorff topological group where both the multiplication $-\cdot - : G \times G \to G$ and inverse $^{-1}:G \to G$ maps are continuous.  
\begin{definition} Let $G$ be a compact group and  $\mu,\nu \in \mathcal{M}(G)$. Then their \emph{convolution product}\footnote{To stay consistent with the notation in \cite{ChGan}, we will use ``$*$'' to denote definable convolution (defined later in this section) and ``$\star$'' to denote classical convolution.} $\mu \star \nu$ is the unique regular 
Borel measure on $G$ such that for any continuous function $f: G \to \mathbb{R}$, 
\begin{equation*} 
\int_{G} f(z) d(\mu \star \nu)(z) = \int_{G} \int_{G} f(x \cdot y)d\mu(x) d\nu(y). 
\end{equation*} 
Equivalently, $\mu \star \nu$ is the unique regular Borel measure on $G$ such that for any Borel subset $E$ of $G$,
\begin{equation*} \mu \star \nu(E) = \int_{G} \mu(Ex^{-1}) d\nu(x).
\end{equation*} 
See e.g.~\cite{Stromberg} for a proof the this equivalence.
\end{definition}  

\begin{remark}\label{remark:conv} Let $G$ be a compact group. 
\begin{enumerate} 
\item If $a,b \in G$, then $\delta_{a \cdot b} = \delta_{a} \star \delta_{b}$ (where $\delta_a$ denotes the Dirac measure on $a$). 
\item The space $\mathcal{M}(G)$ is a compact topological semigroup under convolution. In particular, the map $\star: \mathcal{M}(G) \times \mathcal{M}(G) \to \mathcal{M}(G)$ is associative and continuous.
\item The map $\delta: G \to \mathcal{M}(G), a \mapsto \delta_a$ is an embedding of topological semigroups. 
\end{enumerate} 
\end{remark} 

\begin{definition} Suppose that $G$ is a compact group and $\lambda \in \mathcal{M}(G)$. We say that $\lambda$ is \emph{idempotent} if $\lambda \star \lambda = \lambda$. 
\end{definition} 

\noindent The following theorem classifies idempotent measures on compact groups. The first proof of this theorem is due to Kawada and Ito \cite[Theorem 3]{KIto} and uses representation theory of compact groups. This result was rediscovered a decade and a half later by Wendel using semigroup theory \cite[Theorem 1]{Wendel}. 


\begin{fact}\label{compact:corr} Suppose $G$ is a compact group and $\lambda \in \mathcal{M}(G)$. Then the following are equivalent: 
\begin{enumerate} 
\item $\lambda$ is idempotent. 
\item $\supp(\lambda)$ is a closed subgroup of $G$ and $\lambda|_{\supp(\lambda)}$ is the normalized Haar measure on $\supp(\lambda)$. 
\end{enumerate} 
\end{fact}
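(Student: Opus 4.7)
The plan is to prove the nontrivial direction $(1) \Rightarrow (2)$ in three stages; the converse is a short computation, since if $H := \supp(\lambda)$ is a closed subgroup with $\lambda|_H$ equal to the Haar measure $\mu_H$, then the integral formula for $\star$ combined with left-invariance of $\mu_H$ gives $(\lambda \star \lambda)(E) = \int_H \mu_H((E \cap H) y^{-1}) \, d\mu_H(y) = \mu_H(E \cap H) = \lambda(E)$ for every Borel $E \subseteq G$.

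For the forward direction, the first step I would carry out is to show that $H$ is closed under multiplication. Given $a, b \in H$ and any open neighborhood $U$ of $ab$, I would use continuity of multiplication to pick open sets $V \ni a$ and $W \ni b$ with $V \cdot W \subseteq U$; since then $V \subseteq U y^{-1}$ for every $y \in W$, the integral formula yields
\[ \lambda(U) = (\lambda \star \lambda)(U) = \int \lambda(U y^{-1}) \, d\lambda(y) \geq \lambda(V) \, \lambda(W) > 0, \]
so $ab \in H$. The second step is to upgrade $H$ from a closed subsemigroup to a closed subgroup: this follows from the standard fact that any closed subsemigroup of a compact topological group is a subgroup, proved by observing that for $h \in H$ the closure of $\{h^n : n \geq 1\}$ is a compact abelian subsemigroup and hence contains an idempotent, which can only be the identity $1_G$, so that $h^{-1}$ arises as a limit of a suitable subnet of positive powers of $h$.

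The main obstacle will be the third step: identifying $\lambda|_H$ with $\mu_H$. The cleanest route I would take is the classical Cesaro convergence theorem of Kloss: for any Borel probability measure $\mu$ on a compact group, the averages $\frac{1}{N} \sum_{n=1}^{N} \mu^{\star n}$ converge in the weak-$^*$ topology to the Haar measure on the closed subgroup generated by $\supp(\mu)$. Applied to $\lambda$, idempotency gives $\lambda^{\star n} = \lambda$ for all $n \geq 1$, so every Cesaro average equals $\lambda$ itself; and since $\supp(\lambda) = H$ is already a closed subgroup by the previous two steps, the limit must be $\mu_H$, forcing $\lambda = \mu_H$. As an alternative, following the original Kawada-Ito approach, one can argue representation-theoretically: for each irreducible unitary representation $\pi$ of $G$, the Fourier coefficient $\hat\lambda(\pi) = \int \pi(g) \, d\lambda(g)$ is a contraction satisfying $\hat\lambda(\pi)^2 = \hat\lambda(\pi)$ by idempotency, hence an orthogonal projection; comparing with the analogous computation for $\mu_H$ and applying Peter-Weyl then gives $\lambda = \mu_H$.
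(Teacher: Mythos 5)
The paper does not prove this statement at all: it is quoted as a classical \emph{Fact} with references to Kawada--Ito (who argue via representation theory) and Wendel (who argue via compact semigroup theory), so there is no in-paper proof to compare against. Your outline is essentially correct and self-consistent. The converse computation is fine, except that the step $\mu_H\left((E\cap H)y^{-1}\right)=\mu_H(E\cap H)$ uses \emph{right}-invariance of $\mu_H$, not left-invariance; this is harmless since the normalized Haar measure of a compact group is bi-invariant, but you should say so. Steps 1 and 2 of the forward direction (the support is a closed subsemigroup by the convolution formula $\lambda(U)=\int\lambda(Uy^{-1})\,d\lambda(y)\geq\lambda(V)\lambda(W)>0$, and a closed subsemigroup of a compact group is a subgroup via an idempotent in $\overline{\{h^n\}}$) are standard and correct. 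Step 3 is where all the real content sits: invoking Kloss's Ces\`aro convergence theorem is legitimate and does make the conclusion immediate (indeed it subsumes Steps 1--2, since the limit is Haar measure on the closed subgroup generated by $\supp(\lambda)$, whose support is that whole subgroup), but be aware that Kloss's theorem is itself a result of comparable depth to the statement being proved, usually established by the Peter--Weyl/mean-ergodic machinery; your alternative route via $\hat\lambda(\pi)^2=\hat\lambda(\pi)$ being an orthogonal projection is exactly the Kawada--Ito argument the paper cites, and is the more honest ``from scratch'' option, though as written it is only gestured at (the passage from ``each Fourier coefficient is an orthogonal projection'' to ``$\lambda$ is Haar measure on a closed subgroup'' is the part that needs the actual work). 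In short: correct as a sketch, with the named external theorems carrying the weight the paper delegates to its citations.
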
 

\noindent We are interested in which ways this theorem  can be recovered for Keisler measures on definable groups. 
However, finding subgroups of a monster model is more difficult than directly applying this classification theorem since the support of an \textit{idempotent Keisler measure} is a collection of types and not a subgroup of the model. Instead, we will also need to take into account a measure's stabilizer. This distinction does not arise in the compact group setting since the stabilizer of an idempotent probability measure is the same as its support. We take a moment to be precise about this statement. 

\begin{definition} Suppose $G$ is a compact group and $\lambda \in \mathcal{M}(G)$. Its \emph{right stabilizer} is $\stab(\lambda) := \{a \in G:$ for any Borel set $B \subseteq G$, $\lambda(B\cdot a) = \lambda(B)\}$.  
\end{definition} 

\begin{lemma}\label{lem: sup equals stab in compact grp} Let $G$ be a compact group and $\lambda \in \mathcal{M}(G)$. If $\lambda$ is idempotent, then $ \supp(\lambda) =  \stab(\lambda)$.
\end{lemma}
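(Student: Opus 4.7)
The plan is to invoke Fact \ref{compact:corr} directly and then argue both inclusions separately. By the idempotence of $\lambda$, Fact \ref{compact:corr} gives that $H := \supp(\lambda)$ is a closed subgroup of $G$ and $\lambda|_{H}$ is the normalized Haar measure on $H$. Viewed as a measure on $G$, $\lambda$ is concentrated on $H$ in the sense that $\lambda(B) = \lambda(B \cap H)$ for every Borel $B \subseteq G$. This reduces everything to translating inside or out of $H$.

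For the inclusion $H \subseteq \stab(\lambda)$: pick $a \in H$ and a Borel $B \subseteq G$. Because $a \in H$, the set $(B \cdot a) \cap H$ equals $(B \cap H) \cdot a$, so by right-invariance of Haar measure on $H$,
\begin{equation*}
\lambda(B \cdot a) = \lambda((B \cdot a) \cap H) = \lambda((B \cap H) \cdot a) = \lambda(B \cap H) = \lambda(B).
\end{equation*}
Thus $a \in \stab(\lambda)$.

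For the reverse inclusion $\stab(\lambda) \subseteq H$: let $a \in \stab(\lambda)$ and apply the definition with $B = H$. Since $\lambda(H) = 1$ and $a$ stabilizes $\lambda$, we get $\lambda(H \cdot a) = 1$. The set $H \cdot a$ is closed (as a translate of the closed set $H$ by a homeomorphism), and its complement is therefore an open set of $\lambda$-measure zero. By the definition of support, $\supp(\lambda) \subseteq H \cdot a$, i.e. $H \subseteq H \cdot a$. In particular the identity $e$ lies in $H \cdot a$, so $e = h \cdot a$ for some $h \in H$, yielding $a = h^{-1} \in H$.

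I don't anticipate any real obstacle: the two inclusions are short and use only right-invariance of the Haar measure on $H$ together with the fact that a closed set of full measure must contain the support. The only minor point that requires care is making sure to distinguish between $\lambda$ as a measure on $G$ versus $\lambda|_H$ as the Haar measure on the subgroup $H$, and to keep track of the intersection $(B \cdot a) \cap H$ when manipulating translates.
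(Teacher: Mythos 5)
Your proof is correct and follows essentially the same route as the paper's: both inclusions rest on Fact \ref{compact:corr}, the identity $(B\cdot a)\cap H=(B\cap H)\cdot a$, and right-invariance of the Haar measure on $H=\supp(\lambda)$. The only cosmetic difference is in the reverse inclusion, where you deduce $H\subseteq H\cdot a$ from $\lambda(H\cdot a)=1$ and the closedness of $H\cdot a$, while the paper argues the contrapositive via disjointness of the cosets $H$ and $H\cdot a$; these are the same argument in different clothing.
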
 

\begin{proof} Suppose  $a \in \supp(\lambda)$. By Fact \ref{compact:corr}, $\supp(\lambda)$ is a closed subgroup of $G$ and $\lambda|_{\supp(\lambda)}$ is the normalized Haar measure on $\supp(\lambda)$. Hence for any Borel subset $C$ of $\supp(\lambda)$, $\lambda(C \cdot a) = \lambda(C)$. Let $X$ be a Borel subset of $G$. Then 
\begin{equation*} \lambda(X \cdot a) = \lambda((X \cdot a) \cap \supp(\lambda)) = \lambda((X \cap \supp(\lambda)) \cdot a) = \lambda( X \cap \supp(\lambda)) = \lambda(X),  
\end{equation*} 
hence $a \in \stab(\lambda)$. 

Conversely, suppose $a \in \stab(\lambda)$, but $a \not \in \supp(\lambda)$. By Fact \ref{compact:corr}, this implies that $\left( \supp(\lambda) \cdot a \right) \cap \supp(\lambda)= \emptyset$. However $\lambda(\supp(\lambda)) = 1$ and also
\begin{equation*} \lambda(\supp(\lambda) \cdot a) = \lambda(\supp(\lambda) \cdot a \cap \supp(\lambda)) = \lambda (\emptyset) = 0,
\end{equation*}
a contradiction.
\end{proof} 

Finally, we recall a couple of facts on integrating functions over compact groups.
\begin{fact}\label{int:comp} Suppose that $G$ is a compact group and $H$ is a closed subgroup of $G$ with normalized Haar measure $\lambda_{H}$. Let $h \in H$ and $f:G \to \mathbb{R}$ be a Borel function such that $f|_{H} \in L^{1}(\lambda_{H})$, i.e. the restriction of $f$ to $H$ is integrable. Then 
\begin{equation*} \int_{G} f(x) d\lambda(x) = \int_{G} f(x \cdot h) d\lambda(x), 
\end{equation*} 
where $\lambda$ is the measure on $G$ defined by $\lambda(X) = \lambda_{H}(X \cap H)$.
\end{fact}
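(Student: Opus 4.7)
The plan is to reduce both sides to an integral over $H$ against the Haar measure $\lambda_H$, and then invoke the right-invariance of $\lambda_H$ under translation by $h \in H$.

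First I would observe that $\lambda$ is concentrated on $H$: by definition, $\lambda(G \setminus H) = \lambda_H\bigl((G \setminus H) \cap H\bigr) = \lambda_H(\emptyset) = 0$. Thus $\lambda$ is precisely the push-forward of $\lambda_H$ along the inclusion $\iota: H \hookrightarrow G$. Applying Fact \ref{meas:facts}$(iii)$ to $\iota$, for any Borel $g: G \to \mathbb{R}$ with $g \in L^1(\lambda)$ one has
\begin{equation*}
\int_G g(x)\, d\lambda(x) \;=\; \int_H (g \circ \iota)(x)\, d\lambda_H(x) \;=\; \int_H g(x)\, d\lambda_H(x).
\end{equation*}
I would verify this first for indicator functions directly from the definition of $\lambda$, then extend to simple functions by linearity, to nonnegative Borel functions by monotone convergence, and finally to $L^1$ by splitting into positive and negative parts.

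Next I would apply this identity to both $g = f$ and $g(x) := f(x \cdot h)$. The former gives $\int_G f\, d\lambda = \int_H f\, d\lambda_H$. For the latter, since $h \in H$ and $H$ is a subgroup, the map $x \mapsto x \cdot h$ sends $H$ into $H$, so the restriction of $g$ to $H$ coincides with the right-translate of $f|_H$ by $h$. Hence
\begin{equation*}
\int_G f(x \cdot h)\, d\lambda(x) \;=\; \int_H f(x \cdot h)\, d\lambda_H(x).
\end{equation*}

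Finally, by the right-invariance of the normalized Haar measure $\lambda_H$ on the compact group $H$ under translation by $h \in H$, the right-hand side equals $\int_H f(x)\, d\lambda_H(x)$, which matches the expression for $\int_G f\, d\lambda$. Chaining these three equalities yields the desired identity. No step is a real obstacle; the only subtlety worth flagging is checking that $f(\cdot\, h)$ remains in $L^1(\lambda_H)$, which is immediate from right-invariance of $\lambda_H$ applied to $|f|$.
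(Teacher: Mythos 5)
The paper states Fact \ref{int:comp} as a known result and gives no proof of its own, so there is nothing to compare against; your argument is correct and is the standard one. Reducing both integrals to $\int_H (\cdot)\, d\lambda_H$ via the push-forward identity of Fact \ref{meas:facts}$(iii)$ applied to the inclusion $H \hookrightarrow G$ (noting $h\in H$ keeps the translate inside $H$), and then invoking right-invariance of the normalized Haar measure on the compact (hence unimodular) group $H$, settles the claim; the integrability check for $f(\cdot\, h)$ is handled exactly as you say.
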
 

\noindent The next fact appears hard to find explicitly stated in the literature, so we provide a proof for completeness. 

\begin{lemma}\label{cont:int} Let $G$ be a compact group and assume that $f: G \to \mathbb{R}$ is continuous. Let $\mu \in \mathcal{M}(G)$. Then the map $ b \mapsto \int_{G} f(x\cdot b)d\mu$  from $G$ to $\mathbb{R}$ is continuous. 
\end{lemma}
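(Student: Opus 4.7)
The plan is to reduce continuity of $b \mapsto \int_G f(x \cdot b)\,d\mu(x)$ to uniform continuity of $f$ with respect to the appropriate group uniformity on $G$. Fix $b_0 \in G$ and $\varepsilon > 0$; we seek an open neighborhood $V$ of $b_0$ so that $b \in V$ implies the integrals differ by less than $\varepsilon$.

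First, I would invoke the standard fact that in a compact Hausdorff topological group, every continuous real-valued function is uniformly continuous with respect to the \emph{left} uniformity: there exists an open neighborhood $U$ of the identity $e \in G$ such that for all $y, z \in G$ with $z^{-1} y \in U$, we have $|f(y) - f(z)| < \varepsilon$. (This follows from compactness of $G$, continuity of the multiplication map $G \times G \to G$, and a standard finite-subcover argument applied to $f$.)

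Next, set $V := b_0 U$, an open neighborhood of $b_0$. For any $b \in V$, write $b = b_0 u$ for some $u \in U$, and compute for each $x \in G$:
\begin{equation*}
(x \cdot b_0)^{-1}(x \cdot b) = b_0^{-1} x^{-1} x b = b_0^{-1} b = u \in U.
\end{equation*}
The key point is that the ambient $x$ cancels, which is precisely why the \emph{left} uniformity is the right choice here (the right uniformity would leave a conjugate of $b_0^{-1}b$ by $x$, which is not controlled uniformly in $x$). By the uniform continuity above, $|f(x \cdot b) - f(x \cdot b_0)| < \varepsilon$ for \emph{every} $x \in G$.

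Finally, integrating this pointwise bound against $\mu$ gives
\begin{equation*}
\left| \int_G f(x \cdot b)\,d\mu(x) - \int_G f(x \cdot b_0)\,d\mu(x) \right| \leq \int_G |f(x\cdot b) - f(x \cdot b_0)|\,d\mu(x) \leq \varepsilon \cdot \mu(G) = \varepsilon,
\end{equation*}
which establishes continuity at $b_0$. The only real subtlety — which I would flag as the main obstacle to a careless approach — is choosing the correct one-sided uniformity so that the variable of integration cancels; once this is set up, the proof is routine.
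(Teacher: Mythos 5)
Your proposal is correct and follows essentially the same route as the paper: both reduce the claim to the fact that for every $\varepsilon>0$ there is a neighborhood $U$ of $e$ with $\sup_{x\in G}|f(x\cdot b)-f(x\cdot b_0)|<\varepsilon$ whenever $b_0^{-1}b\in U$, and then integrate this uniform bound against the probability measure $\mu$. The only difference is that the paper proves this uniform-continuity statement in full (via a net/compactness contradiction argument) rather than citing it as standard, and phrases the final step with nets instead of neighborhoods; these are presentational, not substantive, differences.
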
 

\begin{proof} Define $h : G \to \mathbb{R}$ via $h(b) = \int_{G} f(x \cdot b)d\mu$. We first show that for every $\varepsilon > 0$ there exists an open neighborhood $U$ of the identity $e \in G$ such that for any $b \in U$, $\sup_{x \in G}|f(x) - f(x \cdot b)| < \varepsilon$. Fix $\varepsilon > 0$, and suppose the statement does not hold. Then for every neighborhood $U$ of $e$ there exist some $b_{U} \in U$ and $c_{U} \in G$ such that $|f(c_{U}) - f(c_{U} \cdot b_{U})| \geq \varepsilon$. Let  $N$ be the set of all open neighborhoods of $e$, then $N$ is a directed set under reverse inclusion and $(c_U \cdot b_U)_{U \in N}$ is a net.  Since $G$ is compact, we may pass to a convergent subnet $N'$ of $N$ so that  $(c_U \cdot b_U)_{U \in N'}$ converges. Note also that still $\lim_{U \in N'} b_{U} = e$. Since the nets $(c_{U} \cdot b_{U})_{U \in N'}$ and  $( b_{U})_{U \in N'}$ both converge and $G$ is a topological group, the net $( c_{U})_{U \in N'}$ also converges, let $c := \lim_{U \in N'} c_{U}$.  By continuity of $f$,
\begin{equation*} 
\lim_{U \in N'} f(c_U) = f(c) = \lim_{U \in N' } f(c_U \cdot b_U)
\end{equation*} 
Then $\lim_{U \in N'} |f(c_U) - f(c_U\cdot b_U)| = 0$, but $|f(c_U) - f(c_U \cdot b_U)| \geq \varepsilon$ for each $U \in N'$ by assumption,  a contradiction. 

We now show that $h$ is continuous. Let $(r,s) \subseteq \mathbb{R}$, $g_0 \in h^{-1}((r,s))$, and $\varepsilon := \min\{|h(g_0)-r|,|h(g_0)-s|\}$. By the paragraph above, there exists an open neighborhood of the identity $U$ such that for any $b \in U$, $\sup_{x \in G} |f(x) - f(x \cdot b)| < \frac{\varepsilon}{2}$. We will show that the open set $g_0 \cdot U$ is a subset of $h^{-1}((r,s))$ containing $g_0$. Note that $g_0 \in g_0 \cdot U$ since $e \in U$. Now suppose that $g_1 \in g_0 \cdot U$, so $g_1 = g_0 \cdot b_1$ for some $b_1 \in U$. Since for any $g \in G$, the action $k(x) \mapsto k(x \cdot g)$ of $G$ on the space  $C(G)$ of continuous functions from $G$ to $\mathbb{R}$ preserves the uniform norm,  acting on the right by $g_0$ derives $\sup_{x \in G}|f(x \cdot g_0) - f(x \cdot g_0 \cdot b_1)| < \frac{\varepsilon}{2}$. Therefore

\begin{equation*}
h(g_1) = \int_{G} f(x \cdot g_1) d\mu = \int_{G} f(x \cdot g_0 \cdot b_1) d\mu  \approx_{\frac{\epsilon}{2}} \int_{G} f(x \cdot g_0)d\mu = h(g_0). 
\end{equation*} 
Hence $h(g_1) \in (r,s)$ and so $g_0 \cdot U$ is an open subset of  $h^{-1}((r,s))$. Therefore $h^{-1}((r,s))$ is also open, and the map $h$ is continuous.  \end{proof} 


\subsection{Model theoretic setting}

For the most part, our notation is standard. Let $T$ be a complete first-order theory in a language $\mathcal{L}$ and assume that $\mathcal{U}$ is a sufficiently saturated and homogeneous model of $T$. While the rest of the paper is focused on the setting where $T$ expands the theory of a group, this section contains results about arbitrary theories. We write $x,y,z, \ldots$ to denote arbitrary finite tuples of variables. If $x$ is a tuple of variables and $A \subseteq \mathcal{U}$, then $\mathcal{L}_{x}(A)$ is the collection of formulas with free variables in $x$ and parameters from $A$, modulo logical equivalence. We write $\mathcal{L}_{x}$ for $\mathcal{L}_{x}(\emptyset)$. Given a partitioned formula $\varphi(x;y)$ with object variables $x$ and parameter variables $y$, we let $\varphi^*(y;x) := \varphi(x;y)$ be the partitioned formula with the roles of $x$ and $y$ reversed.

As usual, $S_x(A)$ denotes the space of types over $A$, and if $A \subseteq B \subseteq \mathcal{U}$ then 
$S^{\fs}_x(B,A)$ (respectively, $S^{\inv}_x(B,A)$) denotes the closed set of types in $S_x(B)$ that are finitely satisfiable in $A$ (respectively, invariant over $A$). Throughout this paper, we will want to discuss the spaces $S^{\inv}_{x}(B,A)$ and $S^{\fs}_{x}(B,A)$ simultaneously, so we let $S^{\dagger}_{x}(B,A)$ denote ``either $S^{\fs}_x(B,A)$ or $S_{x}^{\inv}(B,A)$''.  If $\varphi(x) \in \mathcal{L}_{x}(\mathcal{U})$, $[\varphi(x)] = \{p \in S_{x}(\mathcal{U}): \varphi(x) \in p\}$. Given a set $X \subset \mathcal{U}^{x}$ and $A \subseteq \mathcal{U}$ a small set of parameters, we say that $X$ is $\bigvee$-definable over $A$  (respectively, $\bigwedge$- or type-definable over $A$) if for some $\left\{ \psi_{i}(x) \right\}_{i \in I}$ with $\psi_{i}(x) \in \mathcal{L}_{x}(A)$ we have $X = \bigcup_{i \in I} \psi_{i}(\mathcal{U})$ (respectively, $X = \bigcap_{i \in I} \psi_{i}(\mathcal{U})$). And $X$ is $\bigvee$-definable (respectively, type-definable) if it is  $\bigvee$-definable (respectively, type-definable) over $A$ for some small $A \subseteq \cU$.

\begin{definition}\label{def: type def sets} If $X$ is a $\bigvee$-definable subset of $\mathcal{U}^{x}$, we let $[X] := \bigcup_{i \in I} [\psi_{i}(x)]$ where $\bigvee_{i \in I} \psi_i(x)$ is any $\bigvee$-definition of $X$. Likewise, if $X$ is a type-definable subset of $\mathcal{U}^{x}$, we let $[X] := \bigcap_{i \in I} [\phi_{j}(x)]$ where $\bigwedge_{i \in I} \phi_{j}(x)$ is any $\bigwedge$-definition of $X$. Note that $[X]$ does not depend on the choice of the small set of formulas defining $X$.
\end{definition} 

In the next fact, (1) follows by considering the preimages of half-open intervals, and for a proof of (2) see e.g.~\cite[Fact 2.10]{GanNIP}. 

\begin{fact}\label{fac: cont funcs on type spaces} Let $S$ be a topological space and $f:S \to \mathbb{R}$ a function.
\begin{enumerate}
	\item Assume $f$ is bounded and Borel. Then for every $\varepsilon > 0$ there exist $r_1, \ldots, r_n \in \mathbb{R}$ and Borel sets $B_1, \ldots, B_n$ such that $\{B_i\}_{i=1}^{n}$ partition $S$ and
\begin{equation*}
\sup_{a \in S} |f(a) - \sum_{i=1}^{n} r_i \mathbf{1}_{B_i}(a)| < \varepsilon. 
\end{equation*} 

\item Assume $S$ is a Stone space and $f$ is continuous. Then for every $\varepsilon >0$ there exists clopen sets $C_1, \ldots,C_n \subseteq S$ and $r_1,\ldots,r_n \in \mathbb{R}$ such that
\begin{equation*}
\sup_{a \in S} |f(a) - \sum_{i=1}^{n} r_i \mathbf{1}_{C_i}(a)| < \varepsilon.
\end{equation*}

\end{enumerate}

\end{fact}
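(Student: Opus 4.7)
For part (1), the plan is to reduce to a finite partition of the range. Since $f$ is bounded, fix $M > 0$ with $f(S) \subseteq [-M,M]$, and given $\varepsilon > 0$ choose a subdivision $-M = t_0 < t_1 < \cdots < t_n = M+\varepsilon$ with $t_i - t_{i-1} < \varepsilon$ for each $i$. Set $B_i := f^{-1}\bigl([t_{i-1}, t_i)\bigr)$ and $r_i := t_{i-1}$. The half-open intervals partition $[-M, M+\varepsilon)$, so the $B_i$ are a Borel partition of $S$ (Borel because $f$ is), and for any $a \in B_i$ we have $f(a) \in [t_{i-1}, t_i)$ and hence $|f(a) - r_i| < \varepsilon$. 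Taking sup over $a$ gives the desired inequality. Nothing is subtle here beyond the preimage-of-half-intervals observation already flagged by the authors.

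For part (2), the plan is to combine continuity of $f$ with the defining feature of a Stone space, namely that the clopen subsets form a basis (and $S$ is compact Hausdorff). Fix $\varepsilon > 0$. For each $a \in S$ the set $f^{-1}\!\left((f(a) - \varepsilon,\, f(a) + \varepsilon)\right)$ is open and contains $a$; since clopens form a basis, pick a clopen $V_a$ with $a \in V_a$ and $V_a \subseteq f^{-1}\!\left((f(a) - \varepsilon,\, f(a) + \varepsilon)\right)$. The family $\{V_a : a \in S\}$ covers $S$, so by compactness extract a finite subcover $V_{a_1}, \ldots, V_{a_n}$.

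To convert the cover into a disjoint family while staying inside the clopen algebra, set
\begin{equation*}
C_1 := V_{a_1}, \qquad C_i := V_{a_i} \setminus \bigcup_{j<i} V_{a_j} \text{ for } 2 \leq i \leq n,
\end{equation*}
and let $r_i := f(a_i)$. Each $C_i$ is clopen (the clopen sets of $S$ form a Boolean algebra, so they are closed under finite unions, intersections, and complements), and the $C_i$ partition $S$. For $x \in C_i \subseteq V_{a_i}$ we have $|f(x) - r_i| < \varepsilon$ by choice of $V_{a_i}$, and summing over $i$ gives the uniform estimate.

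The only mild subtlety — really the only thing to verify carefully — is that the ``disjointification'' step stays inside the Boolean algebra of clopen sets; this is immediate because a Stone space is totally disconnected, so complements of clopens are clopen. Boundedness of $f$ in part (2) is automatic from compactness of $S$ plus continuity, so one does not need to invoke part (1); nevertheless, part (2) can be viewed as a ``clopen refinement'' of part (1), using continuity to upgrade Borel preimages to clopen approximants.
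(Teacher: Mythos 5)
Your proof is correct and matches the paper's treatment: the paper proves (1) exactly by the preimages-of-half-open-intervals observation you spell out, and for (2) it simply cites \cite[Fact 2.10]{GanNIP}, whose content is the standard clopen-cover-plus-disjointification argument you give. (One cosmetic point: complements of clopen sets are clopen in \emph{any} topological space, so the appeal to total disconnectedness in your final remark is unnecessary --- what matters is only that the clopens form a basis and $S$ is compact.)
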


\subsection{Keisler measures} \label{sec: Keisl meas prelim}

For any $A \subseteq \mathcal{U}$, a \emph{Keisler measure} over $A$ in variables $x$ is a finitely additive probability measure on $\mathcal{L}_{x}(A)$. We denote the space of Keisler measures over $A$ (in variables $x$) as $\mathfrak{M}_{x}(A)$. Every $\mu \in \mathfrak{M}_{x}(A)$ extends  uniquely to a regular Borel probability measure $\tilde{\mu}$ on the space $S_{x}(A)$, we will routinely use this correspondence. 
If $A \subseteq   B \subseteq \cU$, then there is an obvious restriction map $r_{0}: \mathfrak{M}_{x}(B) \to \mathfrak{M}_{x}(A)$ and we denote $r_{0}(\mu)$ simply as $\mu|_{A}$. Conversely, every $\mu \in \mathfrak{M}_{x}(A)$ admits an extension to some $\mu' \in \mathfrak{M}_{x}(B)$ (not necessarily a unique one). 

\begin{definition} Let $B \subseteq \mathcal{U}$ and $\mu \in \mathfrak{M}_{x}(\mathcal{U})$. We say that $\mu$ is:
\begin{enumerate}
\item \emph{invariant over $B$} if for any formula $\varphi(x,y) \in \mathcal{L}_{x,y}(B)$ and elements $a, b \in \mathcal{U}^{y}$ such that $a \equiv_{B} b$ we have $\mu \left(\varphi(x,b) \right) = \mu \left(\varphi(x,a) \right)$;
\item \emph{finitely satisfiable in $B$} if for any formula $\varphi(x) \in \mathcal{L}_{x}(\mathcal{U})$ such that $\mu(\varphi(x)) > 0$, there exists some $b \in B$ such that $\models \varphi(b)$. 
\end{enumerate} 
We let $\mathfrak{M}_{x}^{\fs}(\mathcal{U},B)$ (respectively, $\mathfrak{M}_{x}^{\inv}(\mathcal{U},B)$) denote the closed set of Keisler measures in $\mathfrak{M}_{x}(\mathcal{U})$ that are finitely satisfiable in $B$ (respectively, invariant over $B$).
\end{definition} 
Just as with types, we let $\mathfrak{M}^{\dagger}_{x}(\mathcal{U},B)$ mean ``$\mathfrak{M}^{\fs}_x(\mathcal{U},B)$ or $\mathfrak{M}_{x}^{\inv}(\mathcal{U},B)$". 
The \textit{support of $\mu \in \mathfrak{M}_{x}(B)$} is the non-empty closed set of types $\sup(\mu) = \{p \in S_{x}(B):$ for any $\varphi(x) \in p$, $\mu(\varphi(x)) > 0\}$. 
Given $\bar{p} = (p_1, \ldots, p_n)$ with $p_i \in S_x(A)$, we let $\Av(\bar{p}) \in \mathfrak{M}_x(A)$ be defined by $\Av(\bar{p})(\varphi(x)) := \frac{|\{i \in [n] : \varphi(x) \in p_i\}|}{n}$, and given $\bar{a} = (a_1, \ldots, a_n) \in \cU^x$, we let $\Av(\bar{a}) := \Av \left( \tp(a_1/\cU), \ldots, \tp(a_n/\cU)  \right)$. We refer to e.g.~\cite[Section 2]{ChGan} for a more detailed discussion of the aforementioned notions.

\begin{definition}\label{def: meas on a set of types} Let $\mathbb{X} \subseteq S_{x}(\mathcal{U})$. We let $\mathfrak{M}(\mathbb{X}):= \{ \mu \in \mathfrak{M}_{x}(\mathcal{U}): \sup(\mu) \subseteq \mathbb{X}\}$ be the set of Keisler measures supported on $\mathbb{X} $. If $\mathbb{X}$ is a closed subset of $S_{x}(\mathcal{U})$, we let $\mathcal{M}(\mathbb{X})$ denote the set of regular Borel probability measures on $\mathbb{X}$, with the topology on $\mathbb{X}$ induced from $S_{x}(\mathcal{U})$. When we consider $\mathcal{M}(\mathbb{X})$ as a topological space, we will always consider it with the weak--$^*$ topology.
\end{definition} 
\noindent The space of Keisler measures $\mathfrak{M}_{x}(A)$ is a (closed convex) subset of a real locally convex topological vector space of bounded charges on $\mathcal{L}_{x}(A)$ (see e.g.~\cite{Charges} for the details). 

%
%

\begin{lemma}\label{closed} Assume that $\mathbb{X}$ is a closed subset of $S_{x}(\mathcal{U})$. Then $\mathfrak{M}(\mathbb{X})$ is a closed convex subset of $\mathfrak{M}_{x}(\mathcal{U})$.
\end{lemma}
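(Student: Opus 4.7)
The plan is to treat convexity and closedness separately, both using the standard description of $\operatorname{supp}(\mu)$ via zero-measure formulas.

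For convexity, suppose $\mu_1, \mu_2 \in \mathfrak{M}(\mathbb{X})$ and $t \in [0,1]$, and let $\mu := t\mu_1 + (1-t)\mu_2$. I would simply check that $\operatorname{supp}(\mu) \subseteq \operatorname{supp}(\mu_1) \cup \operatorname{supp}(\mu_2)$: if $p \notin \operatorname{supp}(\mu_i)$ for $i = 1,2$, pick $\varphi_i(x) \in p$ with $\mu_i(\varphi_i) = 0$, then $\varphi_1 \wedge \varphi_2 \in p$ and $\mu(\varphi_1 \wedge \varphi_2) = 0$. Since both supports are contained in $\mathbb{X}$, so is that of $\mu$.

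For closedness, the key observation is the reformulation
\begin{equation*}
\mathfrak{M}(\mathbb{X}) = \bigcap_{\substack{\varphi(x) \in \mathcal{L}_x(\mathcal{U}) \\ [\varphi] \cap \mathbb{X} = \emptyset}} \{ \mu \in \mathfrak{M}_x(\mathcal{U}) : \mu(\varphi) = 0 \}.
\end{equation*}
To justify this, I would use that $S_x(\mathcal{U})$ is a Stone space and $\mathbb{X}$ is closed, so its complement is a union of clopen sets $[\varphi_i]$. The inclusion ``$\subseteq$'' is immediate: if $\operatorname{supp}(\mu) \subseteq \mathbb{X}$ and $[\varphi] \cap \mathbb{X} = \emptyset$, then $\tilde\mu([\varphi]) = 0$ by inner regularity applied to the open set $[\varphi]$ disjoint from the support. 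For ``$\supseteq$'': if $p \notin \mathbb{X}$, pick a clopen $[\varphi] \ni p$ disjoint from $\mathbb{X}$; the hypothesis gives $\mu(\varphi) = 0$, so $p \notin \operatorname{supp}(\mu)$.

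Finally, each set $\{\mu : \mu(\varphi) = 0\}$ is closed in $\mathfrak{M}_x(\mathcal{U})$ because the evaluation functional $\mu \mapsto \mu(\varphi)$ is continuous in the induced weak--$^*$ topology (the indicator $\mathbf{1}_{[\varphi]}$ is continuous on the Stone space $S_x(\mathcal{U})$, c.f.\ Fact \ref{meas:facts}(ii)), and $\{0\}$ is closed in $\mathbb{R}$. An intersection of closed sets is closed, finishing the proof. I do not anticipate any serious obstacle here; the only mild subtlety is invoking the Stone space structure to reduce an arbitrary closed condition to a conjunction of formula-valued conditions, which is what makes the argument go through cleanly.
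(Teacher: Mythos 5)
Your proof is correct and rests on the same key idea as the paper's: a type $p\in\sup(\mu)\setminus\mathbb{X}$ can be separated from the closed set $\mathbb{X}$ by a single formula $\varphi\in p$ with $[\varphi]\cap\mathbb{X}=\emptyset$, forcing $\mu(\varphi)>0$ while every measure supported on $\mathbb{X}$ gives $\varphi$ measure zero. The paper packages this as a net-convergence contradiction (using normality of $S_x(\mathcal{U})$ rather than the clopen basis) and simply asserts $\sup(r\mu+s\nu)=\sup(\mu)\cup\sup(\nu)$ for convexity, so your version is just a cleaner reorganization, not a different route.
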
 

\begin{proof} Suppose $\mathfrak{M}(\mathbb{X})$ is not closed, then $\lim_{i \in I} \mu_{i} = \mu$ for some $\mu \not \in \mathfrak{M}(\mathbb{X})$ and some net $(\mu_i)_{i \in I}$ with $\mu_i \in \mathfrak{M}(\mathbb{X})$. 
Then there exists a type $p \in \sup(\mu) \setminus \mathbb{X}$. Since $\mathbb{X}$ is closed, the set $U := S_{x}(\mathcal{U}) \backslash \mathbb{X}$ is open. Hence $U = \bigcup_{j \in J} [\varphi_{j}(x)]$ for some set of formulas $\{\varphi_j\}_{j \in J}$ and there is some $j \in J$ such that $\varphi_{j}(x) \in p$. Then $[\varphi_{j}(x)] \cap \mathbb{X} = \emptyset$ and $\mu(\varphi_{j}(x)) > 0$ (since $p \in \sup(\mu)$). Therefore
$\lim_{i \in I} \mu_i (\varphi_{j}(x)) = \lim_{i \in I} 0 = 0 < \mu(\varphi_{j}(x))$, 
a contradiction. 

The space $\mathfrak{M}(\mathbb{X})$ is convex since if $r,s \in \mathbb{R}_{>0}$, $r + s =1$, and $\mu,\nu \in \mathfrak{M}(\mathbb{X})$, then
$\sup(r\mu + s\nu) = \sup(\mu) \cup \sup(\nu) \subseteq \mathbb{X}$.
\end{proof} 

In the later sections, we will need to discuss maps from the space of Keisler measures to other spaces of measures. The following definition is an appropriate notion of an isomorphism in this context (and will be denoted by $\cong$). 

\begin{definition}\label{def: affine homeo} Let $V_1,V_2$ be two locally convex topological vector spaces. Suppose that $C_1$ and $C_2$ are closed convex subsets of $V_1$ and $V_2$, respectively. A map $f:C_1 \to C_2$ is an \textit{affine homeomorphism} if $f$ is a homeomorphism from $C_1$ to $C_2$ (with the induced topologies) and for any $a_1, \ldots, a_n \in C_1$ and $r_1, \ldots, r_n \in \mathbb{R}_{ \geq 0} $ with $\sum_{i=1}^{n} r_i = 1$ we have 
\begin{equation*} 
f \left(\sum_{i=1}^{n} r_i a_i \right) = \sum_{i=1}^{n} r_i f(a_i). 
\end{equation*} 
\end{definition} 

\begin{definition} Let $A$ be a subset of a locally convex topological vector space, $V$ and let $b \in V$. We say that $b$ is \emph{extreme in $A$} (or an \emph{extreme point of $A$}) if $b \in A$ and $b$ cannot be written as $r c_1 + (1-r)c_2$ for $c_1,c_2 \in A$ where $c_1 \neq c_2$ and $r \in (0,1)$. We let $\ex(A) := \{ c \in A: c \text{ is extreme in } A\}$.
\end{definition} 

\begin{fact}[Krein-Milman theorem] Let $A$ be a convex compact subset of a locally convex topological vector space $V$. Then the convex hull of $\ex(A)$ is a dense subset of $A$.  
\end{fact}

\begin{proposition}\label{closed:homeomorphism} Let $\mathbb{X} \subseteq S_{x}(\mathcal{U})$ be a closed set. Then  there exists an affine homeomorphism $\gamma : \mathfrak{M}(\mathbb{X}) \to \mathcal{M}(\mathbb{X})$ such that for any $\varphi(x) \in \mathcal{L}_{x}(\mathcal{U})$ and $\mu \in \mathfrak{M}(\mathbb{X})$, 
\begin{equation*} \mu(\varphi(x)) = \gamma(\mu)([\varphi(x)] \cap \mathbb{X}). 
\end{equation*}
Moreover, $\sup(\mu) = \supp(\gamma(\mu))$. 
\end{proposition}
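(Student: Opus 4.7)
The plan is to use the standard bijection between Keisler measures and regular Borel probability measures on $S_x(\mathcal{U})$, and show it restricts to the desired affine homeomorphism. Specifically, given $\mu \in \mathfrak{M}(\mathbb{X})$, let $\tilde{\mu}$ be the unique regular Borel probability measure on $S_x(\mathcal{U})$ extending $\mu$. Since $\sup(\mu) \subseteq \mathbb{X}$ and $\mathbb{X}$ is closed, $\tilde{\mu}(S_x(\mathcal{U}) \setminus \mathbb{X}) = 0$ by regularity (any compact subset of the complement is disjoint from $\supp(\tilde{\mu})$ and thus has measure $0$). So I would define $\gamma(\mu) := \tilde{\mu} \restriction \mathcal{B}(\mathbb{X})$, which is then a regular Borel probability measure on $\mathbb{X}$ and satisfies $\gamma(\mu)([\varphi(x)] \cap \mathbb{X}) = \tilde{\mu}([\varphi(x)]) = \mu(\varphi(x))$.

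Next I would exhibit the inverse. Given $\nu \in \mathcal{M}(\mathbb{X})$, define $\mu_\nu \in \mathfrak{M}_x(\mathcal{U})$ by $\mu_\nu(\varphi(x)) := \nu([\varphi(x)] \cap \mathbb{X})$; finite additivity is immediate. To see $\sup(\mu_\nu) \subseteq \mathbb{X}$, note that if $p \notin \mathbb{X}$ then by the Stone space structure there is a clopen $[\varphi(x)] \ni p$ with $[\varphi(x)] \cap \mathbb{X} = \emptyset$, so $\mu_\nu(\varphi(x)) = 0$ and $p \notin \sup(\mu_\nu)$. Injectivity of $\gamma$ follows because $\gamma(\mu)$ determines $\mu(\varphi(x))$ for every $\varphi(x) \in \mathcal{L}_x(\mathcal{U})$; surjectivity follows because $\gamma(\mu_\nu)$ agrees with $\nu$ on every clopen of $\mathbb{X}$, which (using that $\mathbb{X}$ is a closed subspace of a Stone space, hence a Stone space whose clopens are exactly the traces $[\varphi(x)] \cap \mathbb{X}$) determines $\nu$ as a regular Borel measure. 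The affine property is clear from the definition of $\gamma$ via restriction, as is the fact $\supp(\gamma(\mu)) = \sup(\mu)$: for $p \in \mathbb{X}$, the basic clopen neighborhoods of $p$ in $\mathbb{X}$ are exactly the sets $[\varphi(x)] \cap \mathbb{X}$ with $\varphi(x) \in p$, and $\gamma(\mu)([\varphi(x)] \cap \mathbb{X}) = \mu(\varphi(x))$.

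The main thing left is continuity, which I expect to be the one step requiring a bit of care. For $\gamma$ continuous: if $\mu_i \to \mu$ in $\mathfrak{M}(\mathbb{X})$, then $\mu_i(\varphi(x)) \to \mu(\varphi(x))$ for every $\varphi$, i.e. $\int \mathbf{1}_{[\varphi(x)] \cap \mathbb{X}} d\gamma(\mu_i) \to \int \mathbf{1}_{[\varphi(x)] \cap \mathbb{X}} d\gamma(\mu)$ for every clopen $[\varphi(x)] \cap \mathbb{X}$ of $\mathbb{X}$. By Fact \ref{fac: cont funcs on type spaces}(2), every continuous $f:\mathbb{X} \to \mathbb{R}$ is a uniform limit of finite $\mathbb{R}$-linear combinations of indicators of clopens, so $\int f d\gamma(\mu_i) \to \int f d\gamma(\mu)$, giving $\gamma(\mu_i) \to \gamma(\mu)$ in $\mathcal{M}(\mathbb{X})$ by Fact \ref{meas:facts}(ii). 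Conversely, if $\gamma(\mu_i) \to \gamma(\mu)$, then evaluating on indicators of clopens $[\varphi(x)] \cap \mathbb{X}$ (which are continuous on $\mathbb{X}$) gives $\mu_i(\varphi(x)) \to \mu(\varphi(x))$, so $\mu_i \to \mu$. Combined with bijectivity and affineness, this delivers the claimed affine homeomorphism.
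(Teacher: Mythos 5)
Your proposal is correct and takes essentially the same route as the paper: the paper's proof simply sets $\gamma(\mu) := \tilde{\mu}\restriction_{\mathbb{X}}$, the restriction of the unique regular Borel extension of $\mu$ to the Borel subsets of $\mathbb{X}$, and cites the literature for the verification. You have written out the details the paper delegates to that reference (well-definedness via $\tilde{\mu}(S_x(\mathcal{U})\setminus\mathbb{X})=0$, the explicit inverse, the support computation, and continuity via uniform approximation of continuous functions by simple functions over clopens), all of which check out.
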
 

\begin{proof} This follows directly from the fact that every Keisler measure $\mu$ in $\mathfrak{M}_{x}(\mathcal{U})$ extends uniquely to a regular Borel probability measure $\tilde{\mu}$ on $S_{x}(\mathcal{U})$. We let $\gamma(\mu) := \tilde{\mu}\restriction_{\mathbb{X}}$, i.e.~the restriction of the measure $\tilde{\mu}$ to the collection of Borel subsets of $\mathbb{X}$. See e.g.~\cite[Page 99]{Sibook} for the details.
\end{proof}

\noindent For a proof of the following fact, see \cite[Lemma 2.10]{ChGan}. 
\begin{fact}\label{sup:inv}
\begin{enumerate}
\item $\mu \in \mathfrak{M}_{x}^{\fs}(\mathcal{U},M)$ if and only if $p \in S_{x}^{\fs}(\mathcal{U},M)$ for all $p \in \sup(\mu)$. 
	\item (T is NIP) $\mu \in \mathfrak{M}_{x}^{\inv}(\mathcal{U},M)$ if and only if  $p \in S_{x}^{\inv}(\mathcal{U},M)$ for every $p \in \sup(\mu)$. 
\end{enumerate}

\end{fact}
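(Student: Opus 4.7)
The plan is to split each biconditional into two implications, with only the forward direction of (2) requiring NIP in an essential way. The backbone of the easier directions is the following support lemma: \emph{if $\mu(\varphi(x)) > 0$ then some $p \in \sup(\mu)$ contains $\varphi(x)$.} For a proof, if no such $p$ existed then for every $p \in [\varphi(x)]$ one could choose $\psi_p \in p$ with $\mu(\psi_p) = 0$; since the clopen sets $[\psi_p \wedge \varphi]$ cover the compact $[\varphi(x)] \seq S_x(\mathcal{U})$, finitely many would suffice, forcing $\mu(\varphi(x)) \leq \sum_i \mu(\psi_{p_i}) = 0$, a contradiction.

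Using this, the directions (1)$\Rightarrow$, (1)$\Leftarrow$, and (2)$\Leftarrow$ are all routine. For (1)$\Rightarrow$, any $\varphi(x) \in p \in \sup(\mu)$ has $\mu(\varphi(x)) > 0$, so by finite satisfiability of $\mu$ it is realized in $M$. For (1)$\Leftarrow$, given $\varphi(x)$ with $\mu(\varphi(x)) > 0$, the support lemma locates $p \in \sup(\mu)$ with $\varphi \in p$, and $p$ being finitely satisfiable in $M$ provides a realization. For (2)$\Leftarrow$, if $\mu$ fails to be $M$-invariant then $\mu(\varphi(x;b)) \neq \mu(\varphi(x;b'))$ for some $b \equiv_M b'$; WLOG $\mu(\varphi(x;b) \wedge \neg\varphi(x;b')) > 0$, and the support lemma yields $p \in \sup(\mu)$ containing this formula, witnessing non-invariance.

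For (2)$\Rightarrow$ I plan to establish the intermediate NIP claim: for every $c \equiv_M c'$, $\mu(\varphi(x;c) \triangle \varphi(x;c')) = 0$. This yields the conclusion immediately: were some $p \in \sup(\mu)$ non-$M$-invariant, then $\varphi(x;b) \wedge \neg\varphi(x;b') \in p$ for some $b \equiv_M b'$, forcing $\mu(\varphi(x;b) \triangle \varphi(x;b')) > 0$, contradicting the claim. To prove the claim, fix a global $M$-invariant extension $q^{\ast}$ of $\tp(c/M) = \tp(c'/M)$ and choose $e \models q^{\ast}|_{Mcc'}$; by $M$-invariance of $q^{\ast}$ both pairs $(c, e)$ and $(c', e)$ realize $\tp(b_0, b_1/M)$ for any Morley pair $(b_0, b_1)$ of $q^{\ast}$ over $M$, so combining $M$-invariance of $\mu$ with the triangle inequality for the measure of symmetric differences reduces the claim to showing $\mu(\varphi(x;b_0) \triangle \varphi(x;b_1)) = 0$ for any $M$-indiscernible $(b_i)_{i<\omega}$.

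The final step is where NIP enters: $M$-invariance together with indiscernibility forces $\mu(\varphi(x;b_i) \triangle \varphi(x;b_j))$ to be a single constant $\beta$ for all $i \neq j$, while the NIP alternation bound applied to $\varphi(x;y)$ along the $M$-indiscernible sequence ensures that $(\mathbf{1}_{\varphi(a;b_i)})_{i<\omega}$ is eventually constant for every $a \in \mathcal{U}$. The pointwise limit is the indicator of the Borel set $L := \{a : {} \models \varphi(a;b_i) \text{ eventually}\}$, so by dominated convergence $\mu(\varphi(x;b_i) \triangle \varphi(x;b_j)) \to 0$ as $i,j \to \infty$, forcing $\beta = 0$. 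I expect this NIP alternation/convergence step to be the main obstacle; everything else is routine Stone-space and measure-theoretic bookkeeping.
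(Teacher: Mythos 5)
The paper gives no proof of this statement: it is quoted as a Fact with a pointer to \cite[Lemma 2.10]{ChGan}, so there is no in-paper argument to compare against. Your proof is correct and follows the standard route taken in that reference: the support lemma together with finite satisfiability disposes of (1) and of the easy direction of (2), and the forward direction of (2) rests on the standard NIP fact that a global $M$-invariant measure gives $\mu(\varphi(x;b)\triangle\varphi(x;b'))=0$ whenever $b\equiv_M b'$, which you correctly reduce, via a Morley sequence of a global $M$-invariant extension, to the vanishing of the constant value $\beta=\mu(\varphi(x;b_i)\triangle\varphi(x;b_j))$ along an $M$-indiscernible sequence, killed by the alternation bound. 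Two points should be made explicit in a final write-up, though neither is a gap: the dominated convergence step must be performed on the Stone space $S_x(\mathcal{U})$ against the unique regular Borel extension $\tilde{\mu}$ (a Keisler measure is only finitely additive on definable sets, and your set $L$ is Borel in $S_x(\mathcal{U})$ but not definable), and the eventual constancy of $\mathbf{1}_{\varphi(a;b_i)}$ must be applied to realizations $a$ of arbitrary global types in an elementary extension of $\mathcal{U}$, not only to $a\in\mathcal{U}$; both are routine.
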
 

\noindent Combining Proposition \ref{closed:homeomorphism} and Fact \ref{sup:inv} we have the following.
\begin{corollary}\label{cor:homeomorphism} 
\begin{enumerate} 
\item If $T$ is any theory, then $\mathfrak{M}^{\fs}_{x}(\mathcal{U},M) = \mathfrak{M}(S^{\fs}_{x}(\mathcal{U},M)) \cong \mathcal{M}(S^{\fs}_{x}(\mathcal{U},M))$. 
\item If $T$ is NIP, then  $\mathfrak{M}^{\inv}_{x}(\mathcal{U},M) = \mathfrak{M}(S^{\inv}_{x}(\mathcal{U},M)) \cong \mathcal{M}(S^{\inv}_{x}(\mathcal{U},M))$. 
\end{enumerate} 
\end{corollary}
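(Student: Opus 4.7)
The statement is essentially a direct combination of Fact \ref{sup:inv} and Proposition \ref{closed:homeomorphism}, so the plan is to chain these two results in each part, after verifying that the relevant set of types is a closed subset of $S_x(\cU)$ so that Proposition \ref{closed:homeomorphism} applies.

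For part (1), I would first note that $S^{\fs}_x(\cU,M)$ is a closed subset of $S_x(\cU)$ (this is recalled earlier in Section \ref{sec: Keisl meas prelim}: finite satisfiability in $M$ is preserved under taking limits of types). Next I would apply Fact \ref{sup:inv}(1) to identify the two sets $\mathfrak{M}^{\fs}_{x}(\cU,M)$ and $\mathfrak{M}(S^{\fs}_{x}(\cU,M))$: a measure $\mu \in \mathfrak{M}_x(\cU)$ is finitely satisfiable in $M$ exactly when every $p \in \sup(\mu)$ belongs to $S^{\fs}_x(\cU,M)$, which is precisely the definition of $\mathfrak{M}(S^{\fs}_x(\cU,M))$ from Definition \ref{def: meas on a set of types}. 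Finally, Proposition \ref{closed:homeomorphism} applied to $\mathbb{X} = S^{\fs}_x(\cU,M)$ produces the affine homeomorphism with $\mathcal{M}(S^{\fs}_x(\cU,M))$.

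For part (2), the argument is identical with $S^{\inv}_x(\cU,M)$ in place of $S^{\fs}_x(\cU,M)$. The set $S^{\inv}_x(\cU,M)$ is again closed in $S_x(\cU)$ (invariance over $M$ is a closed condition), so Proposition \ref{closed:homeomorphism} applies as soon as we know $\mathfrak{M}^{\inv}_x(\cU,M) = \mathfrak{M}(S^{\inv}_x(\cU,M))$. This last equality is exactly Fact \ref{sup:inv}(2), which is the part that uses NIP; one direction (support contained in $S^{\inv}_x(\cU,M)$ implies $\mu$ invariant) is immediate in any theory by affineness, but the converse requires NIP, hence the hypothesis on $T$ in (2).

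There is no real obstacle here: the only substantive work was already done in Fact \ref{sup:inv} and Proposition \ref{closed:homeomorphism}. The one conceptual point worth flagging is the asymmetry between (1) and (2), namely why NIP is needed in the invariant case but not in the finitely satisfiable case; this comes purely from the corresponding asymmetry in Fact \ref{sup:inv}.
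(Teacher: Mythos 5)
Your proof is correct and takes essentially the same route as the paper, which simply states that the corollary follows by combining Fact \ref{sup:inv} (to identify $\mathfrak{M}^{\dagger}_{x}(\mathcal{U},M)$ with $\mathfrak{M}(S^{\dagger}_{x}(\mathcal{U},M))$) with Proposition \ref{closed:homeomorphism} (to obtain the affine homeomorphism with $\mathcal{M}(S^{\dagger}_{x}(\mathcal{U},M))$). Your additional remarks on the closedness of the type spaces and on which direction of Fact \ref{sup:inv}(2) actually needs NIP are accurate and consistent with the remark following the corollary in the paper.
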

\begin{remark}
	It is not true that $\mathfrak{M}(S^{\inv}_{x}(\mathcal{U},M)) = \mathfrak{M}_{x}^{\inv}(\mathcal{U},M)$ in an arbitrary theory, see \cite[Lemma 2.10(4)]{ChGan}.
\end{remark}

\begin{lemma}\label{lem: limit}  For any $\mu \in \mathfrak{M}_{x}(\mathcal{U})$, there exists a net of measures $(\nu_{j})_{j \in J}$ in $\mathfrak{M}_{x}(\mathcal{U})$ such that: 
\begin{enumerate}
\item for each $j \in J$, $\nu_j = \Av(\overline{p}_j)$ for some $\overline{p}_j = (p_{j_1}, \ldots, p_{j_m})$ with $p_{j_i} \in \sup(\mu)$;
\item $\lim_{j \in J} \nu_j = \mu$. 
\end{enumerate}  
Moreover, if $\mu$ is finitely satisfiable in $M \preceq \cU$, then we can take $\nu_j$ of the form $\Av(\bar{a}_j)$ for some $\bar{a}_j \in \left( M^x \right)^{<\omega}$.
\end{lemma}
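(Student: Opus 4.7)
The plan is a weak--$^*$ density argument exploiting the fact that basic open neighborhoods of $\mu$ in $\mathfrak{M}_x(\mathcal{U})$ are cut out by finitely many $\mathcal{L}_x(\mathcal{U})$-formulas. Via Proposition \ref{closed:homeomorphism}, I identify $\mu$ with its associated regular Borel probability measure on $S_x(\mathcal{U})$, which is concentrated on the closed set $\sup(\mu)$. I index the net by the directed set $J$ of pairs $j=(F,\varepsilon)$ with $F \subseteq \mathcal{L}_x(\mathcal{U})$ finite and $\varepsilon > 0$, ordered by $F_1 \subseteq F_2$ and $\varepsilon_1 \geq \varepsilon_2$. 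Convergence will then follow because Fact \ref{fac: cont funcs on type spaces}(2) ensures that, on the Stone space $S_x(\mathcal{U})$, the basic open neighborhoods of $\mu$ in the weak--$^*$ topology of Fact \ref{meas:facts}(i) may equivalently be taken of the form $\{\nu : |\nu(\varphi_i) - \mu(\varphi_i)| < \varepsilon,\ i \leq n \}$ for $\varphi_i \in \mathcal{L}_x(\mathcal{U})$.

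Given $j = (F,\varepsilon)$, the formulas in $F$ together with their negations partition $S_x(\mathcal{U})$ into finitely many clopen atoms $A_\sigma = \bigcap_{\varphi \in F} [\varphi(x)]^{\sigma(\varphi)}$ indexed by $\sigma \in 2^F$. Let $\Sigma = \{\sigma : \mu(A_\sigma) > 0\}$. For each $\sigma \in \Sigma$, since $\mu$ (as a Borel measure) concentrates on $\sup(\mu)$, we have $A_\sigma \cap \sup(\mu) \neq \emptyset$, so I pick some $p_\sigma \in A_\sigma \cap \sup(\mu)$. Next I choose a common denominator $N \in \mathbb{N}$ and non-negative integers $(n_\sigma)_{\sigma \in \Sigma}$ with $\sum_{\sigma} n_\sigma = N$ and $|n_\sigma/N - \mu(A_\sigma)| < \varepsilon / (|F| \cdot 2^{|F|})$; this is a routine simultaneous rational approximation using $\sum_\sigma \mu(A_\sigma) = 1$. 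Define $\nu_j := \Av(\bar p_j)$, where $\bar p_j$ lists each $p_\sigma$ with multiplicity $n_\sigma$. For any $\varphi \in F$, since $[\varphi(x)] = \bigsqcup_{\sigma(\varphi)=1} A_\sigma$, one computes $\nu_j(\varphi) = \sum_{\sigma(\varphi)=1} n_\sigma/N$ while $\mu(\varphi) = \sum_{\sigma(\varphi)=1} \mu(A_\sigma)$, so $|\nu_j(\varphi) - \mu(\varphi)| < \varepsilon$. This shows $\nu_j \to \mu$.

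For the moreover clause, assume $\mu$ is finitely satisfiable in $M$. By Fact \ref{sup:inv}(1), each $p_\sigma$ lies in $S^{\fs}_x(\mathcal{U},M)$. The single formula $\bigwedge_{\varphi \in F} \varphi(x)^{\sigma(\varphi)}$ belongs to $p_\sigma$, so there exists $a_\sigma \in M^x$ realizing it, equivalently $\tp(a_\sigma / \mathcal{U}) \in A_\sigma$. Replacing $p_\sigma$ by $\tp(a_\sigma/\mathcal{U})$ throughout, the averages $\Av(\bar a_j)$ (with the same multiplicities $n_\sigma$) satisfy the same estimate and converge to $\mu$ as required. There is no substantive obstacle here; the only point that needs mild care is the simultaneous rational approximation of the numbers $\mu(A_\sigma)$, and the verification that the support of a Keisler measure truly meets every atom of positive measure, which follows from the correspondence with a regular Borel measure on $S_x(\mathcal{U})$.
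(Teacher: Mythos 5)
Your proof is correct and follows essentially the same route as the paper's: both partition $S_x(\mathcal{U})$ into the atoms of the finite Boolean algebra generated by the formulas appearing in a basic neighborhood of $\mu$, pick a supporting type in each atom of positive measure, approximate the weights by rationals with a common denominator to realize the resulting combination as an average, and in the finitely satisfiable case replace each chosen type by the type of a realization in $M$ of the corresponding atom. The only difference is cosmetic (you index the net by pairs $(F,\varepsilon)$ rather than by the basic open sets containing $\mu$).
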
 

\begin{proof} Consider a basic open subset $O$ of $\mathfrak{M}_{x}(\mathcal{U})$, of the form
\begin{equation*} O  = \bigcap_{i=1}^{n}\left\{\nu \in \mathfrak{M}_{x}(\mathcal{U}) : r_i < \nu(\theta_i(x)) < s_i \right\}. 
\end{equation*} 
Suppose that $\mu \in O$. Let $\mathcal{B}$ be the (finite) Boolean algebra generated by the sets $\{\theta_1(x),\ldots,\theta_n(x)\}$, let $\{\sigma_{1}(x), \ldots, \sigma_{m}(x)\}$ be the set of its atoms. For each atom $\sigma_{i}(x)$ such that $\mu(\sigma_i(x)) > 0$, there exists some $p_i \in \sup(\mu)$ such that $\sigma_{i}(x) \in p_i$. Consider the measure
\begin{equation*} 
\lambda := \sum_{\{i \in [n]: \mu(\sigma_i(x)) > 0\}} \mu(\sigma_i(x)) \delta_{p_i}. 
\end{equation*} 
Then $\lambda (\theta_i(x)) = \mu(\theta_i(x))$ for every $i \in [n]$, hence $\lambda \in O$. We can choose a sufficiently large $t \in \mathbb{N}$ and $s_i \in \mathbb{N}$ so that $\frac{s_i}{t}$ is sufficiently close to $\mu(\sigma_i(x))$, so that 
 $\nu_{O} := \sum_{\{i \in [n]: \mu(\sigma_i(x)) > 0\}} \frac{s_i}{t} \delta_{p_i} \in O$ (taking $\bar{p}_O$ to be the tuple of types of length $t$ with $p_i$ repeated $s_i$ times, we see that $\nu_O = \Av(\bar{p}_O)$). Then we can take the net $(\nu_O)_{\mu \in O}$. 
 
 And if $\mu$ is finitely satisfiable in $M$ and $\mu(\sigma_i(x)) >0$, then $\models \sigma_i(a_i)$ for some $a_i \in M^x$, and we can take $p_i := \tp(a_i/\cU)$ (see \cite[Proposition 2.11]{ChGan}).
\end{proof}

\subsection{Definable convolution in NIP groups}\label{sec: def conv NIP prelims}
In this section, we assume that $T$ is an $\cL$-theory expanding a group, $\mathcal{G}$ denotes a sufficiently saturated model of $T$, and $G$ denotes a small elementary submodel; $x,y, \ldots$ denote singleton variables; and for any $\varphi(x) \in \cL_x(\mathcal{G})$, we let $\varphi'(x,y) := \varphi(x \cdot y)$.

\begin{definition}\label{def: def conv}[T is NIP] Suppose that $\mu, \nu \in \mathfrak{M}^{\inv}_{x}(\mathcal{G},G)$. Then we define $\mu * \nu$ to be the unique Keisler measure in $\mathfrak{M}^{\inv}_{x}(\mathcal{G},G)$ such that for any formula $\varphi(x) \in \mathcal{L}_{x}(\mathcal{G})$,
\begin{equation*} \mu * \nu(\varphi(x)) = \mu_{x} \otimes \nu_{y}(\varphi(x \cdot y)) = \int_{S_{y}(G')} F_{\mu,G'}^{\varphi'} d(\nu_{G'}),
\end{equation*} 
where $G'$ is a small model contains $G$ and all parameters from $\varphi$, $F_{\mu,G'}^{\varphi'}:S_{y}(G') \to [0,1]$ is given by 
$F_{\mu,G'}^{\varphi'}(q) = \mu(\varphi(x \cdot b))$ for some (equivalently, any) $b \in \mathcal{G}$ with $b \models q$, and $\nu_{G'}$ is the regular Borel probability measure on $S_{y}(G')$ corresponding to the  Keisler measure $\nu|_{G'}$. We will routinely suppress notation and write this integral as $\int_{S_{y}(G')} F_{\mu}^{\varphi'} d\nu$.
\end{definition} 

\begin{remark}
	This integral is well-defined since invariant measures in NIP  are Borel-definable, so the maps which are being integrated are measurable, and does not depend on the choice of $G'$. For more details about definable convolution and its basic properties we refer the reader to \cite[Section 3.2]{ChGan}, in particular \cite[Proposition 3.14]{ChGan} will be used freely throughout the article. 
\end{remark}
\noindent The following is well-known, see e.g.~\cite[Fact 3.11]{ChGan}.
\begin{fact}\label{fac: prod of types Ellis}
	Both $(S^{\inv}_{x}(\mathcal{G},G),*)$ and $(S_{x}^{\fs}(\mathcal{G},G),*)$ are left continuous (i.e.~$p \mapsto p \ast q$ is a continuous map for every $q$) compact Hausdorff semigroups.
\end{fact}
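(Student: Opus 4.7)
The plan is to verify, in turn, three things: (i) $*$ is a well-defined binary operation on each of $S^{\inv}_x(\mathcal{G},G)$ and $S^{\fs}_x(\mathcal{G},G)$, (ii) it is associative, and (iii) for each fixed $q$, the map $L_q: p \mapsto p \ast q$ is continuous. Compactness and Hausdorffness come for free, as both spaces are closed subspaces of the Stone space $S_x(\mathcal{G})$.

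For (i), I would give a concrete pointwise description of $p \ast q$ as a type. Given $\varphi(x) \in \mathcal{L}_x(\mathcal{G})$, choose a small model $G' \supseteq G$ containing the parameters of $\varphi$, and pick any $b \in \mathcal{G}$ realizing $q|_{G'}$, which exists by saturation of $\mathcal{G}$. Declare $\varphi(x) \in p \ast q$ iff $\varphi(x \cdot b) \in p$. Independence of this declaration from the choice of $b$ is the first real check: if $b, b' \in \mathcal{G}$ both realize $q|_{G'}$, then $b \equiv_{G'} b'$, so some $\sigma \in \Aut(\mathcal{G}/G')$ carries $b$ to $b'$; since $\sigma$ fixes $G$ and the parameters of $\varphi$, $G$-invariance of $p$ gives $\varphi(x\cdot b) \in p \iff \varphi(x\cdot b') \in p$. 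That $p \ast q$ is $G$-invariant follows by the same style of argument. Preservation of finite satisfiability uses that $G \prec \mathcal{G}$ is closed under $\cdot$: a witness $a \in G$ for $\varphi(a \cdot b) \in q_y$ (coming from $\varphi(x\cdot b) \in p$ and finite satisfiability of $p$) yields, by finite satisfiability of $q$ in $G$, some $b' \in G$ with $\models \varphi(a \cdot b')$, and $a \cdot b' \in G$.

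For (ii), associativity of $\otimes$ on invariant (resp.\ finitely satisfiable) types, combined with associativity of the group operation, gives $(p\ast q)\ast r = p \ast (q\ast r)$: one computes both sides by picking $a, b, c \in \mathcal{G}$ with $c \models r|_{G'}$, $b \models q|_{G'c}$, $a \models p|_{G'bc}$ and comparing $\varphi((a\cdot b)\cdot c)$ with $\varphi(a\cdot (b\cdot c))$.

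Step (iii) is then essentially immediate from the description in (i). Fix $q$ and $\varphi(x) \in \mathcal{L}_x(\mathcal{G})$, pick $G'$ and $b \in \mathcal{G}$ as above. Then
\[
L_q^{-1}([\varphi(x)]) \;=\; \{p \in S^{\dagger}_x(\mathcal{G},G): \varphi(x\cdot b) \in p\} \;=\; [\varphi(x\cdot b)] \cap S^{\dagger}_x(\mathcal{G},G),
\]
which is clopen in $S^{\dagger}_x(\mathcal{G},G)$ since $[\varphi(x\cdot b)]$ is clopen in $S_x(\mathcal{G})$. So $L_q$ is continuous. The main technical burden is really in (i): once one isolates the right $G'$ and invokes $G$-invariance of $p$ to move realizations of $q|_{G'}$ around, both associativity and left continuity fall out almost mechanically. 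Right continuity, notably, is \emph{not} claimed (and indeed fails in general), because swapping the roles of $p$ and $q$ forces one to move parameters of $\varphi$, for which $p$'s mere $G$-invariance is insufficient.
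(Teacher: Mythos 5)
Your argument is correct and is the standard one: the paper does not prove this statement but quotes it as well-known (citing Fact 3.11 of its predecessor \cite{ChGan}), and your three-step verification --- well-definedness of $p \ast q$ via $G$-invariance of $p$ (which, in the finitely satisfiable case, holds because a global type finitely satisfiable in the model $G$ is automatically $G$-invariant), associativity via a common chain of realizations, and left continuity from the clopen identity $L_q^{-1}([\varphi(x)]) = [\varphi(x \cdot b)] \cap S^{\dagger}_x(\mathcal{G},G)$ --- is exactly the argument that reference supplies. The only slip is cosmetic: the phrase ``$\varphi(a \cdot b) \in q_y$'' should read ``$\varphi(a \cdot y) \in q$'', but the surrounding logic (finite satisfiability of $p$ yields $a \in G$ with $\models \varphi(a \cdot b)$, then finite satisfiability of $q$ yields $b' \in G$ with $\models \varphi(a \cdot b')$, and $a \cdot b' \in G$ since $G \prec \mathcal{G}$ is closed under the group operation) is right.
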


\noindent The next fact is from \cite[Proposition 6.2(3) + Proposition 6.4]{ChGan}.
\begin{fact}\label{fac: conv is a semigroup in NIP}[$T$ is NIP] Both $(\mathfrak{M}^{\inv}_{x}(\mathcal{G},G),*)$ and $(\mathfrak{M}_{x}^{\fs}(\mathcal{G},G),*)$ are left continuous (i.e.~$\mu \mapsto \mu \ast \nu$ is a continuous map for every $\nu$) compact Hausdorff semigroups.

Moreover, for any fixed $\nu$ and $\varphi(x) \in \mathcal{L}_x(\mathcal{G})$, the map $\mu \mapsto \left( \mu \ast \nu  \right)(\varphi(x)) \in [0,1]$ is continuous.
\end{fact}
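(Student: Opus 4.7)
The plan is to first verify well-definedness and the semigroup structure, then focus on the main technical point of left continuity. Compactness and Hausdorffness of $\mathfrak{M}^{\dagger}_x(\mathcal{G}, G)$ are inherited from the ambient weak-$^*$ compact Hausdorff space of Keisler measures, inside which $\mathfrak{M}^{\dagger}_x(\mathcal{G}, G)$ is closed by Fact \ref{sup:inv}. For well-definedness of $\mu * \nu$ I would invoke Borel-definability of invariant NIP measures to see that $F_\mu^{\varphi'} : S_y(G') \to [0,1]$ is Borel-measurable; finite additivity, normalization, and preservation of the $\dagger$-property of the resulting functional on $\cL_x(\mathcal{G})$ are then direct checks. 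Associativity follows from a Fubini-type computation on the iterated Morley product $\mu \otimes \nu \otimes \rho$, whose existence again relies on Borel-definability under NIP.

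The heart of the statement is left continuity, which, by varying $\varphi$, is exactly the ``moreover'' clause. Fix $\nu$ and $\varphi(x)$; I want $\mu \mapsto (\mu * \nu)(\varphi) = \int F_\mu^{\varphi'}\, d\nu$ continuous at an arbitrary $\mu_0$. Given $\varepsilon > 0$, the strategy is to produce a clopen partition $[\psi_1(y)], \ldots, [\psi_n(y)]$ of $S_y(G')$ and parameters $b_k \models \psi_k(y)$ in $\mathcal{G}$ such that for all $\mu$ in a weak-$^*$ neighborhood of $\mu_0$,
\begin{equation*}
\left| (\mu * \nu)(\varphi(x)) - \sum_{k=1}^{n} \mu(\varphi(x \cdot b_k))\, \nu(\psi_k(y)) \right| < \varepsilon.
\end{equation*}
The right side is a finite sum of weak-$^*$ continuous functions of $\mu$, so this yields continuity at $\mu_0$. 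To construct the partition, one applies Fact \ref{fac: cont funcs on type spaces}(1) to the bounded Borel function $F_{\mu_0}^{\varphi'}$ to get a step-function approximation over Borel sets $B_k$, uses regularity of the Borel measure on $S_y(G')$ corresponding to $\nu|_{G'}$ to replace the $B_k$ by clopen $[\psi_k(y)]$ with negligible integration error, and then picks $b_k \in \mathcal{G}$ realizing $\psi_k$.

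The main obstacle is making this approximation \emph{uniform} for $\mu$ varying in a weak-$^*$ neighborhood of $\mu_0$: pointwise continuity $\mu_i \to \mu$ gives $F_{\mu_i}^{\varphi'}(q) \to F_\mu^{\varphi'}(q)$ at each fixed $q$, but passing from pointwise convergence of uniformly bounded functions to convergence of the integrals against $\nu$ requires either dominated convergence (unavailable for nets) or a step-function approximation uniform in $\mu$. This is where NIP is essential: the finite VC-dimension of the family $\{\varphi(x \cdot b) : b \in \mathcal{G}\}$ yields an approximation of $F_\mu^{\varphi'}$ on a $\nu$-full measure set by a step function on a clopen partition with error bounds independent of $\mu$. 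An alternative route would be to use Lemma \ref{lem: limit} to reduce to the case where $\mu$ is a convex combination of Dirac types on $S^{\dagger}_x(\mathcal{G},G)$, where the statement reduces to Fact \ref{fac: prod of types Ellis} and affinity of $*$ in each argument, then pass to the limit.
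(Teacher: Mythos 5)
First, a point of reference: this statement is a quoted Fact in the paper, with no proof given here --- it is imported from \cite[Propositions 6.2(3) and 6.4]{ChGan} --- so there is no in-paper argument to compare against line by line. On its own terms, your proposal gets the architecture right: compactness, Hausdorffness and closedness are indeed routine; well-definedness does rest on Borel-definability of invariant measures under NIP; the ``moreover'' clause is equivalent to left continuity; and you correctly locate the entire difficulty in the uniformity (in $\mu$) of a finite approximation to $\int F_{\mu}^{\varphi'}\, d\nu$, since dominated convergence is unavailable for nets.

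However, the key step is not correctly justified. Your partition is adapted to $F_{\mu_0}^{\varphi'}$, the integrand at the base point; a weak-$^*$ neighborhood of $\mu_0$ only constrains $\mu$ on finitely many instances and gives no control on how well that same partition approximates $F_{\mu}^{\varphi'}$, so your displayed estimate is only established at $\mu = \mu_0$. The NIP input you invoke to repair this --- a step-function approximation of $F_{\mu}^{\varphi'}$ on a clopen partition with error independent of $\mu$ --- is not what VC theory provides, and with a single partition serving all $\mu$ it is false: already for $\mu = \delta_p$ the function $F_{\delta_p}^{\varphi'}$ is the indicator of a Borel set $B_p \subseteq S_y(G')$, and a $\nu$-a.e.\ sup-norm approximation within $\varepsilon < 1/2$ on a fixed finite partition would force every $B_p$ to be a union of cells modulo $\nu$-null sets, which fails for infinite families $\{B_p\}$. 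The correct mechanism is dual: by the $\varepsilon$-approximation form of the VC theorem applied to $\nu|_{G'}$ and the NIP formula $\varphi(x \cdot y)$, one finds types $q_1, \ldots, q_n \in \sup(\nu|_{G'})$ with realizations $b_j$ such that $(\mu \ast \nu)(\varphi(x)) \approx_{\varepsilon} \frac{1}{n}\sum_{j=1}^{n} \mu(\varphi(x \cdot b_j))$ for \emph{all} invariant $\mu$ simultaneously --- i.e.\ one approximates the measure $\nu$ by an average of types uniformly over the family $\{F_{\mu}^{\varphi'}\}$, rather than approximating each integrand by a step function; this also requires extending the VC estimate from realized instances to the Borel sets $B_p$ attached to invariant types. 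Your fallback route via Lemma \ref{lem: limit} does not close the gap either: Fact \ref{fac: prod of types Ellis} does not cover $\delta_p \ast \nu$ for $\nu$ a measure, and continuity on a dense set of averages of types does not pass to the closure without precisely the uniformity that is at issue.
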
 

\noindent We also have right continuity when multiplying by a \emph{definable} measure (but not in general).
\begin{lemma}\label{lem: right cont for def meas}
	If $\nu \in \mathfrak{M}^{\inv}_{x}(\mathcal{G},G)$ is a definable measure, then the map $\mu \mapsto \nu \ast \mu$ from $\mathfrak{M}^{\inv}_{x}(\mathcal{G},G)$ to $\mathfrak{M}^{\inv}_{x}(\mathcal{G},G)$ is continuous.
\end{lemma}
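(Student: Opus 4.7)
The plan is to unwind the convolution formula in Definition \ref{def: def conv} and exploit the fact that the definability of $\nu$ upgrades the integrand from a Borel-measurable function to a \emph{continuous} function on the relevant type space. The semigroup topology on $\mathfrak{M}^{\inv}_x(\mathcal{G},G)$ is induced from $\mathfrak{M}_x(\mathcal{G})$ under the weak--$^*$ topology, so it suffices to show that for every fixed $\varphi(x) \in \mathcal{L}_x(\mathcal{G})$, the map $\mu \mapsto (\nu \ast \mu)(\varphi(x))$ from $\mathfrak{M}^{\inv}_x(\mathcal{G},G)$ to $\mathbb{R}$ is continuous.

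First, I would fix a small $G' \succ G$ containing the parameters of $\varphi$ and write
\begin{equation*}
	(\nu \ast \mu)(\varphi(x)) = \int_{S_y(G')} F^{\varphi'}_{\nu} \, d\mu,
\end{equation*}
where $F^{\varphi'}_{\nu}(q) = \nu(\varphi(x \cdot b))$ for $b \models q$. Since $\nu$ is assumed to be a \emph{definable} measure, the function $F^{\varphi'}_{\nu}: S_y(G') \to [0,1]$ is continuous (this is precisely the definability of $\nu$ applied to the formula $\varphi(x \cdot y)$). This upgrade from Borel to continuous is the whole point: in general the integrand is only Borel-definable and one loses right continuity, but here we recover it.

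Next, suppose $(\mu_i)_{i \in I}$ is a net in $\mathfrak{M}^{\inv}_x(\mathcal{G},G)$ with $\mu_i \to \mu$. By definition of the weak--$^*$ topology on Keisler measures, $\mu_i(\psi(y)) \to \mu(\psi(y))$ for every $\psi(y) \in \mathcal{L}_y(G')$, which under the correspondence of Proposition \ref{closed:homeomorphism} means that the associated Borel measures on $S_y(G')$ converge on the indicator of every clopen set $[\psi(y)]$. Given $\varepsilon > 0$, Fact \ref{fac: cont funcs on type spaces}(2) yields clopen sets $C_1, \ldots, C_n \subseteq S_y(G')$ and reals $r_1, \ldots, r_n$ such that $\sup_q | F^{\varphi'}_{\nu}(q) - \sum_{k=1}^n r_k \mathbf{1}_{C_k}(q) | < \varepsilon$. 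Uniform approximation of the integrand combined with convergence of the integrals of each $\mathbf{1}_{C_k}$ against $\mu_i$ yields $\int F^{\varphi'}_{\nu} d\mu_i \to \int F^{\varphi'}_{\nu} d\mu$, which is exactly $(\nu \ast \mu_i)(\varphi(x)) \to (\nu \ast \mu)(\varphi(x))$.

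The main (and only) conceptual point is the observation in the first step that definability of $\nu$ makes $F^{\varphi'}_{\nu}$ continuous; after that the argument is essentially the standard fact that weak--$^*$ convergence of regular Borel probability measures on a Stone space, characterized by integration against clopen indicators, automatically extends to integration against continuous functions via uniform approximation. No obstacle beyond keeping the variable renaming and the choice of $G'$ straight.
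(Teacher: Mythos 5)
Your proposal is correct and follows essentially the same route as the paper: both reduce the claim to the continuity, for each fixed $\varphi$, of $\mu \mapsto (\nu \ast \mu)(\varphi(x)) = \int_{S_y(G')} F^{\varphi'}_{\nu} \, d\mu_{G'}$, which rests on the integrand being continuous because $\nu$ is definable. The only difference is that the paper outsources this step to a citation (\cite[Lemma 5.4]{CGH}, continuity of $\mu_x \mapsto (\nu_z \otimes \mu_x)(\varphi(z \cdot x))$ for definable $\nu$), whereas you supply the standard weak--$^*$ plus uniform-approximation-by-clopen-simple-functions argument directly; both are fine.
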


\begin{proof} Let $O$ be a basic open subset of $\mathfrak{M}^{\inv}_{x}(\mathcal{G},G)$, that is 
\begin{equation*} 
O = \bigcap_{i=1}^{n} \left\{\mu \in \mathfrak{M}^{\inv}_{x}(\mathcal{G},G): r_i < \mu(\varphi_i(x)) < s_i\right\}
\end{equation*} 
for some $r_i, s_i \in  \mathbb{R}$ and $\varphi_i(x) \in \mathcal{L}_{x}(\mathcal{G})$.
We have
\begin{gather*}
(\nu * - )^{-1}(O) = \bigcap_{i=1}^{n} \left\{\mu \in \mathfrak{M}^{\inv}_{x}(\mathcal{G},G): r_i < \left( \nu * \mu \right) (\varphi_i(x)) < s_i\right\} \\
= \bigcap_{i=1}^{n} \left\{\mu \in \mathfrak{M}^{\inv}_{x}(\mathcal{G},G): r_i < \left( \nu_{z} \otimes \mu_{x} \right) \left(\varphi_i(z \cdot x) \right) < s_i\right\} \\
= \bigcap_{i=1}^{n} \left( \big(\left(\nu_{z} \otimes - \right)(\varphi_i(z \cdot x)) \big)^{-1}(r_i,s_i) \right).
\end{gather*} 
where $\nu_z$ is simply $\nu_x$ with change of variables to $z$ and $(r_i,s_i)$ is an open subinterval of $[0,1]$. By e.g.~\cite[Lemma 5.4]{CGH}, the map $\mu_x \in  \mathfrak{M}_{x}(\mathcal{G}) \mapsto \left(\nu_z \otimes \mu_x \right) \left( \varphi_i(z \cdot x) \right) \in [0,1]$ is continuous, so its restriction to $\mathfrak{M}^{\inv}_{x}(\mathcal{G},G)$ remains continuous. Thus $O$ is open, as the intersection of finitely many open sets. 
\end{proof} 

\begin{definition}
	A measure $\mu \in \mathfrak{M}^{\inv}_{x}(\mathcal{G},G)$ is \emph{idempotent} if $\mu * \mu = \mu$.  
\end{definition}

 The following simple observation will be frequently used in computations. 

\begin{fact}\label{fact:monst} Let $\mu \in \mathfrak{M}^{\inv}_{x}(\mathcal{G},G)$ and $f: S_{x}(G) \to \mathbb{R}$ be a bounded Borel function. Let $r: S_{x}(\mathcal{G}) \to S_{x}(G), p \mapsto p|_G$ be the restriction map. Then 
\begin{equation*} 
\int_{S_{x}(G)} f d\mu_{G} = \int_{S_{x}(\mathcal{G})} (f \circ r) d\mu. 
\end{equation*}
\end{fact}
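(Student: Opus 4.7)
The plan is to identify the statement with a standard push-forward formula. The restriction map $r : S_x(\mathcal{G}) \to S_x(G)$, $p \mapsto p|_G$, is continuous: for any $\varphi(x) \in \mathcal{L}_x(G)$ we have $r^{-1}([\varphi(x)]) = [\varphi(x)] \subseteq S_x(\mathcal{G})$, and the clopens $[\varphi(x)]$ with $\varphi(x) \in \mathcal{L}_x(G)$ form a basis of the topology on $S_x(G)$. In particular $r$ is Borel, so the push-forward $r_*(\mu) \in \mathcal{M}(S_x(G))$ is defined, and by Fact \ref{meas:facts}(iii),
\begin{equation*}
\int_{S_x(G)} f \, dr_*(\mu) = \int_{S_x(\mathcal{G})} (f \circ r) \, d\mu
\end{equation*}
for every Borel $f \in L^1(r_*(\mu))$. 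Since $f$ is bounded and $r_*(\mu)$ is a probability measure, integrability is automatic.

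It therefore suffices to show that $r_*(\mu) = \mu_G$ (as regular Borel probability measures on $S_x(G)$). For any clopen $[\varphi(x)] \subseteq S_x(G)$ with $\varphi(x) \in \mathcal{L}_x(G)$, the computation
\begin{equation*}
r_*(\mu)([\varphi(x)]) = \mu(r^{-1}([\varphi(x)])) = \mu([\varphi(x)]) = \mu(\varphi(x)) = \mu|_G(\varphi(x)) = \mu_G([\varphi(x)])
\end{equation*}
shows that $r_*(\mu)$ and $\mu_G$ agree on the clopen algebra of $S_x(G)$. Two regular Borel probability measures on a Stone space that agree on the clopen basis agree on the Boolean algebra generated by it and hence (by Carathéodory extension, or directly by uniqueness of the Keisler-to-Borel correspondence used in Proposition \ref{closed:homeomorphism}) on all Borel sets. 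Thus $r_*(\mu) = \mu_G$ and the display above becomes exactly the desired identity.

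There is no real obstacle here; the only thing to pay attention to is the identification of $r_*(\mu)$ with $\mu_G$, which comes down to the elementary observation that the preimage of a basic clopen $[\varphi(x)]$ (with $\varphi \in \mathcal{L}_x(G)$) under the restriction map is literally the clopen $[\varphi(x)]$ in $S_x(\mathcal{G})$, together with the definition of $\mu_G = \mu|_G$.
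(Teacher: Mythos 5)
Your proof is correct; the paper records this statement as a Fact without proof, and your argument --- identifying $\mu_G$ with the push-forward $r_{*}(\tilde{\mu})$ by checking agreement on the clopens $[\varphi(x)]$, $\varphi(x) \in \mathcal{L}_x(G)$, and then applying the change-of-variables formula of Fact \ref{meas:facts}(iii) --- is exactly the intended one. The only point to be careful about is that on a non-metrizable Stone space the $\sigma$-algebra generated by the clopens (the Baire sets) can be strictly smaller than the Borel $\sigma$-algebra, so the Carath\'eodory/monotone-class step alone does not give agreement on all Borel sets; but your alternative justification via the uniqueness of the \emph{regular} Borel extension of a Keisler measure (as in Proposition \ref{closed:homeomorphism}, which rests on inner/outer regularity rather than on the generated $\sigma$-algebra) is the right one and closes that gap.
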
 

\subsection{Some facts from Ellis semigroup theory} \label{sec: Ellis theory prelims}
\begin{definition} Suppose that $(X,*)$ is a semigroup. A non-empty subset $I$ of $X$ is a \emph{left ideal} if $XI = \{x \ast i : x \in X, i \in I \} \subseteq I$. We say that $I$ is a \emph{minimal left ideal} if $I$ does not properly contain any other left ideal. 
\end{definition} 

\noindent The next fact summarizes the results that we will need from  the theory of Ellis semigroups. See \cite[Proposition 4.2]{Ellis} and \cite[Proposition 2.4]{Glasner}.

\begin{fact}\label{fact:Ellis} Suppose that $X$ is a compact Hausdorff space and $(X,*)$ is a left continuous semigroup, i.e.~for each $q \in X$, the map $-*q: X \to X$ is continuous.  Then there exists a minimal left ideal $I$, and any minimal left ideal is closed. We let $\id(I) = \left\{u \in I: u^{2} = u\right\}$ be the set of idempotents in $I$. 
\begin{enumerate}
\item $\id(I)$ is non-empty. 
\item For every $p \in I$ and $u \in \id(I)$, $p * u = p$.
\item For every $u \in \id(I)$, $u * I = \{ u *p : p \in I\} = \{p \in I: u *p = p\}$ is a subgroup of $I$ with identity element $u$. For every $u' \in \id(I)$, the map $\rho_{u,u'} := (u' * -)|_{u*I}$ is a group isomorphism from $u*I$ to $u'*I$. In view of this, we refer to $u \ast I$ as \emph{the ideal group}.
\item $I = \bigcup \{u * I: u \in \id(I)\}$, where the sets in the union are pairwise disjoint, and each set $u \cdot I$ is a subgroup of $I$ with identity $u$.
\item For any $q \in X$, $I *q$ is a minimal left ideal; and if $p \in I$, then $X * p = I$. 
\item Let $J$ be another minimal left ideal of $X$ and $u \in \id(I)$. Then there exists a unique idempotent $u' \in \id(J)$ such that $u * u' = u'$ and $u' * u = u$. The map $\rho_{I,J}:=(-*u')|_{I}$ is a homeomorphism from $I$ to $J$ (with the induced topologies) mapping $u \ast I$ to $u' \ast J$. 
\end{enumerate}
\end{fact}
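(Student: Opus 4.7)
The plan is to establish the basic existence results first, then bootstrap the algebraic structure from minimality via standard manipulations. I would begin by observing that left continuity of $\ast$ makes the map $-\ast q: X \to X$ continuous for each $q \in X$, so $X \ast q$ is a closed (hence compact) left ideal. The family of closed left ideals of $X$ is therefore non-empty; chains in this family have non-empty intersection by the finite intersection property (since $X$ is compact Hausdorff), and an intersection of left ideals is a left ideal. Zorn's lemma then gives a minimal closed left ideal $I$, and if $J \subseteq I$ is any left ideal then $X \ast p \subseteq J$ is closed for any $p \in J$, so $J = I$. Every minimal left ideal is then closed by the same argument applied inside itself. For (1), apply Zorn once more to the non-empty family of closed sub-semigroups of $I$ to obtain a minimal one $K$; for any $p \in K$, the set $K \ast p$ is a closed sub-semigroup contained in $K$, hence equals $K$, so the set $\{q \in K : q \ast p = p\}$ is non-empty, closed (by left continuity), and a sub-semigroup, hence equals $K$, giving $p \ast p = p$.

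For items (2)--(5) I would systematically exploit minimality. Since $X \ast u$ is a closed left ideal contained in $I$, minimality forces $X \ast u = I$, so any $p \in I$ equals $q \ast u$ for some $q$, and then $p \ast u = q \ast u \ast u = q \ast u = p$, proving (2). For (3), $u \ast I$ contains $u$; item (2) shows $u$ is a right identity on $u \ast I$, and $u \ast (u \ast p) = u \ast p$ shows $u$ is a left identity. Given $p \in u \ast I$, the set $I \ast p$ is a closed left ideal inside $I$, hence $I \ast p = I$, producing $q \in I$ with $q \ast p = u$; then $u \ast q$ lies in $u \ast I$ and is a left inverse of $p$ there, and the standard ``left identity plus left inverses'' argument promotes this to a two-sided inverse, so $u \ast I$ is a group. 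The maps $\rho_{u,u'}$ are homomorphisms because $p \ast u' = p$ for $p \in I$ by (2), hence $(u' \ast p) \ast (u' \ast q) = u' \ast p \ast u' \ast q = u' \ast p \ast q$; bijectivity follows since $\rho_{u',u}$ is an inverse. For (4), distinct $u, u' \in \id(I)$ yield disjoint groups $u \ast I$ and $u' \ast I$ by uniqueness of group identities, and covering follows by applying (1) inside the closed sub-semigroup $\overline{I \ast p}$ to get an idempotent $u$ with $p = p \ast u \in u \ast I$. For (5), $I \ast q$ is a closed left ideal, and minimality passes along the continuous surjection $-\ast q : I \to I \ast q$.

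For (6), the matching idempotent is built by the same Ellis-style argument: $J \ast u$ is a closed left ideal contained in $X \ast u = I$, hence equals $I$, so some $v \in J$ satisfies $v \ast u = u$. The set $\{w \in J : w \ast u = u\}$ is a non-empty closed sub-semigroup of $J$, so contains an idempotent $u'$ by the idempotent-existence argument from (1). Then direct computation gives $(u \ast u')(u \ast u') = u \ast (u' \ast u) \ast u' = u \ast u \ast u' = u \ast u'$ and $(u \ast u') \ast u = u \ast u = u$, so $u \ast u'$ is an idempotent in $J$ with the same defining property. The map $\rho_{I,J}(p) = p \ast u'$ is continuous by left continuity, is a homomorphism using $u \ast u' = u'$ together with $p \ast u' = p$, and has inverse $q \mapsto q \ast u$. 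The main obstacle I anticipate is the uniqueness clause in (6), namely that the pair of conditions $u \ast u' = u'$ and $u' \ast u = u$ pins down a single idempotent in $J$; this should follow by showing that any two such candidates $u', u''$ satisfy $u' = u \ast u' = (u' \ast u) \ast u'' \ast \cdots$ and using the group structure of $u \ast I$ established in (3) to cancel, but the bookkeeping of left- versus right-multiplication is the part that requires the most care.
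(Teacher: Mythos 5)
The paper offers no proof of this statement: it is quoted as a known fact with pointers to Ellis and Glasner, so the only basis for comparison is the standard literature argument, which is indeed the route you take (Zorn plus compactness for closed minimal left ideals, the Ellis--Namakura minimal-closed-subsemigroup trick for idempotents, and bootstrapping the group structure from $X*u=I$ and $I*p=I$). Items (1), (2), (3) and (5) are essentially correct; in (3) you should add the one-line observation that $u*u'$ is an idempotent of the group $u*I$ and hence equals its identity $u$ (and symmetrically $u'*u=u'$ by item (2)), since that is exactly what makes $\rho_{u',u}$ a two-sided inverse of $\rho_{u,u'}$.

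Two steps are genuinely wrong as written. First, the covering half of (4): $\overline{I*p}$ is just $I$ (you already proved $I*p=I$ is closed), so extracting an idempotent from it says nothing about $p$; worse, the conclusion ``$p=p*u\in u*I$'' confuses the two sides --- $p*u=p$ holds for \emph{every} $u\in\id(I)$ by (2) and does not place $p$ in $u*I$, which requires $u*p=p$. The correct move is to apply your idempotent-existence argument to the closed subsemigroup $\{q\in I: q*p=p\}$, which is non-empty precisely because $I*p=I$. Second, in (6) you justify multiplicativity of $\rho_{I,J}=(-*u')|_I$ by invoking ``$p*u'=p$'' for $p\in I$; this is false whenever $I\neq J$, since $p*u'\in J$ and distinct minimal left ideals are disjoint. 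What your identities $u*u'=u'$ and $u'*u=u$ actually yield is $u'*q=(u'*u)*q=u*q=q$ for $q\in u*I$, whence $(p*u')*(q*u')=p*(u'*q)*u'=(p*q)*u'$ for $p,q$ in the \emph{single} ideal group $u*I$; i.e., $-*u'$ is a homeomorphism of $I$ with $J$, commutes with left multiplication by elements of $X$, and restricts to a group isomorphism $u*I\to (u*u')*J$. Multiplicativity on all of $I$ does not follow from your computation (for $q\in v*I$ with $v\neq u$ one gets $u'*q=(u'*v)*q$, and $u'*v$ need not act as $v$ does), and in fact it can fail already in a finite Rees matrix semigroup over $\mathbb{Z}/2\mathbb{Z}$ with a suitable sandwich matrix --- so do not try to prove the global claim; the restricted version is all the paper ever uses. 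Your existence argument for the matched idempotent (replacing $u'$ by $u*u'$) is fine, and the uniqueness you defer is completed by $u'_1=u*u'_1=(u'_2*u)*u'_1=u'_2*(u*u'_1)=u'_2*u'_1=u'_2$, the last equality being item (2) applied inside $J$.
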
 

\noindent The following is a celebrated theorem of Ellis \cite[Theorems 1 and 2]{Ellis2} (see also \cite[Corollary 5.2]{lawson1974joint}).

\begin{fact}[Ellis' Joint Continuity Theorem]\label{Ellis:2}
\begin{enumerate}
\item Let $G$ be a locally compact Hausdorff semitopological group (i.e.~$G$ is equipped with a group structure such that the maps $x \mapsto y \cdot x$ and $x \mapsto x \cdot y$ from $G$ into $G$ are continuous for any fixed $y \in G$), and let $X$ be a locally compact Hausdorff topological space. Then every separately continuous action of $G$ on $X$ is (jointly) continuous.
	\item If $G$ is a locally compact Hausdorff semitopological group, then $G$ is a topological group.
\end{enumerate} 
\end{fact} 

\subsection{Some facts from Choquet theory}\label{sec: Choquet theory prelims}

We recall some notions and facts from Choquet theory for \emph{not necessarily metrizable}  compact Hausdorff spaces (we use \cite{phelps2001lectures} as a general reference). 	Let $E$ be a locally convex real topological vector space. The following generalizes the usual notion of a simplex in $\mathbb{R}^n$ to the infinite dimensional context.

\begin{definition}\cite[Section 10]{phelps2001lectures}
	\begin{enumerate}
		\item A set $P \subseteq E$ is a \emph{convex cone} if $P+P \subseteq P$ and $\lambda P \subseteq P$ for every scalar $\lambda > 0$ in $ \mathbb{R}$.
		\item A set $X \subseteq P$ is the \emph{base} of a  convex cone $P$ if for every $y \in P$ there exists a unique scalar $\lambda \geq 0$ in $\mathbb{R}$ and $x \in X$ such that $y = \lambda x$ (not all convex cones have a base).
		\item A convex cone $P$  in $E$ induces a translation invariant partial ordering on $E$: $x \geq y$ if and only if $x-y \in P$. When $P$ admits a base, then $P \cap \left(-P \right) = \{0\}$, hence $x \geq y \land y \geq x \implies x = y$.
		\item 	A non-empty compact convex set $X \subseteq E$ is a \emph{Choquet simplex}, or just \emph{simplex}, if $X$ is the base of a convex cone $P \subseteq E$ such that $P$ is a lattice with respect to the ordering induced by $P$. That is, for every $x,y \in P$ there exists a greatest lower bound $z \in P$ (i.e.~$z \leq x, z \leq y$ and for every $z' \in P$ with $z' \leq x, z'\leq y$, $z' \leq z$). The greatest lower bound $z$ of $x,y$ is unique and denoted $x \land y$.
	\end{enumerate}
\end{definition}


%
%

We could not find a direct quote for the following fact, so we provide a short  argument combining several standard results in the literature.
\begin{fact}\label{fac: inv measures simplex}
	Let $S$ be a compact Hausdorff space and $\mathcal{T}$ a family of continuous functions from $S$ into $S$.
Then the set of all regular $\mathcal{T}$-invariant (that is, $\mu \left( T^{-1}(A) \right) = \mu \left(A \right)$ for every Borel $A \subseteq S$ and $T \in \mathcal{T}$) Borel probability measures on $S$, denoted $\mathcal{M}_{\mathcal{T}}(S)$,  is a Choquet simplex (assuming it is non-empty).	
\end{fact}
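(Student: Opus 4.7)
The plan is to realize $\mathcal{M}_{\mathcal{T}}(S)$ as the base of a lattice subcone inside the Riesz space of signed finite regular Borel measures on $S$. First, $\mathcal{M}_{\mathcal{T}}(S)$ is compact and convex: for each $T \in \mathcal{T}$, the push-forward $T_* : \mathcal{M}(S) \to \mathcal{M}(S)$ is affine and continuous by Fact \ref{meas:facts}(iii)--(iv), so $\mathcal{M}_{\mathcal{T}}(S) = \bigcap_{T \in \mathcal{T}} \{\mu \in \mathcal{M}(S) : T_*\mu = \mu\}$ is a closed convex subset of the compact convex set $\mathcal{M}(S)$.

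Let $M(S)$ denote the locally convex real topological vector space of signed finite regular Borel measures on $S$, and let $P \subseteq M(S)$ be the cone of positive such measures. By the classical Jordan decomposition, $M(S)$ is a Riesz space with positive cone $P$, so $P$ is a lattice under its induced ordering, with base $\mathcal{M}(S) = \{\mu \in P : \mu(S) = 1\}$. Define $P_{\mathcal{T}} := \{\mu \in P : T_*\mu = \mu \text{ for all } T \in \mathcal{T}\}$; this is a convex subcone of $P$ whose base is precisely $\mathcal{M}_{\mathcal{T}}(S)$, since every nonzero $\alpha \in P_{\mathcal{T}}$ decomposes uniquely as $\alpha(S) \cdot (\alpha / \alpha(S))$ with $\alpha/\alpha(S) \in \mathcal{M}_{\mathcal{T}}(S)$. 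It remains to check that $P_{\mathcal{T}}$ is a sublattice of $P$; then by definition $\mathcal{M}_{\mathcal{T}}(S)$ is a Choquet simplex.

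For this, observe that for each $T \in \mathcal{T}$ the push-forward $T_*$ extends to a positive linear operator on $M(S)$ preserving total mass: $T_*\mu(S) = \mu(T^{-1}(S)) = \mu(S)$. Hence $T_*$ is order-preserving on $P$. Given $\mu, \nu \in P_{\mathcal{T}}$, set $\gamma := \mu \wedge \nu$ computed in $P$; then $T_*\gamma \leq T_*\mu = \mu$ and $T_*\gamma \leq T_*\nu = \nu$, whence $T_*\gamma \leq \gamma$. But $\gamma - T_*\gamma$ is a positive measure of total mass $\gamma(S) - T_*\gamma(S) = 0$, forcing $T_*\gamma = \gamma$. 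Thus $\gamma \in P_{\mathcal{T}}$, and since the ordering on $P_{\mathcal{T}}$ is inherited from $P$, the greatest-lower-bound property transfers. The main subtlety is precisely this last step: $\mathcal{M}_{\mathcal{T}}(S)$ need \emph{not} be a face of $\mathcal{M}(S)$ (e.g.~for a nontrivial finite permutation action, the unique invariant measure is a convex combination of non-invariant Dirac measures), so one cannot conclude simplexhood via a face argument, and must combine positivity with mass-preservation of $T_*$ to transport the meet operation into the invariant subcone.
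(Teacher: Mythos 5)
Your proof is correct, and it follows the same overall strategy as the paper: both arguments realize $\mathcal{M}_{\mathcal{T}}(S)$ as the base of the cone of $\mathcal{T}$-invariant finite non-negative regular measures sitting inside the lattice of all such measures, so that everything reduces to showing the lattice meet of two invariant measures is again invariant. The difference is in how that key step is handled. The paper records the explicit formula $\left( \mu \land \nu \right)(A) = \inf_{B \subseteq A} \left\{ \mu(B) + \nu(A \setminus B) \right\}$ (to check regularity of the meet) and then outsources invariance of $\mu \land \nu$ to \cite[Proposition 12.3]{phelps2001lectures}, whose proof goes through Radon--Nikodym derivatives with respect to $\mu + \nu$. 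You instead give a short self-contained argument: $T_{*}$ is a positive linear operator preserving total mass, hence $T_{*}(\mu \land \nu) \leq \mu \land \nu$, and a non-negative measure of total mass zero vanishes. This is cleaner and avoids the external citation; what it quietly relies on is that the signed finite regular Borel measures on $S$ (i.e.\ $C(S)^{*}$) form a Riesz space, so that the meet of two regular measures exists and is regular --- which is standard, and is essentially the same input the paper takes from the literature. One point worth making explicit in your write-up: the definition of Choquet simplex used here asks for greatest lower bounds in $P_{\mathcal{T}}$ with respect to the ordering induced by the cone $P_{\mathcal{T}}$ itself; this coincides with the restriction of the $P$-ordering on the span of $P_{\mathcal{T}}$ because a $\mathcal{T}$-invariant signed measure that is non-negative automatically lies in $P_{\mathcal{T}}$, which is exactly what lets the meet computed in $P$ serve as the required meet for $P_{\mathcal{T}}$. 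Your closing observation that $\mathcal{M}_{\mathcal{T}}(S)$ need not be a face of $\mathcal{M}(S)$ is accurate and correctly identifies why the positivity-plus-mass argument, rather than a face argument, is what is needed.
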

\begin{proof}
By the Riesz representation theorem, we can view the set $\mathcal{M}^{+}(S)$  of all regular Borel non-negative finite measures on $S$ as a subset of $C(S)^{*}$, the dual (real topological vector) space of the topological vector space of continuous functions on $S$, with the weak-$^*$ topology.
Let $\mathcal{M}_{\mathcal{T}}(S)$ (respectively, $\mathcal{M}^{+}_{\mathcal{T}}(S)$) be the set  of regular Borel $\mathcal{T}$-invariant probability (respectively, finite non-negative) measures on $S$. 
Then $\mathcal{M}_{\mathcal{T}}(S) \subseteq \mathcal{M}^{+}_{\mathcal{T}}(S) \subseteq \mathcal{M}^{+}(S)$ are compact convex subsets (by Borel measurability of the maps in $\mathcal{T}$, see \cite[page 76]{phelps2001lectures}).
Moreover, $\mathcal{M}^{+}_{\mathcal{T}}(S)$ is a convex cone with the base $\mathcal{M}_{\mathcal{T}}(S)$. It is well-known that $\mathcal{M}^{+}(S)$ forms a lattice: for $\mu,\nu \in \mathcal{M}^{+}(S)$, their greatest lower bound $\mu \land \nu \in \mathcal{M}^{+}(S)$ can be defined via $\left( \mu \land \nu \right)(A) = \inf_{B \in \mathcal{S}, B \subseteq A} \left\{ \mu \left(B\right) + \nu \left(A \setminus B \right) \right\}$ (see e.g.~\cite[page 111]{dales2016banach};  it is easy to verify from this definition that if $\mu,\nu$ are regular, then $\mu \land \nu$ is also regular). Finally, \cite[Proposition 12.3]{phelps2001lectures} shows that if $\mu,\nu \in  \mathcal{M}^{+}(S)$ are $\mathcal{T}$-invariant,  then  $ \mu \land \nu $ is also $\mathcal{T}$-invariant (using an equivalent definition of the lower bound in terms of the Radon-Nikodym derivative). Hence $\mathcal{M}^{+}_{\mathcal{T}}(S)$ is a lattice, and so $\mathcal{M}_{\mathcal{T}}(S)$ is a Choquet simplex.
\end{proof}

\begin{definition}(see \cite[Section 11]{phelps2001lectures} or \cite[Chapter 2, \S 4]{alfsen2012compact})\label{def: Bauer simplex}
	A compact convex set $X \subseteq E$ is a \emph{Bauer simplex} if $X$ is a Choquet simplex and $\ex(X)$ is closed.
\end{definition}

\begin{definition}
	A point $x \in E$ is the \emph{barycenter}  of a regular Borel probability measure $\mu$ on $X$ if $f(x) = \mu(f) := \int_{X} f d\mu$ for any continuous linear function $f: E \to \mathbb{R}$.
\end{definition}

\begin{remark}\label{rem: aff hom simplex}
	Both the property of being a Choquet simplex and a Bauer simplex are preserved under affine homeomorphisms (see e.g.~\cite[pages 52-53]{phelps2001lectures}).
\end{remark}

\begin{fact}\label{fac: props of Bauer simplex}
	\begin{enumerate}
		\item \cite[Proposition 11.1]{phelps2001lectures} $X$ is a Bauer simplex if and only if the map sending a regular Borel probability measure $\mu$ on $\overline{\ex(X)}$ (the closure of the extreme points) to its barycenter is an affine homeomorphism of $\mathcal{M}\left(\overline{\ex(X) }\right)$ and $X$ (hence a posteriori of $\mathcal{M}\left(\ex(X) \right)$ and $X$).
		\item \cite[Corollary II.4.4]{alfsen2012compact} Up to affine homeomorphisms, Bauer simplices are exactly the sets of the form $\mathcal{M}(X)$ for $X$ a compact Hausdorff space (where $\ex \left( \mathcal{M}(X) \right) = \left\{\delta_x : x \in X \right\}$).
	\end{enumerate}
\end{fact}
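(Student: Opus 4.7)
The plan is to deduce both parts from the \emph{barycenter map} $b \colon \mathcal{M}(\overline{\ex(X)}) \to X$, which sends a regular Borel probability measure $\mu$ on $\overline{\ex(X)}$ to its barycenter in $X$. The whole argument is essentially a repackaging of the Choquet--Bishop--de Leeuw representation theorem together with the uniqueness property of simplices.

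First I would verify that $b$ is well-defined, affine, and weak-$^*$ continuous. Well-definedness follows from the standard fact that any regular Borel probability measure on a compact convex subset of a locally convex space has a unique barycenter. Affineness is immediate from the definition in terms of continuous linear functionals, and continuity follows from the defining weak-$^*$ open sets of $\mathcal{M}(\overline{\ex(X)})$, since continuous linear functionals on $E$ restrict to continuous functions on the compact set $\overline{\ex(X)}$.

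For Part (1), the forward implication reduces to showing $b$ is a bijection (any continuous bijection between compact Hausdorff spaces is a homeomorphism). Surjectivity is the content of Choquet--Bishop--de Leeuw: every point of $X$ is the barycenter of some regular Borel probability measure supported (in the Baire sense) on the extreme points. The Bauer hypothesis that $\ex(X)$ is closed gives $\overline{\ex(X)} = \ex(X)$ and eliminates the Baire-support subtleties. Injectivity is precisely Choquet's uniqueness theorem for simplices: a compact convex set is a Choquet simplex exactly when each point has a unique maximal representing measure, and when $\ex(X)$ is closed these maximal measures are the regular Borel probability measures on $\ex(X)$. For the reverse implication, assume $b$ is an affine homeomorphism, and set $Y := \overline{\ex(X)}$. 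The embedding $y \mapsto \delta_y$ identifies $Y$ homeomorphically with a closed subset of $\mathcal{M}(Y)$, and a standard argument using continuous affine functions shows $\ex(\mathcal{M}(Y)) = \{\delta_y : y \in Y\}$. Transporting back through $b$, we conclude $\ex(X) \cong Y$ is closed; combined with the fact that $\mathcal{M}(Y)$ is always a Choquet simplex (apply Fact \ref{fac: inv measures simplex} with $\mathcal{T} = \emptyset$), this yields that $X$ is a Bauer simplex.

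Part (2) is then essentially a corollary of Part (1). For the nontrivial direction, if $X$ is a Bauer simplex then $X \cong \mathcal{M}(\ex(X))$ by Part (1), with $\ex(X)$ compact Hausdorff. Conversely, for any compact Hausdorff $Y$, the space $\mathcal{M}(Y)$ is a Choquet simplex with closed extreme boundary $\{\delta_y\}_{y \in Y}$, hence a Bauer simplex. The main obstacle throughout is the Choquet--Bishop--de Leeuw representation in the non-metrizable setting: in the metrizable case $\ex(X)$ is a $G_\delta$ and Choquet's original theorem applies directly, but in general one must work with the Baire $\sigma$-algebra and verify that the Bauer hypothesis sidesteps this technicality. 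Since this is a standard result in convexity theory, my sketch defers to \cite{phelps2001lectures} and \cite{alfsen2012compact} for these inputs.
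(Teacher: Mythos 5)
The paper does not prove this statement at all: it is recorded as a Fact and delegated entirely to the cited sources (Phelps, Proposition 11.1, and Alfsen, Corollary II.4.4), so there is no in-paper argument to compare against. Your sketch is a correct outline of the standard proof from exactly those references --- the barycenter map, Choquet--Bishop--de Leeuw for surjectivity, the Choquet--Meyer uniqueness theorem plus the fact that measures carried by a closed $\ex(X)$ are maximal for injectivity, and the identification $\ex(\mathcal{M}(Y)) = \{\delta_y : y \in Y\}$ for the converse and for part (2) --- so it is consistent with, rather than different from, what the paper relies on.
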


%

\section{Definable convolution on $\mathcal{G}$ and convolution on $\mathcal{G}/\mathcal{G}^{00}$}\label{sec: def conv vs conv}

Throughout the rest of the paper, $T$ is a complete NIP theory expanding a group, $\mathcal{G}$ is a monster model of $T$, $G$ is a small elementary submodel of $\mathcal{G}$, $x,y, \ldots$ denote singleton variables, and for any $\varphi(x) \in \cL_x(\mathcal{G})$, $\varphi'(x,y) = \varphi(x \cdot y)$.
We define and study a natural push-forward map from $\mathfrak{M}_{x}(\mathcal{G})$ to $\mathcal{M}(\mathcal{G}/\mathcal{G}^{00})$. We demonstrate that this map is a  homomorphism from the semigroup $(\mathfrak{M}_{x}^{\inv}(\mathcal{G},G),*)$ of invariant Keisler measures with definable convolution onto the semigroup $(\mathcal{M}(\mathcal{G}/\mathcal{G}^{00}), \star)$ of regular Borel probability measures on the compact group $\mathcal{G}/\mathcal{G}^{00}$ with classical convolution. In particular, the image of an idempotent invariant Keisler measure on $\mathcal{G}$ is an idempotent measure on the compact group $\mathcal{G}/\mathcal{G}^{00}$. The proofs of these theorems are primarily analytic, and the NIP assumption is used to ensure that $\mathcal{G}^{00}$ exists and definable convolution is well-defined. We begin by recalling some properties of $\mathcal{G}/\mathcal{G}^{00}$ and define the corresponding push-forward map.

\begin{fact}\label{group:fact} Suppose that $T$ is NIP. 
\begin{enumerate}[$(i)$]
\item There exists a smallest type-definable subgroup of $\mathcal{G}$ of bounded index, denoted $\mathcal{G}^{00}$. Moreover, $\mathcal{G}^{00}$ is a normal subgroup of $\mathcal{G}$ type-definable over $\emptyset$. Let $\pi:\mathcal{G} \to \mathcal{G}/\mathcal{G}^{00}$ be the  quotient map, i.e.~$\pi(a) = a\mathcal{G}^{00}$.
\item $\mathcal{G}/\mathcal{G}^{00}$ is a compact group with the \emph{logic topology}: a subset $B$ of $\mathcal{G}/\mathcal{G}^{00}$ is closed if and only if $\pi^{-1}(B)$ is type-definable over some/any small submodel of $\mathcal{G}$. 
\item The map $\pi: \mathcal{G} \to \mathcal{G}/\mathcal{G}^{00} $ induces a continuous map $\hat{\pi}:S_{x}(\mathcal{G}) \to \mathcal{G}/\mathcal{G}^{00}$ via $\hat{\pi}(q) := \pi(a)$,  where $a \models q|_G$ and $G$ is some/any elementary submodel of $\mathcal{G}$. Therefore, we can consider the push-forward $\hat{\pi}_{*}: \mathcal{M}(S_{x}(\mathcal{G})) \to \mathcal{M}(\mathcal{G}/\mathcal{G}^{00})$. By Proposition \ref{closed:homeomorphism}, $\mathfrak{M}_{x}(\mathcal{G})$ is affinely homeomorphic to $\mathcal{M}(S_{x}(\mathcal{G}))$ and so (formally) we let $\pi_{*}: \mathfrak{M}_{x}(\mathcal{G}) \to \mathcal{M}(\mathcal{G}/\mathcal{G}^{00})$ be the composition of $\hat{\pi}_{*}$ and this homeomorphism. We will primarily work with $\pi_{*}$, and usually identify $\hat{\pi}_{*}$ and $\pi_{*}$ without comment. 
\item  The map $\pi_{*}: \mathfrak{M}_{x}(\mathcal{G}) \to \mathcal{M}(\mathcal{G}/\mathcal{G}^{00})$ is continuous, affine, and surjective. 
\end{enumerate}
\end{fact}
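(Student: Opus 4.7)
The plan is to treat parts $(i)$ and $(ii)$ as summaries of the classical NIP theory of type-definable bounded-index subgroups and the logic topology, and to reduce $(iii)$ and $(iv)$ to bookkeeping on top of those results together with the push-forward formalism of Fact \ref{meas:facts}.

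For $(i)$, I would invoke the standard NIP existence theorem for a smallest type-definable bounded-index subgroup (whose proof uses the bounded chain condition on intersections of conjugates, a key consequence of NIP). Normality of $\mathcal{G}^{00}$ follows from uniqueness applied to any conjugate $g\mathcal{G}^{00}g^{-1}$, which is again a type-definable subgroup of the same bounded index. For $\emptyset$-definability, $\mathcal{G}^{00}$ is $\Aut(\mathcal{G})$-invariant (by the same uniqueness), and an $\Aut(\mathcal{G})$-invariant type-definable set is well known to admit a type-definition over $\emptyset$. For $(ii)$, I would cite the standard properties of the logic topology on bounded hyperdefinable quotients: compactness corresponds to bounded index, Hausdorffness to $\mathcal{G}^{00}$ being type-definable, and continuity of the group operations follows from the logic-topology characterization of closed sets via preimages under $\pi$.

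For $(iii)$, the crux is well-definedness of $\hat\pi$. Suppose $a, a'$ both realize $q|_G$ for some $q \in S_x(\mathcal{G})$ and some $G \prec \mathcal{G}$; then $a \equiv_G a'$. Since by $(i)$ the subgroup $\mathcal{G}^{00}$ is type-definable over $\emptyset$, the bounded equivalence relation defined by $x^{-1}y \in \mathcal{G}^{00}$ is refined by the type-equivalence $\equiv_G$ over any small $G$ (a standard fact about bounded $\emptyset$-type-definable equivalence relations), whence $\pi(a) = \pi(a')$. The same refinement gives independence from the choice of $G$. For continuity of $\hat\pi$, given any closed $B \subseteq \mathcal{G}/\mathcal{G}^{00}$, the set $\pi^{-1}(B)$ is type-definable over some small model, say by $\bigwedge_{i \in I} \psi_i(x)$, and hence $\hat\pi^{-1}(B) = \bigcap_{i \in I} [\psi_i(x)]$ is closed in $S_x(\mathcal{G})$. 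The push-forward $\hat\pi_{*}$ is then the one provided by Fact \ref{meas:facts}$(iii)$, and $\pi_*$ is defined as its composition with the affine homeomorphism of Proposition \ref{closed:homeomorphism}.

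Finally, $(iv)$ follows from general push-forward properties. Continuity and affineness of $\hat\pi_{*}$ are immediate from Fact \ref{meas:facts}$(iii)$--$(iv)$, and transfer to $\pi_{*}$ via the affine homeomorphism $\mathfrak{M}_x(\mathcal{G}) \cong \mathcal{M}(S_x(\mathcal{G}))$. For surjectivity, by Fact \ref{meas:facts}$(iv)$ it suffices to check that $\hat\pi$ is surjective: given any coset $g\mathcal{G}^{00} \in \mathcal{G}/\mathcal{G}^{00}$, the type $q = \tp(g/\mathcal{G}) \in S_x(\mathcal{G})$ satisfies $\hat\pi(q) = g\mathcal{G}^{00}$ by the very definition of $\hat\pi$. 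I do not expect any serious obstacle; the only mildly delicate point is the refinement of the $\mathcal{G}^{00}$-equivalence relation by $\equiv_G$ used in $(iii)$, which is a standard consequence of having $\mathcal{G}^{00}$ type-definable over $\emptyset$, itself established in $(i)$.
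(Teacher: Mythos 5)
Your proposal is correct; parts $(i)$, $(ii)$ and $(iv)$ are handled exactly as in the paper (citations to Shelah/Pillay for the first two, and Fact \ref{meas:facts}$(iii)$--$(iv)$ plus Proposition \ref{closed:homeomorphism} for the last, with surjectivity of $\hat{\pi}$ coming from realized types). The genuine difference is in the well-definedness of $\hat{\pi}$ in $(iii)$. You argue via the standard fact that a bounded $\emptyset$-type-definable equivalence relation (here $x^{-1}y \in \mathcal{G}^{00}$) is refined by $\equiv_G$ for any small model $G$ --- essentially routing through Lascar strong types and the inclusion $\mathcal{G}^{000}_{\emptyset} \subseteq \mathcal{G}^{00}$. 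The paper instead gives a self-contained topological argument: for any open $U \ni \pi(a_1)$, the preimage $\pi^{-1}(U)$ is $\bigvee$-definable over each of the two models, the corresponding unions of clopen sets in $S_x(\mathcal{G})$ coincide, so $q$ contains a disjunct from each definition and hence $\pi(a_2) \in U$ as well; Hausdorffness then forces $\pi(a_1) = \pi(a_2)$. Your route is shorter but outsources the work to a nontrivial external fact about bounded invariant equivalence relations; the paper's route uses only the logic-topology characterization already stated in $(ii)$, which keeps the proof internal to the stated preliminaries. Your continuity argument (pulling back a closed set to $\bigcap_i [\psi_i(x)]$) is just an unfolded version of the paper's factorization $\hat{\pi} = f \circ r_G$, so there is no real difference there. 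One small point worth making explicit in your write-up: independence of the choice of $G$ does not literally follow from the same-$G$ case alone; you should pass to a common small model $G_3 \supseteq G_1 \cup G_2$ and a realization of $q|_{G_3}$, then apply the refinement twice.
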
 

\begin{proof} $(i)$ is a theorem of Shelah \cite{shelah2008minimal}, and $(ii)$ is from \cite{pillay2004type} (see also \cite[Section 8]{Sibook}). 

$(iii)$ First, $\hat{\pi}$ is well-defined. Indeed, let $G_1, G_2 \prec \mathcal{G}$ be small elementary submodels, $q \in S_{x}(\mathcal{G})$, $a_i \models q|_{G_i}$ for $i \in \{1,2\}$. It suffices to show $\pi(a_1) = \pi(a_2)$. Let $U$ be an open subset of $\mathcal{G}/\mathcal{G}^{00}$ such that $\pi(a_1) \in U$, and we show that then also $\pi(a_2) \in U$. Since $U$ is open, $\pi^{-1}(U)$ is $\bigvee$-definable over both $G_1$ and $G_2$, let $\bigvee_{j \in I_i} \psi^i_j(x)$ be a definition of $\pi^{-1}(U)$ over $G_i$. Hence there is some $j_{1} \in I_1$ such that $\mathcal{U} \models \psi_{j_1}(a_{1})$, so $\psi_{j_1}(x) \in q$. As $\bigcup_{j \in I_1} [\psi^1_{j}(x)] = \bigcup_{j \in I_2} [\psi^2_{j}(x)]$ (see Definition \ref{def: type def sets}), there exists some $j_{2} \in I_2$ so that $\psi_{j_{2}}(x) \in q$. Now
\begin{equation*} 
a_2 \in \psi^2_{j_{2}}(\mathcal{U})  \subseteq \bigcup_{j \in I_2} \psi^2_{j}(\mathcal{U}) = \pi^{-1}(U) \implies \pi(a_2) \in U. 
\end{equation*}
Since $\mathcal{G}/\mathcal{G}^{00}$ is Hausdorff and $\pi(a_1)$ and $\pi(a_2)$ are in the same open sets, we conclude that $\pi(a_1) = \pi(a_2)$.  

By the previous paragraph, $\hat{\pi} = f \circ r_{G}$ where $G$ is any small submodel, the map $r_G: S_{x}(\mathcal{G)} \to S_{x}(G)$ is the restriction map, and $f:S_{x}(G) \to \mathcal{G}/\mathcal{G}^{00}$ is defined via $f(q) = \pi(a)$, where $a \models q$. Both $f$ and $r_{G}$ are continuous maps and so $\hat{\pi}$ is a continuous map (the map $f$ is continuous by $(ii)$).

(iv) By Fact \ref{meas:facts}(iii),(iv) and Proposition \ref{closed:homeomorphism}.
\end{proof} 

\begin{definition} We let $\pi^{\fs}_{G,*} := \pi_{*}\restriction_{\mathfrak{M}_{x}^{\fs}(\mathcal{G},G)}$ and $\pi^{\inv}_{G,*} := \pi_{*} \restriction_{\mathfrak{M}_{x}^{\inv}(\mathcal{G},G)}$. We will typically write $\pi^{\inv}_{G,*}$ simply as $\pi^{\inv}_{*}$ when $G$ is clear from the context, and $\pi_{*}^{\dagger}$ to mean ``either $\pi_{*}^{\inv}$ or $\pi_{*}^{\fs}$".
\end{definition} 

\begin{remark}\label{rem: pi star is cont} Both $\pi_{*}^{\inv}$ and $\pi_{*}^{\fs}$ are continuous and affine since these maps are restrictions of $\pi_{*}$ to a closed convex subspace.
\end{remark} 

\begin{proposition}\label{surject} The map $\pi_{*}^{\dagger}:\mathfrak{M}_{x}^{\dagger}(\mathcal{G},G) \to \mathcal{M}(\mathcal{G}/\mathcal{G}^{00})$ is surjective.
\end{proposition}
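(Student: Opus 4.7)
The plan is to first handle the finitely satisfiable case, from which the invariant case follows immediately by inclusion. The key preliminary observation is that the restriction $\hat{\pi}|_{S^{\fs}_x(\mathcal{G},G)} : S^{\fs}_x(\mathcal{G},G) \to \mathcal{G}/\mathcal{G}^{00}$ is already continuous (as a restriction of the continuous map $\hat{\pi}$ from Fact \ref{group:fact}(iii)) and, as I will argue, surjective. Given any $g \in \mathcal{G}$, take a global coheir $p \in S^{\fs}_x(\mathcal{G},G)$ of $\tp(g/G)$. By the factorization $\hat{\pi} = f \circ r_G$ used in the proof of Fact \ref{group:fact}(iii), $\hat{\pi}(p) = f(p|_G) = f(\tp(g/G)) = \pi(g)$, since $g$ itself realizes $p|_G$. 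Combined with the surjectivity of $\pi : \mathcal{G} \to \mathcal{G}/\mathcal{G}^{00}$, this gives surjectivity of $\hat{\pi}|_{S^{\fs}_x(\mathcal{G},G)}$.

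Next, since $S^{\fs}_x(\mathcal{G},G)$ and $\mathcal{G}/\mathcal{G}^{00}$ are both compact Hausdorff and the restricted map is continuous and surjective, Fact \ref{meas:facts}(iv) yields that the induced push-forward
\begin{equation*}
\bigl(\hat{\pi}|_{S^{\fs}_x(\mathcal{G},G)}\bigr)_* : \mathcal{M}\bigl(S^{\fs}_x(\mathcal{G},G)\bigr) \to \mathcal{M}(\mathcal{G}/\mathcal{G}^{00})
\end{equation*}
is surjective. Under the affine homeomorphism $\mathfrak{M}^{\fs}_x(\mathcal{G},G) \cong \mathcal{M}\bigl(S^{\fs}_x(\mathcal{G},G)\bigr)$ of Corollary \ref{cor:homeomorphism}(1), this push-forward is precisely $\pi^{\fs}_{G,*}$, so $\pi^{\fs}_{G,*}$ is surjective.

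For $\pi^{\inv}_{G,*}$: every type finitely satisfiable in $G$ is $\Aut(\mathcal{G}/G)$-invariant, so $\mathfrak{M}^{\fs}_x(\mathcal{G},G) \subseteq \mathfrak{M}^{\inv}_x(\mathcal{G},G)$ and $\pi^{\inv}_{G,*}$ restricts to $\pi^{\fs}_{G,*}$; the image of the latter already exhausts $\mathcal{M}(\mathcal{G}/\mathcal{G}^{00})$, so the former is surjective as well. The only step requiring an argument is the surjectivity of $\hat{\pi}|_{S^{\fs}_x(\mathcal{G},G)}$, which is not a genuine obstacle — it is just the standard observation that every type over $G$ admits a global coheir; everything else is bookkeeping through the identifications already established in Section \ref{sec: Keisl meas prelim}.
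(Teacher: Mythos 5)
Your proof is correct, but it is organized differently from the paper's. The paper works directly at the level of measures: given $\nu \in \mathcal{M}(\mathcal{G}/\mathcal{G}^{00})$, it approximates $\nu$ by convex combinations of Dirac measures (Krein--Milman), lifts each atom $b^i_j$ to a global coheir $p^i_j$ of $\tp(a^i_j/G)$ for some $a^i_j \in \pi^{-1}(b^i_j)$, and then passes to a convergent subnet of the lifted measures, using continuity of $\pi_*$ to identify the limit's image with $\nu$. You instead isolate the purely model-theoretic content --- that $\hat{\pi}$ restricted to $S^{\fs}_x(\mathcal{G},G)$ is a continuous surjection onto $\mathcal{G}/\mathcal{G}^{00}$, again via global coheirs --- and then delegate the entire measure-level argument to Fact \ref{meas:facts}(iv) (push-forward along a continuous surjection of compact Hausdorff spaces is surjective on regular Borel probability measures), transported through the affine homeomorphism of Corollary \ref{cor:homeomorphism}(1). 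The two proofs use the same essential input (existence of global coheirs lifting points of $\mathcal{G}/\mathcal{G}^{00}$); the paper's net argument is in effect an inline re-derivation of the special case of Fact \ref{meas:facts}(iv) it needs, whereas your version is more modular and shorter. The one step you should not wave away entirely as bookkeeping is the commutation of $\pi^{\fs}_{G,*}$ with $\bigl(\hat{\pi}|_{S^{\fs}_x(\mathcal{G},G)}\bigr)_*$ under the identification: this uses that for $\mu \in \mathfrak{M}^{\fs}_x(\mathcal{G},G)$ the regular Borel extension $\tilde{\mu}$ is concentrated on the closed set $S^{\fs}_x(\mathcal{G},G)$ (Fact \ref{sup:inv}(1) together with Proposition \ref{closed:homeomorphism}), so that $\tilde{\mu}(\hat{\pi}^{-1}(A)) = \tilde{\mu}\bigl(\hat{\pi}^{-1}(A) \cap S^{\fs}_x(\mathcal{G},G)\bigr)$ for every Borel $A$; but this is available from the preliminaries, so the proof stands.
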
 

\begin{proof} Since $\mathfrak{M}_{x}^{\fs}(\mathcal{G},G) \subseteq \mathfrak{M}_{x}^{\inv}(\mathcal{G},G)$, it suffices to show that $\pi^{\fs}_{*}$ is surjective. Fix  $\nu \in \mathcal{M}(\mathcal{G}/\mathcal{G}^{00})$. By the Krein-Milman theorem, the convex hull of the extreme points of $\mathcal{M}(\mathcal{G}/\mathcal{G}^{00})$ is dense inside $\mathcal{M}(\mathcal{G}/\mathcal{G}^{00})$. The extreme points of $\mathcal{M}(\mathcal{G}/\mathcal{G}^{00})$ are the Dirac measures concentrating on the elements of $\mathcal{G}/\mathcal{G}^{00}$ (see e.g.~\cite[Example 8.16]{simon2011convexity}). Thus there exists a net $(\nu_{i})_{i \in I}$ of measures in $\mathcal{M}(\mathcal{G}/\mathcal{G}^{00})$ such that $\lim_{i \in I} \nu_i = \nu$ and for each $i \in I$, $\nu_{i} = \sum_{j = 1}^{n_{i}} r^i_{j} \delta_{b^{i}_{j}}$ for some $n_i \in \mathbb{N}$, $b^i_{j} \in \mathcal{G}/\mathcal{G}^{00}$ and $r^i_j \in \mathbb{R}_{>0}$ with  $\sum_{j = 1}^{n_i} r^i_j = 1$. Since the map $\pi$ is surjective, for each $b^i_j$ there exists some $a^i_j \in \mathcal{G}$ such that $\pi \left(a^i_j \right) = b^i_j$. Let $p^i_j \in S^{\fs}_x(\mathcal{G},G)$ be a global coheir of $\tp(a^i_{j}/G)$,  and let $\mu_{i} := \sum_{j = 1}^{n_i} r^i_{j} \delta_{p^i_{j}}$, then $\pi_{*}(\mu_{i}) = \nu_i$. Now $(\mu_{i})_{i \in I}$ is a net in the compact space $\mathfrak{M}_{x}^{\fs}(\mathcal{G},G)$, hence passing to a subnet we may assume that it converges, and let $\mu := \lim_{i \in I} \mu_i$. Then 
\begin{equation*}
\pi_{*}(\mu) = \pi_{*} \left(\lim_{i \in I} \mu_i \right) = \lim_{i \in I} \pi_{*}(\mu_i) = \lim_{i \in I} \nu_{i} = \nu,
\end{equation*} 
where the second equality follows from continuity of $\pi_{*}$. Hence $\pi^{\fs}_{*}$ is surjective. 
\end{proof} 

\begin{lemma}\label{basic:lem} Let $p,q \in S^{\inv}_{x}(\mathcal{G},G)$. Then:
\begin{enumerate}[$(i)$]
\item $\hat{\pi}(p) \cdot \hat{\pi}(q) = \hat{\pi}(p*q)$, 
\item $\pi_{*}(\delta_{p}) = \delta_{\hat{\pi}(p)}$,
\item $\pi_{*}(\delta_{p} * \delta_{q}) = \pi_{*}(\delta_{p}) \star \pi_{*}(\delta_{q})$. 
\end{enumerate} 
\end{lemma}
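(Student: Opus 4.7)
The plan is to prove the three parts in the order (ii), (i), (iii), since (iii) will follow by combining the first two with the fact that definable convolution of Dirac types is given by the Dirac of the type convolution.

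Part (ii) is essentially a tautology from the definition of the push-forward. For any Borel set $A \subseteq \mathcal{G}/\mathcal{G}^{00}$, unwinding the identification between $\mathfrak{M}_x(\mathcal{G})$ and $\mathcal{M}(S_x(\mathcal{G}))$ of Proposition \ref{closed:homeomorphism} gives $\pi_*(\delta_p)(A) = \hat{\pi}_*(\delta_p)(A) = \delta_p(\hat{\pi}^{-1}(A))$, which equals $1$ if $\hat{\pi}(p) \in A$ and $0$ otherwise. Hence $\pi_*(\delta_p) = \delta_{\hat{\pi}(p)}$.

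For (i), I would work with realizations. Using that $\mathcal{G}$ is sufficiently saturated, pick $b \in \mathcal{G}$ with $b \models q|_G$, and then $a \in \mathcal{G}$ with $a \models p|_{G b}$. By definition of $p \ast q$ via $p(x) \otimes q(y)$ applied to the formula $\varphi(x \cdot y)$, the element $a \cdot b$ realizes $(p \ast q)|_G$; equivalently, this is exactly Definition \ref{def: def conv} specialized to Dirac measures (see the computation below). By Fact \ref{group:fact}(i), $\mathcal{G}^{00}$ is normal, so $\pi$ is a group homomorphism, and therefore
\begin{equation*}
\hat{\pi}(p \ast q) = \pi(a \cdot b) = \pi(a) \cdot \pi(b) = \hat{\pi}(p) \cdot \hat{\pi}(q),
\end{equation*}
where the last equality uses $a \models p|_G$ and $b \models q|_G$ together with the well-definedness of $\hat{\pi}$ established in Fact \ref{group:fact}(iii).

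Part (iii) then combines (i) and (ii) after the preliminary observation that $\delta_p \ast \delta_q = \delta_{p \ast q}$ in $\mathfrak{M}_x^{\inv}(\mathcal{G},G)$. This identity is a direct computation from Definition \ref{def: def conv}: for any $\varphi(x) \in \mathcal{L}_x(\mathcal{G})$ and a suitable $G' \supseteq G$ containing the parameters of $\varphi$,
\begin{equation*}
(\delta_p \ast \delta_q)(\varphi(x)) = \int_{S_y(G')} F^{\varphi'}_{\delta_p} \, d(\delta_q|_{G'}) = F^{\varphi'}_{\delta_p}(q|_{G'}) = \delta_p(\varphi(x \cdot b)),
\end{equation*}
for any $b \models q|_{G'}$, which is $1$ iff $\varphi(x \cdot b) \in p$ iff $\varphi(x) \in p \ast q$. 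Then by (ii), Remark \ref{remark:conv}(1), and (i),
\begin{equation*}
\pi_*(\delta_p \ast \delta_q) = \pi_*(\delta_{p \ast q}) = \delta_{\hat{\pi}(p \ast q)} = \delta_{\hat{\pi}(p) \cdot \hat{\pi}(q)} = \delta_{\hat{\pi}(p)} \star \delta_{\hat{\pi}(q)} = \pi_*(\delta_p) \star \pi_*(\delta_q).
\end{equation*}

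There is no serious obstacle in this lemma; the main conceptual content is in (i), where we need the interaction between the realized product $a \cdot b$ of appropriately chosen realizations and the group homomorphism property of $\pi$, both of which are essentially standard. The role of this lemma is to bootstrap to a statement for general invariant measures in the subsequent Theorem \ref{invar:conv}, using density of convex combinations of Dirac measures via Lemma \ref{lem: limit} together with continuity/affineness of $\pi_*$ and left-continuity of definable convolution.
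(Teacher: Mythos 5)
Your proposal is correct and follows essentially the same route as the paper: the paper likewise derives (iii) from (i), (ii), the identity $\delta_p \ast \delta_q = \delta_{p\ast q}$ (which it cites from the prequel rather than recomputing), and Remark \ref{remark:conv}(1). The only cosmetic differences are that the paper verifies (ii) by testing against continuous functions rather than Borel sets, and proves (i) by checking that $\pi(a\cdot b)$ and $\hat{\pi}(p\ast q)$ lie in the same open sets instead of invoking the well-definedness of $\hat{\pi}$ directly as you do; both variants are sound.
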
 

\begin{proof} 
(i) Let $b \models q|_{G}$ and $a \models p|_{Gb}$. By defintion $\left( a \cdot b \right)\models p*q|_{G}$, hence 
\begin{equation*} 
\hat{\pi}(p * q) = \pi(a \cdot b) = \pi(a) \cdot \pi(b) = \hat{\pi}(p) \cdot \hat{\pi}(q). 
\end{equation*} 

\noindent (ii) Let $f:\mathcal{G}/\mathcal{G}^{00} \to \mathbb{R}$ be a continuous function. Then
\begin{equation*} \pi_{*}(\delta_{p})(f) = \int (f \circ \hat{\pi}) d\delta_{p} = f(\hat{\pi}(p)) = \int f d\delta_{\hat{\pi}(p)} = \delta_{\hat{\pi}(p)}(f).
\end{equation*} Since $\pi_{*}(\delta_{p})$ and $\delta_{\hat{\pi}(p)}$ agree on all continuous functions, by Fact \ref{meas:facts}(i) they belong to the same open sets in a Hausdorff space, hence $\pi_{*}(\delta_{p}) = \delta_{\hat{\pi}(p)}$.

\noindent (iii) We have
\begin{equation*} \pi_{*}(\delta_{p} * \delta_{q}) = \pi_{*}(\delta_{p *q}) = \delta_{\hat{\pi}(p *q)} = \delta_{\hat{\pi}(p) \cdot \hat{\pi}(q)} = \delta_{\hat{\pi}(p)} \star \delta_{\hat{\pi}(q)}.
\end{equation*} 
Here the first equality follows from \cite[Proposition 3.12]{ChGan}, the second and third equalities follows from $(ii)$ and $(i)$ respectively, and the last equality is by Remark \ref{remark:conv}. 
\end{proof} 

To show that $\pi^{\inv}_{*}$ is a homomorphism, we first observe some basic properties of the action of $\mathcal{G}$ on its space of types and in turn, on the space of continuous functions from $S_{x}(\mathcal{G})$ to $\mathbb{R}$. 

\begin{definition}\label{acts:naturally} Let $G$ be a model of $T$. For $a \in G$ and $p \in S_x(G)$, let $p \cdot a := \{\varphi(x\cdot a^{-1}):\varphi(x)\in p\} \in S_x(G)$ and $a \cdot p = \{\varphi(a^{-1}\cdot x):\varphi(x)\in p\} \in S_x(G)$. This defines right (respectively, left) action of $G$ on $S_x(G)$ by homeomorphisms.
\end{definition} 

\begin{lemma}\label{quot:triv} For any $a \in \mathcal{G}$ and $q \in S_{x}(\mathcal{G})$ we have $\pi(a) \cdot \hat{\pi}(p) = \hat{\pi}(a \cdot p)$ and $\hat{\pi}(p) \cdot \pi(a) =\hat{\pi}(p \cdot a)$.
\end{lemma}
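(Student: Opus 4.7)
The plan is to unwind the definitions using a well-chosen realization. First I would pick a small elementary submodel $G' \prec \mathcal{G}$ with $a \in G'$, which is possible by saturation. By Fact \ref{group:fact}(iii), the map $\hat{\pi}$ is well-defined independently of the choice of small submodel, so I may compute $\hat{\pi}(p) = \pi(b)$ for any $b \in \mathcal{G}$ (using saturation) that realizes $p|_{G'}$.

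Next I would verify the key translation property: if $b \models p|_{G'}$, then $a \cdot b \models (a \cdot p)|_{G'}$. Unpacking Definition \ref{acts:naturally}, $\varphi(x) \in a \cdot p$ iff $\varphi(a \cdot x) \in p$. So for any $\varphi(x) \in \mathcal{L}_x(G') \cap (a \cdot p)$, the formula $\varphi(a \cdot x)$ lies in $\mathcal{L}_x(G')$ (since $a \in G'$) and belongs to $p$, hence $\mathcal{G} \models \varphi(a \cdot b)$, as required.

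With this in hand, the identity follows directly from $\pi$ being a group homomorphism:
\begin{equation*}
\hat{\pi}(a \cdot p) = \pi(a \cdot b) = \pi(a) \cdot \pi(b) = \pi(a) \cdot \hat{\pi}(p).
\end{equation*}
The second identity $\hat{\pi}(p) \cdot \pi(a) = \hat{\pi}(p \cdot a)$ is proved analogously, using that $b \models p|_{G'}$ implies $b \cdot a \models (p \cdot a)|_{G'}$ by the same substitution argument applied on the right.

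The only mild subtlety, which I would address briefly, is ensuring that the action $a \cdot p$ defined in Definition \ref{acts:naturally} for a model extends naturally to $p \in S_x(\mathcal{G})$ with $a \in \mathcal{G}$ (it does: the formula $\varphi(a^{-1} \cdot x)$ makes sense whenever $a$ is a parameter, and we simply view $\mathcal{G}$ as the ambient model). Given that, no further model-theoretic machinery (NIP, invariance, etc.) is needed; the proof is purely about the interaction between the logic topology on $\mathcal{G}/\mathcal{G}^{00}$ and the definitions, so I do not expect any real obstacle.
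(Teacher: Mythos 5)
Your proof is correct, but it takes a different route from the paper. The paper's proof is a one-line reduction: it writes $\hat{\pi}(p)\cdot\pi(a) = \hat{\pi}(p)\cdot\hat{\pi}(\tp(a/\mathcal{G})) = \hat{\pi}(p*\tp(a/\mathcal{G})) = \hat{\pi}(p\cdot a)$, invoking Lemma \ref{basic:lem}(i) (compatibility of $\hat{\pi}$ with $*$) applied to the realized type $\tp(a/\mathcal{G})$, and says the other identity is similar. You instead verify the statement directly: choose $G' \prec \mathcal{G}$ containing $a$, realize $b \models p|_{G'}$, check the substitution identity $a\cdot b \models (a\cdot p)|_{G'}$ (and $b \cdot a \models (p \cdot a)|_{G'}$), and conclude via the homomorphism property of $\pi$ and the well-definedness of $\hat{\pi}$. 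Your computation of the translation property is accurate ($\psi(x) \in a\cdot p$ iff $\psi(a\cdot x)\in p$, and $\psi(a \cdot x) \in \mathcal{L}_x(G')$ since $a \in G'$), and your closing remark about Definition \ref{acts:naturally} applying with $\mathcal{G}$ as the ambient model is the right thing to note. One small point in your favor: Lemma \ref{basic:lem}(i) is stated only for $p,q \in S^{\inv}_x(\mathcal{G},G)$, whereas Lemma \ref{quot:triv} is asserted for arbitrary global types, so the paper's citation technically requires the reader to observe that the argument extends (or that only the realized factor needs invariance); your direct argument matches the stated generality without that caveat, at the cost of redoing by hand the unwinding that Lemma \ref{basic:lem}(i) packages.
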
 

\begin{proof} We notice that 
\begin{equation*}  \hat{\pi}(p) \cdot \pi(a) =   \hat{\pi}(p) \cdot \hat{\pi}(\tp(a/\mathcal{G})) = \hat{\pi}(p* \tp(a/\mathcal{G}) ) = \hat{\pi}(p \cdot a),
\end{equation*}
where the second equality is by Lemma \ref{basic:lem}(i). The other computation is similar.
\end{proof} 

\begin{lemma}\label{cont:lemma} Let $G$ be any model of $T$. Let $h:S_{x}(G) \to \mathbb{R}$ be a function, $\left\{[\psi_{i}]\right\}_{i \in [n]}$ a partition of $S_{x}(G)$ with $\psi_i \in \cL_x(G)$, $\varepsilon \in \mathbb{R}_{>0}$ and $r_1,\ldots,r_n \in \mathbb{R}$  such that 
$ \sup_{q \in S_{x}(G)} \left \lvert h(q) - \sum_{i=1}^n r_i \mathbf{1}_{[\psi_{i}]}(q) \right \rvert < \varepsilon.
$
For $a \in G$, we define the functions $h \cdot a, a \cdot h: S_x(G) \to \mathbb{R}$ via $(h \cdot a)(p) = h(p \cdot a)$ and $(a \cdot h)(p) = h(a \cdot p)$. Then
\begin{gather*}
\sup_{q\in S_{x}(\mathcal{G})}|(h \cdot a)(q)-\sum_{i=1}^{n}r_{i}\mathbf{1}_{[\psi_{i}(x\cdot a)]} (q)|<\varepsilon \textrm{, and } \\
\sup_{q\in S_{x}(\mathcal{G})}|(a \cdot h)(q)-\sum_{i=1}^{n}r_{i}\mathbf{1}_{[\psi_{i}(a \cdot x)]}(q)|<\varepsilon.	
\end{gather*}
In particular, if $h$ is continuous, then $h \cdot a$ and $a \cdot h$ are both continuous maps from $S_{x}(\mathcal{G})$ to $\mathbb{R}$ (as uniform limits of continuous functions, using Fact \ref{fac: cont funcs on type spaces}(2)).
\end{lemma}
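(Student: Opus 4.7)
The plan is to reduce the two approximation inequalities to a single identity describing how right- or left-translation by $a$ permutes the basic clopen sets $[\psi_i]$, and then deduce continuity by a uniform-limit argument.

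First, I would record the key observation about the action from Definition \ref{acts:naturally}: unwinding the definition $p \cdot a = \{\varphi(x \cdot a^{-1}) : \varphi(x) \in p\}$, one has $\psi_i(x) \in p \cdot a$ if and only if $\psi_i(x \cdot a) \in p$. Thus for every $p \in S_x(\mathcal{G})$,
\begin{equation*}
\mathbf{1}_{[\psi_i]}(p \cdot a) = \mathbf{1}_{[\psi_i(x \cdot a)]}(p),
\end{equation*}
and symmetrically $\mathbf{1}_{[\psi_i]}(a \cdot p) = \mathbf{1}_{[\psi_i(a \cdot x)]}(p)$. Moreover the partition $\{[\psi_i(x \cdot a)]\}_{i \in [n]}$ is again a clopen partition of $S_x(\mathcal{G})$, since $q \mapsto q \cdot a$ is a homeomorphism of $S_x(\mathcal{G})$ (and similarly for $q \mapsto a \cdot q$).

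Next, I would apply the hypothesis after substituting $q \cdot a$ (respectively $a \cdot q$) for $q$: for each such $q$,
\begin{equation*}
\left| h(q \cdot a) - \sum_{i=1}^n r_i \mathbf{1}_{[\psi_i]}(q \cdot a) \right| < \varepsilon,
\end{equation*}
which by the identity above is precisely $|(h \cdot a)(q) - \sum_{i} r_i \mathbf{1}_{[\psi_i(x \cdot a)]}(q)| < \varepsilon$. Since $q \mapsto q \cdot a$ is a bijection on $S_x(\mathcal{G})$, the supremum is unchanged, giving the first displayed inequality; the argument for $a \cdot h$ is identical.

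For the final clause, assume $h$ is continuous. By Fact \ref{fac: cont funcs on type spaces}(2), for every $\varepsilon > 0$ we can find a simple function $\sum_i r_i \mathbf{1}_{[\psi_i]}$ with $\psi_i \in \mathcal{L}_x(G)$ (so that each $[\psi_i]$ is clopen) approximating $h$ within $\varepsilon$ uniformly. The first part of the lemma then produces a simple function with clopen support approximating $h \cdot a$ (respectively $a \cdot h$) within $\varepsilon$ uniformly; such simple functions are continuous, and $h \cdot a$ and $a \cdot h$ are therefore uniform limits of continuous functions, hence continuous. There is no real obstacle here: the whole argument rests on the translation identity for indicator functions, which is immediate from the definition of the action, and on the uniform-limit characterization of continuous functions on a Stone space.
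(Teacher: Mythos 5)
Your proof is correct and rests on the same key observation as the paper's, namely that $\psi_i(x\cdot a)\in q$ if and only if $\psi_i(x)\in q\cdot a$, so that translation simply permutes the clopen partition; the paper packages this as a proof by contradiction while you run it as a direct substitution, but the content is identical. Your explicit use of the bijectivity of $q\mapsto q\cdot a$ to preserve the strict inequality on the supremum is a small point the paper glosses over, and is handled correctly here.
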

\begin{proof} We only prove the lemma for $h \cdot a$ (the case of $a \cdot h$ is similar). Assume the conclusion fails, then there exists some $q \in S_{x}(G)$ such that 
$|(h \cdot a)(q) - \sum_{i=1}^{n} r_i \mathbf{1}_{[\psi_{i}(x \cdot a)]} (q)| > \varepsilon$. Since $\{[\psi_{i}(x)]\}_{i \in [n]}$ is a partition, so is $\{[\psi_{i}(x \cdot a)]\}_{i \in [n]}$. For precisely one $k \in [n]$, we have that $\psi_{k}(x \cdot a) \in q$ and $\sum_{i=1}^{n} r_i \mathbf{1}_{[\psi_{i}(x\cdot a)]}(q) = r_k$. So $\psi_{k}(x\cdot a^{-1} \cdot a) \in q \cdot a$, hence $\psi_{k}(x) \in q \cdot a$. Since $\{[\psi_{i}(x)]\}_{i \in [n]}$ forms a partition, we have that $\sum_{i=1}^{n} r_{i}\mathbf{1}_{[\psi_{i}(x)]}(q \cdot a) = r_k$. Then 
$\varepsilon > |h(q \cdot a) - \sum_{i=1}^{n} r_i \mathbf{1}_{[\psi_{i}(x)]} (q \cdot a)| = |(h \cdot a) (q) - r_k| > \varepsilon$ by assumption, a contradiction. 
\end{proof}

\begin{remark} The previous lemma follows also from the more general observation that both the left and right action of $\mathcal{G}$ on $(\mathbb{R}^{S_{x}(\mathcal{G})},||\cdot||_{\infty})$ is by isometries, where $\mathbb{R}^{S_{x}(\mathcal{G})}$ is the space of all functions from $S_{x}(\mathcal{G})$ to $\mathbb{R}$ with the uniform norm.
\end{remark} 

\begin{theorem}\label{invar:conv} Suppose $\mu, \nu \in \mathfrak{M}^{\inv}_{x}(\mathcal{G},G)$. Then
 $\pi_{*}(\mu * \nu) = \pi_{*}(\mu) \star \pi_{*}(\nu)$. 
\end{theorem}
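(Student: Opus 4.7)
My plan is to reduce to testing against arbitrary $f \in C(\mathcal{G}/\mathcal{G}^{00}, \mathbb{R})$: by Fact \ref{meas:facts}(ii) it suffices to show $\pi_*(\mu * \nu)(f) = (\pi_*(\mu) \star \pi_*(\nu))(f)$ for each such $f$. Setting $g := f \circ \hat\pi \in C(S_x(\mathcal{G}))$, by the definition of the push-forward the left-hand side equals $(\mu * \nu)(g)$.

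The preparatory step will be to extend the integral formula of Definition \ref{def: def conv} from formulas to this continuous $g$. The key is that by the factorization $\hat\pi = f_G \circ r_G$ from the proof of Fact \ref{group:fact}(iii), $g = (f \circ f_G) \circ r_G$ factors through the Stone space $S_x(G)$. Hence Fact \ref{fac: cont funcs on type spaces}(2) uniformly approximates $g$ by $\sum r_i \mathbf{1}_{[\psi_i]}$ with $\psi_i \in \mathcal{L}_x(G)$. Using $G$-invariance of $\mu$ together with Lemma \ref{cont:lemma}, the corresponding functions $q \mapsto \sum r_i \mu(\psi_i(x \cdot b))$ from Definition \ref{def: def conv} are continuous on $S_y(G)$ and converge uniformly; passing to the limit yields a continuous $F^g_\mu \in C(S_y(G))$ satisfying $F^g_\mu(q) = \int g(p \cdot b) \, d\mu(p)$ for any $b \models q$, together with the extended identity $(\mu * \nu)(g) = \int_{S_y(G)} F^g_\mu \, d\nu|_G$.

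The rest is a direct two-step change of variables. For $b \models q \in S_y(G)$, Lemma \ref{quot:triv} gives $\hat\pi(p \cdot b) = \hat\pi(p) \cdot \pi(b)$, and by the push-forward formula (Fact \ref{meas:facts}(iii)),
\begin{equation*}
F^g_\mu(q) = \int f(\hat\pi(p) \cdot \pi(b)) \, d\mu(p) = \int_{\mathcal{G}/\mathcal{G}^{00}} f(x \cdot \pi(b)) \, d\pi_*(\mu)(x) =: F(\pi(b)),
\end{equation*}
with $F \in C(\mathcal{G}/\mathcal{G}^{00}, \mathbb{R})$ by Lemma \ref{cont:int}. Since $\hat\pi = f_G \circ r_G$ gives $\pi_*(\nu) = (f_G)_*(\nu|_G)$, a second application of the push-forward formula yields
\begin{equation*}
(\mu * \nu)(g) = \int_{S_y(G)} F \circ f_G \, d\nu|_G = \int_{\mathcal{G}/\mathcal{G}^{00}} F \, d\pi_*(\nu) = (\pi_*(\mu) \star \pi_*(\nu))(f),
\end{equation*}
the last equality being the definition of classical convolution on $\mathcal{G}/\mathcal{G}^{00}$.

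The principal obstacle is the preparatory step: verifying that the defining formula of $(\mu * \nu)(\varphi)$ extends cleanly to the continuous function $g$, which amounts to a uniform-approximation argument relying on the observation that $g$ depends on its input only through $\hat\pi$, so that everything can be carried out with $\mathcal{L}_x(G)$-formulas and $G$-invariance of $\mu$. Once this is in place, the remainder of the argument is a transparent calculation converting the right translation of a type by an element into right multiplication in the compact group $\mathcal{G}/\mathcal{G}^{00}$.
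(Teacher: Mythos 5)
Your proof is correct and follows essentially the same route as the paper's: uniform approximation of $f \circ \hat{\pi}$ by $\sum_i r_i \mathbf{1}_{[\psi_i]}$ with $\psi_i \in \mathcal{L}_x(G)$ via Fact \ref{fac: cont funcs on type spaces}(2), the identity $\hat{\pi}(p \cdot b) = \hat{\pi}(p) \cdot \pi(b)$, continuity of $\mathbf{b} \mapsto \int f(x \cdot \mathbf{b})\, d\pi_{*}(\mu)$ from Lemma \ref{cont:int}, and two push-forward changes of variables, differing only in that you package the $\varepsilon$-approximations into an ``extended convolution formula'' for $f \circ \hat{\pi}$ while the paper carries them inline. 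One small imprecision: the individual fiber maps $q \mapsto \mu(\psi_i(x \cdot b))$ are in general only Borel (by Borel definability of invariant measures in NIP), not continuous, and Lemma \ref{cont:lemma} does not give their continuity --- but this is harmless, since only measurability is needed for the integrals and your limit $F^{g}_{\mu}$ is continuous anyway because it factors as $F \circ f_G$.
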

\begin{proof} It suffices to show that for any continuous function $f:\mathcal{G}/\mathcal{G}^{00} \to \mathbb{R}$ we have $\pi_{*}(\mu * \nu)(f) = \pi_{*}(\mu) \star \pi_{*}(\nu)(f)$. Fix a continuous $f: \mathcal{G}/\mathcal{G}^{00} \to \mathbb{R}$. Let $r: S_{x}(\mathcal{G}) \to S_{x}(G), p \mapsto p|_{G}$ be the restriction map.  
Fix $\varepsilon >0$. Then $f \circ \hat{\pi}$ is a continuous function from $S_{x}(\mathcal{G})$ to $\mathbb{R}$ (which factors through $S_{x}(G)$), so by Fact \ref{fac: cont funcs on type spaces}(2) there exists a partition $\{[\psi_{i}(x)]\}_{i \in [n]}$ of $S_{x}(\mathcal{G})$ with $\psi_i(x) \in \mathcal{L}_{x}(G)$ and $r_1, \ldots,r_n \in \mathbb{R}$ such that
\begin{equation*} 
\sup_{p \in S_{x}(\mathcal{G})} \left \lvert (f \circ \hat{\pi})(p) - \sum_{i=1}^{n} r_i \mathbf{1}_{[\psi_{i}(x)]}(p) \right \rvert < \varepsilon.
\end{equation*} 

\noindent We now have the following computation for  $\pi_{*}(\mu * \nu)(f)$:
\begin{gather*}
	\pi_{*}(\mu * \nu)(f) = \int_{\mathcal{G}/\mathcal{G}^{00}} f d\pi_{*}(\mu*\nu) = \int_{S_{x}(\mathcal{G})} (f \circ \hat{\pi}) d(\mu*\nu)\\
	\approx_{\varepsilon} \int_{S_{x}(\mathcal{G})} \left( \sum_{i=1}^{n} r_i \mathbf{1}_{[\psi_i(x)]}\right) d(\mu * \nu) = \sum_{i=1}^{n} r_i \big((\mu * \nu)(\psi_i(x))\big)  \\
	=\sum_{i=1}^{n} r_i \big((\mu_x \otimes \nu_y)(\psi_{i}(x \cdot y)\big) = \sum_{i=1}^{n}r_i \int_{S_{y}(G)} F_{\mu,G}^{\psi_i'} d(\nu_{G}) \\
	\overset{(*)}{=} \sum_{i=1}^{n} r_i\int_{S_{y}(\mathcal{G})} \left( F_{\mu,G}^{\psi_i'} \circ r \right) d\nu  = \int_{S_{y}(\mathcal{G})} \left( \left( \sum_{i=1}^{n}r_i F_{\mu,G}^{\psi_i'} \right)  \circ r \right) d\nu.
\end{gather*}
\noindent The equality $(*)$ is justified by Fact \ref{fact:monst}. 

 Next we will show that the convolution product $\left( \pi_{*}(\mu) \star \pi_{*}(\nu) \right)(f)$ in $\mathcal{M}(\mathcal{G}/\mathcal{G}^{00})$ is close to the final term in the above computation. Define $h: \mathcal{G}/\mathcal{G}^{00} \to \mathbb{R}$ via $h(\mathbf{a}) = \int_{\mathcal{G}/\mathcal{G}^{00}} f(x\cdot \mathbf{a}) d \pi_{*}(\mu)$. By Lemma \ref{cont:int}, $h$ is continuous. Fix $p \in S_{y}(\mathcal{G})$ and let $\mathbf{b} := \hat{\pi}(p) \in \mathcal{G}/\mathcal{G}^{00}$ and $b \models r(p) \in \mathcal{G}$. By definition, $\hat{\pi}(p) = \pi(b) = \mathbf{b}$.  By Lemmas \ref{basic:lem} and \ref{cont:lemma}, we have the following computation:
\begin{gather*}
	(h \circ \hat{\pi})(p) =  h(\mathbf{b}) =  \int_{\mathcal{G}/\mathcal{G}^{00}} f(x\cdot \mathbf{b}) d \pi_{*}(\mu) = \int_{q \in S_{x}(\mathcal{G})} f(\hat{\pi}(q) \cdot \mathbf{b})d\mu\\
	= \int_{q \in S_{x}(\mathcal{G})} f(\hat{\pi}(q) \cdot \pi(b))d\mu = \int_{q \in S_{x}(\mathcal{G})} f(\hat{\pi}(q \cdot b))d\mu = \int_{S_{x}(\mathcal{G})} \left((f \circ \hat{\pi}) \cdot b \right) d\mu\\
	\approx_{\varepsilon} \int_{S_{x}(\mathcal{G})} \sum_{i=1}^{n} r_i \mathbf{1}_{[\psi_{i}(x\cdot b)]} d\mu = \sum_{i=1}^{n} r_i \mu(\psi_i(x\cdot b)) = \left( \left( \sum_{i=1}^{n} r_i F_{\mu,G}^{\psi'_i} \right) \circ r \right)(p).
\end{gather*}
\noindent Since $p$ was arbitrary in $S_{y}(\mathcal{G})$, we conclude that
\begin{equation*} 
\sup_{p \in S_{y}(\mathcal{G})} \left \lvert (h \circ \hat{\pi})(p) - \left(\left( \sum_{i=1}^{n} r_i F_{\mu,G}^{\psi'} \right) \circ r \right)(p) \right \rvert < \varepsilon.
\end{equation*}
Therefore 
\begin{gather*} 
\left(\pi_{*}(\mu) \star \pi_{*}(\nu) \right)(f) = \int_{\mathcal{G}/\mathcal{G}^{00}} h d\pi_{*}(\nu) = \int_{S_{y}(\mathcal{G})} (h \circ \hat{\pi}) d\nu \\\approx_{\varepsilon} \int_{S_{y}(\mathcal{G})} \left( \left( \sum_{i=1}^{n} r_i F_{\mu,G}^{\psi'} \right) \circ r \right) d\nu \approx_{\varepsilon}	\pi_{*}(\mu * \nu)(f) . 
\end{gather*} 
Since $\varepsilon$ was arbitrary, we conclude that $\pi_{*}(\mu * \nu)(f) = \left( \pi_{*}(\mu) \star \pi_{*}(\nu) \right)(f)$. 
\end{proof} 

\begin{corollary}\label{cor:wendel} If $\mu \in \mathfrak{M}^{\inv}_{x}(\mathcal{G},G)$ and $\mu$ is idempotent, then $\pi_{*}(\mu)$ is an idempotent measure on $\mathcal{G}/\mathcal{G}^{00}$.
\end{corollary}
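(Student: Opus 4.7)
The plan is to apply Theorem \ref{invar:conv} directly with $\nu := \mu$. Since $\mu$ is idempotent, $\mu * \mu = \mu$, so applying $\pi_{*}$ to both sides and using the theorem yields
\begin{equation*}
\pi_{*}(\mu) = \pi_{*}(\mu * \mu) = \pi_{*}(\mu) \star \pi_{*}(\mu),
\end{equation*}
which is exactly the statement that $\pi_{*}(\mu)$ is an idempotent element of $\mathcal{M}(\mathcal{G}/\mathcal{G}^{00})$ under classical convolution. There is no real obstacle here: the substantive work has already been carried out in Theorem \ref{invar:conv}, which established that $\pi_{*}$ is a semigroup homomorphism from definable convolution on $\mathfrak{M}_{x}^{\inv}(\mathcal{G},G)$ to classical convolution on $\mathcal{M}(\mathcal{G}/\mathcal{G}^{00})$, and this corollary is the immediate observation that semigroup homomorphisms send idempotents to idempotents.
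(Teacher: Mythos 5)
Your proposal is correct and matches the paper's own proof exactly: both apply Theorem \ref{invar:conv} with $\nu = \mu$ and use idempotence to get $\pi_{*}(\mu) \star \pi_{*}(\mu) = \pi_{*}(\mu * \mu) = \pi_{*}(\mu)$. Nothing further is needed.
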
 

\begin{proof} By Theorem \ref{invar:conv} we have  $\pi_{*}(\mu) \star \pi_{*}(\mu) = \pi_{*}( \mu * \mu) = \pi_{*}(\mu)$.
\end{proof} 

\begin{corollary}\label{cor: pushf idemp onto} Let $\lambda \in \mathcal{M}(\mathcal{G}/\mathcal{G}^{00})$ and assume that $\lambda$ is idempotent. Then there exists a measure $\nu \in \mathfrak{M}^{\fs}_{x}(\mathcal{G},G)$ such that $\pi_{*}(\nu) = \lambda$ and $\nu$ is idempotent. 
\end{corollary}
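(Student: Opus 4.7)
The plan is to find the desired idempotent via Ellis's theorem applied to the fiber $\pi_{*}^{-1}(\lambda)$ inside $\mathfrak{M}_{x}^{\fs}(\mathcal{G},G)$.

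First, let $S := (\pi_{*}^{\fs})^{-1}(\lambda) \subseteq \mathfrak{M}_{x}^{\fs}(\mathcal{G},G)$. By Proposition \ref{surject} the map $\pi_{*}^{\fs}$ is surjective, so $S$ is non-empty. Since $\pi_{*}^{\fs}$ is continuous (Remark \ref{rem: pi star is cont}) and $\{\lambda\}$ is closed in $\mathcal{M}(\mathcal{G}/\mathcal{G}^{00})$, the set $S$ is a closed subset of the compact Hausdorff space $\mathfrak{M}_{x}^{\fs}(\mathcal{G},G)$, hence compact Hausdorff in the induced topology.

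Next I would verify that $S$ is closed under definable convolution. If $\mu, \nu \in S$, then by Theorem \ref{invar:conv} (applied to $\mathfrak{M}_{x}^{\fs}(\mathcal{G},G) \subseteq \mathfrak{M}_{x}^{\inv}(\mathcal{G},G)$, using that $\ast$ preserves finite satisfiability, see Fact \ref{fac: conv is a semigroup in NIP}) and the idempotency of $\lambda$,
\begin{equation*}
\pi_{*}(\mu \ast \nu) = \pi_{*}(\mu) \star \pi_{*}(\nu) = \lambda \star \lambda = \lambda,
\end{equation*}
so $\mu \ast \nu \in S$. Thus $(S, \ast)$ is a subsemigroup of $(\mathfrak{M}_{x}^{\fs}(\mathcal{G},G), \ast)$. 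Moreover, for any fixed $\nu \in S$, the map $\mu \mapsto \mu \ast \nu$ is continuous on $\mathfrak{M}_{x}^{\fs}(\mathcal{G},G)$ by Fact \ref{fac: conv is a semigroup in NIP}, and its restriction to $S$ remains continuous with image in $S$. Therefore $(S,\ast)$ is itself a compact Hausdorff left-continuous semigroup.

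Finally, I would invoke Fact \ref{fact:Ellis}(1): any such semigroup contains a minimal left ideal, and that ideal contains an idempotent. Let $\nu \in S$ be any such idempotent. Then $\nu \in \mathfrak{M}_{x}^{\fs}(\mathcal{G},G)$, $\nu \ast \nu = \nu$, and $\pi_{*}(\nu) = \lambda$, which is exactly what we want. There is no real obstacle here; the corollary is essentially the observation that Ellis's theorem applied to the (non-empty, compact, left-continuous) preimage semigroup automatically lifts the idempotent $\lambda$ to an idempotent $\nu$.
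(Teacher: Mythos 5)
Your proof is correct and follows essentially the same route as the paper: both consider the fiber $(\pi_{*}^{\fs})^{-1}(\lambda)$, show it is a non-empty closed subsemigroup (hence a compact left-continuous semigroup) using Proposition \ref{surject}, continuity of $\pi_{*}$, and Theorem \ref{invar:conv} together with the idempotency of $\lambda$, and then extract an idempotent via Fact \ref{fact:Ellis}. No issues.
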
 

\begin{proof} By Proposition \ref{surject}, the set $A := \{\eta \in \mathfrak{M}^{\fs}_{x}(\mathcal{G},G): \pi_{*}(\eta) = \lambda\}$ is non-empty. Since $\pi_{*}$ is continuous by Fact \ref{group:fact}(iv), $A$ is a closed subset of $\mathfrak{M}^{\fs}_{x}(\mathcal{G},G)$. And for any $\eta_1, \eta_2 \in A$ we have $\eta_1 \ast \eta_2 \in A$,  as $\pi_{*}(\eta_{1} * \eta_{2}) = \pi_{*}(\eta_{1}) \star \pi_{*}(\eta_{2}) = \lambda \star \lambda = \lambda$ by Theorem \ref{invar:conv}. Hence $(A,*)$ is a compact left-continuous semigroup (using Fact \ref{fac: conv is a semigroup in NIP}). By Fact \ref{fact:Ellis}, $(A,*)$ contains an idempotent.
\end{proof} 

\section{$\mathcal{G}^{00}$-invariant idempotent measures and type-definable subgroups}  \label{sec: G00 inv idempt class}

In this section we use the properties of the push-forward map established in Section  \ref{sec: def conv vs conv} to prove that if $\mu$ is idempotent, $\mathcal{G}^{00}$-right-invariant, and automorphism invariant over a small model, then $\mu$ is a translation invariant measure on its type-definable stabilizer subgroup of $\mathcal{G}$. 
\begin{definition}\label{def: stab, def amen}
\begin{enumerate}
\item Let $\mu \in \mathfrak{M}_{x}(\mathcal{G})$. The \emph{right stabilizer of $\mu$}, denoted as $\stab(\mu)$, is the subgroup of $\mathcal{G}$ defined as follows:
\begin{equation*} \stab(\mu) := \bigcap_{\varphi \in \mathcal{L}_x(\mathcal{G})}\left\{g \in \mathcal{G}: \mu(\varphi(x)) = \mu(\varphi(x \cdot g))\right\}. 
\end{equation*} 
\item  Let $\mathcal{H}$ be a subgroup of $\mathcal{G}$ (not necessarily definable). We say that $\mu \in \mathfrak{M}_{x}(\mathcal{G})$ is \emph{$\mathcal{H}$-right-invariant} (respectively, \emph{$\mathcal{H}$-left-invariant}) if for every formula $\varphi(x) \in \mathcal{L}_{x}(\mathcal{G})$ and $h \in \mathcal{H}$ we have $\mu(\varphi(x \cdot h)) = \mu(\varphi(x))$ (respectively, $\mu(\varphi(h \cdot x)) = \mu(\varphi(x))$). We say that $\mu$ is \emph{$\mathcal{H}$-invariant} if $\mu$ is both $\mathcal{H}$-left-invariant and $\mathcal{H}$-right-invariant. 
\item Let $\mathcal{H}$ be a type-definable subgroup of $\mathcal{G}$. We say that $\mathcal{H}$ is \emph{definably amenable} if there exists some $\mu \in \mathfrak{M}_{x}(\mathcal{G})$ such that $\tilde\mu([\mathcal{H}]) = 1$ (where $\tilde{\mu}$ is the unique regular Borel probability measure extending $\mu$) and $\mu$ is $\mathcal{H}$-right-invariant.
 Moreover, in this case we say that $(\mathcal{H}, \mu)$ is an \emph{amenable pair}.
\end{enumerate} 
\end{definition} 

\noindent The next proposition shows that if a Keisler measure witnesses
the definable amenability of some type-definable subgroup of $\mathcal{G}$, then this
subgroup must be its stabilizer:
\begin{proposition}\label{pair} Suppose that $\mu \in \mathfrak{M}_{x}(\mathcal{G})$ and $\mathcal{H}$ is a type-definable subgroup of $\mathcal{G}$. Suppose that $\tilde{\mu}([\mathcal{H}]) = 1$ and $\mathcal{H} \subseteq \stab(\mu)$. Then $\mathcal{H} = \stab(\mu)$.
\end{proposition}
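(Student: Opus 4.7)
The plan is to show the reverse inclusion $\stab(\mu) \subseteq \mathcal{H}$ by producing two disjoint sets of measure $1$ whenever $g \in \stab(\mu) \setminus \mathcal{H}$. Fix a type-definition $\mathcal{H} = \bigcap_{i \in I} \psi_i(\mathcal{G})$ and, without loss of generality, assume the family $\{\psi_i\}_{i \in I}$ is closed under finite conjunctions. Since $[\mathcal{H}] \seq [\psi_i(x)]$ for each $i$ and $\tilde\mu([\mathcal{H}]) = 1$, we get $\mu(\psi_i(x)) = 1$ for every $i \in I$.

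Now take an arbitrary $g \in \stab(\mu)$ and translate: for each $i \in I$,
\begin{equation*}
\mu(\psi_i(x \cdot g)) \;=\; \mu(\psi_i(x)) \;=\; 1,
\end{equation*}
and note that $[\psi_i(x \cdot g)]$ is the clopen of $S_x(\mathcal{G})$ corresponding to the definable set $\psi_i(\mathcal{G}) \cdot g^{-1}$. Since $\{\psi_i(x \cdot g)\}_{i \in I}$ is still closed under finite conjunctions, we have
\begin{equation*}
[\mathcal{H} \cdot g^{-1}] \;=\; \bigcap_{i \in I} [\psi_i(x \cdot g)],
\end{equation*}
written as a directed intersection of clopens, each of $\tilde\mu$-measure $1$. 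By the standard inf-characterization of regular Borel measures on a Stone space applied at this closed set (equivalently, by a compactness argument showing that every open neighborhood of $[\mathcal{H} \cdot g^{-1}]$ already contains some $[\psi_i(x \cdot g)]$), this forces $\tilde\mu([\mathcal{H} \cdot g^{-1}]) = 1$.

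To conclude, suppose for contradiction that $g \notin \mathcal{H}$. Then $\mathcal{H}$ and $\mathcal{H} \cdot g^{-1}$ are disjoint cosets of $\mathcal{H}$ in $\mathcal{G}$, so the corresponding type-sets $[\mathcal{H}]$ and $[\mathcal{H} \cdot g^{-1}]$ are disjoint closed subsets of $S_x(\mathcal{G})$, each of $\tilde\mu$-measure $1$. This contradicts $\tilde\mu$ being a probability measure. Hence $g \in \mathcal{H}$, proving $\stab(\mu) \seq \mathcal{H}$; the other inclusion is by hypothesis.

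The only delicate point is the passage from $\mu(\psi_i(x \cdot g)) = 1$ for all $i$ to $\tilde\mu([\mathcal{H} \cdot g^{-1}]) = 1$, and this is exactly the content of how Keisler measures extend to regular Borel measures on Stone spaces: the measure of a directed intersection of clopens equals the infimum of their measures. Everything else is a coset disjointness computation.
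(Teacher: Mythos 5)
Your proof is correct and follows essentially the same route as the paper's: take $g \in \stab(\mu) \setminus \mathcal{H}$ and derive a contradiction from two disjoint closed subsets of $S_x(\mathcal{G})$ (namely $[\mathcal{H}]$ and its translate by $g$) each having $\tilde\mu$-measure $1$. The only difference is that you spell out the regularity/directed-intersection argument showing the translate has measure $1$, which the paper leaves implicit.
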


\begin{proof} Suppose that that $\mathcal{H} \neq \stab(\mu)$, and let $g \in \stab(\mu) \backslash \mathcal{H}$. The subsets $[\mathcal{H}]$ and $[\mathcal{H}] \cdot g$ of $S_x(\mathcal{G})$ are disjoint and $\tilde{\mu} ([\mathcal{H}] \cup ([\mathcal{H}] \cdot g)) = 2$, where $\tilde{\mu}$ is the unique regular Borel probability measure extending $\mu$ to $S_{x}(\mathcal{G})$. This is a contradiction. 
\end{proof} 

\begin{definition} 
An idempotent measure $\mu \in \mathfrak{M}^{\inv}_{x}(\mathcal{G},G)$ is said to be \emph{pairless} if there does not exist a type-definable subgroup $\mathcal{H}$ of $\mathcal{G}$ such that $\left( \mathcal{H}, \mu \right)$ is an amenable pair. 
\end{definition} 
\begin{remark}
	By Proposition \ref{pair}, if $\stab(\mu)$ is type-definable, then $\mu$ is pairless if and only if $\mu([\stab(\mu)]) \neq 1$. 
\end{remark}

We now give two examples of pairless idempotent measures (in fact, types) in NIP groups (one definable, the other finitely satisfiable). Our third example shows that there can be many measures forming an amenable pair with a given group.

\begin{example}\label{example:pair} Let $T$ be the (complete)  theory of divisible ordered abelian groups,  $G := (\mathbb{R}, +, <) \models T$ and $\mathcal{G} \succ G$ a monster model of $T$.
\begin{enumerate} 
\item Let $p_{0^{+}}$ be the unique global definable (over $\mathbb{R}$) type extending  $\{x < a: a > 0, a \in \mathcal{G} \} \cup \{x >a: a \leq 0, a \in \mathbb{R}\}$. Then $\delta_{p_{0^{+}}} \in \mathfrak{M}^{\inv}_x(\mathcal{G},G)$ is idempotent and pairless. 
\item Let $p_{\mathbb{R}^{+}}$ be the unique global type finitely satisfiable in  $\mathbb{R}$ and extending $\{x > a: a \in \mathbb{R}\}$. Then $\delta_{p_{\mathbb{R}^{+}}}  \in \mathfrak{M}_x^{\fs}(\mathcal{G},G)$ is idempotent and pairless. 
\item Let $p_{+\infty}$ and $p_{- \infty}$ be the unique global heirs (over $\mathbb{R}$) extending the types $\Theta_{+}(x) := \{x > a: a \in \mathbb{R}\}$ 
and $\Theta_{-}(x) := \{x < a: a \in \mathbb{R}\}$ respectively. Then $(\mathcal{G}, \mu_{r})$ is an amenable pair for any $r \in [0,1]$, where
\begin{equation*} 
\mu_{r} = r\delta_{p_{-\infty}} + (1-r) \delta_{p_{+ \infty}}. 
\end{equation*} 
\end{enumerate} 
\end{example} 

\begin{proof}
\begin{enumerate}
\item Note that $\stab \left(\delta_{p_{0^{+}}} \right) = \{0\}$ and $\delta_{p_{0^{+}}}(\{0\}) = 0$, hence $\delta_{p_0^+}$ is pairless by Proposition \ref{pair}. We now check that $\delta_{p_{0^{+}}}$ is idempotent. Fix some $a \in \mathcal{G}$, some small $G' \prec \mathcal{G}$ containing $a$ and $\mathbb{R}$, and a realization $c \models p_{0^{+}}|_{G'}$ in $\mathcal{G}$. Note that
\begin{equation*} \left( p_{0^{+}} * p_{0^{+}}  \right) (x < a) = \left( p_{0^{+}} \otimes p_{0^{+}}  \right) (x + y < a) = p_{0^{+}} (x < a - c). 
\end{equation*} 
We now have two cases: 
\begin{enumerate}
\item If $a > 0$, then $a - c > 0$ and so $\left( p_{0^{+}} * p_{0^{+}} \right)  (x < a) = 1$. 
\item If $a \leq 0$, then $a - c < 0$ and so $\left( p_{0^{+}} * p_{0^{+}} \right) (x < a) = 0$. 
\end{enumerate} 
Hence, using quantifier-elimination, $p_{0^{+}} * p_{0^{+}} = p_{0^{+}}$, and so $\delta_{p_{0^{+}}}* \delta_{p_{0^{+}}} = \delta_{p_{0^{+}}}$. 
\item The measure $\delta_{p_{\mathbb{R}^{+}}}$ is idempotent by a computation analogous to the one in (1). We have 
$$\stab \left(\delta_{p_{\mathbb{R}^{+}}} \right) = \{a \in \mathcal{G}: -n < a < n \text{ for some n} \in \mathbb{N}\}.$$
 We note that $\stab \left(\delta_{p_{\mathbb{R}^{+}}} \right)$ is a $\bigvee$-definable subset of $\mathcal{G}$, but is not definable, hence it is not type-definable. 
Now suppose that there exists a type-definable subgroup $\mathcal{H}$ of $\mathcal{G}$ such that $\left(\mathcal{H},\delta_{p_{\mathbb{R}^{+}}} \right)$ is an amenable pair. Then, by definition,  $\mathcal{H} \subseteq \stab \left(\delta_{p_{\mathbb{R}^{+}}} \right)$ and $\delta_{p_{\mathbb{R}^{+}}} \left([\mathcal{H}] \right) = 1$. By Proposition \ref{pair}, we conclude that $\mathcal{H} = \stab \left(\delta_{p_{\mathbb{R}^{+}}} \right)$. Hence $\stab \left(\delta_{p_{\mathbb{R}^{+}}} \right)$ is type-definable, a contradiction. Alternatively, we get a contradiction by regularity of the measure:
\begin{equation*} 
\delta_{p_{\mathbb{R}^{+}}} \left( \left[\stab \left(\delta_{p_{\mathbb{R}^{+}}} \right) \right] \right) = \sup \left(\left\{\delta_{p_{\mathbb{R}^{+}}}([-n < x < n]): n \in \mathbb{N}\right\} \right) = 0.
\end{equation*} 

\item Note that $p_{+ \infty}$ and $p_{- \infty}$ are (left and right) $\mathcal{G}$-invariant. Hence $\mu_{r} := r\delta_{p_{- \infty}} + (1-r)\delta_{p_{+ \infty}} \in \mathfrak{M}_x(\mathcal{G})$ is $\mathcal{G}$-invariant for any $r \in [0,1]$. Since $\mu_{r}$ is $\mathcal{G}$-invariant, $(\mathcal{G},\mu_{r})$ is an amenable pairing for every $r \in [0,1]$. \qedhere
\end{enumerate} 
\end{proof} 

In the rest of this section we show that in an NIP group $\mathcal{G}$,  for any $\mathcal{G}^{00}$-invariant idempotent $\mu \in \mathfrak{M}^{\inv}_x(\mathcal{G}, G)$, $\stab(\mu)$ is type-definable and $\left(\stab(\mu), \mu \right)$ is an amenable pair.

\begin{definition}\label{def:wendel} Assume that $\mu \in \mathfrak{M}^{\inv}_{x}(\mathcal{G},G)$ is idempotent. By Corollary \ref{cor:wendel}, the measure $\pi_{*}(\mu) \in \mathcal{M}(\mathcal{G}/\mathcal{G}^{00})$ is idempotent and by Fact \ref{compact:corr}, $\supp(\pi_{*}(\mu))$ is a closed subgroup of $\mathcal{G}/\mathcal{G}^{00}$ and $\pi_{*}(\mu) \restriction_{\supp(\pi_{*}(\mu))}$ is the normalized Haar measure on this closed subgroup. Then $\pi^{-1}(\supp(\pi_{*}(\mu)))$ is a type-definable subgroup of $\mathcal{G}$. We let $H_{\mathcal{L}}(\mu) := \pi^{-1}(\supp(\pi_{*}(\mu)))$. 
\end{definition} 

\begin{proposition}\label{useful:lemma} Suppose $\mu \in \mathfrak{M}^{\inv}_{x}(\mathcal{G},G)$ is idempotent and $\mathcal{G}^{00}$-right-invariant.
\begin{enumerate}[$(i)$]
\item If $p \in \sup(\mu)$, then $\hat{\pi}(p) \in \supp(\pi_{*}(\mu))$ (see Fact \ref{group:fact} for the definition of $\hat{\pi}$). 
\item If $p \in \sup(\mu)$, then $p \in [H_{\mathcal{L}}(\mu)]$. 
\item $\mu([H_{\mathcal{L}}(\mu)]) = 1$.
\item If $b \in \stab(\mu)$, then $\pi(b) \in \stab(\pi_{*}(\mu))$. 
\end{enumerate} 
\end{proposition}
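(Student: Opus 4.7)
The plan is to treat (i)--(iii) by unwinding the definitions of $\hat{\pi}$, $\sup(\mu)$, and $H_{\mathcal{L}}(\mu)$, and then to handle (iv) with a uniform approximation argument in the spirit of the proof of Theorem \ref{invar:conv}.

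For (i), I would argue by contrapositive: if $\hat{\pi}(p) \notin \supp(\pi_{*}(\mu))$, then there is an open $U \subseteq \mathcal{G}/\mathcal{G}^{00}$ containing $\hat{\pi}(p)$ with $\pi_{*}(\mu)(U) = 0$. By continuity of $\hat{\pi}$ (Fact \ref{group:fact}(iii)), $\hat{\pi}^{-1}(U)$ is open in $S_{x}(\mathcal{G})$ and contains $p$, so it contains a basic clopen $[\varphi(x)]$ with $\varphi \in p$. Then $\mu(\varphi(x)) \leq \tilde{\mu}(\hat{\pi}^{-1}(U)) = \pi_{*}(\mu)(U) = 0$, contradicting $p \in \sup(\mu)$.

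For (ii), by (i) we have $\hat{\pi}(p) \in \supp(\pi_{*}(\mu))$. Choose a small $B \supseteq G$ over which $H_{\mathcal{L}}(\mu)$ is type-definable by $\bigwedge_{j} \phi_{j}(x)$ with $\phi_{j} \in \mathcal{L}_{x}(B)$, and realize $a \models p|_{B}$ in $\mathcal{G}$ by saturation. Well-definedness of $\hat{\pi}$ gives $\hat{\pi}(p) = \pi(a) \in \supp(\pi_{*}(\mu))$, placing $a$ in $H_{\mathcal{L}}(\mu)$, so $\models \phi_{j}(a)$ for every $j$; since $a \models p|_{B}$, this forces every $\phi_{j}(x)$ into $p$, yielding $p \in [H_{\mathcal{L}}(\mu)]$. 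The same argument, dropped of the hypothesis $p \in \sup(\mu)$, in fact identifies $[H_{\mathcal{L}}(\mu)]$ with the closed set $\hat{\pi}^{-1}(\supp(\pi_{*}(\mu)))$. Part (iii) then follows by change of variables: $\tilde{\mu}([H_{\mathcal{L}}(\mu)]) = \tilde{\mu}(\hat{\pi}^{-1}(\supp(\pi_{*}(\mu)))) = \pi_{*}(\mu)(\supp(\pi_{*}(\mu))) = 1$, where the final equality combines Corollary \ref{cor:wendel} with Fact \ref{compact:corr}.

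For (iv), it suffices to show that $\int f(z \cdot \pi(b)) \, d\pi_{*}(\mu)(z) = \int f \, d\pi_{*}(\mu)$ for every continuous $f : \mathcal{G}/\mathcal{G}^{00} \to \mathbb{R}$, since regular Borel probability measures on $\mathcal{G}/\mathcal{G}^{00}$ are determined by their values on continuous functions (Fact \ref{meas:facts}(ii)); this translates directly into $\pi(b) \in \stab(\pi_{*}(\mu))$. Pushing forward and using Lemma \ref{quot:triv} rewrites the left-hand side as $\int_{S_{x}(\mathcal{G})} (g \cdot b)(p) \, d\mu(p)$, where $g := f \circ \hat{\pi}$ is continuous and $(g \cdot b)(p) = g(p \cdot b)$ in the sense of Lemma \ref{cont:lemma}. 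Now fix $\varepsilon > 0$ and use Fact \ref{fac: cont funcs on type spaces}(2) to uniformly approximate $g$ by $\sum_{i=1}^{n} r_{i} \mathbf{1}_{[\psi_{i}(x)]}$ for suitable $\psi_{i} \in \mathcal{L}_{x}(G)$; Lemma \ref{cont:lemma} then yields a uniform $\varepsilon$-approximation of $g \cdot b$ by $\sum r_{i} \mathbf{1}_{[\psi_{i}(x \cdot b)]}$. Integrating both sides against $\mu$ and applying $b \in \stab(\mu)$ to replace each $\mu(\psi_{i}(x \cdot b))$ by $\mu(\psi_{i}(x))$ gives $\int (g \cdot b) \, d\mu \approx_{2\varepsilon} \int g \, d\mu$, and letting $\varepsilon \to 0$ finishes the argument. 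The main technical obstacle here is the bookkeeping of the approximation--translation--integration interchange; Lemmas \ref{quot:triv} and \ref{cont:lemma} were tailored to make this routine.
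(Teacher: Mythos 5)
Your proof is correct. Parts (i)--(iii) follow essentially the same route as the paper: (i) is the paper's argument run contrapositively (the paper fixes an arbitrary open $U \ni \hat{\pi}(p)$, writes $\pi^{-1}(U)$ as a $\bigvee$-definable set, and extracts a formula of $p$ with positive $\mu$-measure); your explicit identification $[H_{\mathcal{L}}(\mu)] = \hat{\pi}^{-1}(\supp(\pi_{*}(\mu)))$ is exactly what makes the paper's ``(ii) is obvious by (i)'' true, and your (iii) via the push-forward identity is a clean substitute for the paper's argument by regularity of $\tilde{\mu}$ on the open complement of $[H_{\mathcal{L}}(\mu)]$. Where you genuinely diverge is (iv): the paper disposes of it in one line by applying the homomorphism property (Theorem \ref{invar:conv}) to get $\pi_{*}(\mu) \star \delta_{\pi(b)} = \pi_{*}(\mu * \delta_{b}) = \pi_{*}(\mu)$, whereas you re-derive the needed special case by hand via the approximation--translation--integration interchange of Lemmas \ref{quot:triv} and \ref{cont:lemma}. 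The paper's route is shorter, but note that it invokes Theorem \ref{invar:conv} with $\nu = \delta_{b}$ for an arbitrary $b \in \stab(\mu) \subseteq \mathcal{G}$, and $\tp(b/\mathcal{G})$ is not $G$-invariant unless $b \in G$, so strictly speaking that application falls outside the theorem's stated hypotheses (though its proof goes through); your direct computation sidesteps this issue entirely, at the cost of a page of bookkeeping. Two cosmetic points: the fact that a measure is determined by its integrals against continuous functions is Fact \ref{meas:facts}(i) (Hausdorffness of the weak-$^{*}$ topology) rather than (ii), and your argument literally shows $\pi(b)^{-1} \in \stab(\pi_{*}(\mu))$, which suffices since the stabilizer is a subgroup.
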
 

\begin{proof}
\begin{enumerate}[$(i)$]
\item Let $U$ be an open subset of $\mathcal{G}/\mathcal{G}^{00}$ containing $\hat{\pi}(p)$. Then $\pi^{-1}(U)$ is $\bigvee$-definable, so $\pi^{-1}(U) = \bigvee_{i \in I} \psi_{i}(x)$ for some $\psi_i \in \mathcal{L}_x(G)$. Hence there exists some $i \in I$ so that $\psi_{i}(x) \in p$. Since $p \in \sup(\mu)$, we have that $\mu(\psi_{i}(x)) > 0$. Then 
\begin{equation*} \pi_{*}(\mu)(U) = \tilde{\mu}([\hat{\pi}^{-1}(U)]) \geq \mu(\psi_{i}(x)) > 0,
\end{equation*} 
where $\tilde{\mu}$ is the unique regular Borel probability measures extending $\mu$. Therefore $\hat{\pi}(p) \in \supp(\pi_{*}(\mu))$.
\item Obvious by $(i)$.
\item Assume not. Then $\mu(S_{x}(\mathcal{G})\setminus [H_{\mathcal{L}}(\mu)]) > 0$. This set is open and so by regularity there exists some $[\psi(x)] \subset S_{x}(\mathcal{G}) \setminus [\mathcal{H}_{\mathcal{L}}(\mu)]$ such that $\mu(\psi(x)) > 0$. Then there exists some $p \in \sup(\mu)$ so that $\psi(x) \in p$. This contradicts $(ii)$. 
\item By Theorem \ref{invar:conv}, 
\begin{equation*} \pi_{*}(\mu) \cdot \pi(b)  =   \pi_{*}(\mu) \star \delta_{\pi(b)} = \pi_{*}( \mu \ast \delta_{b}) = \pi_{*}(\mu). \qedhere
\end{equation*}
\end{enumerate}
\end{proof}

\begin{lemma}\label{Borel:quotient} Assume that $f:S_{x}(\mathcal{G}) \to \mathbb{R}$ is Borel and factors through $\hat{\pi}:S_{x}(\mathcal{G}) \to \mathcal{G}/\mathcal{G}^{00}$, and let $f_{\star}:\mathcal{G}/\mathcal{G}^{00} \to \mathbb{R}$ be the factor map. Then $f_{\star}$ is Borel. 
\end{lemma}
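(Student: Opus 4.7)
The plan is to reduce to a Polish setting via a countability argument and then apply Souslin's theorem on analytic sets. Since $f = f_\star \circ \hat{\pi}$, for each $r \in \mathbb{R}$ we have $f^{-1}((-\infty, r)) = \hat{\pi}^{-1}\bigl(f_\star^{-1}((-\infty, r))\bigr)$ is Borel in $S_x(\mathcal{G})$, so it suffices to prove the following general fact: whenever $B \subseteq \mathcal{G}/\mathcal{G}^{00}$ has $\hat{\pi}^{-1}(B)$ Borel in $S_x(\mathcal{G})$, $B$ is Borel in $\mathcal{G}/\mathcal{G}^{00}$. Applying this to $B := f_\star^{-1}((-\infty, r))$ for each $r \in \mathbb{R}$ then yields that $f_\star$ is Borel.

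The first main step is a reduction to countably many parameters. The Borel $\sigma$-algebra on the Stone space $S_x(\mathcal{G})$ is generated by the clopens $[\varphi]$ with $\varphi \in \mathcal{L}_x(\mathcal{G})$, and by a standard transfinite-induction argument each Borel set lies in the sub-$\sigma$-algebra generated by only countably many such clopens. Hence the countable collection $\{f^{-1}((-\infty, r)) : r \in \mathbb{Q}\}$ uses only countably many formulas whose parameters lie in a countable subset of $\mathcal{G}$. Let $G_0 \prec \mathcal{G}$ be a countable elementary submodel containing these parameters and such that $\mathcal{G}^{00}$ is type-definable over $G_0$. Then $f = h \circ r_{G_0}$ for the restriction map $r_{G_0}: S_x(\mathcal{G}) \to S_x(G_0)$ and some Borel $h : S_x(G_0) \to \mathbb{R}$. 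As in the proof of Fact \ref{group:fact}(iii), $\hat{\pi}$ also factors through $r_{G_0}$ via a continuous surjection $g : S_x(G_0) \to \mathcal{G}/\mathcal{G}^{00}$; consequently $h = f_\star \circ g$.

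Next, I would work in the Polish category. Since $G_0$ is countable, $S_x(G_0)$ is a compact metrizable space, and its Hausdorff continuous image $\mathcal{G}/\mathcal{G}^{00}$ is likewise compact metrizable (compact Hausdorff quotients of compact metric spaces being metrizable). For any fixed $r \in \mathbb{R}$, set $B := f_\star^{-1}((-\infty, r))$, so $g^{-1}(B) = h^{-1}((-\infty, r))$ is Borel. By surjectivity, $B = g(g^{-1}(B))$ and $\mathcal{G}/\mathcal{G}^{00} \setminus B = g(S_x(G_0) \setminus g^{-1}(B))$ are continuous images of Borel subsets of a Polish space, and hence analytic in $\mathcal{G}/\mathcal{G}^{00}$. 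Being complements of each other, each is simultaneously analytic and coanalytic, and hence Borel by Souslin's theorem.

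The main obstacle is that in full generality neither $S_x(\mathcal{G})$ nor $\mathcal{G}/\mathcal{G}^{00}$ is metrizable, so classical descriptive set theory does not apply directly; the key workaround is the observation that any single Borel set in a Stone space depends on only countably many clopen generators, enabling a L\"owenheim--Skolem-style descent to a Polish quotient where Souslin's theorem becomes available.
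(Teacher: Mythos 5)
Your reduction to countably many parameters breaks down at the very first step, and this is a genuine gap rather than a cosmetic one. You assert that the Borel $\sigma$-algebra on $S_x(\mathcal{G})$ is generated by the clopen sets $[\varphi]$, so that every Borel set lies in a countably-clopen-generated sub-$\sigma$-algebra. This is false when $S_x(\mathcal{G})$ is not second countable (which it never is over a monster model): the $\sigma$-algebra generated by \emph{all} clopens is the Baire $\sigma$-algebra, and it already fails to contain all closed sets. For instance, a singleton $\{p\}$ is closed, hence Borel, but if it belonged to the $\sigma$-algebra generated by countably many clopens $[\varphi_n]$ it would have to be an atom $\bigcap_n [\varphi_n]^{\epsilon_n}$ of that countable family, which by saturation it typically is not; the same issue arises for type-definable sets cut out by uncountably many formulas. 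So your L\"owenheim--Skolem descent only applies to Baire-measurable $f$, whereas the lemma is stated (and used, e.g.\ for the Borel-definable maps $F_{\mu,G'}^{\varphi'}$ in Lemma \ref{G:comp}) for arbitrary Borel $f$. A secondary issue: the countable elementary submodel $G_0$, and hence the metrizability of $\mathcal{G}/\mathcal{G}^{00}$, requires $\mathcal{L}$ to be countable, which the paper does not assume; passing to a countable sublanguage does not obviously help because $\mathcal{G}^{00}$ computed in a reduct need not agree with $\mathcal{G}^{00}$ in the full structure.

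The parts of your argument downstream of the reduction are fine (the equivalence with ``$\hat{\pi}^{-1}(B)$ Borel implies $B$ Borel,'' the metrizability of a compact Hausdorff continuous image of a compact metric space, and the Souslin two-sided-analytic argument), but the paper avoids the whole issue by quoting a non-metrizable descriptive-set-theoretic result: for a continuous (hence perfect) surjection $\hat{\pi}$ between compact Hausdorff spaces, Borelness of $f_\star \circ \hat{\pi}$ implies Borelness of $f_\star$ (Holick\'y--Spurn\'y, as explained in \cite[Theorem 2.1]{CGH}). To repair your proof you would either need to restrict the lemma to Baire functions and check that this suffices for all applications, or replace the countable-reduction step with such a general theorem --- at which point the metrizable detour is no longer doing any work.
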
 

\begin{proof} The map $\hat{\pi}:S_{x}(\mathcal{G}) \to \mathcal{G}/\mathcal{G}^{00}$ is a continuous surjective map between compact Hausdorff spaces. If the map $f = f_{\star} \circ \hat{\pi}$ is Borel, then $f_{\star}$ is Borel by \cite[Theorem 10]{holicky2003perfect} (see \cite[Theorem 2.1]{CGH} for an explanation).
\end{proof} 

\begin{lemma}\label{G:comp} Assume that $\mu \in \mathfrak{M}_{x}^{\inv}(\mathcal{G},G)$ is idempotent and $\mathcal{G}^{00}$-right-invariant. Suppose that $p \in \sup(\mu|_{G})$ and $a \models p$ in $\mathcal{G}$. Then $\mu(\varphi(x)) = \mu(\varphi(x \cdot a))$ for any $\varphi(x) \in \mathcal{L}_x(\mathcal{G})$. 
\end{lemma}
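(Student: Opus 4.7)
The plan is to reduce the desired equality to the statement $\pi(a) \in \stab(\pi_{*}(\mu))$ in the compact group $\mathcal{G}/\mathcal{G}^{00}$, and then to use $\mathcal{G}^{00}$-right-invariance of $\mu$ together with idempotence to transport the equality through the quotient map.

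First I would establish that $\pi(a) \in \stab(\pi_{*}(\mu))$. By Corollary \ref{cor:wendel}, $\pi_{*}(\mu)$ is an idempotent regular Borel probability measure on the compact group $\mathcal{G}/\mathcal{G}^{00}$, so Fact \ref{compact:corr} and Lemma \ref{lem: sup equals stab in compact grp} give $\supp(\pi_{*}(\mu)) = \stab(\pi_{*}(\mu))$. It thus suffices to show $\pi(a) \in \supp(\pi_{*}(\mu))$. Since (the Borel measure corresponding to) $\mu|_{G}$ is the pushforward of $\mu$ along the continuous restriction map $r_{G}: S_{x}(\mathcal{G}) \to S_{x}(G)$ and $\sup(\mu)$ is compact, one has $\sup(\mu|_{G}) = r_{G}(\sup(\mu))$. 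Choosing $q \in \sup(\mu)$ with $q|_{G} = p$, Proposition \ref{useful:lemma}(i) gives $\hat{\pi}(q) \in \supp(\pi_{*}(\mu))$; and since $\hat{\pi}(q)$ depends only on $q|_{G}$ (Fact \ref{group:fact}(iii)), $\hat{\pi}(q) = \pi(a)$.

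Next I would show that the Borel-definable function $H : S_{y}(\mathcal{G}) \to [0,1]$ defined by $H(q) := \mu(\varphi(x \cdot b))$ (for any/some $b \models q$) factors through $\hat{\pi}$ as $H = H_{\star} \circ \hat{\pi}$. Given $q, q' \in S_{y}(\mathcal{G})$ with $\hat{\pi}(q) = \hat{\pi}(q')$ and realizations $b \models q$, $b' \models q'$, we have $h := b' \cdot b^{-1} \in \mathcal{G}^{00}$; setting $\psi(x) := \varphi(x \cdot b)$, one has $\psi(x \cdot h) \equiv \varphi(x \cdot b')$, so right-$\mathcal{G}^{00}$-invariance of $\mu$ yields
\[
\mu(\varphi(x \cdot b')) = \mu(\psi(x \cdot h)) = \mu(\psi(x)) = \mu(\varphi(x \cdot b)).
\]
The induced map $H_{\star}$ is then Borel by Lemma \ref{Borel:quotient}.

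Finally, combining idempotence of $\mu$, the change-of-variables formula for pushforwards, and Lemma \ref{quot:triv}:
\[
\mu(\varphi(x)) = (\mu \ast \mu)(\varphi(x)) = \int_{S_{y}(\mathcal{G})} H \, d\mu = \int_{\mathcal{G}/\mathcal{G}^{00}} H_{\star} \, d\pi_{*}(\mu),
\]
while, using that $b \models q$ implies $b \cdot a \models q \cdot a$ so $H(q \cdot a) = \mu(\varphi(x \cdot b \cdot a))$,
\[
\mu(\varphi(x \cdot a)) = \int H(q \cdot a) \, d\mu(q) = \int H_{\star}(\hat{\pi}(q) \cdot \pi(a)) \, d\mu(q) = \int H_{\star}(\mathbf{y} \cdot \pi(a)) \, d\pi_{*}(\mu)(\mathbf{y}),
\]
and this last integral equals $\int H_{\star} \, d\pi_{*}(\mu)$ because $\pi(a) \in \stab(\pi_{*}(\mu))$. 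The main technical point to watch is keeping the left/right multiplications consistent when verifying in Step 2 that $H$ descends through $\hat{\pi}$, since $\mu$ is only assumed right-invariant under $\mathcal{G}^{00}$; one must choose $h = b' \cdot b^{-1}$ (rather than $b^{-1} \cdot b'$) so that $\psi(x \cdot h)$ reproduces $\varphi(x \cdot b')$ and the right-invariance hypothesis applies cleanly.
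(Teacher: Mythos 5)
Your proposal is correct and follows essentially the same route as the paper's proof: both use idempotence to write $\mu(\varphi(x))$ and $\mu(\varphi(x\cdot a))$ as integrals of the fiber functions of the convolution, use $\mathcal{G}^{00}$-right-invariance together with normality of $\mathcal{G}^{00}$ to factor those functions through $\hat{\pi}$, and then conclude by right-translation-invariance of the idempotent measure $\pi_{*}(\mu)$ under $\pi(a)\in\supp(\pi_{*}(\mu))$. The only cosmetic difference is that the paper introduces two factor maps $f_{\star},h_{\star}$ and invokes Fact \ref{int:comp} for the Haar measure on the closed subgroup $\supp(\pi_*(\mu))$, whereas you work with a single function $H_{\star}$ and invoke $\supp(\pi_{*}(\mu))=\stab(\pi_{*}(\mu))$ via Lemma \ref{lem: sup equals stab in compact grp}; these are interchangeable.
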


\begin{proof} Fix $p \in \sup(\mu|_{G})$, $\varphi(x) \in \mathcal{L}_x(\mathcal{G})$ and $a \in \mathcal{G}$ such that $a \models p$. Fix a small model $G' \prec \mathcal{G}$ such that $G'$ contains $G$, $a$, and all of the parameters of $\varphi$. Let $r:S_{y}(\mathcal{G}) \to S_{y}(G'), q \mapsto q|_{G'}$ be the restriction map. Since $\mu$ is idempotent,
\begin{equation*} \mu(\varphi(x \cdot a)) = \mu * \mu (\varphi(x \cdot a)) = \int_{S_{y}(G')} F_{\mu,G'}^{\varphi_{a}'}d\mu_{G'} = \int_{S_{y}(\mathcal{G})} \left( F_{\mu,G'}^{\varphi_{a}'} \circ r \right) d\mu,
\end{equation*} 
\noindent where $\varphi_a(x) := \varphi(x \cdot a)$, hence $\varphi_a'(x,y) = \varphi(x \cdot y \cdot a)$ and $F_{\mu,G'}^{\varphi_{a}'}(q) = \mu(\varphi(x\cdot c \cdot a))$ for some/any $c \models q$ (see Definition \ref{def: def conv}).   Let $f := F_{\mu,G'}^{\varphi_{a}'} \circ r$ and $h := F_{\mu,G'}^{\varphi'} \circ r$.

~

\noindent \textbf{Claim 1.} Both $f$ and $h$ factor through $\hat{\pi}:S_{y}(\mathcal{G}) \to \mathcal{G}/\mathcal{G}^{00}$. 

\begin{proof} The proofs are essentially the same, so we only show that $f$ factors through $\hat{\pi}$. Fix $q_{1},q_{2} \in S_{y}(\mathcal{G})$ with $\hat{\pi}(q_{1}) = \hat{\pi}(q_{2})$, we want to show that then $f(q_1) = f(q_2)$. Let $b_1, b_2 \in \mathcal{G}$ be such that $b_1 \models r(q_1)$ and $b_2 \models r(q_2)$, then $\pi(b_1) = \pi(b_2)$. Since $\mathcal{G}^{00}$ is a normal subgroup of $\mathcal{G}$, we then have $b_{1} = d \cdot b_2$ for some $d \in \mathcal{G}^{00}$.
Hence 
\begin{equation*} f(q_1) = \left( F_{\mu,G'}^{\varphi_{a}'} \circ r \right)(q_1) = \mu(\varphi(x \cdot b_{1} \cdot a)) = \mu(\varphi(x \cdot d \cdot b_2 \cdot a)).
\end{equation*} 
And since $\mu$ is $\mathcal{G}^{00}$-right-invariant, we have $\mu(\psi(x \cdot d)) = \mu(\psi(x))$ for  $\psi(x) := \varphi(x \cdot b_2 \cdot a)$, that is 
\begin{equation*}  \mu(\varphi(x \cdot d \cdot b_2 \cdot a)) = \mu(\varphi(x \cdot b_2 \cdot a)) = \left( F_{\mu,G'}^{\varphi_{a}'} \circ r \right)(q_2) = f(q_2). \qedhere
\end{equation*} 
\end{proof}

\noindent We let $f_{\star}$ and $h_{\star}$ be the associated factor maps from $\mathcal{G}/\mathcal{G}^{00}$ to $\mathbb{R}$.

~

\noindent \textbf{Claim 2.} We have $h_{\star} \cdot \pi(a) =  f_{\star}$, where $h_{\star} \cdot \pi(a): \mathcal{G}/\mathcal{G}^{00} \to \mathbb{R}$ is the function defined by $(h_{\star} \cdot \pi(a))(\mathbf{b}) := h_{\star} (\mathbf{b} \cdot \pi(a))$ for any $\mathbf{b} \in \mathcal{G}/\mathcal{G}^{00}$. 

\begin{proof}
Fix $\mathbf{b} \in \mathcal{G}/\mathcal{G}^{00}$ and $b \in \mathcal{G}$ such that $\pi(b) = \mathbf{b}$. Then
\begin{gather*}(h_{\star} \cdot \pi(a))(\mathbf{b}) = (h_{\star})(\mathbf{b} \cdot \pi(a)) = (h_{\star})(\pi(b \cdot a)) = \left( F_{\mu,G'}^{\varphi'} \circ r \right) \left( \tp(b \cdot a/ \mathcal{G}) \right)\\
 = F_{\mu,G'}^{\varphi'}(\tp(b\cdot a/G')) = \mu(\varphi(x \cdot b \cdot a)) =  F_{\mu,G'}^{\varphi'_a} \left( \tp(b/G') \right) = f_{\star}(\mathbf{b}).\qedhere
\end{gather*} 
\end{proof}

\noindent \textbf{Claim 3.} $\mu(\varphi(x \cdot a)) = \mu(\varphi(x))$. 

\begin{proof}
The maps $f_{\star}, h_{\star}: \mathcal{G}/\mathcal{G}^{00} \to \mathbb{R}$ are Borel by Lemma \ref{Borel:quotient}. By assumption $a \models p$ with $p \in \sup(\mu|_G)$. Then there exists $\hat{p} \in \sup(\mu)$ such that $\hat{p}|_{G} = p$ (see e.g.~\cite[Proposition 2.8]{ChGan}). By Proposition \ref{useful:lemma}(i) we then have $\pi(a) = \hat{\pi} \left( \hat{p} \right) \in \supp(\pi_{*}(\mu))$. The measure $\pi_{*}(\mu)$ is idempotent by Corollary \ref{cor:wendel}. Applying Fact  \ref{int:comp} (to the compact group $\mathcal{G}/\mathcal{G}^{00}$ and its closed subgroup $\supp(\pi_{*}(\mu)) \ni \pi(a)$) we get 
$$\int_{\mathcal{G}/\mathcal{G}^{00}}  \left( h_{\star} \cdot \pi(a) \right) d\pi_{*}(\mu) = \int_{\mathcal{G}/\mathcal{G}^{00}} h_{\star} d\pi_{*}(\mu).$$
Using this and Claim 2 we have the following computation: 
\begin{gather*} \mu(\varphi(x \cdot a)) = \left( \mu * \mu \right)(\varphi(x \cdot a)) \\
= \int_{S_{y}(\mathcal{G})} f d\mu = \int_{\mathcal{G}/\mathcal{G}^{00}} f_\star d \pi_{*}(\mu) = \int_{\mathcal{G}/\mathcal{G}^{00}}  \left( h_{\star} \cdot \pi(a) \right) d\pi_{*}(\mu) = \int_{\mathcal{G}/\mathcal{G}^{00}} h_{\star} d\pi_{*}(\mu)\\
 = \int_{S_{y}(\mathcal{G})} h d\mu = \int_{S_{y}(G')} F_{\mu,G'}^{\varphi'} d\mu_{G'}  = \left( \mu * \mu \right) (\varphi(x)) = \mu(\varphi(x)).\qedhere
\end{gather*} 
\end{proof}
\noindent This concludes the proof of Lemma \ref{G:comp}.
\end{proof}

\begin{lemma}\label{lemma:sup} Suppose that $\mathbf{g} \in \supp(\pi_{*}(\mu))$. Then there exists some $p \in \sup(\mu|_{G})$ such that for any $b \models p$ we have $\pi(b) = \mathbf{g}$. 
\end{lemma}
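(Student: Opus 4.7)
The plan is to reformulate the conclusion in terms of the restricted measure $\mu|_G$ and the continuous factor map $f:S_x(G) \to \cG/\cG^{00}$ from the proof of Fact \ref{group:fact}(iii), and then derive a contradiction from compactness of the relevant fiber.

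First I would record the reformulation. By Fact \ref{group:fact}(iii), $\hat\pi$ factors as $\hat\pi = f \circ r_G$, where $r_G: S_x(\cG) \to S_x(G)$ is the restriction map and $f: S_x(G) \to \cG/\cG^{00}$ is a continuous map satisfying $f(p) = \pi(a)$ for some (equivalently any) realization $a \models p$. In particular, the condition ``for any $b \models p$, $\pi(b) = \mathbf{g}$'' is the same as $f(p) = \mathbf{g}$. Moreover, unwinding the push-forward gives $\pi_*(\mu) = f_*(\mu|_G)$ as Borel measures on $\cG/\cG^{00}$, so $\mathbf{g} \in \supp(\pi_*(\mu))$ is equivalent to saying that every open neighborhood $W$ of $\mathbf{g}$ satisfies $\mu|_G(f^{-1}(W)) > 0$. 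The task thus becomes to find $p \in \sup(\mu|_G)$ lying in the fiber $F := f^{-1}(\mathbf{g})$.

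I would argue by contradiction: assume $F \cap \sup(\mu|_G) = \emptyset$. Since $f$ is continuous and $\{\mathbf{g}\}$ is closed in the Hausdorff space $\cG/\cG^{00}$, the set $F$ is closed, hence compact in $S_x(G)$. For each $q \in F$, since $q \notin \sup(\mu|_G)$, there is a clopen neighborhood $V_q$ of $q$ in $S_x(G)$ with $\mu|_G(V_q) = 0$; by compactness of $F$, finitely many such neighborhoods cover $F$, so $V := \bigcup_{i \le n} V_{q_i}$ is an open set containing $F$ with $\mu|_G(V) = 0$.

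The main step is to promote this to vanishing of $\pi_*(\mu)$ on an open neighborhood of $\mathbf{g}$. For this I would use that $f$ is a closed map (being continuous between compact Hausdorff spaces): the image $f(S_x(G) \setminus V)$ is a closed subset of $\cG/\cG^{00}$ that does not contain $\mathbf{g}$ (since $F \subseteq V$), so
\[ W := \cG/\cG^{00} \setminus f(S_x(G) \setminus V) \]
is an open neighborhood of $\mathbf{g}$ with $f^{-1}(W) \subseteq V$. Consequently,
\[ \pi_*(\mu)(W) = f_*(\mu|_G)(W) = \mu|_G(f^{-1}(W)) \le \mu|_G(V) = 0, \]
contradicting $\mathbf{g} \in \supp(\pi_*(\mu))$. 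I do not expect a serious obstacle here; the only point requiring care is the closedness of $f$ (used to separate $\mathbf{g}$ from the image of the complement of $V$), but this is an immediate consequence of $S_x(G)$ being compact and $\cG/\cG^{00}$ being Hausdorff. Notably, this argument does not invoke idempotence or $\cG^{00}$-right-invariance of $\mu$, so it holds for arbitrary $\mu \in \mathfrak{M}_x(\cG)$.
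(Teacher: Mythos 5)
Your proof is correct, but it takes a genuinely different route from the paper's. The paper argues directly: for each open neighborhood $U$ of $\mathbf{g}$ it picks a point $p_U \in \supp(\tilde{\mu}) \cap [\pi^{-1}(U)]$ (possible since $\pi_*(\mu)(U) > 0$), forms the net $(p_U)_{\mathbf{g} \in U}$ indexed by the directed family of neighborhoods of $\mathbf{g}$, extracts a convergent subnet inside the compact set $\supp(\tilde{\mu})$, and observes that the limit $q$ satisfies $\hat{\pi}(q) = \mathbf{g}$ by continuity of $\hat{\pi}$; it then restricts $q$ to $G$. You instead prove the contrapositive: you push everything down to $S_x(G)$ via the factorization $\hat{\pi} = f \circ r_G$ (correctly noting $\pi_*(\mu) = f_*(\mu|_G)$, which is a clean reformulation the paper leaves implicit), cover the compact fiber $f^{-1}(\mathbf{g})$ by finitely many null clopen sets, and use that $f$ is a closed map to convert the resulting null open set $V$ into a null open neighborhood of $\mathbf{g}$, contradicting $\mathbf{g} \in \supp(\pi_*(\mu))$. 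Both arguments are instances of the standard fact that $\supp(f_* \mu) \subseteq f(\supp \mu)$ for a continuous map between compact Hausdorff spaces; the paper's version buys a direct construction of the witnessing type at the cost of a subnet extraction, while yours trades nets for a finite subcover plus the closed-map lemma and makes explicit that neither idempotence nor $\cG^{00}$-invariance of $\mu$ is used (which is also true of the paper's proof, though unstated). All steps in your argument check out, including the identification of $\sup(\mu|_G)$ with the topological support of the Borel extension of $\mu|_G$ and the measure-zero computation $\pi_*(\mu)(W) \leq \mu|_G(V) = 0$.
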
 

\begin{proof} We use the fact that $\pi_{*}: \mathcal{M}(S_{x}(\mathcal{G})) \to \mathcal{M}(\mathcal{G}/\mathcal{G}^{00})$ is a push-forward map. Let $\tilde{\mu}$ be the unique extension of $\mu$ to a regular Borel probability measure on $S_{x}(\mathcal{G})$.  Let $\mathbf{g} \in \supp(\pi_{*}(\mu))$ and let $U \subseteq \mathcal{G}/\mathcal{G}^{00}$ be an open set containing $\mathbf{g}$. Since $\mathbf{g} \in \supp(\pi_{*}(\mu))$, we have that $0 < \pi^{*}(\mu)(U) = \tilde{\mu} \left( [\pi^{-1}(U)] \right)$. Then there exists some $p_{U} \in \supp(\tilde{\mu})$ such that $p_{U} \in [\pi^{-1}(U)]$. The collection of open sets in $\mathcal{G}/\mathcal{G}^{00}$ containing $\mathbf{g}$  forms a directed family under reverse inclusion, and we can consider the net $(p_{U})_{\mathbf{g} \in U}$. Since $\supp(\tilde{\mu})$ is closed and hence compact, there exists a convergent subnet $(q_{i})_{i \in I}$ with a limit in $\supp \left(\tilde{\mu} \right)$. Let $q := \lim_{i \in I} q_i$. By continuity of $\hat{\pi}:S_{x}(\mathcal{G}) \to \mathcal{G}/\mathcal{G}^{00}$, we have that $\hat{\pi}(q) = \mathbf{g}$. Since $\sup(\mu) = \supp(\tilde{\mu})$ we conclude that $q \in \sup(\mu)$. By definition of $\hat{\pi}$ we have $\hat{\pi}(q) = \pi(b)$ for any $b \models q|_{G}$, so the lemma holds with $p := q|_{G}$. 
\end{proof} 

\begin{theorem} Suppose that $\mu \in \mathfrak{M}_{x}^{\inv}(\mathcal{G},G)$ is idempotent and $\mathcal{G}^{00}$-right-invariant. Then: 
\begin{enumerate}
\item $\stab(\mu) = H_{\mathcal{L}}(\mu)$ (see Definition \ref{def:wendel}); 
\item $\stab(\mu)$ is a type-definable subgroup of $\mathcal{G}$; 
\item $(\stab(\mu),\mu)$ is an amenable pair. 
\end{enumerate} 
\end{theorem}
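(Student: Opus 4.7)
The plan is to deduce all three conclusions from the stabilizer correspondence on the compact quotient $\mathcal{G}/\mathcal{G}^{00}$ together with the lemmas already established in this section. The key observation is that $\pi_{*}(\mu)$ is an idempotent Borel probability measure on the compact group $\mathcal{G}/\mathcal{G}^{00}$ by Corollary \ref{cor:wendel}, so the classical Kawada--Ito/Wendel theorem (Fact \ref{compact:corr}) together with Lemma \ref{lem: sup equals stab in compact grp} gives us $\stab(\pi_{*}(\mu))=\supp(\pi_{*}(\mu))$. The whole argument is about transferring this classical equality back to $\mathcal{G}$ via the push-forward, using $\mathcal{G}^{00}$-right-invariance of $\mu$ to bridge the gap between $a$ and elements of $\sup(\mu|_G)$.

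For (1), I would prove both inclusions in $\stab(\mu)=H_{\mathcal{L}}(\mu)$ separately. The inclusion $\stab(\mu)\subseteq H_{\mathcal{L}}(\mu)$ is immediate: by Proposition \ref{useful:lemma}(iv), if $a\in\stab(\mu)$ then $\pi(a)\in\stab(\pi_{*}(\mu))=\supp(\pi_{*}(\mu))$, hence $a\in H_{\mathcal{L}}(\mu)$ by definition. For the reverse inclusion, take $a\in H_{\mathcal{L}}(\mu)$, so $\pi(a)\in\supp(\pi_{*}(\mu))$. By Lemma \ref{lemma:sup}, there exists $p\in\sup(\mu|_G)$ such that every realization $b\models p$ satisfies $\pi(b)=\pi(a)$. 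Pick such a $b$; Lemma \ref{G:comp} then gives $b\in\stab(\mu)$. Writing $a=(ab^{-1})\cdot b$, the element $ab^{-1}$ lies in $\mathcal{G}^{00}$ since $\pi(a)=\pi(b)$, and $\mathcal{G}^{00}\subseteq\stab(\mu)$ by the $\mathcal{G}^{00}$-right-invariance assumption on $\mu$; since $\stab(\mu)$ is a subgroup, $a\in\stab(\mu)$.

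For (2), $H_{\mathcal{L}}(\mu)=\pi^{-1}(\supp(\pi_{*}(\mu)))$ is type-definable by Fact \ref{group:fact}(ii) because $\supp(\pi_{*}(\mu))$ is closed in $\mathcal{G}/\mathcal{G}^{00}$; combined with (1), this shows $\stab(\mu)$ is type-definable. For (3), we need to verify the three clauses of Definition \ref{def: stab, def amen}(3): $\stab(\mu)$ is type-definable by (2); $\mu$ is trivially $\stab(\mu)$-right-invariant by definition of the stabilizer; and $\tilde{\mu}([\stab(\mu)])=\tilde{\mu}([H_{\mathcal{L}}(\mu)])=1$ by Proposition \ref{useful:lemma}(iii) together with (1). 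Hence $(\stab(\mu),\mu)$ is an amenable pair.

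Essentially all the work has been done in the preceding lemmas, so there is no serious obstacle left. The one step where one must be careful is the $H_{\mathcal{L}}(\mu)\subseteq\stab(\mu)$ direction: we only know that \emph{realizations} of types in $\sup(\mu|_G)$ stabilize $\mu$ (via Lemma \ref{G:comp}), so passing from an arbitrary $a\in H_{\mathcal{L}}(\mu)$ to such a realization requires the $\mathcal{G}^{00}$-right-invariance hypothesis to absorb the coset representative difference. This is precisely where the hypothesis is used.
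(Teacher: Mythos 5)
Your proof is correct and follows essentially the same route as the paper's: the hard inclusion $H_{\mathcal{L}}(\mu)\subseteq\stab(\mu)$ is obtained exactly as in the paper via Lemma \ref{lemma:sup}, Lemma \ref{G:comp} and $\mathcal{G}^{00}$-right-invariance, and parts (2) and (3) are read off in the same way. The only (harmless) difference is in the easy inclusion: you derive $\stab(\mu)\subseteq H_{\mathcal{L}}(\mu)$ from Proposition \ref{useful:lemma}(iv) together with Lemma \ref{lem: sup equals stab in compact grp} on the compact quotient, whereas the paper sidesteps this direction entirely by invoking Proposition \ref{pair} (a measure-one disjointness argument) once $\mu([H_{\mathcal{L}}(\mu)])=1$ and $H_{\mathcal{L}}(\mu)\subseteq\stab(\mu)$ are known.
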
 

\begin{proof} (1) As $H_{\mathcal{L}}(\mu)$ is a type-definable subgroup of $\mathcal{G}$, by Proposition \ref{pair} it suffices to show that $\mu$ is $H_{\mathcal{L}}(\mu)$-right-invariant and $\mu \left( [H_{\mathcal{L}}(\mu)] \right) = 1$. By Proposition \ref{useful:lemma}(iii) we have $\mu([H_{\mathcal{L}}(\mu)]) = 1$, so it remains to show that $H_{\mathcal{L}}(\mu) \subseteq \stab(\mu)$. Fix $a \in H_{\mathcal{L}}(\mu)$. Then $\mathbf{g} := \pi(a) \in \supp(\pi_{*}(\mu))$ by Proposition \ref{useful:lemma}(i). By Lemma \ref{lemma:sup}, there exists some $p \in \sup(\mu|_{G})$ and $b \models p$ such that $\pi(b) = \mathbf{g}$. In particular $a \cdot \mathcal{G}^{00} = b \cdot \mathcal{G}^{00}$,  so $a = c \cdot b$ for some $c \in \mathcal{G}^{00}$. Now we have 
\begin{equation*} \mu(\varphi(x \cdot a)) = \mu(\varphi(x \cdot c \cdot b)) = \mu(\varphi(x \cdot b)) = \mu(\varphi(x)).
\end{equation*} 
The second equality follows from the fact that $\mu$ is $\mathcal{G}^{00}$-right-invariant and the fourth equality follows from Lemma \ref{G:comp}. 

Statement (2) follows from the fact that $\stab(\mu) = \mathcal{H}_{\mathcal{L}}(\mu)$ and $\mathcal{H}_{\mathcal{L}}(\mu)$ is type-definable. Statement (3) follows since $\mu([\stab(\mu)]) = \mu([\mathcal{H}_{\mathcal{L}}(\mu)]) = 1$. 
\end{proof}

\section{The structure of convolution semigroups}\label{sec: struct of conv semigrps}

By Fact \ref{fac: conv is a semigroup in NIP}, if $T$ is an NIP theory expanding a group, then both $(\mathfrak{M}^{\inv}_{x}(\mathcal{G},G),*)$ and $(\mathfrak{M}^{\fs}_{x}(\mathcal{G},G),*)$ are left-continuous compact Hausdorff  semigroups (hence satisfy the assumption of Fact \ref{fact:Ellis}). In this section we describe some properties of the minimal left ideals and ideal groups which arise in this setting. Unlike the better studied case of the semigroup $\left( S_{x}^{\fs}(\mathcal{G},G), \ast \right)$, we demonstrate that the ideal subgroups of any minimal left ideal (in either $\mathfrak{M}^{\fs}_{x}(\mathcal{G},G)$ or $\mathfrak{M}^{\inv}(\mathcal{G},G)$) are always trivial, i.e.~isomorphic to the group with a single element. The following theorem summarizes the properties that we will prove in this section. 

\begin{theorem}\label{thm: main props of min ideals} Assume that $\mathcal{G}$ is NIP, and let $I$ be a minimal left ideal of $\mathfrak{M}_{x}^{\dagger}(\mathcal{G},G)$ (exists by Fact \ref{fact:Ellis}). Then we have the following.
\begin{enumerate}
\item $I$ is a closed convex subset of $\mathfrak{M}_{x}^{\dagger}(\mathcal{G},G)$ (Proposition \ref{convex:compact}). 
\item For any $\mu \in I$, $\pi_{*}(\mu) = h$, where $h$ is the normalized Haar measure on $\mathcal{G}/\mathcal{G}^{00}$ (Proposition \ref{haar}). 
\item If $\mathcal{G}/\mathcal{G}^{00}$ is non-trivial, then $I$ does not contain any types (Proposition \ref{prop: GmodG00 non triv no types}).
\item For any idempotent $u \in I$, we have $u * I \cong (e, \cdot)$. In other words, the ideal group is always trivial (Proposition \ref{trivial}). 
\item Every element of $I$ is an idempotent (Proposition \ref{mult}). 
\item If $\mu, \nu \in I$ then $\mu * \nu = \mu$ (Proposition \ref{mult}). 
\item For any $\mu \in I$, $I = \left\{\nu \in \mathfrak{M}_{x}^{\dagger}(\mathcal{G},G): \nu * \mu = \nu\right\}$ (Corollary \ref{needed}). 
\item For any definable measure $\nu \in \mathfrak{M}_{x}^{\dagger}(\mathcal{G},G)$ there exists a measure $\mu \in I$ such that $\nu * \mu = \mu$. In particular, for any $g \in G$ there exists a measure $\mu \in I$ such that $\delta_{g} * \mu = g \cdot \mu = \mu$ (Proposition \ref{prop: G not free action}). 
\item Assume that $\mathcal{G}$ is definably amenable.
\begin{enumerate} 
\item If $\dagger = \fs$, then $I = \{\nu\}$, where $\nu \in \mathfrak{M}_{x}^{\fs}(\mathcal{G},G)$ is a $G$-left-invariant measure (Proposition \ref{fs:I}). 
\item If $\dagger = \inv$, then
\begin{equation*} I = \left\{\mu \in \mathfrak{M}_{x}^{\inv}(\mathcal{G},G): \mu \text{ is } \mathcal{G}\text{-right-invariant}\right\}. 
\end{equation*} 
Moreover, $I$ is a two-sided ideal, and is the unique minimal left ideal (Proposition \ref{ideal:inv}). The set $\ex(I)$ of extreme points of $I$ is closed and equal to $\left\{ \mu_p : p \in S_x^{\inv}(\mathcal{G},G) \textrm{ is right }f\textrm{-generic} \right\}$, and $I$ is a Bauer simplex (Corollary \ref{cor: ex inv f-generic}).
\end{enumerate} 
\item If $\mathcal{G}$ is fsg and $\mu \in \mathfrak{M}_x(\mathcal{G})$ is the unique $\mathcal{G}$-left-invariant measure, then  $I = \{\mu\}$ is the unique minimal left (in fact, two-sided) ideal in both  $\mathfrak{M}^{\inv}_x(\mathcal{G},G)$ and $\mathfrak{M}^{\fs}_x(\mathcal{G},G)$ (Corollary \ref{cor: fsg}).
\item If $\mathcal{G}$ is not definably amenable, then the closed convex set $I$ has infinitely many extreme points (Remark \ref{prop: many idemp in non def am}).

\end{enumerate} 
\end{theorem}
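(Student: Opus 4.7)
The statement packages results from throughout Section \ref{sec: struct of conv semigrps}; the unified proof rests on three ideas---Ellis theory combined with left-affinity of $\ast$, the pushforward homomorphism of Theorem \ref{invar:conv}, and Schauder--Tychonoff applied to the compact convex set $I$---and the bulk of the work lies in establishing items (1) and (4). For (1), fix an idempotent $u \in \id(I)$ (Fact \ref{fact:Ellis}(1)); using $\mu \ast u = \mu$ for $\mu \in I$ (Fact \ref{fact:Ellis}(2)) together with left-affinity of $\ast$, any convex combination $\alpha \mu + (1{-}\alpha)\nu$ of elements of $I$ equals $(\alpha \mu + (1{-}\alpha)\nu) \ast u \in \mathfrak{M}^{\dagger}_x(\mathcal{G},G) \ast u \subseteq I$, and closedness is Fact \ref{fact:Ellis}. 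For (4), fix $\nu_0 \in u \ast I$ and consider $\Phi_{\nu_0}: I \to I$, $\mu \mapsto \mu \ast \nu_0$, which is continuous (Fact \ref{fac: conv is a semigroup in NIP}) and affine (by bilinearity of $\ast$). Since $I$ is compact and convex in the locally convex space of bounded charges, Schauder--Tychonoff yields a fixed point $\mu_0 \in I$ with $\mu_0 \ast \nu_0 = \mu_0$. Setting $\mu_0' := u \ast \mu_0 \in u \ast I$, we have $\mu_0' \ast \nu_0 = u \ast (\mu_0 \ast \nu_0) = \mu_0'$; cancelling in the group $u \ast I$ forces $\nu_0 = u$, so $u \ast I = \{u\}$. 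Combined with Fact \ref{fact:Ellis}(4) this yields (5); then (6) is Fact \ref{fact:Ellis}(2) applied with any $\nu \in I = \id(I)$ as the idempotent, and (7) follows from (6) together with $\nu \ast \mu \in I$ (left-ideal property).

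For items (2), (3), (8): by Theorem \ref{invar:conv}, $\pi_\ast$ is a continuous surjective semigroup homomorphism onto $(\mathcal{M}(\mathcal{G}/\mathcal{G}^{00}), \star)$, where Haar $h$ is two-sided absorbing ($h \star \eta = \eta \star h = h$), so $\pi_\ast(I)$ is a left ideal containing $h$. For any $\mu \in I$, (6) gives $\pi_\ast(\mu) \star \pi_\ast(\nu) = \pi_\ast(\mu)$ for all $\nu \in I$; choosing $\nu \in I$ with $\pi_\ast(\nu) = h$ and combining with the absorbing property yields $\pi_\ast(\mu) = h$, proving (2). For (3), a type $p \in I$ would force $\pi_\ast(\delta_p) = \delta_{\hat\pi(p)}$ (Lemma \ref{basic:lem}) to be Dirac, which cannot equal Haar when $\mathcal{G}/\mathcal{G}^{00}$ is non-trivial. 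For (8), the map $\mu \mapsto \nu \ast \mu$ for definable $\nu$ is continuous (Lemma \ref{lem: right cont for def meas}) and affine; Schauder--Tychonoff on $I$ gives $\mu_0 \in I$ with $\nu \ast \mu_0 = \mu_0$, whence $\nu^{(\ast n)} \ast \mu_0 = \mu_0$ by induction. Specializing to $\nu = \delta_g$ for $g \in G$ gives the second claim.

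For the definably amenable case (9), the set $I_0 := \{\mu \in \mathfrak{M}^{\inv}_x(\mathcal{G},G) : \mu \textrm{ is } \mathcal{G}\textrm{-right-invariant}\}$ is non-empty by definable amenability, is clearly a two-sided ideal, and equals $I$ by minimality once one shows $I \subseteq I_0$: this uses items (5)--(7) together with the analysis of Section \ref{sec: G00 inv idempt class} identifying each idempotent $\mu \in I$ with an invariant measure on its type-definable stabilizer, which in the definably amenable setting is forced to be all of $\mathcal{G}$. The Bauer simplex structure then comes from Fact \ref{fac: inv measures simplex} applied to the $\mathcal{G}$-right-translation action on $S^{\inv}_x(\mathcal{G},G)$, with extreme points identified as the lifts $\mu_p$ of Haar along $f$-generic $p$ via the pushforward analysis of Section \ref{sec: def conv vs conv}. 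Items (9)(a) and (10) are corollaries (restricting to $\mathfrak{M}^{\fs}$, and using uniqueness of the $\mathcal{G}$-invariant measure in the fsg case). For (11), the Chernikov--Simon result \cite{ChSi} provides an infinite ideal group $\cong \mathcal{G}/\mathcal{G}^{00}$ in the type semigroup, whose distinct elements lift to infinitely many extreme points of $I$. The main obstacle is the Schauder--Tychonoff argument in (4): this is precisely where the convex structure of measures (absent for types) forces the collapse of ideal subgroups, and it drives every downstream conclusion.
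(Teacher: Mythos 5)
Much of your outline is sound, and several items are handled by valid routes that genuinely differ from the paper's: for (4) you take a Tychonoff fixed point of $-\ast\nu_0$ on the compact convex set $I$ and cancel in the group $u\ast I$, whereas the paper takes an extreme point of $I$ via Krein--Milman and runs an elementary midpoint argument inside $u\ast I$ (Lemmas \ref{extreme} and \ref{exidempotent}, Proposition \ref{trivial}); for (2) you use that Haar is absorbing and that $\pi_{\ast}(I)$ is a left ideal, whereas the paper shows $\pi_{\ast}^{-1}(\{h\})$ is a two-sided ideal and applies Lemma \ref{two-sided}; for (8) a single fixed point plus induction legitimately replaces the paper's Markov--Kakutani argument on the commuting family. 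These all check out.

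There are, however, two genuine gaps. First, in (9)(b) the logic is inverted: since $I_0$ is a two-sided ideal, $I\subseteq I_0$ is the \emph{easy} direction (Lemma \ref{two-sided}), while the content is $I_0\subseteq I$, and minimality of $I$ tells you nothing about a set containing $I$. The paper gets $I_0\subseteq I$ from the computation that a $\mathcal{G}$-right-invariant $\mu$ satisfies $\mu\ast\nu=\mu$ for every $\nu$ (because $F^{\varphi'}_{\mu}$ is constant), whence $\mu=\mu\ast\nu\in I$ for $\nu\in I$ (Proposition \ref{ideal:inv}). Your proposed route for $I\subseteq I_0$ through Section \ref{sec: G00 inv idempt class} is also unusable as stated: the stabilizer theorem there assumes the idempotent is $\mathcal{G}^{00}$-right-invariant, which is not known for elements of $I$ --- it is essentially what is being proved. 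Second, your argument for (11) fails: the isomorphism $u\ast I\cong\mathcal{G}/\mathcal{G}^{00}$ from \cite{ChSi} is a theorem about \emph{definably amenable} NIP groups and is false outside that class (for $\SL_2(\mathbb{R})$ the ideal group of the type semigroup is $\mathbb{Z}/2\mathbb{Z}$ while $\mathcal{G}/\mathcal{G}^{00}$ is trivial), and by item (3) no type can even lie in $I$ when $\mathcal{G}/\mathcal{G}^{00}$ is non-trivial, so there is no mechanism for ``lifting'' type-level ideal group elements to extreme points of $I$. The paper's Remark \ref{prop: many idemp in non def am} argues instead that if $\ex(I)$ were finite, then since each $\delta_g\ast-$ permutes $\ex(I)$, the uniform average over $\ex(I)$ would be a $G$-left-invariant measure, contradicting non-definable-amenability. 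Finally, (9)(a) is not a formal corollary of (9)(b) by ``restriction'': a $\mathcal{G}$-right-invariant measure need not be finitely satisfiable in $G$, and Proposition \ref{fs:I} argues separately by approximating a finitely satisfiable $\nu$ by averages of realized types from $G$ and using only $G$-left-invariance of $\mu$.
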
 
\noindent We remark that $(5)$ and $(11)$ of Theorem \ref{thm: main props of min ideals} guarantee the existence of many idempotent measures in non-definably amenable NIP groups. All previous ``constructions" of idempotent measures either explicitly or implicitly use definable amenability or amenability of closed subgroups of $\mathcal{G}/\mathcal{G}^{00}$. A priori, the idempotent measures we find here have no connection to type-definable   subgroups.

\subsection{General structure}

Our first goal is to show that any minimal left ideal of $\mathfrak{M}_{x}^{\dagger}(\mathcal{G},G)$ is convex. We begin by showing that convolution is affine in both arguments, hence preserves convexity on both sides.
\begin{lemma}\label{convex:1} 
Assume $\mu, \lambda_1, \lambda_2 \in \mathfrak{M}_{x}^{\dagger}(\mathcal{G},G)$ and $r,s \in \mathbb{R}_{>0}$ with $r + s = 1$. We have:
\begin{enumerate}
\item $\left( r \lambda_1 + s \lambda_2 \right) \ast \mu = r (\lambda_1 \ast \mu) + s (\lambda_2 \ast \mu)$;
\item $\mu \ast \left( r \lambda_1 + s \lambda_2 \right) = r (\mu \ast \lambda_1) + s (\mu \ast \lambda_2)$;
	\item if $A \subseteq \mathfrak{M}_{x}^{\dagger}(\mathcal{G},G)$ is convex, then both $\mu * A$ and $A * \mu$ are convex. 

\end{enumerate}
\end{lemma} 

\begin{proof} Parts (1) and (2) were stated in \cite[Proposition 3.14(4)]{ChGan}, but no proof was provided there, so we take the opportunity to provide it here. 

(1) Fix a formula $\varphi(x) \in \mathcal{L}_{x}(\mathcal{G})$ and let $G'$ be a small model containing $G$ and the parameters of $\varphi$. Then 
\begin{gather*} 
\left( (r\lambda_1 + s\lambda_2) * \mu \right)(\varphi(x)) = \int_{S_{y}(G')} F_{r\lambda_1 + s\lambda_2}^{\varphi'} d\mu_{G'} =  \int_{S_{y}(G')} \left( rF_{\lambda_1}^{\varphi'} + sF_{\lambda_2}^{\varphi'} \right) d\mu_{G'} \\
= r \int_{S_{y}(G')} F_{\lambda_1}^{\varphi'}  d\mu_{G'} + s \int_{S_{y}(G')} F_{\lambda_2}^{\varphi'}  d\mu_{G'} = r\left(\lambda_1 * \mu\right)(\varphi(x)) +  s\left(\lambda_2 * \mu\right)(\varphi(x))\\
= \left(r (\lambda_1 \ast \mu) + s (\lambda_2 \ast \mu)\right)(\varphi(x)).
\end{gather*} 

(2) Fix a formula $\varphi(x) \in \mathcal{L}_{x}(\mathcal{G})$ and a small model $G'$ containing $G$ and the parameters of $\varphi$. Then the map $F_{\mu}^{\varphi'}:S_{y}(G') \to [0,1]$ is a bounded Borel function, hence for any $\varepsilon > 0$ there exist Borel subsets $B_1, \ldots, B_n$ of $S_{y}(G')$ and real numbers $k_1, \ldots, k_n$ such that 
\begin{equation*}
\sup_{q \in S_{y}(G)} \left \lvert F_{\mu}^{\varphi'}(q) - \sum_{i=1}^{n} k_i \mathbf{1}_{B_i}(q) \right \rvert < \varepsilon. 
\end{equation*} 
Now we compute the product:
\begin{gather*}
\left( \mu*(r\lambda_1 + s\lambda_2) \right) (\varphi(x)) = \int_{S_{y}(G')} F_{\mu}^{\varphi'} d(r\lambda_1 + s\lambda_2) \\
\approx_{\varepsilon} \int_{S_{y}(G')}  \left( \sum_{i=1}^{n} k_i\mathbf{1}_{B_i} \right) d(r\lambda_1 + s\lambda_2)
= r\sum_{i=1}^{n} k_i \lambda_1(B_i) + s\sum_{i=1}^{n} k_i\lambda_2(B_i) \\
= r\int_{S_{y}(G')}  \left( \sum_{i=1}^{n}k_i \mathbf{1}_{B_i} \right) d\lambda_1 + s\int_{S_{y}(G')} \left(  \sum_{i=1}^{n}k_i \mathbf{1}_{B_i} \right) d\lambda_2 \\
\approx_{\varepsilon} r \int_{S_{y}(G')}  F_{\mu}^{\varphi'} d\lambda_1 + s \int_{S_{y}(G')}  F_{\mu}^{\varphi'}d\lambda_2 =\left(r (\mu \ast \lambda_1) + s (\mu \ast \lambda_2) \right)(\varphi(x)).
\end{gather*} 

(3) We first prove that $A * \mu$ is convex. Let $\nu_1,\nu_2 \in A * \mu$ and $r,s \in \mathbb{R}_{>0}$ with $r + s =1$ be given, we need to show that $r\nu_1 + s\nu_2 \in A*\mu$. By assumption there exist some $\lambda_1,\lambda_2 \in A$ such that $\lambda_i * \mu = \nu_i$ for $i \in \{1,2\}$. Since $A$ is convex, we have that $r\lambda_{1} + s\lambda_{2} \in A$. It follows by (1) that $r \nu_1 + s \nu_2 = (r\lambda_1 + s\lambda_2) * \mu \in A \ast \mu$.

Now we prove that $\mu*A$ is convex. Similarly, let $\nu_1,\nu_2 \in \mu *A$ and $r,s \in \mathbb{R}_{>0}$ with $r + s =1$ be given, and let $\lambda_1,\lambda_2 \in A$ be such that $\mu * \lambda_i = \nu_i$. Consider the measure $r\lambda_1 + s\lambda_2 \in A$. It follows by (2) that $r\nu_1 + s\nu_2 = \mu*(r\lambda_1 + s\lambda_2) \in \mu \ast A$.
\end{proof} 

\begin{proposition}\label{convex:compact} If $I$ is a minimal left ideal in $\mathfrak{M}_{x}^{\dagger}(\mathcal{G},G)$, then $I$ is closed and convex. 
\end{proposition}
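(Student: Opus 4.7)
The plan is to deduce closedness directly from the general Ellis theory and reduce convexity to Lemma \ref{convex:1} via a suitable representation of $I$ as a translate of the whole semigroup.

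For closedness, first observe that by Fact \ref{fac: conv is a semigroup in NIP} the semigroup $\mathfrak{M}_{x}^{\dagger}(\mathcal{G},G)$ is a left-continuous compact Hausdorff semigroup, so it falls under the hypotheses of Fact \ref{fact:Ellis}. That fact asserts in particular that every minimal left ideal is closed, which takes care of this half of the statement with no further work.

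For convexity, the key step is to rewrite $I$ as a right-translate of the whole semigroup. I would fix any $\mu \in I$ and apply Fact \ref{fact:Ellis}(5) to conclude
\begin{equation*}
  \mathfrak{M}_{x}^{\dagger}(\mathcal{G},G) \ast \mu = I.
\end{equation*}
Now the ambient space $\mathfrak{M}_{x}^{\dagger}(\mathcal{G},G)$ is a convex subset of the space of Keisler measures (convex combinations of invariant, respectively finitely satisfiable, measures are again invariant, respectively finitely satisfiable). Applying Lemma \ref{convex:1} to $A := \mathfrak{M}_{x}^{\dagger}(\mathcal{G},G)$ and right-multiplication by $\mu$, we obtain that $A \ast \mu = I$ is convex.

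There is essentially no obstacle here, as the heavy lifting has already been done. The only mild point to verify is that the convexity of $\mathfrak{M}_{x}^{\dagger}(\mathcal{G},G)$ itself holds in both cases $\dagger \in \{\fs, \inv\}$, which is immediate since finite satisfiability in $G$ (respectively $\Aut(\mathcal{G}/G)$-invariance) is preserved under convex combinations. Thus both assertions follow with minimal bookkeeping.
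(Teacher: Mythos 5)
Your proposal is correct and follows exactly the paper's own argument: closedness from Fact \ref{fact:Ellis}, and convexity by writing $I = \mathfrak{M}_{x}^{\dagger}(\mathcal{G},G) \ast \mu$ via Fact \ref{fact:Ellis}(5) and applying Lemma \ref{convex:1}. Your explicit remark that $\mathfrak{M}_{x}^{\dagger}(\mathcal{G},G)$ is itself convex is a point the paper leaves implicit, but there is no substantive difference.
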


\begin{proof} Any minimal left ideal is closed by Fact \ref{fact:Ellis}. Choose $\mu \in I$. By Fact \ref{fact:Ellis}(5), we have $\mathfrak{M}_{x}^{\dagger}(\mathcal{G},G)*\mu = I$. By Lemma \ref{convex:1} and the convexity of $\mathfrak{M}_{x}^{\dagger}(\mathcal{G},G)$, $I$ is convex. 
\end{proof} 

\noindent We now consider the interaction between the push-forward map to $\mathcal{G}/\mathcal{G}^{00}$ and the minimal left ideal. The following lemma is standard.

\begin{lemma}\label{two-sided} Let $S$ be a semigroup, $L$ a minimal left ideal of $S$ and $H$ a two-sided ideal in $S$. Then $L \subseteq H$. 
\end{lemma}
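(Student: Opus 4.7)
The plan is to show that $L \cap H$ is a non-empty left ideal contained in $L$, and then invoke minimality of $L$ to conclude $L \cap H = L$, i.e.~$L \subseteq H$.

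First I would verify that $L \cap H$ is non-empty. Pick any $x \in L$ and any $y \in H$ (both non-empty by definition of an ideal). Since $H$ is a two-sided ideal, $HS \subseteq H$, hence $y \cdot x \in H$. Since $L$ is a left ideal, $SL \subseteq L$, hence $y \cdot x \in L$. Thus $y \cdot x \in L \cap H$.

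Next I would verify that $L \cap H$ is itself a left ideal of $S$: for any $z \in L \cap H$ and any $s \in S$, we have $s \cdot z \in SL \subseteq L$ and $s \cdot z \in SH \subseteq H$ (as $H$ is in particular a left ideal), so $s \cdot z \in L \cap H$.

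Finally, $L \cap H$ is a left ideal of $S$ contained in $L$, so by minimality of $L$ we must have $L \cap H = L$, which gives $L \subseteq H$. There is no real obstacle here; the argument is purely formal and uses only the two-sidedness of $H$ (to get an element in the intersection) together with the minimality of $L$.
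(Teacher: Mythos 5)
Your proof is correct and follows essentially the same route as the paper: intersect $L$ with $H$, check the intersection is a non-empty left ideal, and invoke minimality. Your choice of witness $y \cdot x$ with $y \in H$ on the left is the careful one, since it lands in $L$ via $SL \subseteq L$ and in $H$ via $HS \subseteq H$ (the paper's parenthetical writes the product in the other order, where membership in $L$ is less immediate).
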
 
\begin{proof}
	Note that $L' := L \cap H$ is non-empty (for $l \in L$ and $h \in H$, $h\cdot l \in L \cap H$) and is a left ideal (as an intersection of two left ideals). As $L' \subseteq L$, by minimality $ L = L' \subseteq H$.
\end{proof}

\begin{proposition}\label{haar} Let $I$ be a minimal left ideal in $\mathfrak{M}_{x}^{\dagger}(\mathcal{G},G)$. Then for every $\nu \in I$ we have $\pi_{*}^{\dagger}(\nu) = h$, where $h$ is the normalized Haar measure on $\mathcal{G}/\mathcal{G}^{00}$. 
\end{proposition}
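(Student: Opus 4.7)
The plan is to reduce the claim to a statement about the classical convolution semigroup on $\mathcal{G}/\mathcal{G}^{00}$, exploiting that $\pi_*^{\dagger}$ is a continuous surjective semigroup homomorphism (combining Theorem \ref{invar:conv}, Proposition \ref{surject}, and Remark \ref{rem: pi star is cont}). The key observation is that $\{h\}$ is already a two-sided ideal on the $\mathcal{G}/\mathcal{G}^{00}$ side, and that two-sided ideals on the target pull back to two-sided ideals on the source, so the result will follow from Lemma \ref{two-sided}.

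First, I would verify that $\{h\}$ is a two-sided ideal in $(\mathcal{M}(\mathcal{G}/\mathcal{G}^{00}), \star)$. Using the characterization of $\star$ via continuous test functions and the fact that compact groups are unimodular, for any $\mu \in \mathcal{M}(\mathcal{G}/\mathcal{G}^{00})$ and any continuous $f: \mathcal{G}/\mathcal{G}^{00} \to \mathbb{R}$, the computations
\begin{equation*}
\int f \, d(h \star \mu) = \int \!\! \int f(x \cdot y) \, dh(x) \, d\mu(y) = \int f \, dh
\end{equation*}
(by left-invariance of $h$, making the inner integral independent of $y$) and the symmetric one on the other side yield $h \star \mu = \mu \star h = h$.

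Next, I would set $J := (\pi_*^{\dagger})^{-1}(\{h\}) \subseteq \mathfrak{M}_x^{\dagger}(\mathcal{G}, G)$. By Proposition \ref{surject}, $J$ is non-empty. For any $\mu \in \mathfrak{M}_x^{\dagger}(\mathcal{G}, G)$ and $\nu \in J$, Theorem \ref{invar:conv} gives
\begin{equation*}
\pi_*^{\dagger}(\mu * \nu) = \pi_*^{\dagger}(\mu) \star \pi_*^{\dagger}(\nu) = \pi_*^{\dagger}(\mu) \star h = h,
\end{equation*}
so $\mu * \nu \in J$, and symmetrically $\nu * \mu \in J$. Thus $J$ is a two-sided ideal in $\mathfrak{M}_x^{\dagger}(\mathcal{G}, G)$.

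Finally, applying Lemma \ref{two-sided} to the minimal left ideal $I$ and the two-sided ideal $J$ yields $I \subseteq J$, which is exactly $\pi_*^{\dagger}(\nu) = h$ for every $\nu \in I$. The argument is essentially formal once the homomorphism and surjectivity properties of $\pi_*^{\dagger}$ are in place, so I do not foresee any significant obstacle; the only thing to be careful about is invoking the correct form of the convolution semigroup homomorphism (Theorem \ref{invar:conv} requires the NIP hypothesis, which holds throughout this section).
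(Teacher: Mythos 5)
Your proposal is correct and follows essentially the same route as the paper: both take $J := (\pi_*^{\dagger})^{-1}(\{h\})$, use surjectivity of $\pi_*^{\dagger}$ for non-emptiness, the homomorphism property (Theorem \ref{invar:conv}) together with $h \star \mu = \mu \star h = h$ to see that $J$ is a two-sided ideal, and conclude via Lemma \ref{two-sided}. The only difference is that you spell out the invariance computation for $h$, which the paper takes as known.
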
 

\begin{proof} Since $\pi_{*}^{\dagger}$ is surjective (Proposition \ref{surject}) and continuous (Remark \ref{rem: pi star is cont}), the set $A := \left( \pi_{*}^{\dagger} \right)^{-1}(\{h\})$ is a non-empty closed subset of $\mathfrak{M}_{x}^{\dagger}(\mathcal{G},G)$. Moreover, $A$ is a two-sided ideal: since $\pi_{*}^{\dagger}$ is a homomorphism (Theorem \ref{invar:conv}) and $h$ is both left and right invariant, for any $\mu \in A$ and $\nu \in \mathfrak{M}_{x}^{\dagger}(\mathcal{G},G)$ we have
\begin{equation*} \pi_{*}^{\dagger}(\nu * \mu) = \pi_{*}^{\dagger}(\nu) \star \pi_{*}^{\dagger}(\mu) = \pi_{*}^{\dagger}(\nu) \star h= h,
\end{equation*} 
and a similar computation also shows that $A$ is a right ideal. By Lemma \ref{two-sided} we have $I \subseteq A$, which completes the proof. 
\end{proof} 

\begin{definition} Let $\mu \in \mathfrak{M}_{x}(\mathcal{G})$. We say that $\mu$ is \emph{strongly continuous} if for every $\varepsilon > 0$, there exists a finite partition $\left\{[\psi(x)]\right\}_{i < n}$ of $S_{x}(\mathcal{G})$ with $\psi_i \in \mathcal{L}_x \left(\mathcal{G} \right)$ such that $\mu(\psi(x)) < \varepsilon$ for all $i < n$. 
\end{definition}

\begin{proposition}\label{prop: GmodG00 non triv no types} Let $I$ be a minimal left ideal in $\mathfrak{M}_{x}^{\dagger}(\mathcal{G},G)$.
\begin{enumerate}
\item If $\mathcal{G}/\mathcal{G}^{00}$ is non-trivial, then $I$ does not contain any types.
\item If $\mathcal{G}/\mathcal{G}^{00}$ is infinite, then every measure in $I$ is strongly continuous.
\end{enumerate} 
\end{proposition}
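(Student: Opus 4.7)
The plan is to reduce both statements to facts about the normalized Haar measure $h$ on $\mathcal{G}/\mathcal{G}^{00}$, leveraging Proposition \ref{haar}: every $\nu \in I$ satisfies $\pi_{*}^{\dagger}(\nu) = h$.

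For (1), I would argue by contradiction, supposing that $\delta_p \in I$ for some $p \in S_x^{\dagger}(\mathcal{G},G)$. Combining Lemma \ref{basic:lem}(ii) with Proposition \ref{haar} gives $h = \pi_{*}^{\dagger}(\delta_p) = \delta_{\hat{\pi}(p)}$. But the Haar measure on a non-trivial compact group is never a Dirac measure: if $h = \delta_g$ for some $g \in \mathcal{G}/\mathcal{G}^{00}$, then left-translation invariance yields $\delta_g = \delta_{g' \cdot g}$ for every $g'$, forcing $g' = e$, which contradicts non-triviality of $\mathcal{G}/\mathcal{G}^{00}$.

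For (2), fix $\nu \in I$ and $\varepsilon > 0$; the goal is to produce a finite partition $\{[\theta_l(x)]\}_{l \leq k}$ of $S_x(\mathcal{G})$ by formulas with $\nu(\theta_l) < \varepsilon$ for each $l$. The first key step is to observe that $h$ has no atoms when $\mathcal{G}/\mathcal{G}^{00}$ is infinite: any point mass at some $g$ would, by translation invariance, assign every element the same positive mass, forcing $|\mathcal{G}/\mathcal{G}^{00}| \leq 1/h(\{g\}) < \infty$. Outer regularity of the Radon measure $h$ on the compact Hausdorff group $\mathcal{G}/\mathcal{G}^{00}$ then yields an open neighborhood $V$ of the identity with $h(V) < \varepsilon$. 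By compactness, finitely many left translates $g_1 V, \ldots, g_n V$ cover $\mathcal{G}/\mathcal{G}^{00}$, and each has $h$-measure strictly less than $\varepsilon$ by translation invariance.

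The remaining step is to pull this cover back through $\hat{\pi} \colon S_x(\mathcal{G}) \to \mathcal{G}/\mathcal{G}^{00}$. Each $\hat{\pi}^{-1}(g_i V) \subseteq S_x(\mathcal{G})$ is open, hence a union of basic clopen sets $[\psi_{i,j}(x)]$ with $\psi_{i,j} \in \mathcal{L}_x(\mathcal{G})$; the collection of all such clopens covers the compact space $S_x(\mathcal{G})$, so finitely many $[\psi_{i_1, j_1}], \ldots, [\psi_{i_k, j_k}]$ already suffice. I would then refine to a clopen partition by taking $\theta_l := \psi_{i_l, j_l} \wedge \bigwedge_{l' < l} \neg \psi_{i_{l'}, j_{l'}}$, so that $[\theta_l] \subseteq \hat{\pi}^{-1}(g_{i_l} V)$ and therefore $\nu(\theta_l) \leq \pi_{*}^{\dagger}(\nu)(g_{i_l} V) = h(g_{i_l} V) < \varepsilon$, completing the proof. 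I do not expect any substantial obstacle; the only mildly delicate point is the atomlessness step, where outer regularity is needed to pass from ``$h$ has no point mass at $e$'' to ``there is an arbitrarily small open neighborhood of $e$.''
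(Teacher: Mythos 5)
Your part (1) is essentially the paper's argument: push $\delta_p$ forward via Lemma \ref{basic:lem}(ii), note it is a Dirac measure, and contradict Proposition \ref{haar}; you merely add the (correct) one-line verification that Haar measure on a non-trivial compact group cannot be Dirac. Your part (2) is also correct, but it takes a genuinely different route. The paper argues by contraposition using a decomposition theorem for charges (\cite[Theorem 5.2.7]{Charges}): any $\nu$ that is not strongly continuous splits as $r_0\mu_0 + \sum_i r_i \delta_{p_i}$ with $\mu_0$ strongly continuous and some $r_{i^*}>0$, and then affineness of $\pi_*$ forces $\pi_*(\nu)$ to have an atom at $\hat{\pi}(p_{i^*})$, contradicting $\pi_*(\nu)=h$ since Haar measure on an infinite compact group is atomless. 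You instead give a direct construction: atomlessness plus outer regularity of $h$ yields a small open $V \ni e$, compactness gives a finite cover by translates $g_iV$ each of $h$-measure $<\varepsilon$, and pulling back through $\hat{\pi}$ and refining a finite clopen subcover into a partition produces exactly the formulas required by the definition of strong continuity, with $\nu(\theta_l) \le \pi_*(\nu)(g_{i_l}V) = h(g_{i_l}V) < \varepsilon$. Your version proves the stronger, quantitative statement directly and avoids the external decomposition theorem, at the cost of a slightly longer covering argument; the paper's version is shorter given the cited fact. Both hinge on Proposition \ref{haar} and atomlessness of $h$, and I see no gap in either step of your write-up.
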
 

\begin{proof}
(1) By Lemma \ref{basic:lem}(2) we have $\pi_{*}^{\dagger}(\delta_{p}) = \delta_{\hat{\pi}(p)}$, which does not equal the normalized Haar measure on $\mathcal{G}/\mathcal{G}^{00}$ when it is non-trivial. This contradicts Proposition \ref{haar}.

(2) If $\mathcal{G}/\mathcal{G}^{00}$ is infinite then the normalized Haar measure $h$ on $\mathcal{G}/\mathcal{G}^{00}$ is zero on every point. Suppose that $\nu \in \mathfrak{M}_{x}^{\dagger}(\mathcal{G},G)$ is not strongly continuous. By compactness and \cite[Theorem 5.2.7]{Charges},  $\nu$ can be written as 
\begin{equation*} \nu = r_0 \mu_0 + \sum_{i \in \omega} r_i \delta_{p_i},
\end{equation*} 
where $\mu_0 \in \mathfrak{M}_{x}^{\dagger}(\mathcal{G},G)$ is strongly continuous, $r_i \in [0,1]$ and $p_i \in S_x^{\dagger} \left(\mathcal{G},G \right)$ for each $i \in \omega$, and $\sum_{i \in \omega} r_i = 1$.  We then must have $r_{i^*} > 0$ for some $i^* \in \omega \backslash \{0\}$. Since the push-forward map is affine (Remark \ref{rem: pi star is cont}), we have
\begin{equation*} 
\pi_{*}(\nu) = r_0\pi_{*}(\mu_0) + \sum_{i\in \omega} r_i \delta_{\hat{\pi}(p_i)}. 
\end{equation*}
Hence $\pi_{*}(\nu) \left(\left\{\hat{\pi}(p_{i^*})\right\} \right) = r_{i^*} > 0$, so $\pi_{*}(\nu) \neq h$,  contradicting Proposition \ref{haar}.
\end{proof} 

We now show that the ideal subgroup of any minimal left ideal is trivial. A related result appears in \cite[Theorem 3]{CC}, but we are working in a semigroup which is only left-continuous. Our proof is a generalization of the proof that there do not exist any non-trivial convex compact groups and  follows \cite[Lemmas 3.1 and 3.2]{Murphy}. In particular, compactness is used only to get an extreme point in \textit{some} ideal subgroup. Some elementary algebra is then used to show that the only possible ideal subgroups are isomorphic to a single point. 

\begin{lemma}\label{extreme} If $I$ is a minimal left ideal in $\mathfrak{M}_{x}^{\dagger}(\mathcal{G},G)$, then $\ex(I) \neq \emptyset$. 
\end{lemma}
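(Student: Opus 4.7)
The plan is to invoke the Krein-Milman theorem. Recall that, as noted in the discussion following Definition \ref{def: meas on a set of types}, the space $\mathfrak{M}_{x}(\mathcal{G})$ sits naturally as a closed convex subset of the locally convex real topological vector space of bounded charges on $\mathcal{L}_{x}(\mathcal{G})$ equipped with the weak--$^*$ topology. The subspaces $\mathfrak{M}_{x}^{\fs}(\mathcal{G},G)$ and $\mathfrak{M}_{x}^{\inv}(\mathcal{G},G)$ are closed convex subsets of $\mathfrak{M}_{x}(\mathcal{G})$, and by Proposition \ref{convex:compact}, the minimal left ideal $I$ is itself a closed convex subset of $\mathfrak{M}_{x}^{\dagger}(\mathcal{G},G)$, hence a compact convex subset of a locally convex real topological vector space.

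First, I would observe that the notion of ``extreme in $A$'' from the definition preceding the lemma coincides with the classical notion of an extreme point of a convex set $A$. With this identification, the Krein-Milman theorem immediately yields that any non-empty compact convex subset of a locally convex real topological vector space has at least one extreme point, and in fact is the closed convex hull of its extreme points. Since $I$ is non-empty (being a minimal left ideal, $I$ exists by Fact \ref{fact:Ellis}), we conclude $\ex(I) \neq \emptyset$.

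There is no serious obstacle here; the only thing to verify carefully is that the ambient topology on $\mathfrak{M}_{x}^{\dagger}(\mathcal{G},G)$ is indeed the restriction of a locally convex Hausdorff vector space topology (so that Krein-Milman applies in its standard form), which is exactly the content of the remark following Definition \ref{def: meas on a set of types}. In summary, the proof reduces to the single line: $I$ is a non-empty compact convex subset of a locally convex Hausdorff topological vector space, hence has extreme points by Krein-Milman.
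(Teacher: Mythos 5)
Your proof is correct and is essentially identical to the paper's: both cite Proposition \ref{convex:compact} to see that $I$ is a non-empty compact convex set and then apply the Krein--Milman theorem. The extra care you take in identifying the ambient locally convex space and matching the paper's notion of ``extreme in $A$'' with the classical one is fine but not needed beyond what the paper already records in the remark following Definition \ref{def: meas on a set of types}.
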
 

\begin{proof} By Proposition \ref{convex:compact}, $I$ is a compact convex set. By the Krein-Milman theorem, $I$ contains an extreme point. 
\end{proof} 

\begin{lemma}\label{exidempotent} If $I$ is a minimal left ideal in $\mathfrak{M}_{x}^{\dagger}(\mathcal{G},G)$, then there exists an idempotent $\mu$ in $I$ such that $\mu \in \ex(\mu * I)$. 
\end{lemma}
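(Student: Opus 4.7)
\textbf{Proof plan for Lemma \ref{exidempotent}.} The plan is to start from an extreme point of $I$ (which exists by Lemma \ref{extreme}), locate the ideal subgroup containing it, and then translate it to the identity of that subgroup using an affine group-translation, so that the identity inherits extremality. First, pick any $\nu \in \ex(I)$. By Fact \ref{fact:Ellis}(4), $I = \bigsqcup_{u \in \id(I)} u*I$ with each $u*I$ a subgroup of $I$ having identity $u$; hence there is a unique $u \in \id(I)$ with $\nu \in u*I$, and $\nu$ has an inverse $\nu^{-1}\in u*I$ satisfying $\nu^{-1}*\nu = \nu*\nu^{-1} = u$. I claim $\mu := u$ works.

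The key observation is that \emph{both} left and right convolution by a fixed measure are affine operations on $\mathfrak{M}^{\dagger}_x(\mathcal{G},G)$: the computations inside the proof of Lemma \ref{convex:1} actually yield, for any $\lambda,\lambda_1,\lambda_2 \in \mathfrak{M}^{\dagger}_x(\mathcal{G},G)$ and $r,s \in [0,1]$ with $r+s=1$, both identities
\begin{equation*}
(r\lambda_1 + s\lambda_2) * \lambda = r(\lambda_1 * \lambda) + s(\lambda_2 * \lambda) \quad \text{and} \quad \lambda * (r\lambda_1 + s\lambda_2) = r(\lambda * \lambda_1) + s(\lambda * \lambda_2),
\end{equation*}
the second by the $\varepsilon$-approximation argument together with linearity of integration (letting $\varepsilon \to 0$). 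Applying Lemma \ref{convex:1} to the convex set $I$ (convex by Proposition \ref{convex:compact}), the set $u*I$ is convex. Moreover, since $u*I \subseteq I$, extremality of $\nu$ in $I$ immediately forces $\nu \in \ex(u*I)$: any decomposition $\nu = r\alpha + (1-r)\beta$ with $\alpha,\beta \in u*I$ is a decomposition in $I$, so $\alpha = \beta = \nu$.

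Now consider the maps $\phi, \psi : u*I \to u*I$ defined by $\phi(x) := \nu^{-1}*x$ and $\psi(x) := \nu * x$. Since $u*I$ is a group with identity $u$, $\phi$ and $\psi$ are mutually inverse bijections of $u*I$; by the affineness of left convolution by a fixed measure recalled above, both $\phi$ and $\psi$ are affine. An affine bijection with affine inverse carries extreme points to extreme points (if $\phi(\nu) = r y_1 + (1-r) y_2$, apply $\psi$ to deduce $\nu = r\psi(y_1) + (1-r)\psi(y_2)$, then use $\nu \in \ex(u*I)$). Hence $\phi(\nu) = \nu^{-1}*\nu = u$ lies in $\ex(u*I)$, and taking $\mu := u$ gives an idempotent in $I$ with $\mu \in \ex(\mu*I)$, as desired.

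The only subtle point is the affineness of left convolution $\lambda \mapsto \lambda * \eta$ viewed in the second argument, i.e.~$\eta \mapsto \lambda * \eta$; although in general this map is not even continuous (we only have left-continuity of $*$ for Fact \ref{fac: conv is a semigroup in NIP}), affineness is a purely algebraic consequence of the integral formula for definable convolution and does not rely on continuity. This is the step that I expect to be mildly delicate but which is essentially already contained in the proof of Lemma \ref{convex:1}.
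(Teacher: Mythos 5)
Your proposal is correct and is essentially the paper's own argument: the paper likewise takes an extreme point $\nu$ of $I$ (via Lemma \ref{extreme}), locates the idempotent $u$ with $\nu \in u*I$, and shows $u \in \ex(u*I)$ by applying $\nu * -$ to a putative decomposition $u = r\eta_1 + (1-r)\eta_2$, using affineness of convolution in the second argument (contained in the proof of Lemma \ref{convex:1}), extremality of $\nu$, and cancellation in the group $u*I$. Your packaging of this computation as ``the mutually inverse affine bijections $\nu^{-1}*-$ and $\nu*-$ of $u*I$ preserve extreme points'' is only a cosmetic reformulation, and your flagged subtle point (affineness, not continuity, of $\eta \mapsto \lambda * \eta$) is exactly the ingredient the paper uses implicitly.
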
 

\begin{proof} By Lemma \ref{extreme}, there exists a measure $\nu \in I$ which is extreme in $I$. By Fact \ref{fact:Ellis}(4), there exists an idempotent $\mu$ in $I$ such that $\nu \in \mu * I$. Towards a contradiction, suppose that $\mu \not \in \ex(\mu * I)$. Then there exist distinct $\eta_1, \eta_2 \in \mu * I$ and $r \in (0,1)$ such that $r \eta_1 + (1-r)\eta_2 = \mu$. 
As $\mu$ is the identity of the group $\mu * I$ by Fact  \ref{fact:Ellis}(3), we get
\begin{gather*}
	\nu = \nu * \mu = r \left( \nu * \eta_1 \right) +(1-r) \left( \nu * \eta_2 \right).
\end{gather*}
Since $\nu \in \ex(I)$ and $\nu \ast \eta_i \in I$ as $I$ is a left ideal, it follows that $\nu = \nu * \eta_1 = \nu * \eta_2$. Since $\nu,\eta_1, \eta_2 \in \mu *I$ and  $\mu *I$ is a group, this implies $\eta_1 = \eta_2$, contradicting the assumption. Hence $\mu \in \ex(\mu * I)$. 
\end{proof} 

\begin{proposition}\label{trivial} The ideal subgroup of $\mathfrak{M}_{x}^{\dagger}(\mathcal{G},G)$ is trivial. 
\end{proposition}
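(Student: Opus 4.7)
The plan is to reduce the claim to showing that a certain convex compact subgroup $H$ of $\mathfrak{M}^{\dagger}_x(\mathcal{G},G)$ must be a singleton, and then apply a Murphy-style argument that a convex compact group has only one element. First, I would invoke Lemma \ref{exidempotent} to fix a minimal left ideal $I$ and an idempotent $\mu \in I$ such that $\mu$ is extreme in $H := \mu \ast I$. By Fact \ref{fact:Ellis}(3), $H$ is a group with identity $\mu$; by Proposition \ref{convex:compact} the ideal $I$ is convex, and Lemma \ref{convex:1} implies that left multiplication by $\mu$ preserves convexity, so $H$ is convex as well. Since all ideal subgroups of $\mathfrak{M}^{\dagger}_x(\mathcal{G},G)$ are isomorphic as groups (Fact \ref{fact:Ellis}(6)), it suffices to show $H = \{\mu\}$.

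Next, I would prove that \emph{every} element of $H$ is extreme in $H$, using that right multiplication by any fixed measure is affine. This affineness is already implicit in the proof of Lemma \ref{convex:1} (the computation $(r\lambda_1 + (1-r)\lambda_2) \ast \nu = r(\lambda_1 \ast \nu) + (1-r)(\lambda_2 \ast \nu)$ holds pointwise on formulas). Given $g \in H$ and a decomposition $g = r\lambda_1 + (1-r)\lambda_2$ with $\lambda_1, \lambda_2 \in H$ and $r \in (0,1)$, let $g^{-1}$ denote the inverse of $g$ in the group $H$. Applying right multiplication by $g^{-1}$ yields
\begin{equation*}
\mu = g \ast g^{-1} = r(\lambda_1 \ast g^{-1}) + (1-r)(\lambda_2 \ast g^{-1}),
\end{equation*}
a convex combination of two elements of $H$. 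Extremality of $\mu$ in $H$ forces $\lambda_1 \ast g^{-1} = \lambda_2 \ast g^{-1} = \mu$, and then right-multiplying by $g$ gives $\lambda_1 = \lambda_2 = g$, as desired.

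For the contradiction: if some $g \in H$ satisfied $g \neq \mu$, then $\tfrac{1}{2}\mu + \tfrac{1}{2}g \in H$ by convexity, this element is extreme in $H$ by the previous step, yet it is written as a nontrivial convex combination of the distinct points $\mu$ and $g$ in $H$ — contradiction. Hence $H = \{\mu\}$, and the ideal subgroup is trivial. The main point to verify carefully is that right multiplication by an arbitrary measure (not just a definable one) is affine on $\mathfrak{M}^{\dagger}_x(\mathcal{G},G)$; this is a purely algebraic property of the convolution $\ast$ and does not require the right-continuity that fails in general, so no analytic obstacle arises beyond what is already set up in Section \ref{sec: def conv NIP prelims}.
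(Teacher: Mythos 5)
Your proof is correct and follows essentially the same Murphy-style convexity argument as the paper: fix an idempotent $\mu$ extreme in the convex group $H = \mu \ast I$ via Lemma \ref{exidempotent}, translate a convex decomposition back to the identity by multiplying with a group inverse, and invoke extremality of $\mu$. The only (cosmetic) difference is that you translate by right multiplication --- the exact, first computation in the proof of Lemma \ref{convex:1} --- and conclude that every point of $H$ is extreme, whereas the paper left-multiplies $\alpha := \tfrac{1}{2}\left(\eta_1 + \eta_2\right)$ by $\alpha^{-1}$ and uses the (approximation-based) affineness of left translation; both routes are valid and rest on the same two lemmas.
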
 

\begin{proof} Let $I$ be a minimal left ideal of $\mathfrak{M}_{x}^{\dagger}(\mathcal{G},G)$. By Lemma \ref{exidempotent}, there exists an idempotent $\mu \in I$ such that $\mu$ is extreme in $\mu *I$. Let $\eta_1, \eta_2 \in \mu *I$. We will show that $\eta_1 = \eta_2$. By Lemma \ref{convex:1} and Proposition \ref{convex:compact}, $\mu * I$ is convex. Hence $\alpha := \frac{\eta_1 + \eta_2 }{2} \in \mu * I$. Since $\mu *I$ is a group with identity $\mu$, $\mu *I$ contains $\alpha^{-1}$ (i.e. $\alpha^{-1} * \alpha =\alpha * \alpha^{-1} =  \mu$). Then
\begin{equation*}
\mu =\alpha^{-1} * \alpha= \alpha^{-1} * \left( \frac{1}{2} \eta_1 + \frac{1}{2} \eta_2 \right) = \frac{1}{2} \left( \alpha^{-1} * \eta_1 \right)  + \frac{1}{2} \left( \alpha^{-1} * \eta_2 \right). 
\end{equation*} 
Since $\mu$ is extreme in $\mu *I$ and $\alpha^{-1} * \eta_i \in \mu \ast I$, we get $\mu = \alpha^{-1} * \eta_1 = \alpha^{-1} * \eta_2$, hence $\eta_1 = \eta_2$. 
\end{proof} 

We have shown that any ideal subgroup of $\mathfrak{M}_{x}^{\dagger}(\mathcal{G},G)$ is trivial. Since the minimal left ideals can be partitioned into their ideal subgroups, it follows that the convolution operation is trivial when restricted to a minimal left ideal.

\begin{proposition}\label{mult} Let $I$ be a minimal left ideal in $\mathfrak{M}_{x}^{\dagger}(\mathcal{G},G)$. Then every element of $I$ is an idempotent. Moreover, for any elements $\mu, \nu \in I$, we have that $\mu * \nu = \mu$. 
\end{proposition}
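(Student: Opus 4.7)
The plan is to apply Proposition \ref{trivial} in combination with the structural facts about ideal groups from Fact \ref{fact:Ellis}, reducing the claim to an immediate corollary of the triviality of ideal subgroups. The first assertion follows from the disjoint-union decomposition $I = \bigsqcup_{u \in \id(I)} u * I$ of Fact \ref{fact:Ellis}(4), while the second follows from the absorption property of Fact \ref{fact:Ellis}(2).

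First I would invoke Fact \ref{fact:Ellis}(4) to write $I = \bigsqcup_{u \in \id(I)} u * I$. By Proposition \ref{trivial}, each ideal subgroup $u * I$ is trivial, i.e., $u * I = \{u\}$. Consequently $I = \id(I)$, so every element of $I$ is idempotent, proving the first claim.

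For the second claim, fix $\mu, \nu \in I$. Since $\nu \in I = \id(I)$, the element $\nu$ is an idempotent in $I$. Now Fact \ref{fact:Ellis}(2) says that $p * u = p$ for every $p \in I$ and every $u \in \id(I)$; applying this with $p = \mu$ and $u = \nu$ yields $\mu * \nu = \mu$, as desired.

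There is essentially no obstacle here — the hard work is packaged in Proposition \ref{trivial} and the general Ellis machinery of Fact \ref{fact:Ellis}. The proof amounts to two lines combining these.
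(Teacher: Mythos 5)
Your proof is correct and follows essentially the same route as the paper: both derive $I=\id(I)$ from Fact \ref{fact:Ellis}(4) together with Proposition \ref{trivial}, and the paper's justification of the ``moreover'' part via $\mu * I = \{\mu\}$ is interchangeable with your appeal to Fact \ref{fact:Ellis}(2).
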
 

\begin{proof} By Fact \ref{fact:Ellis}(4) and Proposition \ref{trivial},
\begin{equation*}
I = \bigsqcup_{\mu \in \id(I)} \mu * I = \bigsqcup_{\mu \in \id(I)} \{\mu\} = \id(I). 
\end{equation*} 
The ``moreover'' part also follows from the observation that $\mu * I = \{\mu\}$.  
\end{proof} 

\begin{corollary}\label{needed} Let $I$ be a minimal left ideal of $\mathfrak{M}_{x}^{\dagger}(\mathcal{G},G)$ and assume that $\mu \in I$. Then  $I = \{\nu \in \mathfrak{M}_{x}^{\dagger}(\mathcal{G},G): \nu * \mu = \nu\}$. 
\end{corollary}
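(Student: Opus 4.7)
The plan is to prove the two inclusions separately, each of which is essentially immediate from what has already been established. I do not expect any real obstacles here; the statement is a clean packaging of Proposition \ref{mult} together with the defining property of a left ideal.

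For the inclusion $I \subseteq \{\nu \in \mathfrak{M}_{x}^{\dagger}(\mathcal{G},G): \nu * \mu = \nu\}$, I would take any $\nu \in I$ and apply the ``moreover'' part of Proposition \ref{mult} to the pair $\nu, \mu \in I$, which directly yields $\nu * \mu = \nu$.

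For the reverse inclusion $\{\nu \in \mathfrak{M}_{x}^{\dagger}(\mathcal{G},G): \nu * \mu = \nu\} \subseteq I$, suppose $\nu \in \mathfrak{M}_{x}^{\dagger}(\mathcal{G},G)$ satisfies $\nu * \mu = \nu$. Since $I$ is a left ideal and $\mu \in I$, we have
\begin{equation*}
\nu = \nu * \mu \in \mathfrak{M}_{x}^{\dagger}(\mathcal{G},G) * \mu \subseteq I,
\end{equation*}
so $\nu \in I$ as desired. This completes the proof, and no step requires further computation.
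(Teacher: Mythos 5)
Your proof is correct and is exactly the paper's argument: the forward inclusion is the ``moreover'' part of Proposition \ref{mult}, and the reverse inclusion follows since $\nu = \nu * \mu \in \mathfrak{M}_{x}^{\dagger}(\mathcal{G},G) * \mu \subseteq I$ because $I$ is a left ideal containing $\mu$. Nothing further is needed.
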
 

\begin{proof} By Proposition \ref{mult} we have $I \subseteq \{\nu \in \mathfrak{M}_{x}^{\dagger}(\mathcal{G},G): \nu * \mu = \nu\}$. And since $I$ is a left ideal and $\mu \in I$, we have $\{\nu \in \mathfrak{M}_{x}^{\dagger}(\mathcal{G},G): \nu * \mu = \nu\} \subseteq I$. 
\end{proof} 

\noindent We also observe that the action of the underlying group $G$ on the minimal left ideal is far from being a free action (this is of course trivial in the definably amenable case, but is meaningful when $\mathcal{G}$ is not definably amenable). 

\begin{proposition}\label{prop: G not free action} Let $I$ be a minimal left ideal of $\mathfrak{M}_{x}^{\dagger}(\mathcal{G},G)$. For any definable measure $\nu \in \mathfrak{M}^{\dagger}(\mathcal{G},G)$ there exists a measure $\mu \in I$ such that $\nu * \mu= \mu$. In particular, for every element $g \in G$, there exists a measure $\mu \in I$ such that $\delta_{g} * \mu = \mu$. 
\end{proposition}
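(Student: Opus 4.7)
The plan is to apply the Markov-Kakutani fixed point theorem to the left-translation map $L_\nu : I \to I$ defined by $L_\nu(\mu) = \nu * \mu$. The hypothesis that $\nu$ is definable is exactly what is needed to turn the generally only left-continuous convolution into a continuous map in the second argument.

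First I would verify the hypotheses of Markov-Kakutani. The set $I$ is a non-empty compact convex subset of the locally convex topological vector space of bounded charges on $\mathcal{L}_x(\mathcal{G})$: non-emptiness comes from Fact \ref{fact:Ellis}, and closedness and convexity from Proposition \ref{convex:compact}. The map $L_\nu$ sends $I$ into itself because $I$ is a left ideal and $\nu \in \mathfrak{M}_x^{\dagger}(\mathcal{G},G)$. Continuity of $L_\nu$ is precisely Lemma \ref{lem: right cont for def meas}, using that $\nu$ is definable. Affineness of $L_\nu$ follows from linearity of definable convolution in the second argument (this is the computation performed in the second half of the proof of Lemma \ref{convex:1}).

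Markov-Kakutani then yields a fixed point $\mu \in I$ with $\nu * \mu = \mu$. An easy induction gives the stronger conclusion $\nu^{(*n)} * \mu = \mu$ for every $n \in \mathbb{N}$: using associativity from Fact \ref{fac: conv is a semigroup in NIP}, once $\nu * \mu = \mu$ we have
\begin{equation*}
\nu^{(*(n+1))} * \mu \;=\; \nu^{(*n)} * (\nu * \mu) \;=\; \nu^{(*n)} * \mu.
\end{equation*}

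For the ``in particular'' clause, I would apply the general statement to $\nu = \delta_g$ for $g \in G$. This Dirac measure belongs to $\mathfrak{M}_x^{\dagger}(\mathcal{G},G)$ (as $g \in G$) and is definable, since for any $\varphi(x,y)$ the function $q \mapsto \delta_g(\varphi(x,b))$ (with $b \models q$) is the indicator of the clopen set $[\varphi(g,y)]$, hence continuous. The analogue of Remark \ref{remark:conv}(1) for definable convolution, $\delta_a * \delta_b = \delta_{a \cdot b}$, then gives $\delta_g^{(*n)} = \delta_{g^n}$ inductively, so the fixed point $\mu \in I$ provided by the first part satisfies $\delta_{g^n} * \mu = \mu$ for all $n$. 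I do not anticipate any serious obstacle; the only conceptual point is recognizing that definability of $\nu$ is the precise hypothesis that upgrades left-continuity of $*$ to the two-sided continuity needed for a fixed-point argument.
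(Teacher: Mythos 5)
Your proof is correct and follows essentially the same route as the paper: both apply the Markov--Kakutani fixed point theorem to translation by $\nu$ on the compact convex set $I$, using Lemma \ref{lem: right cont for def meas} for continuity and (the proof of) Lemma \ref{convex:1} for affineness. The only difference is that the paper invokes the commuting-family version of Markov--Kakutani on $\left\{ (\nu^{(*n)} * -)|_{I} : n \in \mathbb{N} \right\}$ (which requires knowing each $\nu^{(*n)}$ is definable), whereas your single-map fixed point plus the associativity induction $\nu^{(*(n+1))} * \mu = \nu^{(*n)} * (\nu * \mu) = \nu^{(*n)} * \mu$ reaches the same conclusion slightly more economically.
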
 

\begin{proof} Consider the map $\nu \ast -: \mathfrak{M}_{x}^{\dagger}(\mathcal{G},G) \to \mathfrak{M}_{x}^{\dagger}(\mathcal{G},G)$ sending $\lambda$ to $\nu \ast \lambda$.
Since $I$ is a minimal left ideal, the image of $(\nu*-)|_{I}$ is contained in $I$. Since $\nu$ is definable, the map $(\nu*-)|_{I}: I \to I$ is continuous by Lemma \ref{lem: right cont for def meas}. By Lemma \ref{convex:1}, this  map is also affine. 
By the Markov-Kakutani fixed point theorem, there exists some $\mu \in I$ such that $\nu * \mu = \mu$. The ``in particular'' part of the statement follows since $\delta_{g}, g \in G$ is a definable measure. 
\end{proof} 

\subsection{Definably amenable groups}

We now shift our focus to the dividing line of definable amenability. We first describe all minimal left ideals in both $\left( \mathfrak{M}^{\fs}_{x}(\mathcal{G},G), \ast \right)$ and $\left(\mathfrak{M}^{\inv}_{x}(\mathcal{G},G), \ast \right)$ when $\mathcal{G}$ is definably amenable. We then make an observation about what happens outside of the definably amenable case.
Recall that $T$ is a complete NIP theory expanding a group, $\mathcal{G}$ is a monster model of $T$, $G$ is a small elementary submodel of $\mathcal{G}$. The group $\mathcal{G}$ is \emph{definably amenable} if  there exists $\mu \in \mathfrak{M}_{x}(\mathcal{G})$ such that $\mu$ is $\mathcal{G}$-left-invariant.
\begin{remark}\label{rem: def amenable el equiv}
\begin{enumerate}
\item 	The group $\mathcal{G}$ is definably amenable if and only if for some $G' \models T$ there exists a $G'$-left-invariant $\mu \in \mathfrak{M}_x(G')$, if and only if for every $G' \models T$ there exists a $G'$-left-invariant $\mu \in \mathfrak{M}_x(G')$ (see \cite[Section 5]{hrushovski2008groups}).
\item If $G' \preceq \mathcal{G}$ and $\mu \in \mathfrak{M}_x(G')$ is $G'$-left-invariant, then the measure $\mu^{-1} \in \mathfrak{M}_x(G')$ defined by $\mu^{-1}(\varphi(x)) = \mu(\varphi(x^{-1}))$ for any $\varphi(x) \in \mathcal{L}_{x}(G')$ is a $G'$-right-invariant, and vice versa.
If $\mu \in \mathfrak{M}_{x}^{\dagger}(\mathcal{G},G)$, then also $\mu^{-1} \in \mathfrak{M}_{x}^{\dagger}(\mathcal{G},G)$ (see \cite[Lemma 6.2]{ChSi}).
\end{enumerate}

\end{remark} 
\noindent We will need the following fact.

\begin{fact}\label{meas:exist} Assume that $\mathcal{G}$ is definably amenable and NIP.
\begin{enumerate}[$(i)$] 
\item \cite[Proposition 3.5]{ChPiSi} For any $G$-left-invariant measure $\mu_0 \in \mathfrak{M}_x(G)$ (which exists by Remark \ref{rem: def amenable el equiv}(1)) there exists $\mu \in \mathfrak{M}_{x}^{\inv}(\mathcal{G},G)$ such that $\mu$ is $\mathcal{G}$-left-invariant and extends $\mu_0$. The same holds for right-invariant measures by Remark \ref{rem: def amenable el equiv}(2).
\item \cite[Theorem 3.17]{ChPiSi}  There exists $\nu \in \mathfrak{M}_{x}^{\fs}(\mathcal{G},G)$ such that $\nu$ is $G$-left-invariant (but not necessarily $\mathcal{G}$-left-invariant). 
\end{enumerate} 
\end{fact}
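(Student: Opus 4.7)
The plan for part (i) is a Hahn--Banach plus fixed-point argument. Let $E$ denote the set of $G$-left-invariant extensions of $\mu_0$ in $\mathfrak{M}_x(\mathcal{G})$. First, use Hahn--Banach in the locally convex space of bounded charges on $\mathcal{L}_x(\mathcal{G})$ to produce some global extension of $\mu_0$; averaging (or applying Markov--Kakutani) along the $G$-translation action, which preserves the property of extending the $G$-invariant $\mu_0$, lands in $E$. Hence $E$ is non-empty, and is evidently closed and convex. By NIP and Borel-definability of $G$-invariant measures (cf.\ Fact \ref{sup:inv}), $E \subseteq \mathfrak{M}^{\inv}_x(\mathcal{G},G)$. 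The monster $\mathcal{G}$ acts continuously and affinely on $\mathfrak{M}_x(\mathcal{G})$ by left translation, and this action preserves $E$ using normality of $\mathcal{G}^{00}$ together with the fact that conjugation by $G$ fixes $\mu_0$. To extract a $\mathcal{G}$-left-invariant fixed point in $E$, one invokes definable amenability structurally: push forward to $\mathcal{G}/\mathcal{G}^{00}$ (using Section \ref{sec: def conv vs conv}), where the compact group $\mathcal{G}/\mathcal{G}^{00}$ admits a Haar measure, and lift via the disintegration coming from the $G$-invariant base measure $\mu_0$. The right-invariant analogue follows by the involution of Remark \ref{rem: def amenable el equiv}(2).

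For part (ii), the construction is more direct because we do not require $\mathcal{G}$-left-invariance. Let $\mu_0 \in \mathfrak{M}_x(G)$ be $G$-left-invariant (exists by definable amenability). By Lemma \ref{lem: limit}, $\mu_0$ is a weak-$*$ limit of averages $\mathrm{Av}(\bar{a})$ of Dirac types $\delta_{\tp(a_i/G)}$ for $a_i \in G$; lifting each to $\delta_{\tp(a_i/\mathcal{G})}$ produces a corresponding net in $\mathfrak{M}^{\fs}_x(\mathcal{G},G)$ (finite satisfiability in $G$ is automatic since the $a_i \in G$). Passing to a convergent subnet in the compact space $\mathfrak{M}^{\fs}_x(\mathcal{G},G)$ yields some $\nu_0$ with $\nu_0|_G = \mu_0$; hence the set $F$ of such extensions is non-empty, closed, and convex in $\mathfrak{M}^{\fs}_x(\mathcal{G},G)$. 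The group $G$ acts on $F$ by left translation (which preserves $F$ since $\mu_0$ is $G$-invariant), and averaging via $\mu_0$ itself used as a mean on $G$ produces a $G$-left-invariant $\nu \in F$. (Equivalently, one may argue that an appropriate global coheir of $\mu_0$ inherits $G$-left-invariance via a direct computation on formulas $\varphi(h \cdot x)$ for $h \in G$.)

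The main obstacle is part (i), specifically the fixed-point step, because $\mathcal{G}$ is generally not amenable as a discrete group. Naive applications of Day's or Markov--Kakutani theorems do not apply to the full $\mathcal{G}$-action, so one must exploit definable amenability more structurally---either by reducing to the compact quotient $\mathcal{G}/\mathcal{G}^{00}$ and pushing back via the push-forward homomorphism of Theorem \ref{invar:conv}, or by iterating extension arguments along well-chosen nets of definable subgroups. In both parts, NIP is essential to ensure that invariance under $G$-translations propagates to full automorphism-invariance over $G$ via Borel-definability.
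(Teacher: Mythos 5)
First, note that the paper does not prove this statement at all: it is imported as a Fact, with both parts quoted from \cite[Propositions 3.5 and 3.17]{ChPiSi}. So your attempt cannot match ``the paper's proof''; it has to stand on its own, and as written it has genuine gaps. The most serious one is a conflation of two different notions of invariance. You assert that the set $E$ of $G$-translation-invariant extensions of $\mu_0$ lands in $\mathfrak{M}^{\inv}_x(\mathcal{G},G)$ ``by NIP and Borel-definability,'' and you close by saying that ``invariance under $G$-translations propagates to full automorphism-invariance over $G$ via Borel-definability.'' This is false: membership in $\mathfrak{M}^{\inv}_x(\mathcal{G},G)$ means $\Aut(\mathcal{G}/G)$-invariance, which is unrelated to invariance under left translation by elements of $G$; Fact \ref{sup:inv} gives no such implication. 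Automorphism-invariance has to be built into the extension directly (e.g.\ by taking a coheir, as you do in part (ii)). Two further steps in part (i) do not work: the left-translation action of the full monster $\mathcal{G}$ does \emph{not} preserve the set of extensions of $\mu_0$ (translating by $g \notin G$ changes the restriction to $G$), and the proposed fixed-point extraction --- push forward to $\mathcal{G}/\mathcal{G}^{00}$ and ``lift via disintegration'' --- is not an argument. The push-forward collapses far too much information for this to succeed: by Proposition \ref{haar} every measure in a minimal left ideal already pushes forward to the Haar measure, yet such measures are $\mathcal{G}$-right-invariant only in the definably amenable case, so having Haar image is very far from translation-invariance, and no lifting procedure is specified that would restore it. You correctly identify that $\mathcal{G}$ (indeed $G$, e.g.\ $\SL_2(\mathbb{R})$) is not amenable as a discrete group, so Markov--Kakutani/Day are unavailable, but the replacement you offer is a gesture rather than a proof. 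The actual arguments in the literature go through the machinery of $f$-generic types and the measures $\mu_p$ (cf.\ Fact \ref{fac: erg meas in def am NIP}) or a careful convolution/compactness construction, not through $\mathcal{G}/\mathcal{G}^{00}$.

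Part (ii) is closer: the construction of some $\nu_0 \in \mathfrak{M}^{\fs}_x(\mathcal{G},G)$ extending $\mu_0$ via limits of $\Av(\bar a)$ with $\bar a \in G$ is correct and standard. But the final step is again unsupported. ``Averaging via $\mu_0$ used as a mean on $G$'' requires making sense of $\int \nu_0(\varphi(g\cdot x))\, d\mu_0(g)$ for global $\varphi$, which forces you to replace $\mu_0$ by an extension to a larger model --- and the $G$-left-invariance of that extension is exactly what is in question, so the argument is circular. The parenthetical fallback, that ``an appropriate global coheir of $\mu_0$ inherits $G$-left-invariance via a direct computation,'' is precisely the nontrivial content of \cite[Proposition 3.17]{ChPiSi}: a coheir only agrees with $\mu_0$ on formulas over $G$, so instances $\varphi(h\cdot x, b)$ with $b \notin G$ are not controlled by any direct computation; the known proof uses a VC-theorem-style uniform approximation of finitely satisfiable measures by averages of points of $G$, uniformly over all global instances. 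In short, both parts reduce the statement to steps that are either false as stated or are themselves the cited propositions in disguise.
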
 

 We remark that Fact \ref{meas:exist}(ii) follows from \cite[Theorem 3.17]{ChPiSi} as $S_{x}^{\fs}(\mathcal{G},G) = S_{x}(G^{\ext}) $ (where $G^{\ext}$ is the Shelah's expansion of $G$ by all externally definable subsets) and $\mathfrak{M}\left(S_{x}^{\fs}(\mathcal{G},G) \right) = \mathfrak{M}_{x}^{\fs}(\mathcal{G},G)$ (see Corollary \ref{cor:homeomorphism}). We now compute the minimal left ideals in definably amenable NIP groups, first in the finitely satisfiable case and then in the invariant case.

\begin{proposition}\label{fs:I}  The group $\mathcal{G}$ is definably amenable if and only if $|I| = 1$ for some (equivalently, every) minimal left ideal $I$ in $\mathfrak{M}_{x}^{\fs}(\mathcal{G},G)$. And if $\mathcal{G}$ is definably amenable, then the minimal left ideals of $\mathfrak{M}_{x}^{\fs}(\mathcal{G},G)$ are precisely of the form $\{\nu\}$ for $\nu$  a $G$-left-invariant measure in $\mathfrak{M}_{x}^{\fs}(\mathcal{G},G)$. 


\end{proposition} 
\begin{proof}
Let $I$ be a minimal left ideal, and assume that $I = \{ \mu \}$. Then for any $g \in G$ we have $g \cdot \mu = \delta_{g} \ast \mu = \mu$, so $\mu$ is $G$-left-invariant. In particular, $\mu|_{G}$ is a $G$-left-invariant measure on $\mathfrak{M}_x(G)$, hence $\mathcal{G}$ is definably amenable by Remark \ref{rem: def amenable el equiv}(1). And all minimal left ideals have the same cardinality by Fact \ref{fact:Ellis}(6).

Conversely, assume that $\mathcal{G}$ is definably amenable. By Fact \ref{meas:exist}(2) there exists some $\mu \in \mathfrak{M}_{x}^{\fs}(\mathcal{G},G)$ such that $\mu$ is $G$-left-invariant. We claim that for any such $\mu$, $\{\mu\}$ is a minimal left ideal of $\mathfrak{M}_{x}^{\fs}(\mathcal{G},G)$.  Let $\nu$ be any measure in $\mathfrak{M}^{\fs}_{x}(\mathcal{G},G)$. Since $\nu$ is finitely satisfiable in $G$, by Lemma \ref{lem: limit} there exists a net of measures in $\mathfrak{M}^{\fs}_x(\mathcal{G},G)$ of the form $\left(\Av(\overline{a}_{i})\right)_{i \in I}$, such that each $\overline{a}_{i} = (a_{i,1}, \ldots, a_{i, n_i}) \in (G^{x})^{n_i}$ for some $n_i \in \mathbb{N}$ and $\lim_{i \in I} \left(\Av(\overline{a}_{i})\right) = \nu$. Fix any $\varphi(x) \in \mathcal{L}_x(\mathcal{G})$. By the ``moreover'' part of Fact \ref{fac: conv is a semigroup in NIP}, the map $\lambda \in \mathfrak{M}^{\fs}_{x}(\mathcal{G},G) \mapsto \left(\lambda \ast \mu \right)(\varphi(x)) \in [0,1]$ is continuous. Therefore 
\begin{gather*} \left(\nu * \mu \right)(\varphi(x)) = \lim_{i \in I} \big( \left(\Av(\overline{a}_{i}) * \mu \right)(\varphi(x))\big)\\
 = \lim_{i \in I} \left( \frac{1}{n_i}\sum_{j=1}^{n_{i}} \mu(\varphi(a_{i,j} \cdot x))\right) \overset{(a)}{=} \lim_{i \in I} \mu(\varphi(x)) = \mu(\varphi(x)). 
\end{gather*}
 Equality $(a)$ follows as $\mu$ is $G$-left-invariant and each $a_{i,j}$ is in $G$. It follows that $\nu \ast \mu = \mu$, hence $\{\mu\}$ is a left ideal.
\end{proof} 

We now compute the minimal left ideals in the invariant case, but first we record an auxiliary lemma. 

\begin{lemma}\label{quick1} Assume that $f:S_{x}(G) \to [0,1]$ is a Borel function. For any $b \in G$, we define the function $f\cdot b: S_x(G) \to [0,1]$ via $(f \cdot b)(p) := f(p \cdot b)$ (recall Lemma \ref{cont:lemma}). If $\mu \in \mathfrak{M}_x(G)$ is $G$-right-invariant then 

\begin{equation*} 
\int_{S_{x}(G)} f d\mu = \int_{S_{x}(G)} (f \cdot b) d\mu.
\end{equation*} 
\end{lemma}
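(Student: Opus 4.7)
The plan is to reduce the identity to its obvious incarnation on clopen indicator functions, and then lift it back up to arbitrary bounded Borel functions via two standard approximation steps: uniform approximation of continuous functions by clopen simple functions (which works because $S_x(G)$ is a Stone space), and the fact that a regular Borel probability measure on a compact Hausdorff space is determined by its values on continuous functions.

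First, I would check the base case. For $\psi(x) \in \mathcal{L}_x(G)$, unwinding Definition \ref{acts:naturally}, we have $\psi(x) \in p \cdot b$ if and only if $\psi(x \cdot b) \in p$, so $\mathbf{1}_{[\psi(x)]} \cdot b = \mathbf{1}_{[\psi(x \cdot b)]}$. Therefore
\begin{equation*}
\int_{S_x(G)} \left(\mathbf{1}_{[\psi(x)]} \cdot b\right) d\mu = \mu(\psi(x \cdot b)) = \mu(\psi(x)) = \int_{S_x(G)} \mathbf{1}_{[\psi(x)]} d\mu,
\end{equation*}
where the middle equality is exactly $G$-right-invariance of $\mu$. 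By linearity of the integral and of the action $f \mapsto f \cdot b$, the identity extends to every finite $\mathbb{R}$-linear combination $g = \sum_{i=1}^n r_i \mathbf{1}_{[\psi_i(x)]}$ with $\psi_i \in \mathcal{L}_x(G)$.

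Next, I would handle continuous $f$. Since $S_x(G)$ is a Stone space, Fact \ref{fac: cont funcs on type spaces}(2) provides, for any continuous $f$ and any $\varepsilon>0$, a clopen simple function $g_\varepsilon$ of the above form with $\|f - g_\varepsilon\|_\infty < \varepsilon$; since the right action $p \mapsto p \cdot b$ is a homeomorphism, $\|f \cdot b - g_\varepsilon \cdot b\|_\infty < \varepsilon$ as well. As $\mu$ is a probability measure, $\left|\int f d\mu - \int g_\varepsilon d\mu\right| \leq \varepsilon$ and likewise with $f \cdot b$. Combining with the previous paragraph and letting $\varepsilon \to 0$, we obtain $\int f d\mu = \int (f \cdot b) d\mu$ for every continuous $f$.

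Finally, I would pass to arbitrary bounded Borel $f$ by a uniqueness argument. The right translation $T_b: S_x(G) \to S_x(G)$, $p \mapsto p \cdot b$, is a homeomorphism, and by Fact \ref{meas:facts}(iii) its push-forward $\nu_b := (T_b)_* \mu$ satisfies $\int f d\nu_b = \int (f \cdot b) d\mu$ for every bounded Borel $f$. The previous paragraph says precisely that $\int f d\nu_b = \int f d\mu$ for every continuous $f$, so by Fact \ref{meas:facts}(i) (and uniqueness of the representing regular Borel probability measure) $\nu_b = \mu$. Applying this to a general bounded Borel $f$ yields the claim. The only minor pitfall is keeping track of the inverse in the definition $p \cdot b = \{\varphi(x \cdot b^{-1}) : \varphi \in p\}$ so that $\mathbf{1}_{[\psi(x)]} \cdot b$ equals $\mathbf{1}_{[\psi(x \cdot b)]}$ rather than $\mathbf{1}_{[\psi(x \cdot b^{-1})]}$; after that the proof is routine.
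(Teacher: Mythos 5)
Your proof is correct and follows essentially the same route as the paper: both arguments identify the key object as the push-forward of $\mu$ under the right-translation homeomorphism $p \mapsto p \cdot b$, verify that this push-forward equals $\mu$ by checking on the clopen sets $[\psi(x)]$ via the identity $\gamma_b^{-1}([\psi(x)]) = [\psi(x \cdot b)]$ and $G$-right-invariance, and then conclude by the change-of-variables formula of Fact \ref{meas:facts}(iii). The only difference is that you interpose a uniform-approximation step through continuous functions before invoking uniqueness of the representing measure, whereas the paper passes directly from agreement on clopen sets to equality of the regular Borel extensions; this detour is harmless but unnecessary.
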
 

\begin{proof} For $b \in G$, consider the map $\gamma_{b}:S_{x}(G) \to S_{x}(G)$ defined by $\gamma_{b}(p) := p \cdot b$. The map $\gamma_{b}$ is a continuous bijection. Hence we can consider the push-forward map $(\gamma_{b})_{*}:\mathfrak{M}_{x}(G) \to \mathfrak{M}_{x}(G)$. Denote $(\gamma_{b})_{*}(\mu)$ as $\mu_{b}$. Fix a formula $\varphi(x) \in \mathcal{L}_{x}(G)$. We claim that $\gamma_{b}^{-1}([\varphi(x)]) = [\varphi(x\cdot b)]$. 

We first show that $(\gamma_{b})^{-1}([\varphi(x)]) = [\varphi(x \cdot b)]$. Assume that $p \in [\varphi(x \cdot b)]$, then $\varphi(x) \in p \cdot b$ and  so $p \cdot b \in [\varphi(x)]$. Hence $(\gamma_{b})^{-1}(p \cdot b) \in (\gamma_{b}^{-1})([\varphi(x)])$. Since $\gamma_{b}$ is a bijection, we have that $p = \gamma_{b}^{-1}(p \cdot b)$, which implies that $p \in (\gamma_{b}^{-1})([\varphi(x)])$. So $[\varphi(x \cdot b)] \subseteq (\gamma_{b})^{-1}([\varphi(x)])$. Now assume that $p \in (\gamma_{b})^{-1}([\varphi(x)])$. Then $\gamma_{b}(p) \in [\varphi(x)]$, hence $p \cdot b \in [\varphi(x)]$ and so $\varphi(x) \in p \cdot b$. By definition $\varphi(x\cdot b) \in (p \cdot b) \cdot b^{-1}$,  and since $(p \cdot b) \cdot b^{-1} = p$, we conclude that $\varphi(x\cdot b) \in p$. Hence $p \in [\varphi(x \cdot b)]$ and $(\gamma_{b})^{-1}([\varphi(x)]) = [\varphi(x\cdot b)]$.

Now we show that $\mu_b = \mu$. Indeed, by $G$-right invariance of $\mu$ and the previous paragraph we have 
\begin{equation*} \mu_{b}(\varphi(x)) = \mu(\gamma_{b}^{-1}[\varphi(x)]) = \mu(\varphi(x \cdot b)) = \mu(\varphi(x)). 
\end{equation*} 
And so by Fact \ref{meas:facts}(iii) we have 
\begin{equation*}\int_{S_{x}(G)} f d\mu =  \int_{S_{x}(G)} f d\mu_{b} = \int_{S_{x}(G)} \left( f \circ \gamma_{b} \right) d\mu = \int_{S_{x}(G)} (f \cdot b) d\mu. \qedhere
\end{equation*} 
\end{proof}

%

\begin{proposition}\label{ideal:inv} Assume that $\mathcal{G}$ is definably amenable. Let 
$$I_{G}^{\inv} := \left\{\mu \in \mathfrak{M}^{\inv}_{x}(\mathcal{G},G): \text{ $\mu$ is $\mathcal{G}$-right-invariant}\right\}.$$
 Then $I_{G}^{\inv}$ is a closed, non-empty, two-sided ideal. Moreover, $I_{G}^{\inv}$ is the unique minimal left ideal in $\mathfrak{M}_{x}(\mathcal{G},G)$. 
\end{proposition}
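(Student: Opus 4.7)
The plan is to verify the claimed properties of $I_{G}^{\inv}$ in turn, with everything riding on a single identity: $\mu \ast \nu = \mu$ whenever $\mu \in I_{G}^{\inv}$ and $\nu \in \mathfrak{M}^{\inv}_{x}(\mathcal{G},G)$.

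Non-emptiness follows by combining Fact \ref{meas:exist}(i) with Remark \ref{rem: def amenable el equiv}(2): extend a $G$-left-invariant measure on $G$ to a $\mathcal{G}$-left-invariant $\mu \in \mathfrak{M}^{\inv}_{x}(\mathcal{G},G)$ and then $\mu^{-1} \in I_{G}^{\inv}$. Closedness is immediate from the definition of the weak-$\ast$ topology: $I_{G}^{\inv}$ is the intersection over all $g \in \mathcal{G}$ and $\varphi \in \mathcal{L}_{x}(\mathcal{G})$ of the preimages of $\{0\}$ under the continuous maps $\mu \mapsto \mu(\varphi(x \cdot g)) - \mu(\varphi(x))$.

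For the two-sided ideal property, $\mathcal{G}$-right-invariance of $\mu$ collapses the Borel-definable function $F_{\mu,G'}^{\varphi'}$ from Definition \ref{def: def conv} to the constant $\mu(\varphi(x))$, since $F_{\mu,G'}^{\varphi'}(q) = \mu(\varphi(x \cdot b)) = \mu(\varphi(x))$ for any $b \models q$; integrating against $\nu_{G'}$ then gives $\mu \ast \nu = \mu \in I_{G}^{\inv}$. To see $\nu \ast \mu \in I_{G}^{\inv}$, for $g \in \mathcal{G}$ choose a small model $G' \ni g$ and set $\psi(x) := \varphi(x \cdot g)$; using $\tp(b \cdot g/G') = \tp(b/G') \cdot g$ one verifies $F_{\nu,G'}^{\psi'} = F_{\nu,G'}^{\varphi'} \cdot g$, and Lemma \ref{quick1} applied to $F_{\nu,G'}^{\varphi'}$ and the $G'$-right-invariant restriction $\mu|_{G'}$ then yields $(\nu \ast \mu)(\varphi(x \cdot g)) = (\nu \ast \mu)(\varphi(x))$.

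For minimality, the identity $\mu \ast \mu_{0} = \mu$ shows that every $\mu \in I_{G}^{\inv}$ lies in $\mathfrak{M}^{\inv}_{x}(\mathcal{G},G) \ast \mu_{0}$, while the reverse inclusion holds because $I_{G}^{\inv}$ is a left ideal containing $\mu_{0}$; hence $\mathfrak{M}^{\inv}_{x}(\mathcal{G},G) \ast \mu_{0} = I_{G}^{\inv}$ for every $\mu_{0} \in I_{G}^{\inv}$. Any minimal left ideal $I$ (which exists by Fact \ref{fact:Ellis}) is contained in $I_{G}^{\inv}$ by Lemma \ref{two-sided}, and picking $\mu_{0} \in I$, Fact \ref{fact:Ellis}(5) gives $I = \mathfrak{M}^{\inv}_{x}(\mathcal{G},G) \ast \mu_{0} = I_{G}^{\inv}$, so $I_{G}^{\inv}$ is itself a minimal left ideal; uniqueness is then immediate since any other minimal left ideal also sits inside the two-sided ideal $I_{G}^{\inv}$ and so coincides with it by minimality. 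The only genuinely delicate step is aligning the right action of $\mathcal{G}$ on $S_{y}(G')$ with the $F$-notation of Definition \ref{def: def conv} when invoking Lemma \ref{quick1} for arbitrary $g \in \mathcal{G}$; the rest is bookkeeping.
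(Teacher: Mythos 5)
Your proposal is correct and follows essentially the same route as the paper: the same collapse of $F_{\mu,G'}^{\varphi'}$ to a constant for the right-ideal identity $\mu\ast\nu=\mu$, the same reduction of the left-ideal property to Lemma \ref{quick1} via $F_{\nu,G'}^{\varphi_g'}=F_{\nu,G'}^{\varphi'}\cdot g$, and the same use of Lemma \ref{two-sided} for uniqueness. The only cosmetic difference is that you route minimality through $\mathfrak{M}^{\inv}_{x}(\mathcal{G},G)\ast\mu_0=I_G^{\inv}$ and Fact \ref{fact:Ellis}(5), whereas the paper deduces $I_G^{\inv}\subseteq J$ directly from $\mu\ast\nu=\mu$; both hinge on the identical identity.
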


\begin{proof} The set $I_{G}^{\inv}$ is closed since it is the complement of the  union of basic open sets in $\mathfrak{M}^{\inv}_{x}(\mathcal{G},G)$:
\begin{gather*}
	\mathfrak{M}^{\inv}_{x}(\mathcal{G},G) \setminus I_G^{\inv} = \\
	\bigcup_{\varphi(x) \in \mathcal{L}_x(\mathcal{G})} \bigcup_{s < t \in [0,1]} \bigcup_{g \in \mathcal{G}}\left( \{\mu :  \mu(\varphi(x)) < s\} \cap  \{\mu :  \mu(\varphi(x \cdot g)) > t\} \right).
\end{gather*}
By Fact \ref{meas:exist}(1), we know that the set $I_{G}^{\inv}$ is non-empty. We first show that $I_{G}^{\inv}$ is a left ideal. Let $\mu \in I_{G}^{\inv}$ and $\nu \in \mathfrak{M}^{\inv}_{x}(\mathcal{G},G)$. It suffices to show that the measure $\nu *\mu$ is $\mathcal{G}$-right-invariant. That is, we need to show that for any $\varphi(x) \in \mathcal{L}_{x}(\mathcal{G})$ and $b \in \mathcal{G}$ we have  $\left( \nu * \mu \right)(\varphi(x \cdot b)) = \left(\nu * \mu \right)(\varphi(x))$. Let $G' \prec \mathcal{G}$ be a small model containing $G$, $b$ and the parameters of $\varphi$. For any $q \in S_{y}(G')$ and $a \models q$ in $\mathcal{G}$, letting $\varphi_b(x) := \varphi(x \cdot b)$ and noting that $a \cdot b \models q \cdot b$, we have
\begin{equation*} F_{\nu,G'}^{\varphi'_{b}}(q) = \nu(\varphi(x\cdot a \cdot b)) = F_{\nu,G'}^{\varphi'}(q \cdot b) = \left(F_{\nu,G'}^{\varphi'}(q) \right) \cdot b. 
\end{equation*}  
Hence, by Lemma \ref{quick1},
\begin{gather*} \left(\nu * \mu \right)(\varphi(x \cdot b)) = \int_{S_{y}(G')} F_{\nu}^{\varphi_{b}'} d\mu_{G'} = \\
\int_{S_{y}(G')} \left( \left(F_{\nu}^{\varphi'}\right) \cdot b \right)d\mu_{G'} = \int_{S_{y}(G')} F_{\nu}^{\varphi'} d\mu_{G'} = \left( \nu * \mu \right)(\varphi(x)).
\end{gather*} 

We now argue that $I_{G}^{\inv}$ is a right ideal. Again let $\mu \in I_{G}^{\inv}$, $\nu \in \mathfrak{M}^{\inv}_{x}(\mathcal{G},G)$, and fix $\varphi(x) \in \mathcal{L}_{x}(\mathcal{G})$ and $G' \prec \mathcal{G}$  containing $G$ and the parameters of $\varphi$. Using $\mathcal{G}$-right-invariance of $\mu$ we have
\begin{equation*} 
\left( \mu * \nu \right)(\varphi(x)) = \int_{S_{y}(G')} F_{\mu}^{\varphi'} d\nu_{G'} = \int_{S_{y}(G')} \mu(\varphi(x)) d\nu_{G'} =  \mu(\varphi(x)). 
\end{equation*} 
Hence $I_{G}^{\inv}$ is a two-sided ideal.

Note that the previous computation shows that $\mu * \nu = \mu$ for any $\mu \in I_{G}^{\inv}$ and $\nu \in \mathfrak{M}_{x}^{\inv}(\mathcal{G},G)$. So if $J$ is any minimal left ideal of $\mathfrak{M}_{x}^{\inv}(\mathcal{G},G)$, then $I_{G}^{\inv} \subseteq J$. Since $I_{G}^{\inv}$ is two-sided, we have that $J \subseteq I_{G}^{\inv}$ (by Lemma \ref{two-sided}). Hence $J = I_{G}^{\inv}$ and $I_{G}^{\inv}$ is the unique minimal left ideal. 
\end{proof} 


We recall some terminology and results from \cite{ChSi} (switching from the action on the left to the action on the right everywhere).
\begin{definition}
	\begin{enumerate}
		\item A type $p \in S_x(\mathcal{G})$ is \emph{right $f$-generic} if  for every $\varphi(x) \in p$ there is some small model $G \prec \mathcal{G}$ such that for any $g\in \mathcal{G}$, $\varphi(x \cdot g)$ does not fork over $G$.
		\item A type $p \in S_x(\mathcal{G})$ is \emph{strongly right $f$-generic} if there exists some small $G \prec \mathcal{G}$ such that $p \cdot g \in S_x^{\inv}(\mathcal{G},G)$ for all $g \in \mathcal{G}$. This is equivalent to the  definition in \cite{ChSi} since in NIP theories, a global type $p$ does not fork over a model $M$ if and only if $p$ is $M$-invariant (see e.g.~\cite[Proposition 2.1]{NIP2}). 
		\item Given a right $f$-generic $p$, let $\mu_p$ be defined via 
		$$\mu_p(\varphi(x)) := h \left( \left\{ \pi(g) \in \mathcal{G}/\mathcal{G}^{00} : g \in \mathcal{G}, \varphi(x) \in  p \cdot g \right\} \right),$$
		where $\pi: \mathcal{G} \to \mathcal{G}/\mathcal{G}^{00}$ is the quotient map and $\varphi(x) \in \mathcal{L}_x(\mathcal{G})$.
		Then $\mu_p \in \mathfrak{M}_x(\mathcal{G})$ and, assuming additionally that $\mathcal{G}$ is definably amenable, $\mu_p$ is $\mathcal{G}^{00}$-right-invariant (see \cite[Definition 3.16]{ChSi} for the details).
	\end{enumerate}
\end{definition}
\begin{fact}\label{fac: erg meas in def am NIP} Assume that $\mathcal{G}$ is definably amenable, NIP.
	\begin{enumerate}
		\item If $p \in S_x^{\inv}(\mathcal{G},G)$ is right $f$-generic then $p$ is strongly right $f$-generic over $G$ and $\mu_p \in \mathfrak{M}_x^{\inv}(\mathcal{G}, G)$. The set of all right $f$-generic types in $S_x(\mathcal{G})$ (and hence in $S_x^{\inv}(\mathcal{G},G)$) is closed.
		\item Let $\mathfrak{I}(\mathcal{G})$ be the (closed convex) set of all $\mathcal{G}$-right invariant measures in $\mathfrak{M}_x(\mathcal{G})$.
		Then the set $\ex(\mathfrak{I}(\mathcal{G}))$ of the extreme points of $\mathfrak{I}(\mathcal{G})$ is the set of all measures of the form $\mu_p$ for some right $f$-generic $p \in S_x(\mathcal{G})$.
		\item The map $p \mapsto \mu_p$ from the (closed) set of global right $f$-generic types to the (closed) set of global $\mathcal{G}$-right-invariant measures is continuous.
	\end{enumerate}
\end{fact}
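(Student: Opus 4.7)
The plan is to prove (1), (2), (3) in this order, leveraging the Choquet simplex structure on the set $\mathfrak{I}(\mathcal{G})$ of $\mathcal{G}$-right-invariant measures. This structure is provided by Fact~\ref{fac: inv measures simplex} applied to the family $\mathcal{T} = \{\tau_g : g \in \mathcal{G}\}$ of right-translation homeomorphisms on $S_x(\mathcal{G})$. The other main ingredients are the push-forward homomorphism $\pi_*$ from Theorem~\ref{invar:conv} and the uniqueness of Haar measure $h$ on the compact group $\mathcal{G}/\mathcal{G}^{00}$.

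For (1), I would first show that right $f$-genericity of $p$ forces $p$ to be $\mathcal{G}^{00}$-invariant under right translation, using NIP (to control the configurations of non-forking translates) together with a $\mathcal{G}$-right-invariant measure produced by definable amenability via Fact~\ref{meas:exist}(i). Then the assignment $g \mapsto p \cdot g$ factors through the quotient $\mathcal{G}/\mathcal{G}^{00}$, and any small $G \prec \mathcal{G}$ over which $p$ and the witnessing parameters live verifies strong $f$-genericity. The invariance of $\mu_p$ over $G$ follows from its defining formula, which only involves $p|_G$ and $h$. Closedness of the set of right $f$-generics is obtained from the equivalent reformulation ``$p \cdot g = p$ for all $g \in \mathcal{G}^{00}$'', a type-definable (hence closed) condition on $p$.

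For (2), each $\mu_p$ is extreme in $\mathfrak{I}(\mathcal{G})$: since $\pi_*(\mu_p) = h$ is the unique right-invariant probability measure on $\mathcal{G}/\mathcal{G}^{00}$ (hence extreme among such measures), any convex decomposition $\mu_p = r\lambda_1 + (1-r)\lambda_2$ in $\mathfrak{I}(\mathcal{G})$ descends under $\pi_*$ to the trivial decomposition of $h$; combined with the concentration of $\mu_p$ on the $\mathcal{G}/\mathcal{G}^{00}$-orbit of $p$, this forces $\lambda_1 = \lambda_2 = \mu_p$. For the converse, given an extreme $\mu \in \mathfrak{I}(\mathcal{G})$, I would pick any $p \in \sup(\mu)$ (which must be right $f$-generic, since otherwise a non-trivial decomposition of $\mu$ inside the simplex would contradict extremality) and then use the lattice structure of $\mathfrak{I}(\mathcal{G})$ to form the greatest lower bound $\mu \wedge \mu_p$; both measures being extreme and compatible forces $\mu = \mu_p$.

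Finally, (3) reduces by the definition of the weak-$^*$ topology to the continuity of $p \mapsto h(\{\pi(g) : \varphi(x) \in p \cdot g\})$ for each $\varphi(x) \in \mathcal{L}_x(\mathcal{G})$, which follows from regularity of $h$ and the fact that this Borel subset of $\mathcal{G}/\mathcal{G}^{00}$ varies continuously in $p$. The main obstacle I expect is the converse direction of (2): identifying every extreme right-invariant measure as a $\mu_p$ requires a careful disintegration argument along the fibers of $\hat{\pi}$ combined with the uniqueness of $h$, and this is the technical heart of the result.
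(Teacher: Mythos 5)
The paper does not actually reprove this statement: it quotes parts (2) and (3) directly as \cite[Theorem 4.5]{ChSi} and \cite[Proposition 4.3]{ChSi}, and the only argument it supplies is for the claim $\mu_p \in \mathfrak{M}_x^{\inv}(\mathcal{G},G)$ in (1), obtained by showing $\sup(\mu_p) \subseteq \overline{p \cdot \mathcal{G}} \subseteq S_x^{\inv}(\mathcal{G},G)$ and applying Fact~\ref{sup:inv}(2). Your plan instead attempts to reprove the content of \cite{ChSi}, so it must be judged on its own merits. For (1), the route via the equivalence (in definably amenable NIP groups) of right $f$-genericity with right $\mathcal{G}^{00}$-invariance of $p$ is sound and does yield closedness; but ``the defining formula only involves $p|_G$ and $h$'' is not accurate --- $\mu_p(\varphi)$ depends on all translates $p \cdot g$ and on arbitrary parameters of $\varphi$, so invariance over $G$ needs either the support computation the paper gives or the observation that every $\sigma \in \Aut(\mathcal{G}/G)$ induces the identity on $\mathcal{G}/\mathcal{G}^{00}$.

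The genuine gap is the converse inclusion in (2). You take an extreme $\mu$, pick $p \in \sup(\mu)$ (necessarily right $f$-generic --- true, but for the wrong reason: this holds for \emph{every} $\mathcal{G}$-right-invariant measure, via forking of non-$f$-generic formulas, and has nothing to do with extremality), and conclude $\mu = \mu_p$ because the two measures are ``extreme and compatible,'' so $\mu \wedge \mu_p \neq 0$. Sharing a point of the support does not give $\mu \wedge \mu_p \neq 0$: mutually singular measures can have identical closed supports, and in the Choquet simplex $\mathfrak{I}(\mathcal{G})$ distinct extreme points \emph{do} satisfy $\mu \wedge \mu_p = 0$, so non-singularity is exactly what has to be proved and the argument is circular. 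The argument that works (and is the one reprised in the paper's proof of Corollary~\ref{cor: ex inv f-generic}) is different: every $\mathcal{G}$-right-invariant measure lies in the closed convex hull of $\left\{ \mu_q : q \text{ right } f\text{-generic} \right\}$ by the approximation lemma \cite[Lemma 3.26]{ChSi}, so Milman's partial converse to the Krein--Milman theorem places every extreme point in the closure of that set, which is closed by (1) and (3). Note that this makes (2) depend on (3), whose proof you also gloss over: the assertion that the Borel set $\left\{ \pi(g) : \varphi(x) \in p \cdot g \right\}$ ``varies continuously in $p$'' is precisely the nontrivial point, since convergence $p_i \to p$ in $S_x(\mathcal{G})$ controls only finitely many formulas at a time while this set depends on all translates of $p$; the proof in \cite{ChSi} requires an NIP/VC-type argument, not just regularity of $h$.
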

\begin{proof}
	(1) Any $f$-generic $p \in S_x^{\inv}(\mathcal{G},G)$ is strongly $f$-generic over $G$ by \cite[Proposition 3.9]{ChSi}. For any $f$-generic $p$, $\sup(\mu_p) \subseteq \overline{p \cdot \mathcal{G}}$, where $\overline{X}$ is the topological closure of $X$ in $S_x(\mathcal{G})$ and $p \cdot \mathcal{G} = \{ p \cdot g \in S_x(\mathcal{G}): g \in \mathcal{G} \}$ is the orbit of $p$ under the right action of $\mathcal{G}$ (by \cite[Remark 3.17(2)]{ChSi}). As $p$ is strongly $f$-generic over $G$, we have $p \cdot \mathcal{G} \subseteq S_x^{\inv}(\mathcal{G},G)$, hence $\sup(\mu_p) \subseteq \overline{S_x^{\inv}(\mathcal{G},G)} = S_x^{\inv}(\mathcal{G},G)$. Hence $\mu_p \in \mathfrak{M}_x^{\inv}(\mathcal{G}, G)$ by Fact \ref{sup:inv}(2).
	
	(2) is \cite[Theorem 4.5]{ChSi}, and (3) is \cite[Proposition 4.3]{ChSi}.
\end{proof}

\noindent Adapting the proof of \cite[Theorem 4.5]{ChSi}, we can describe the extreme points of the minimal ideal $I_{G}^{\inv}$.
\begin{corollary}\label{cor: ex inv f-generic}
Assume that $\mathcal{G}$ is definably amenable NIP. Then:
\begin{enumerate}
	\item $\ex \left(I_{G}^{\inv} \right)= \left\{ \mu_p : p \in S_x^{\inv}(\mathcal{G},G) \textrm{ is right }f\textrm{-generic} \right\}$;
	\item $\ex \left(I_{G}^{\inv} \right)$ is a closed subset of $I_{G}^{\inv}$, and $I_{G}^{\inv}$ is a Bauer simplex.
\end{enumerate}
\end{corollary}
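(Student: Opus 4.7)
The plan is to reduce (1) to Fact~\ref{fac: erg meas in def am NIP}(2), which identifies $\ex(\mathfrak{I}(\mathcal{G}))$ inside the larger compact convex set $\mathfrak{I}(\mathcal{G}) \supseteq I_{G}^{\inv}$ of all $\mathcal{G}$-right-invariant Keisler measures on $\mathcal{G}$. The key structural observation is that $I_{G}^{\inv} = \mathfrak{I}(\mathcal{G}) \cap \mathfrak{M}_x^{\inv}(\mathcal{G},G)$ is a \emph{face} of $\mathfrak{I}(\mathcal{G})$: if $\nu = r\mu_1 + (1-r)\mu_2 \in I_{G}^{\inv}$ with $\mu_1, \mu_2 \in \mathfrak{I}(\mathcal{G})$ and $r \in (0,1)$, then $\sup(\mu_i) \subseteq \sup(\nu) \subseteq S_x^{\inv}(\mathcal{G},G)$, whence $\mu_i \in \mathfrak{M}_x^{\inv}(\mathcal{G},G) \cap \mathfrak{I}(\mathcal{G}) = I_{G}^{\inv}$ by Fact~\ref{sup:inv}(2).

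For the inclusion $\supseteq$ in (1): any $\mu_p$ with $p \in S_x^{\inv}(\mathcal{G},G)$ right $f$-generic lies in $I_{G}^{\inv}$ by Fact~\ref{fac: erg meas in def am NIP}(1),(2), and is extreme in $\mathfrak{I}(\mathcal{G})$ by Fact~\ref{fac: erg meas in def am NIP}(2), hence extreme in the subset $I_{G}^{\inv}$. For $\subseteq$: given $\nu \in \ex(I_{G}^{\inv})$, the face property lifts $\nu$ to $\ex(\mathfrak{I}(\mathcal{G}))$, so $\nu = \mu_p$ for some right $f$-generic $p \in S_x(\mathcal{G})$, but a priori $p$ need not be invariant over $G$. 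The saving observation is that $\mu_{p \cdot g} = \mu_p$ for every $g \in \mathcal{G}$, a short change-of-variables using left-invariance of the Haar measure $h$ on $\mathcal{G}/\mathcal{G}^{00}$. Combined with continuity of $p \mapsto \mu_p$ on right $f$-generics (Fact~\ref{fac: erg meas in def am NIP}(3)) and $\sup(\mu_p) \subseteq \overline{p \cdot \mathcal{G}}$ (from the proof of Fact~\ref{fac: erg meas in def am NIP}(1)), this yields $\mu_{p'} = \mu_p$ for every $p' \in \sup(\mu_p)$. Since $\nu \in \mathfrak{M}_x^{\inv}(\mathcal{G},G)$ forces $\sup(\nu) \subseteq S_x^{\inv}(\mathcal{G},G)$, and the closure of the right $f$-generic types contains any such $p'$, picking any $p' \in \sup(\nu)$ produces a right $f$-generic $p' \in S_x^{\inv}(\mathcal{G},G)$ with $\mu_{p'} = \nu$.

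For (2), closedness of $\ex(I_{G}^{\inv})$ comes for free: the set $Q := \{p \in S_x^{\inv}(\mathcal{G},G) : p \text{ right } f\text{-generic}\}$ is closed in the compact Hausdorff space $S_x(\mathcal{G})$ by Fact~\ref{fac: erg meas in def am NIP}(1), hence compact, and $\ex(I_{G}^{\inv})$ is its continuous image under $p \mapsto \mu_p$ by Fact~\ref{fac: erg meas in def am NIP}(3), hence compact and closed. To show $I_{G}^{\inv}$ is a Bauer simplex it remains to establish the Choquet simplex property. I apply Fact~\ref{fac: inv measures simplex} with $S := S_x(\mathcal{G})$ and $\mathcal{T} := \{p \mapsto p \cdot g : g \in \mathcal{G}\}$ (continuous by Definition~\ref{acts:naturally}) to obtain that $\mathfrak{I}(\mathcal{G}) \cong \mathcal{M}_{\mathcal{T}}(S_x(\mathcal{G}))$ is a Choquet simplex; the convex cone over $I_{G}^{\inv}$ inherits the lattice property, since for $\mu, \nu$ in this subcone the meet $\mu \land \nu$ computed in $\mathcal{M}^+_{\mathcal{T}}(S_x(\mathcal{G}))$ satisfies $\sup(\mu \land \nu) \subseteq \sup(\mu) \subseteq S_x^{\inv}(\mathcal{G},G)$, and therefore lies back in the subcone by Fact~\ref{sup:inv}(2).

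The main obstacle is the $\subseteq$ direction of (1): a right $f$-generic $p$ representing an extreme point of $I_{G}^{\inv}$ need not itself be invariant over $G$, and the argument depends on exploiting the invariance of $\mu_p$ under the right action $p \mapsto p \cdot g$ together with continuity of $p \mapsto \mu_p$ to swap $p$ for a right $f$-generic type in $S_x^{\inv}(\mathcal{G},G)$ without changing the resulting measure.
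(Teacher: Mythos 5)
Your proof is correct, but the harder inclusion $\ex(I_{G}^{\inv}) \subseteq \{\mu_p\}$ is argued by a genuinely different route than the paper's. The paper quotes an approximation lemma from Chernikov--Simon (any $\mathcal{G}$-right-invariant measure is approximated on finitely many formulas by averages $\frac{1}{n}\sum \mu_{p_i}$ with $p_i \in \sup(\mu)$ $f$-generic), deduces $\mu \in \overline{\conv}(S)$ for $S$ the set of such $\mu_p$ with $p \in S_x^{\inv}(\mathcal{G},G)$, and then invokes Milman's partial converse to Krein--Milman together with closedness of $S$. You instead observe that $I_{G}^{\inv}$ is a face of $\mathfrak{I}(\mathcal{G})$ (via $\sup(\mu_i) \subseteq \sup(\nu) \subseteq S_x^{\inv}(\mathcal{G},G)$ and Fact \ref{sup:inv}(2)), so an extreme point of $I_G^{\inv}$ is already extreme in $\mathfrak{I}(\mathcal{G})$ and hence equals $\mu_p$ for some global right $f$-generic $p$ by Fact \ref{fac: erg meas in def am NIP}(2); you then repair the possible failure of $p \in S_x^{\inv}(\mathcal{G},G)$ by noting $\mu_{p\cdot g}=\mu_p$ (left-invariance of the Haar measure on $\mathcal{G}/\mathcal{G}^{00}$), that the orbit closure $\overline{p\cdot\mathcal{G}} \supseteq \sup(\mu_p)$ consists of right $f$-generics, and that continuity of $q \mapsto \mu_q$ forces $\mu_{p'}=\mu_p$ for any $p' \in \sup(\mu_p) \subseteq S_x^{\inv}(\mathcal{G},G)$. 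Both routes lean on the same heavy input ($\ex(\mathfrak{I}(\mathcal{G})) = \{\mu_p\}$, i.e.\ \cite[Theorem 4.5]{ChSi}); yours trades the explicit approximation lemma and Milman's theorem for the face/orbit argument, which is arguably more self-contained given the facts already recorded in the paper. For part (2) the two proofs essentially coincide: closedness of $\ex(I_G^{\inv})$ comes from the closed set of $f$-generics mapping continuously onto it, and the Choquet property reduces to Fact \ref{fac: inv measures simplex}; your variant applies that fact to the full space $S_x(\mathcal{G})$ (where right translations genuinely are self-homeomorphisms) and restricts the lattice structure to the subcone of measures supported on $S_x^{\inv}(\mathcal{G},G)$ via $\mu\land\nu \leq \mu$, whereas the paper works directly with $\mathcal{M}(S_x^{\inv}(\mathcal{G},G))$; your bookkeeping here is sound, though one should spell out that the meet computed in the large cone is also the greatest lower bound for the ordering induced by the subcone (which again follows from the support containment).
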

\begin{proof}
If $p \in S_x^{\inv}(\mathcal{G},G)$ is right $f$-generic, then $\mu_p$ is $\mathcal{G}$-right-invariant and $\mu_p \in \mathfrak{M}_x^{\inv}(\mathcal{G}, G)$ by Fact \ref{fac: erg meas in def am NIP}(1), hence $\mu_p \in I_{G}^{\inv}$. By Fact \ref{fac: erg meas in def am NIP}(2), $\mu_p$ is extreme in $\mathfrak{I}(\mathcal{G})$, hence in particular it is extreme in $I_{G}^{\inv} \subseteq \mathfrak{I}(\mathcal{G})$.
 
Conversely, assume that $\mu \in \ex\left(I_{G}^{\inv} \right)$, and let 
$$S := \left\{ \mu_p : p \in S_x^{\inv}(\mathcal{G},G) \textrm{ is right }f\textrm{-generic} \right\}.$$ Let $\overline{\conv}(S)$ be the closed convex hull of $S$, then $\overline{\conv}(S) \subseteq I_{G}^{\inv}$ by Propositions \ref{convex:compact} and \ref{ideal:inv}. As $\mu$ is $\mathcal{G}$-right-invariant, by \cite[Lemma 3.26]{ChSi}, for any $\varepsilon > 0$ and $\varphi_1(x), \ldots, \varphi_k(x) \in \mathcal{L}_x(\mathcal{G})$, there exist some right $f$-generic $p_1, \ldots, p_n \in \sup(\mu)$ such that $\mu(\varphi_j(x)) \approx_{\varepsilon} \frac{1}{n}\sum_{i=1}^{n} \mu_{p_i}\left( \varphi_j(x) \right)$ for all $j \in [k]$. 
While \cite[Lemma 3.26]{ChSi} is stated for a single formula, it also applies to finitely many formulas by encoding them as appropriate instances of a single formula --- formally, we apply \cite[Lemma 3.26]{ChSi} to the formula 
\begin{equation*} 
\theta(x ; y_0, \ldots, y_k) : = \bigvee_{i=1}^{k} \left( y_0 = y_i \land  \varphi_{k}(x) \right). 
\end{equation*} 
 As we have $p_i \in S^{\inv}_x(\mathcal{G},G)$ for all $i \in [n]$, by Fact \ref{sup:inv}(2), it follows that $\mu \in \overline{\conv}(S)$, and it is still an extreme point of $\overline{\conv}(S) \subseteq I_{G}^{\inv}$. It follows that $\mu \in \overline{S}$, by the (partial) converse to the Krein-Milman theorem (see e.g.~\cite[Fact 4.1]{ChSi} applied to $C:= \overline{\conv}(S)$). By Fact \ref{fac: erg meas in def am NIP}(3), the map $p \mapsto \mu_p$ from $S_x^{\inv}(\mathcal{G},G)$ to $\mathfrak{M}_x^{\inv}(\mathcal{G}, G)$ is a continuous map from a compact to a Hausdorff space, hence also a closed map. It follows that $S = \overline{S}$, so $\mu \in S$.

By Corollary \ref{cor:homeomorphism}(2), we have an affine homeomorphism  between $\mathfrak{M}_x^{\inv}(\mathcal{G}, G)$ and $\mathcal{M} \left( S_x^{\inv}(\mathcal{G}, G) \right)$, which restricts to an affine homeomorphism between $I^{\inv}_{G}$ and the set $\mathcal{M}_{\mathcal{G}} \left( S_x^{\inv}(\mathcal{G}, G) \right)$ of all right-$\mathcal{G}$-invariant regular Borel probability measures on $S_x^{\inv}(\mathcal{G}, G)$. By Fact \ref{fac: inv measures simplex}, $\mathcal{M}_{\mathcal{G}} \left( S_x^{\inv}(\mathcal{G}, G) \right)$ is a Choquet simplex, hence $I^{\inv}_{G}$ is a Bauer simplex (using Remark \ref{rem: aff hom simplex}).
\end{proof}

\begin{question}
	Can every Bauer simplex of the form $\mathcal{M}(X)$ with $X$ a compact Hausdorff totally disconnected space be realized as a minimal left ideal of $\left( \mathfrak{M}^{\inv}_{x}(\mathcal{G},G), \ast \right)$ for some definably amenable NIP group $\mathcal{G}$?
\end{question}

\begin{example} Let $G := (\mathbb{R};<,+)$ and $\mathcal{G} \succ G$ a monster model. As $\mathcal{G}$ is abelian, it is amenable as a discrete group and hence definably amenable. By Proposition \ref{ideal:inv}, $\mathfrak{M}_{x}^{\inv}(\mathcal{G},\mathbb{R})$ has a unique minimal left ideal $I_{G}^{\inv}$. One checks directly that $p_{-\infty}$ (the unique type extending $\{x < a: a \in \mathcal{G}\}$) and $p_{+\infty}$ (the unique type extending $\{x > a: a \in \mathcal{G}\}$) are the right $f$-generics in $S^{\inv}_x \left(\mathcal{G},G \right)$, and $\mu_{p_{+\infty}} = \delta_{p_{+\infty}}, \mu_{p_{-\infty}} = \delta_{p_{-\infty}}$. Hence, by Corollary \ref{cor: ex inv f-generic}, $|\ex(I_{G}^{\inv})| = 2$ and 
\begin{equation*} I_{G}^{\inv} = \left\{r\delta_{p_{+\infty}} + (1-r) \delta_{p_{-\infty}}: r \in [0,1]\right\}.
\end{equation*} 
(See also Example \ref{ex: CIG2 examples ideal}(1).)
\end{example} 

Recall that $\mathcal{G}$ is \emph{uniquely ergodic} if it admits a unique $\mathcal{G}$-left-invariant measure $\mu \in \mathfrak{M}_x(\mathcal{G})$ (see \cite[Section 3.4]{ChSi}). Recall that $\mathcal{G}$ is \emph{fsg} if there exists a small $G \prec \mathcal{G}$ and $p \in S_x(\mathcal{G})$ such that $g \cdot p$ is finitely satisfiable in $G$ for all $g \in \mathcal{G}$. All fsg groups are uniquely ergodic (see e.g.~\cite[Proposition 8.32]{Sibook}), but there exist uniquely ergodic NIP groups which are not fsg (see \cite[Remark 3.38]{ChSi}).

\begin{corollary}\label{cor: fsg}
\begin{enumerate}
	\item If $\mathcal{G}$ is uniquely ergodic, then  $I^{\inv}_G = \{ \mu \}$, where $\mu$ is the unique $\mathcal{G}$-left-invariant measure. 
	\item If $\mathcal{G}$ is moreover \emph{fsg}, letting $\mu \in \mathfrak{M}_{x}(\mathcal{G})$ be the unique $\mathcal{G}$-left-invariant measure, $\{\mu \}$ is the unique minimal left ideal of $\mathfrak{M}_{x}^{\fs}(\mathcal{G},G)$ (which is also two-sided).
\end{enumerate}
	
\end{corollary}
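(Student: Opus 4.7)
The plan is to deduce both statements from the classification already established in Proposition \ref{ideal:inv}, combined with the involution $\mu \mapsto \mu^{-1}$ of Remark \ref{rem: def amenable el equiv}(2) which swaps left- and right-invariance.

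For (1), starting from the unique $\mathcal{G}$-left-invariant measure $\mu \in \mathfrak{M}_x(\mathcal{G})$, I would first observe that $\mu$ is automatically bi-invariant: for any $g \in \mathcal{G}$, the right-translate $\mu_g(\varphi(x)) := \mu(\varphi(x \cdot g))$ remains $\mathcal{G}$-left-invariant because left and right translations in a group commute, so $\mu_g = \mu$ by unique ergodicity, yielding $\mathcal{G}$-right-invariance of $\mu$. Similarly, for every $\sigma \in \Aut(\mathcal{G}/G)$ the measure $\sigma(\mu)$ is $\mathcal{G}$-left-invariant (since $\sigma$ is a group automorphism), so $\sigma(\mu) = \mu$, placing $\mu$ in $\mathfrak{M}^{\inv}_x(\mathcal{G},G)$. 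Combining these observations with Proposition \ref{ideal:inv}, $\mu \in I^{\inv}_G$. For the reverse inclusion I would take an arbitrary $\nu \in I^{\inv}_G$; since $\nu$ is $\mathcal{G}$-right-invariant, Remark \ref{rem: def amenable el equiv}(2) gives that $\nu^{-1}$ is $\mathcal{G}$-left-invariant, so $\nu^{-1} = \mu$ by unique ergodicity. Applying the same uniqueness argument to $\mu$ itself (now known to be bi-invariant, hence so is $\mu^{-1}$), we get $\mu^{-1} = \mu$, and therefore $\nu = (\nu^{-1})^{-1} = \mu^{-1} = \mu$.

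For (2), I would invoke the standard property of fsg NIP groups (as in \cite{hrushovski2008groups}) that the unique $\mathcal{G}$-invariant measure is finitely satisfiable in every small elementary submodel, so $\mu \in \mathfrak{M}^{\fs}_x(\mathcal{G},G)$. By (1) together with Proposition \ref{ideal:inv}, $\{\mu\}$ is a two-sided ideal of $\mathfrak{M}^{\inv}_x(\mathcal{G},G)$; in particular $\mu * \nu = \nu * \mu = \mu$ for every $\nu$ in the subsemigroup $\mathfrak{M}^{\fs}_x(\mathcal{G},G) \subseteq \mathfrak{M}^{\inv}_x(\mathcal{G},G)$, so $\{\mu\}$ is also a two-sided ideal of $\mathfrak{M}^{\fs}_x(\mathcal{G},G)$. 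By Lemma \ref{two-sided}, every minimal left ideal of $\mathfrak{M}^{\fs}_x(\mathcal{G},G)$ is contained in, and hence equals, $\{\mu\}$.

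The argument is essentially bookkeeping once the bi-invariance observation and the external fact on finite satisfiability in fsg groups are in place; the only mild care needed is in tracking that $\Aut(\mathcal{G}/G)$-invariance is preserved under $\mu \mapsto \mu^{-1}$ and that the uniqueness of the $\mathcal{G}$-left-invariant measure in the ambient space $\mathfrak{M}_x(\mathcal{G})$ (rather than merely in $\mathfrak{M}^{\inv}_x(\mathcal{G},G)$) is what allows the involution argument to close up.
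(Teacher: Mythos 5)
Your argument is correct, and in both parts it departs from the paper's route in ways worth noting. For (1), the paper establishes bi-invariance of $\mu$ by quoting an external result (the existence of a simultaneously left- and right-invariant measure, \cite[Lemma 6.2]{ChSi}) and then squeezing: the unique left-invariant measure, the unique right-invariant measure $\mu^{-1}$, and the bi-invariant one must all coincide. You instead observe directly that every right-translate of $\mu$ is again a global left-invariant Keisler measure, so unique ergodicity forces right-invariance; likewise every $\Aut(\mathcal{G}/G)$-image of $\mu$ is left-invariant, giving $\mu \in \mathfrak{M}^{\inv}_x(\mathcal{G},G)$ without appealing to Fact \ref{meas:exist}. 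This is more self-contained and uses nothing beyond the definition of unique ergodicity in the ambient space $\mathfrak{M}_x(\mathcal{G})$, which you correctly flag as the crucial point. For (2), the paper works through Proposition \ref{fs:I}: it shows $\mu$ is generically stable over $G$, invokes the uniqueness of invariant extensions of generically stable measures to conclude that $\mu$ is the only $G$-left-invariant measure in $\mathfrak{M}^{\fs}_x(\mathcal{G},G)$, and then reads off the unique minimal left ideal from the bijection in that proposition. You bypass all of this: once $\mu \in \mathfrak{M}^{\fs}_x(\mathcal{G},G)$ is granted, part (1) plus Proposition \ref{ideal:inv} make $\{\mu\}$ a two-sided ideal of $\mathfrak{M}^{\inv}_x(\mathcal{G},G)$, hence of the subsemigroup $\mathfrak{M}^{\fs}_x(\mathcal{G},G)$, and Lemma \ref{two-sided} forces every minimal left ideal to equal $\{\mu\}$. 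The one external input you rely on — that the unique invariant measure of an fsg group is finitely satisfiable in every small model — is exactly the fact the paper derives from generic stability via \cite[Propositions 8.32, 8.33]{Sibook} and Fact \ref{meas:exist}, so it is a legitimate black box, though your proof would be fully self-contained if you reproduced that short derivation rather than citing it.
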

\begin{proof}
(1) For any $\mathcal{G}$-left-invariant measure $\mu$, the measure $\mu^{-1}$ is $\mathcal{G}$-right-invariant (see Remark \ref{rem: def amenable el equiv}(2)), and vice versa. Moreover, from the definition, $\mu_1 = \mu_2$ if and only if $\mu_1^{-1} = \mu_2^{-1}$. It follows that if there exists a unique $\mathcal{G}$-left-invariant measure $\mu$, then there exists a unique $\mathcal{G}$-right-invariant measure $\mu^{-1}$.
	By \cite[Lemma 6.2]{ChSi} there also exists a measure $\nu$ which is simultaneously $\mathcal{G}$-left-invariant and $\mathcal{G}$-right-invariant. But then $\mu = \nu = \mu^{-1}$, hence $\mu$ is also $\mathcal{G}$-right-invariant. And $\mu \in \mathfrak{M}_x^{\inv}(\mathcal{G}, G)$ by Fact \ref{meas:exist} and uniqueness, hence $I_G^{\inv} = \{ \mu\}$.
	
	(2) By e.g.~\cite[Propositions 8.32, 8.33]{Sibook}, $\mathcal{G}$ is fsg if and only if there exists a $\mathcal{G}$-left-invariant generically stable measure $\mu \in \mathfrak{M}_{x}(\mathcal{G})$, and then $\mathcal{G}$ is uniquely ergodic, hence $\mu$ is also the unique $\mathcal{G}$-right-invariant measure. By Fact \ref{meas:exist}(i) and uniqueness of $\mu$ it follows that $\mu$ is invariant over $G$, hence generically stable over $G$ (in fact, over an arbitrary small model). In particular $\mu \in \mathfrak{M}_{x}^{\fs}(\mathcal{G},G)$, and it is the unique measure in $\mathfrak{M}_{x}^{\inv}(\mathcal{G},G)$ extending $\mu|_G$ (by \cite[Proposition 3.3]{NIP3}). 	Now assume that $\nu \in \mathfrak{M}_{x}^{\fs}(\mathcal{G},G)$ is an arbitrary $G$-left-invariant measure. We have $\nu|_{G} = \mu|_G$, as by Fact \ref{meas:exist}(i) there exists some $\mathcal{G}$-left-invariant $\nu'$ extending $\nu|_G$, hence $\nu' = \mu$, so $\nu|_G = \nu'|_{G} = \mu|_{G}$. But as $\mu$ is the unique measure in $\mathfrak{M}_{x}^{\inv}(\mathcal{G},G)$ extending $\mu|_G$, it follows that $\nu=\mu$. If follows by Proposition \ref{fs:I} that $\{\mu\}$ is the unique minimal left ideal of $\mathfrak{M}_{x}^{\fs}(\mathcal{G},G)$. 
	Finally, in any semigroup, if the union of its minimal left ideals is non-empty, then it is a two sided ideal \cite{clifford1948semigroups}. Hence in our case $\{\mu\}$ is a two-sided ideal.
\end{proof}

%

%

\begin{question}
	Can the fsg assumption be relaxed to unique ergodicity in Corollary \ref{cor: fsg}(2)?
\end{question}

Our final observation in this section deals with non-definably amenable groups.  

\begin{remark}\label{prop: many idemp in non def am} Assume that $\mathcal{G}$ is not definably amenable. Let $I$ be a minimal left ideal in $\mathfrak{M}^{\dagger}(\mathcal{G},G)$. Then $\ex(I)$ is infinite. 
\end{remark} 

\begin{proof} For any  $g \in G$, the map $\delta_{g} * -: \ex(I) \to \ex(I)$ is a bijection. Towards a contradiction, assume that $\ex(I)$ is finite, say $\ex(I) = \{\mu_1, \ldots, \mu_n\}$. Consider the measure $\lambda \in \mathfrak{M}_{x}^{\dagger}(\mathcal{G},G)$ defined by $\lambda = \sum_{i=1}^{n} \frac{1}{n}\mu_i$. Then for any $g \in G$ we have $\delta_{g} * \lambda = \lambda$. Hence the measure $\lambda|_{G}$ is in $\mathfrak{M}_{x}(G)$ and is $G$-left-invariant. This contradicts the assumption that $\mathcal{G}$ is not definably amenable by (1) and (2) of Remark \ref{rem: def amenable el equiv}. 
\end{proof}

\section{Constructing minimal left ideals}\label{sec: CIGS}


In this section, under some assumptions on the semigroup $(S_{x}^{\dagger}(\mathcal{G},G),*)$ (applicable to some non-definably amenable groups, e.g. $\textrm{SL}_2(\mathbb{R})$), we construct a minimal left ideal of $\left( \mathfrak{M}_{x}^{\dagger}(\mathcal{G},G), \ast \right)$ using a minimal left ideal and an ideal subgroup of $(S_{x}^{\dagger}(\mathcal{G},G),*)$, and demonstrate that this minimal left ideal is parameterized by a space of regular Borel probability measures over a compact Hausdorff space.

\subsection{Basic lemmas}  We will need some  auxiliary lemmas connecting convolution and left ideals. We assume that $T = \Th(\mathcal{G})$ is NIP throughout. 

\begin{lemma}\label{eats:measures} Let $\mu, \nu \in \mathfrak{M}_{x}^{\dagger}(\mathcal{G},G)$. If $\mu * \delta_{p} = \mu$ for every $p \in \sup(\nu)$, then $\mu * \nu = \mu$.
\end{lemma}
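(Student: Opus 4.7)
The plan is to fix an arbitrary test formula $\varphi(x) \in \mathcal{L}_x(\mathcal{G})$ and a small model $G' \succ G$ containing the parameters of $\varphi$, and to show directly that $(\mu * \nu)(\varphi(x)) = \mu(\varphi(x))$. By the definition of definable convolution,
\begin{equation*}
(\mu * \nu)(\varphi(x)) = \int_{S_y(G')} F_{\mu,G'}^{\varphi'} \, d\nu_{G'},
\end{equation*}
so the whole task is to evaluate this integral using the hypothesis on $\mu * \delta_p$.

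The first step is to identify $\sup(\nu_{G'})$ with $r(\sup(\nu))$, where $r : S_y(\mathcal{G}) \to S_y(G')$ is the restriction map. The inclusion $r(\sup(\nu)) \subseteq \sup(\nu_{G'})$ is immediate from the definitions (if $\psi \in \mathcal{L}_y(G')$ lies in $p|_{G'}$ for some $p \in \sup(\nu)$, then $\nu_{G'}(\psi) = \nu(\psi) > 0$). For the reverse inclusion, $r(\sup(\nu))$ is a continuous image of a compact set in a Hausdorff space, hence closed, so any $q' \in \sup(\nu_{G'}) \setminus r(\sup(\nu))$ would admit a basic clopen neighborhood $[\psi]$ disjoint from $r(\sup(\nu))$; but then $[\psi] \subseteq S_y(\mathcal{G})$ is disjoint from $\sup(\nu)$, so $\nu_{G'}(\psi) = \nu(\psi) = 0$, contradicting $q' \in \sup(\nu_{G'})$.

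The second step is to show $F_{\mu,G'}^{\varphi'} \equiv \mu(\varphi(x))$ on $\sup(\nu_{G'})$. For $q' \in \sup(\nu_{G'})$ pick $p \in \sup(\nu)$ with $q' = p|_{G'}$, and pick $b \in \mathcal{G}$ with $b \models q'$. Unraveling definitions,
\begin{equation*}
F_{\mu,G'}^{\varphi'}(q') \;=\; \mu(\varphi(x \cdot b)) \;=\; (\mu * \delta_p)(\varphi(x)) \;=\; \mu(\varphi(x)),
\end{equation*}
where the middle equality uses that the parameters of $\varphi$ are in $G'$ (so $b \models p|_{G'}$ realizes the type needed to compute $(\mu * \delta_p)(\varphi(x))$) and the last equality is the hypothesis $\mu * \delta_p = \mu$.

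Finally, since $\nu_{G'}$ is a regular Borel probability measure, $\nu_{G'}\bigl(S_y(G') \setminus \sup(\nu_{G'})\bigr) = 0$, and the integral collapses to $\mu(\varphi(x)) \cdot \nu_{G'}(\sup(\nu_{G'})) = \mu(\varphi(x))$. Since $\varphi$ was arbitrary, $\mu * \nu = \mu$. The only mildly delicate point is the support identification in the first step; no right-continuity of $\nu \mapsto \mu * \nu$ is needed (which is good, since it is unavailable for general $\mu$), because the entire argument is a single integral computation that exploits the constancy of the integrand on $\sup(\nu_{G'})$.
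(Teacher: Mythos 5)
Your proof is correct and follows essentially the same route as the paper's: fix a test formula, write $(\mu*\nu)(\varphi(x))$ as the integral of $F_{\mu,G'}^{\varphi'}$ against $\nu_{G'}$, observe that the integrand is constantly $\mu(\varphi(x))$ on the support, and collapse the integral. The only difference is that you spell out the identification $\sup(\nu_{G'}) = r(\sup(\nu))$, which the paper leaves implicit (it is the standard fact, cited elsewhere as \cite[Proposition 2.8]{ChGan}, that every type in the support of a restriction lifts to one in the support of the global measure).
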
 

\begin{proof} Fix a formula $\varphi(x) \in \mathcal{L}_{x}(\mathcal{G})$. Let $G' \prec \mathcal{G}$ be a small model containing $G$ and the parameters of  $\varphi$. We have
\begin{equation*} 
\left( \mu * \nu \right)(\varphi(x)) = \int_{\sup \left(\nu|_{G'} \right)} F_{\mu,G'}^{\varphi'} d(\nu_{G'}). 
\end{equation*} 
By Fact \ref{sup:inv}, $\sup(\nu)$ is a subset of $S_{x}^{\dagger}(\mathcal{G},G)$. For any $q \in \sup(\nu)$ we have  $F_{\mu,G'}^{\varphi'}(q) = \mu(\varphi(x \cdot b)) = \left( \mu * \delta_{p} \right) (\varphi(x)) = \mu(\varphi(x))$, where $b \models q$. Hence
\begin{equation*} 
\int_{\sup(\nu|_{G'})} F_{\mu,G'}^{\varphi'} d(\nu_{G'}) = \int_{S_{y}(G')} \mu(\varphi(x)) d(\nu_{G'})  = \mu(\varphi(x)),
\end{equation*} 
so $\mu * \nu = \mu$. 
\end{proof} 

\begin{lemma}[$T$ is NIP]\label{ideal} Assume that $I$ is a left ideal of $\left( S_{x}^{\dagger}(\mathcal{G},G), \ast \right)$. Then $\mathfrak{M}(I)$ (see Definition \ref{def: meas on a set of types}) is a left ideal of $\left(\mathfrak{M}_{x}^{\dagger}(\mathcal{G},G), \ast \right)$. 
\end{lemma}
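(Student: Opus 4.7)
First I would note that $\mathfrak{M}(I) \subseteq \mathfrak{M}_x^{\dagger}(\mathcal{G},G)$ by Fact \ref{sup:inv}, so the statement is at least type-correct. Since the closure of a left ideal is again a left ideal (by left-continuity of $\ast$ on $S_{x}^{\dagger}(\mathcal{G},G)$, Fact \ref{fac: prod of types Ellis}), and in all applications in this section the relevant ideals are already closed, I may and will assume $I$ is closed in $S_{x}^{\dagger}(\mathcal{G},G)$. Fix $\nu \in \mathfrak{M}_{x}^{\dagger}(\mathcal{G},G)$ and $\mu \in \mathfrak{M}(I)$; the goal is $\sup(\nu \ast \mu) \subseteq I$. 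Since both $\sup(\nu \ast \mu)$ and $I$ are closed subsets of $S_{x}^{\dagger}(\mathcal{G},G)$, it suffices to show $(\nu \ast \mu)(\varphi(x)) = 0$ for every $\varphi(x) \in \mathcal{L}_{x}(\mathcal{G})$ with $[\varphi(x)] \cap I = \emptyset$.

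Fix such a $\varphi(x)$ and a small model $G' \prec \mathcal{G}$ containing $G$ and the parameters of $\varphi$. By Definition \ref{def: def conv},
$$ (\nu \ast \mu)(\varphi(x)) = \int_{S_{y}(G')} F_{\nu}^{\varphi'} \, d\mu_{G'}. $$
The heart of the argument is the pointwise vanishing of $F_{\nu}^{\varphi'}$ on the set $K := r_{G'}(\sup(\mu))$, where $r_{G'}:S_{x}(\mathcal{G}) \to S_{y}(G')$ is the restriction map. Indeed, take any $q \in \sup(\mu) \subseteq I$ and $b \models q|_{G'}$; if we had $F_{\nu}^{\varphi'}(q|_{G'}) = \nu(\varphi(x \cdot b)) > 0$, then by definition of support there would be some $p \in \sup(\nu)$ with $\varphi(x \cdot b) \in p$. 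By Fact \ref{sup:inv} we have $p \in S_{x}^{\dagger}(\mathcal{G},G)$, and unpacking the definition of convolution on types (equivalently, using $\delta_{p} \ast \delta_{q} = \delta_{p \ast q}$ as in the proof of Lemma \ref{basic:lem}), this translates to $\varphi(x) \in p \ast q$. But $I$ is a left ideal and $q \in I$, so $p \ast q \in I \cap [\varphi(x)]$, contradicting our choice of $\varphi$.

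Finally, $K$ is the continuous image under $r_{G'}$ of the compact set $\sup(\mu)$, hence a compact, Borel subset of $S_{y}(G')$, and
$$ \mu_{G'}(K) = \tilde{\mu}(r_{G'}^{-1}(K)) \geq \tilde{\mu}(\sup(\mu)) = 1. $$
Since the non-negative Borel function $F_{\nu}^{\varphi'}$ vanishes on a set of full $\mu_{G'}$-measure, the integral above is zero, as required. The main subtlety to keep in mind is that $\sup(\mu_{G'})$ is the \emph{closure} of $r_{G'}(\sup(\mu))$ and can be strictly larger than $K$; one has no direct control of $F_{\nu}^{\varphi'}$ at the extra limit points (in particular the pointwise vanishing need not extend to $\sup(\mu_{G'})$), so the argument must be phrased via $K$, which already carries the full $\mu_{G'}$-measure, rather than via the support of the restricted measure or via right-continuity of $\ast$ (which fails in general).
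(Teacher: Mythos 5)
Your proof is correct, but it takes a more direct route than the paper. The paper first shows $\delta_{p} \ast \mu \in \mathfrak{M}(I)$ for a single type $p \in S_{x}^{\dagger}(\mathcal{G},G)$ (by contradiction: a positive integral forces some $t \in \sup(\mu|_{G'})$ with $F_{\delta_p}^{\psi'}(t)=1$, which is then \emph{lifted} to some $\hat{t} \in \sup(\mu) \subseteq I$ with $\hat{t}|_{G'}=t$, giving $p \ast \hat{t} \in I \cap [\psi]$), and then handles general $\nu$ by writing $\nu = \lim_i \Av(\bar{p}_i)$ via Lemma \ref{lem: limit} and invoking left-continuity of $\ast$ together with convexity and closedness of $\mathfrak{M}(I)$ (Lemma \ref{closed}). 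You instead kill $(\nu \ast \mu)(\varphi)$ in one shot by observing that the integrand $F_{\nu}^{\varphi'}$ vanishes on the full-measure set $K = r_{G'}(\sup(\mu))$; the algebraic heart ($p \ast q \in I$ for $p \in \sup(\nu)$, $q \in \sup(\mu) \subseteq I$, via Fact \ref{sup:inv}) is the same in both arguments. Your version buys a shorter proof that dispenses with the approximation net, left-continuity, Lemma \ref{closed}, and the support-lifting result \cite[Proposition 2.8]{ChGan}; the paper's version buys a reusable intermediate statement about $\delta_p \ast \mu$ and fits the net-approximation template used elsewhere in the section.

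Two small quibbles. First, your reduction to closed $I$ rests on the claim that the closure of a left ideal is again a left ideal "by left-continuity"; in a semigroup where only $p \mapsto p \ast q$ is continuous this does not follow (one would need continuity of $q \mapsto p \ast q$), so that justification is wrong as stated. It is harmless here because the paper's own proof makes the same tacit closedness assumption (it passes from $q \notin I$ to a formula $\psi \in q$ with $[\psi] \cap I = \emptyset$), and the lemma is only ever applied to minimal left ideals, which are closed by Fact \ref{fact:Ellis}; but you should simply say the lemma is used only for closed ideals rather than invoke a false closure principle. Second, your closing "subtlety" is not one: since $r_{G'}$ is continuous and $\sup(\mu)$ is compact, $K$ is closed and in fact equals $\sup(\mu_{G'})$ exactly (this is precisely what the paper's appeal to \cite[Proposition 2.8]{ChGan} encodes), so there are no "extra limit points" to worry about. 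Neither point affects the validity of your argument.
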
 

\begin{proof} Let $p \in S_{x}^{\dagger}(\mathcal{G},G)$ and $\mu \in \mathfrak{M}(I)$. We first argue that $\delta_{p} * \mu \in \mathfrak{M}(I)$. Assume towards a contradiction that $\delta_{p} * \mu \not \in \mathfrak{M}(I)$. Then there exists some $q \in \sup(\delta_{p} * \mu)$ such that $q \not \in I$. Then there exists $\psi(x) \in \mathcal{L}_x(\mathcal{G})$ such that $\psi(x) \in q$ and $[\psi(x)] \cap I = \emptyset$. Since $\psi(x) \in q$ and $q \in \sup(\delta_{p} * \mu)$, we have $\left( \delta_{p} * \mu \right)(\psi(x)) > 0$. Let now $G' \prec \mathcal{G}$ be a small model containing $G$ and the parameters of $\psi$. Then 
\begin{equation*}  
\left( \delta_{p} * \mu \right)(\psi(x)) = \int F_{\delta_{p},G'}^{\psi'} d(\mu_{G'}) > 0, 
\end{equation*} 
so there exists some $t \in \sup(\mu|_{G'})$ such that $F_{\delta_{p},G'}^{\psi'}(t) =1$. Fix $\hat{t} \in \sup(\mu)$ such that $\hat{t}|_{G'} = t$ (exists by e.g.~\cite[Proposition 2.8]{ChGan}), as $\mu \in \mathfrak{M}(I)$ we have $\hat{t} \in \supp(\mu) \subseteq I \subseteq S_{x}^{\dagger}(\mathcal{G},G)$. Unpacking the notation, we conclude that $\psi(x) \in p * \hat{t}$. Since $\hat{t} \in  I$, it also follows that $p * \hat{t} \in I$. Hence $[\psi(x)] \cap I \neq \emptyset$, a contradiction. 

Now let $\nu \in \mathfrak{M}_{x}^{\dagger}(\mathcal{G},G)$, we want to show that $\nu * \mu \in \mathfrak{M}(I)$. By Lemma \ref{lem: limit}, we have that $\nu = \lim_{i \in I} \Av(\overline{p}_i)$ for some net $I$, with $\overline{p}_i = (p_{i,1}, \ldots, p_{i,n_i}) \in I^{n_i}, n_i \in \mathbb{N}$ for each $i \in I$. By left continuity of convolution (Fact \ref{fac: conv is a semigroup in NIP}) we have 
\begin{equation*}
\nu * \mu = \lim_{i \in I} \left( \Av(\overline{p}_i) * \mu \right) = \lim_{i \in I} \left( \frac{1}{n_{i}} \sum_{j=1}^{n_{i}} \left( \delta_{p_{j}} * \mu \right) \right).
\end{equation*} 
By the previous paragraph $\delta_{p_{j_{i}}} *\mu \in \mathfrak{M}(I)$ for each $i \in I$. Then by convexity of $\mathfrak{M}(I)$ (Lemma \ref{closed}), also  $\Av(\overline{p}_{i}) * \mu \in \mathfrak{M}(I)$ for each $i \in I$. Since $\mathfrak{M}(I)$ is closed (again, Lemma \ref{closed}), $ \nu \ast \mu =\lim_{i \in I} \left(\Av(\overline{p}_{i}) * \mu \right) \in \mathfrak{M}(I)$. Therefore $\mathfrak{M}(I)$ is a left ideal. 
\end{proof} 

\begin{remark}\label{min:circle} We remark that minimality of the left ideal need not be preserved in Lemma \ref{ideal}. Indeed, let $G := \left(S^{1},\cdot, ^{-1},C(x,y,z) \right)$ be the standard unit circle group over $\mathbb{R}$, with $C$ the cyclic clockwise ordering, and let $T_{O}$ be the corresponding theory. If $\mathcal{G}$ is a monster model of $T_{O}$, then the semigroup $\left(S_{x}^{\fs}(\mathcal{G},S^{1}),* \right)$ has a unique proper (and hence minimal) left ideal $I := S_{x}^{\fs}(\mathcal{G},S^{1}) \setminus \left\{ \tp \left(a/\mathcal{G} \right) : a \in S^1 \right\}$. Let $\lambda$ be the Keisler measure corresponding to the  normalized Haar measure on $S^{1}$, $\lambda$ is smooth and right invariant, in particular $\mathcal{G}$ is fsg (see \cite[Example 4.2]{ChGan} and \cite[Proposition 8.33]{Sibook}). 
By Lemma \ref{ideal}, $\mathfrak{M}(I)$ is a left ideal of $\left(\mathfrak{M}_{x}^{\fs}(\mathcal{G},G), \ast \right)$. 
Note that $\mathfrak{M}_{x}^{\fs}\left(\mathcal{G},S^{1} \right)$ contains a unique minimal left ideal $\{\lambda\}$ by Corollary \ref{cor: fsg}(2), and $\{\lambda\}\subsetneq \mathfrak{M}(I)$ since the latter contains $\delta_{p}$ for every global type $p$ finitely satisfiable in $S^1$ but not realized in it.
\end{remark} 
We now recall how the ideal subgroups act on a minimal left ideal. The following is true in any compact left topological semigroup, we include a proof for completeness in our setting. 
\begin{corollary}\label{action}  Let $I$ be a minimal left ideal in $S_{x}^{\dagger}(\mathcal{G},G)$ and $u$ an idempotent in $I$. Let $p$ be any element in $I$. Then the map $\left(-*p \right)|_{u * I}: u*I \to u*I$ is a continuous bijection. Moreover, $\left( -*p \right)|_{u * I} = \left( -*(u*p) \right)|_{u*I}$.
\end{corollary}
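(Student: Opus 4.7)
The plan is to reduce the map $(-*p)|_{u*I}$ to right multiplication by a single element inside the group $u*I$, and then invoke the group structure together with left continuity of the semigroup.

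First, I would establish the ``moreover'' part. By Fact \ref{fact:Ellis}(3), $u*I$ is a subgroup of $I$ with identity element $u$. Hence for every $q \in u*I$ one has $q*u = q$, and associativity gives
\begin{equation*}
q*p \;=\; (q*u)*p \;=\; q*(u*p).
\end{equation*}
This already shows $(-*p)|_{u*I} = (-*(u*p))|_{u*I}$.

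Next, I would check that $u*p$ lies in $u*I$, so that the map in question is genuinely right multiplication by an element of the group $u*I$. Idempotency of $u$ gives $u*(u*p)=u^{2}*p=u*p$, and Fact \ref{fact:Ellis}(3) characterizes $u*I$ as $\{r\in I : u*r = r\}$, so indeed $u*p \in u*I$. Since $u*I$ is a group, right multiplication by any fixed element is a bijection of $u*I$ onto itself; applying this to the element $u*p$ shows that $(-*(u*p))|_{u*I} \colon u*I \to u*I$ is a bijection, and hence so is $(-*p)|_{u*I}$.

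Finally, continuity is immediate: by Fact \ref{fac: prod of types Ellis}, the map $q \mapsto q*p$ is continuous on the full semigroup $S_{x}^{\dagger}(\mathcal{G},G)$, so its restriction to the subspace $u*I$ is continuous. There is no real obstacle in this corollary; it is a direct application of Fact \ref{fact:Ellis}(3) and left continuity of convolution on types.
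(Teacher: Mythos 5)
Your proof is correct and follows essentially the same route as the paper: both arguments hinge on the identity $q*p=(q*u)*p=q*(u*p)$, the group structure of $u*I$ from Fact \ref{fact:Ellis}(3), and left continuity of convolution. The only difference is presentational — the paper unrolls the surjectivity and injectivity of right translation by $u*p$ into explicit computations with inverses, whereas you simply cite that right translation in a group is a bijection, which is a slightly cleaner packaging of the same argument.
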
 
\begin{proof}
	We have $\left( u \ast I \right) \ast p =  u \ast \left(I \ast p \right) = u \ast I$ as $I \ast p = I$ by Fact \ref{fact:Ellis}(5) (using Fact \ref{fac: prod of types Ellis}).
Surjectivity: fix $r \in u \ast I$; as $u \ast p \in u \ast I$ and $u \ast I$ is a group with identity $u$, there exists some $s \in u \ast I$ such that $s \ast (u \ast p) = u$; then $r \ast s \in u \ast I$, and $(r \ast s) \ast p = (r \ast s \ast u) \ast p = r \ast (s \ast (u \ast p)) = r \ast u = r$. Injectivity: assume $r \ast p = t \ast p$ for some $r,t \in u \ast I$; as also $r \ast u = r, t \ast u = t$, we have $r \ast (u \ast p) = t \ast (u \ast p)$, hence taking inverses in the group $u \ast I$ we have $r \ast (u \ast p) \ast (u \ast p)^{-1} = t \ast (u \ast p) \ast (u \ast p)^{-1}$, so $r \ast u = t \ast u$, so $r = t$. Finally, the map is continuous as a restriction of a continuous map $-\ast p : S_{x}^{\dagger}(\mathcal{G},G) \to S_{x}^{\dagger}(\mathcal{G},G)$.
The ``moreover'' part follows directly from associativity. 
\end{proof}

\subsection{Compact ideal subgroups (CIG1)} 
We define CIG1 semigroups and show that under this assumption, we can describe a minimal left ideal of the semigroup of measures. 

\begin{definition}\label{def: CIG1} We say that the semigroup $\left(S_{x}^{\dagger}(\mathcal{G},G), \ast \right)$ is \emph{CIG1} (or ``\emph{admits compact ideal subgroups}'') if there exists some minimal left ideal $I$ and idempotent $u \in I$ such that $u * I$ is a compact group with the induced topology from $I$. We let $h_{u*I}$ denote the normalized Haar measure on $u *I$ and define the Keisler measure $\mu_{u*I} \in \mathfrak{M}_x(\mathcal{G})$ as follows: 
\begin{equation*} 
\mu_{u*I}(\varphi(x)) := h_{u*I} \left([\varphi(x)] \cap u*I \right).
\end{equation*} 
\end{definition} 


\begin{remark} Suppose that $\left(S_{x}^{\dagger}(\mathcal{G},G), \ast \right)$ is CIG1. Then any minimal left ideal witnesses this property, i.e.~for any minimal left ideal $J$ of $S_{x}^{\dagger}(\mathcal{G},G)$ there exists an idempotent $v \in J$ such that $v *J$ is a compact group with the induced topology.
\end{remark}
\begin{proof} Suppose $\left(S_{x}^{\dagger}(\mathcal{G},G), \ast \right)$ is CIG1. Fix a minimal left ideal $I$ and an idempotent $u$ in $I$ such that $u*I$ is a compact group. Let $J$ be any other minimal left ideal. By Fact \ref{fact:Ellis}(6) there exists an idempotent $v \in J$ such that $u*v = v, v \ast u = u$, and the map $\left(-*v\right)|_{I}:I \to J$ is a homeomorphism mapping $u \ast I$ to $v \ast J$. Note that the restriction to $u \ast I$ is a group homomorphism (indeed, for $p_1, p_2 \in u \ast I$, $\left( p_1 \ast v \right) \ast \left( p_2 \ast v \right) = p_1 \ast v \ast u \ast p_2 \ast v = p_1 \ast u \ast p_2 \ast v = \left( p_1 \ast p_2 \right) \ast v$), hence a continuous group isomorphism. Since is is also a homeomorphism onto its range $v \ast J$, as the restriction of a homeomorphism, it follows that $v \ast J$ is a compact group.
\end{proof}  

\begin{lemma}\label{def:ellis} The semigroup $\left(S_{x}^{\dagger}(\mathcal{G},G), \ast \right)$ is CIG1 if any of the following holds:
\begin{enumerate} 
\item for some minimal left ideal $I$, every $p \in I$ is definable;
\item the ideal group of $S_{x}^{\dagger}(\mathcal{G},G)$ is finite.  
\end{enumerate}
\end{lemma}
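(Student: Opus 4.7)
My plan is to verify the CIG1 condition in each case by analyzing $u \ast I$ for a suitable idempotent $u$, with Ellis's theorem (Fact \ref{Ellis:2}) as the main tool in case (1) and a triviality in case (2). The key complement to the always-present left-continuity (Fact \ref{fac: prod of types Ellis}) is a form of right-continuity coming from definability: if $r \in S_{x}^{\dagger}(\mathcal{G},G)$ is a definable type, then the map $q \mapsto r \ast q$ on $S_{x}^{\dagger}(\mathcal{G},G)$ is continuous. This follows by applying Lemma \ref{lem: right cont for def meas} to the definable measure $\delta_r$ and composing with the topological embedding $q \mapsto \delta_{q}$ (which intertwines $\ast$ on types with $\ast$ on Dirac measures).

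For case (1), fix a minimal left ideal $I$ in which every type is definable and choose any $u \in \id(I)$. By Fact \ref{fact:Ellis}(3),
\[ u \ast I = \{ p \in I : u \ast p = p \}. \]
Since $u$ itself is definable, the self-map $p \mapsto u \ast p$ of the Hausdorff space $I$ is continuous by the remark above, so its fixed-point set $u \ast I$ is closed in $I$. As $I$ is closed in the compact space $S_{x}^{\dagger}(\mathcal{G},G)$ (Fact \ref{fact:Ellis}), $u \ast I$ is itself compact Hausdorff in the induced topology. For the group structure: by Fact \ref{fac: prod of types Ellis}, $p \mapsto p \ast q$ is continuous on $u \ast I$ for every fixed $q$; by definability of every element of $I$ (in particular of $u \ast I$), $q \mapsto p \ast q$ is continuous on $u \ast I$ for every fixed $p$. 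Thus $u \ast I$ is a compact Hausdorff group with separately continuous multiplication, and Ellis's theorem (Fact \ref{Ellis:2}) upgrades it to a compact topological group, yielding CIG1.

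For case (2), assume the ideal group of $S_{x}^{\dagger}(\mathcal{G},G)$ is finite and let $u \in \id(I)$ for any minimal left ideal $I$. Then $u \ast I$ is a finite subspace of the Hausdorff space $I$, hence carries the discrete topology in which any finite group is trivially a compact topological group; CIG1 holds immediately.

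The real obstacle lies only in case (1), and specifically in the right-continuity step: one must argue carefully that definability of every $p \in I$, as a property about types, transfers to continuity of the corresponding right-translation maps via the Dirac embedding and Lemma \ref{lem: right cont for def meas}. Once this separate continuity is in place, closedness of $u \ast I$ in $I$ and the invocation of Ellis's theorem are routine, and case (2) requires essentially no work.
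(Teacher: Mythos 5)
Your proof is correct and follows essentially the same route as the paper's: definability of the types in $I$ yields continuity of the left-translation maps via Lemma \ref{lem: right cont for def meas} (through the Dirac embedding), compactness of $u \ast I$ follows, and Ellis's theorem (Fact \ref{Ellis:2}) upgrades the separately continuous compact Hausdorff group to a topological group, while case (2) is immediate. The only (immaterial) difference is that you obtain compactness of $u \ast I$ as the closed fixed-point set of the continuous map $u \ast -$ on $I$, whereas the paper realizes it as the image $p \ast I$ of the compact set $I$ under the continuous map $p \ast -$.
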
  

\begin{proof} (1) Fix $p \in I$ and let $u \in I$ be the unique idempotent such that $p \in u*I$ (by Fact \ref{fact:Ellis}(4)). Since $p$ is definable, the map $\left( p*- \right)|_{I}:I \to I$ is continuous (by Lemma \ref{lem: right cont for def meas}), hence also closed. Since $I$ is compact, the image of $(p*-)|_{I}$ is compact and is equal to $u*I$. Hence $(u * I,*)$ is a compact Hausdorff space, an abstract group, and both left multiplication and right multiplication are continuous. By Fact \ref{Ellis:2}, $(u *I,*)$ is a compact group. 

(2) is obvious.
\end{proof} 

\begin{example}\label{example:CIG1}
\begin{enumerate}
\item  Let $G := (\mathbb{Z},+,<)$, and consider the sets
\begin{gather*} I^{+} := \left\{ q \in S_{x}^{\inv}(\mathcal{G},\mathbb{Z}): (a < x) \in q \textrm{ for all } a \in \mathbb{\mathcal{G}}\right\} \textrm{ and}\\
 I^{-} := \left\{ q \in S_{x}^{\inv}(\mathcal{G},\mathbb{Z}): (x < a) \in q  \textrm{ for all } a \in \mathcal{G} \right\}. 
\end{gather*} 
Then $I := I^{+} \cup I^{-}$ is the unique minimal left ideal of $\left(S_{x}^{\inv}(\mathcal{G},\mathbb{Z}),* \right)$. Note that every type in $I$ is definable (over $\mathbb{Z}$). By Lemma \ref{def:ellis}, the semigroup $\left(S_{x}^{\inv}(\mathcal{G},\mathbb{Z}),* \right)$ is CIG1. The ideal subgroups are $(I^{-},*)$ and $(I^{+},*)$, both isomorphic to $\widehat{\mathbb{Z}}$ as topological groups. 
\item Consider $G := \SL_{2}(\mathbb{R})$ as a definable subgroup in $(\mathbb{R}, \cdot, +)$. If $I$ is a minimal left ideal of $\left(S_{x}^{\fs}(\mathcal{G},\SL_{2}(\mathbb{R})),* \right)$ and $u$ is an idempotent in $I$, then $u * I \cong \mathbb{Z}/2\mathbb{Z}$ by \cite[Theorem 3.17]{GDP}, hence the semigroup is CIG1. Note that $\SL_{2}(\mathbb{R})$ is not definably amenable (\cite[Remark 5.2]{hrushovski2008groups} + \cite[Lemma 4.4(1)]{conversano2012connected}).
\item There exist fsg groups that are not CIG1. Consider the circle group from Remark \ref{min:circle}. The minimal left ideal of $(S^{\fs}(\mathcal{G},S^{1}),*)$ is precisely $S^{\fs}(\mathcal{G},S^{1})$. As in (1), this left ideal can be decomposed into two ideal subgroups as follows. Let $\st: \mathcal{G} \to S^{1}$ be the standard part map. Consider the sets
\begin{gather*} 
I^{R} := \left\{q \in S_{x}^{\fs}(\mathcal{G},G): \textrm{if } b \models q \textrm{, then } C(\st(b), b, a) \textrm{ for any } a \in S^{1} \right\},\\
I^{L} := \left\{q \in S_{x}^{\fs}(\mathcal{G},G): \textrm{if } b \models q \textrm{, then } C(a, b, \st(b)) \textrm{ for any } a \in S^{1} \right\}. 
\end{gather*} 
Then both $I^{R}$ and $I^{L}$ are ideal subgroups which are isomorphic (as abstract groups) to $S^{1}$, and $S_{x}^{\fs}(\mathcal{G},S^{1}) = I^{R} \sqcup I^{L}$. Moreover, $I^{R}$ and $I^{L}$ are dense subsets of $S_{x}^{\fs}(\mathcal{G},S^{1})$. Note that if $I^{R}$ was compact (with the induced topology), we would have $I^{R} = S_{x}^{\fs}(\mathcal{G},S^{1})$, a contradiction. The same argument applies to $I^{L}$. Therefore, $\left(S_{x}^{\fs}(\mathcal{G},\SL_{2}(\mathbb{R})),* \right)$ is not CIG1. 
\end{enumerate}
\end{example} 

\begin{lemma}\label{haar:semi} Assume that $\left( S_{x}^{\dagger}(\mathcal{G},G), \ast \right)$ is CIG1. Let $I \subseteq  S_{x}^{\dagger}(\mathcal{G},G)$ be a minimal left ideal and $u$ an idempotent in $I$ such that $u *I$ is a compact group. Then for any $p \in u*I$ we have $\mu_{u*I} * \delta_{p} = \mu_{u*I}$ and $ \delta_{p} * \mu_{u*I} = \mu_{u*I} $. 
\end{lemma}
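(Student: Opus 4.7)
The plan is to unwind the definition of definable convolution on both sides and reduce each identity to the translation-invariance of the Haar measure $h_{u*I}$ on the compact group $u*I$. First observe that $\mu_{u*I} \in \mathfrak{M}_x^{\dagger}(\mathcal{G},G)$: its support lies in $u*I \subseteq I \subseteq S_x^{\dagger}(\mathcal{G},G)$, so Fact \ref{sup:inv} applies (for the invariant case this is where we use that $T$ is NIP). Via Proposition \ref{closed:homeomorphism}, $\mu_{u*I}$ corresponds to the regular Borel probability measure on $S_x(\mathcal{G})$ obtained by pushing $h_{u*I}$ forward along the continuous embedding $u*I \hookrightarrow S_x(\mathcal{G})$, so integrals against $\mu_{u*I}$ of Borel functions on $S_x(\mathcal{G})$ can be computed as integrals of their restrictions to $u*I$ against $h_{u*I}$.

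For $\mu_{u*I} * \delta_p = \mu_{u*I}$: fix $\varphi(x) \in \mathcal{L}_x(\mathcal{G})$, a small $G' \prec \mathcal{G}$ containing $G$ and the parameters of $\varphi$, and $b \models p|_{G'}$. Unwinding Definition \ref{def: def conv} at the single atom of $\delta_p|_{G'}$ gives
\[
(\mu_{u*I} * \delta_p)(\varphi(x)) \;=\; \mu_{u*I}(\varphi(x \cdot b)) \;=\; h_{u*I}\bigl([\varphi(x \cdot b)] \cap u*I\bigr).
\]
For any $q \in u*I$, the definition of $*$ on types combined with $b \models p|_{G'}$ gives that $\varphi(x \cdot b) \in q$ if and only if $\varphi(x) \in q * p$; hence $[\varphi(x \cdot b)] \cap u*I = \rho_p^{-1}([\varphi(x)] \cap u*I)$, where $\rho_p \colon u*I \to u*I$, $q \mapsto q*p$, is right translation in the topological group $u*I$. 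Right-invariance of $h_{u*I}$ then yields $\mu_{u*I}(\varphi(x))$.

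For $\delta_p * \mu_{u*I} = \mu_{u*I}$: with $\varphi(x)$ and $G'$ as above, let $r \colon S_y(\mathcal{G}) \to S_y(G')$ be the restriction map. Definition \ref{def: def conv} gives
\[
(\delta_p * \mu_{u*I})(\varphi(x)) \;=\; \int_{S_y(\mathcal{G})} \bigl(F_{\delta_p, G'}^{\varphi'} \circ r\bigr)\, d\mu_{u*I}.
\]
For $q \in u*I$ and any $c \models q|_{G'}$ we have $(F_{\delta_p, G'}^{\varphi'} \circ r)(q) = \delta_p(\varphi(x \cdot c)) = \mathbf{1}_{\varphi(x) \in p * q}$. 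Since $\mu_{u*I}$ is concentrated on $u*I$, the integral reduces to $h_{u*I}\bigl(\lambda_p^{-1}([\varphi(x)] \cap u*I)\bigr)$, where $\lambda_p \colon u*I \to u*I$, $q \mapsto p*q$, is left translation. Left-invariance of $h_{u*I}$ concludes.

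The only nontrivial bookkeeping is to verify that $\lambda_p$ and $\rho_p$ are well-defined continuous selfmaps of $u*I$ (this uses $p \in u*I$ together with the CIG1 assumption that $u*I$ is a compact topological group) and that $[\varphi(x)] \cap u*I$ is Borel, in fact clopen, in $u*I$ with the induced topology; both are immediate. The essential input is simply the bi-invariance of Haar measure on the compact group $u*I$.
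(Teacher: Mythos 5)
Your proposal is correct and follows essentially the same route as the paper's proof: unwind the convolution against the Dirac mass (respectively, reduce the integral of $F_{\delta_p}^{\varphi'}\circ r$ to an $h_{u*I}$-measure of a translate), use the identification $\varphi(x\cdot b)\in q \Leftrightarrow \varphi(x)\in q*p$ to recognize the relevant sets as right/left translates inside the group $u*I$, and conclude by bi-invariance of the Haar measure. The only cosmetic difference is that you phrase the translations as preimages under $\rho_p$ and $\lambda_p$ where the paper multiplies by $p^{-1}$ directly, and you additionally record the (routine) check that $\mu_{u*I}\in\mathfrak{M}_x^{\dagger}(\mathcal{G},G)$.
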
 

\begin{proof} Fix $p \in u *I$ and $\varphi(x) \in \mathcal{L}_{x}(\mathcal{G})$. Let $G' \prec \mathcal{G}$ be a small model containing $G$ and the parameters of $\varphi$. Let $a \models p|_{G'}$, and let $p^{-1}$ be the unique element of the group $u*I$ such that $p*p^{-1} = u$. 

~

\noindent \textbf{Claim 1.} $\left(\mu_{u*I} * \delta_{p} \right)(\varphi(x)) = \mu_{u*I}(\varphi(x))$.
\begin{proof}
We have the following computation, using right-invariance of the Haar measure $h_{u*I}$ on $u*I$: 
\begin{gather*} \left(\mu_{u*I}* \delta_{p} \right) (\varphi(x))=\int_{S_{y}(G')}F_{\mu_{u*I}}^{\varphi'}d(\delta_{p}|_{G'}) = F_{\mu_{u*I}}^{\varphi'}(p|_{G'}) =  \mu_{u*I}(\varphi(x\cdot a)) \\
 = h_{u*I} \left([\varphi(x \cdot a)] \cap u*I \right) =h_{u*I}\left(\{q\in u*I:\varphi(x\cdot a)\in q\} \right)\\
 = h_{u*I}\left(\{q\in u*I:\varphi(x)\in q * p\} \right) = h_{u*I} \left(\{q\in u*I:\varphi(x)\in q\}*p^{-1} \right)\\
= h_{u*I} \left(\{q\in u*I:\varphi(x)\in q\} \right) = \mu_{u*I}(\varphi(x)). \qedhere
\end{gather*} 
\end{proof}

\noindent \textbf{Claim 2.} $\left(\delta_{p} * \mu_{u*I} \right) (\varphi(x)) = \mu_{u*I}(\varphi(x))$
\begin{proof}
Let $r:S_{y}(\mathcal{G}) \to S_{y}(G')$ be the restriction map. Let $\tilde{\mu}_{u*I}$ be the extension of $\mu_{u*I}$ to a regular Borel probability measure on $S_{x}(\mathcal{G})$.  By construction, $\supp(\tilde{\mu}_{u*I}) = \sup(\mu_{u*I}) = u*I$ and $\tilde{\mu}_{u*I}|_{u*I} = h_{u*I}$. Using left-invariance of $h_{u*I}$ we have:
\begin{gather*} \left( \delta_{p} * \mu_{u*I} \right) (\varphi(x)) = \int_{S_{y}(G')} F_{\delta_{p}}^{\varphi'} d( \mu_{u*I}|_{G'}) = \int_{S_{y}(\mathcal{G})} \left( F_{\delta_{p}}^{\varphi'}  \circ r \right) d \mu_{u*I}\\
= \tilde{\mu}_{u*I} \left( \left\{q \in S_{x}(\mathcal{G}): \left(F_{\delta_{p}}^{\varphi'} \circ r \right)(q) = 1\right\} \right) = \tilde{\mu}_{u*I} \left(\left\{q \in S_{x}(\mathcal{G}): \varphi(x) \in p * q \right\} \right)\\
= \tilde{\mu}_{u*I}(\{q \in u * I: \varphi(x) \in p * q \}) = h_{u*I} \left(p^{-1} * \{q \in u * I: \varphi(x) \in q \} \right)\\
 = h_{u*I}(\{q \in u * I: \varphi(x) \in q \}) = \mu_{u*I}(\varphi(x)). \qedhere
\end{gather*} 
\end{proof}

\noindent Hence the statement holds. 
\end{proof} 

\begin{lemma}\label{eats} Assume that $\left( S_{x}^{\dagger}(\mathcal{G},G), \ast \right)$ is CIG1. Let $I \subseteq  S_{x}^{\dagger}(\mathcal{G},G)$ be a minimal left ideal and $u$ an idempotent in $I$ such that $u *I$ is a compact group. Then for any $p \in I$ we have $\mu_{u*I} * \delta_{p} = \mu_{u*I}$. 
\end{lemma}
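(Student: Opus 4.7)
The plan is to reduce this statement to Lemma \ref{haar:semi}, which already handles the case $p \in u \ast I$. The trick will be to insert an idempotent $\delta_u$ in front of $\delta_p$ and use associativity of definable convolution to push the computation onto $u \ast p$, which belongs to $u \ast I$ by construction.

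Concretely, I would first observe that, since $u$ is the identity of the group $u \ast I$ (Fact \ref{fact:Ellis}(3)), Lemma \ref{haar:semi} applied to $p = u$ gives $\mu_{u \ast I} \ast \delta_u = \mu_{u \ast I}$. Given an arbitrary $p \in I$, associativity of definable convolution (Fact \ref{fac: conv is a semigroup in NIP}) together with the standard identity $\delta_u \ast \delta_p = \delta_{u \ast p}$ for types (valid via \cite[Proposition 3.12]{ChGan}, as used in the proof of Lemma \ref{basic:lem}(iii)) will then yield
\[
\mu_{u \ast I} \ast \delta_p = (\mu_{u \ast I} \ast \delta_u) \ast \delta_p = \mu_{u \ast I} \ast (\delta_u \ast \delta_p) = \mu_{u \ast I} \ast \delta_{u \ast p}.
\]
Since $u \ast p \in u \ast I$ by definition of $u \ast I$, a second application of Lemma \ref{haar:semi} gives $\mu_{u \ast I} \ast \delta_{u \ast p} = \mu_{u \ast I}$, which closes the argument.

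There is no significant obstacle here: all the analytic content (Fubini-type computations with the Haar measure on $u \ast I$) was already carried out in Lemma \ref{haar:semi}, and the present lemma is a purely formal manipulation using the Ellis-semigroup structure of $I$. The only minor point worth verifying is that $\mu_{u \ast I}$ indeed lies in $\mathfrak{M}^{\dagger}_x(\mathcal{G},G)$ so that definable convolution with $\delta_p$ is defined; but this is immediate from $\sup(\mu_{u \ast I}) = u \ast I \subseteq I \subseteq S^{\dagger}_x(\mathcal{G}, G)$ together with Fact \ref{sup:inv}.
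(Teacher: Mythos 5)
Your proposal is correct and is essentially identical to the paper's own proof: the paper also writes $\mu_{u*I} * \delta_p = (\mu_{u*I} * \delta_{u}) * \delta_p = \mu_{u*I} *(\delta_{u} * \delta_p) = \mu_{u*I} * \delta_{u * p} = \mu_{u*I}$, invoking Lemma \ref{haar:semi} for the first and last equalities since $u, u*p \in u*I$. Nothing further is needed.
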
 

\begin{proof} 
For any $p \in I$ we have
\begin{equation*}\mu_{u*I} * \delta_p = (\mu_{u*I} * \delta_{u}) * \delta_p = \mu_{u*I} *(\delta_{u} * \delta_p) = \mu_{u*I} * \delta_{u * p} = \mu_{u*I}, 
\end{equation*} 
where the first and the last equalities are by Lemma \ref{haar:semi}, as $u, u * p \in u* I$. 
\end{proof}

\begin{theorem}\label{thm: CIG1 min ideal} Assume that  $\left( S_{x}^{\dagger}(\mathcal{G},G), \ast \right)$ is CIG1. Let $I \subseteq S_{x}^{\dagger}(\mathcal{G},G)$ be a minimal left ideal and $u$ an idempotent in $I$ such that $u *I$ is a compact group. Then $\mathfrak{M}(I) * \mu_{u*I}$ is a minimal left ideal of $\left( \mathfrak{M}^{\dagger}_{x}(\mathcal{G},G), \ast \right)$, containing an idempotent $ \mu_{u*I}$. 
\end{theorem}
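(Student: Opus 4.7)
The plan is to verify three things: (a) $\mathfrak{M}(I) * \mu_{u * I}$ is a left ideal of $\mathfrak{M}^{\dagger}_{x}(\mathcal{G},G)$, (b) $\mu_{u * I}$ belongs to it and is idempotent, and (c) it is minimal. The key observation driving all three parts is the \emph{absorption identity} $\mu_{u * I} * \nu = \mu_{u * I}$ for every $\nu \in \mathfrak{M}(I)$, which follows immediately from Lemmas \ref{eats} and \ref{eats:measures}: since $\sup(\nu) \subseteq I$, Lemma \ref{eats} gives $\mu_{u * I} * \delta_p = \mu_{u * I}$ for every $p \in \sup(\nu)$, and Lemma \ref{eats:measures} then integrates this to $\mu_{u * I} * \nu = \mu_{u * I}$.

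For (a), I recall from Lemma \ref{ideal} that $\mathfrak{M}(I)$ is itself a left ideal of $\mathfrak{M}^{\dagger}_{x}(\mathcal{G},G)$, so associativity of $*$ yields $\nu * (\mu * \mu_{u * I}) = (\nu * \mu) * \mu_{u * I} \in \mathfrak{M}(I) * \mu_{u * I}$ for any $\nu \in \mathfrak{M}^{\dagger}_{x}(\mathcal{G},G)$ and $\mu \in \mathfrak{M}(I)$. For (b), note that $\sup(\mu_{u * I}) = u * I \subseteq I$, so $\mu_{u * I} \in \mathfrak{M}(I)$; applying the absorption identity with $\nu := \mu_{u * I}$ gives $\mu_{u * I} * \mu_{u * I} = \mu_{u * I}$, showing both that $\mu_{u * I}$ is idempotent and that $\mu_{u * I} = \mu_{u * I} * \mu_{u * I} \in \mathfrak{M}(I) * \mu_{u * I}$.

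For (c), suppose $J$ is a left ideal of $\mathfrak{M}^{\dagger}_{x}(\mathcal{G},G)$ contained in $\mathfrak{M}(I) * \mu_{u * I}$, and pick any $\nu \in J$. Since $\mathfrak{M}(I)$ is a left ideal containing $\mu_{u * I}$, we have $\mathfrak{M}(I) * \mu_{u * I} \subseteq \mathfrak{M}(I)$, so $\nu \in \mathfrak{M}(I)$ and the absorption identity gives $\mu_{u * I} * \nu = \mu_{u * I}$. For any $\lambda \in \mathfrak{M}(I)$ one then has
\[ \lambda * \mu_{u * I} = \lambda * \mu_{u * I} * \nu \in \mathfrak{M}^{\dagger}_{x}(\mathcal{G},G) * \nu \subseteq J, \]
so $\mathfrak{M}(I) * \mu_{u * I} \subseteq J$, completing the proof of minimality.

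The conceptual work has been front-loaded into Lemmas \ref{haar:semi} and \ref{eats}, which exploit left- and right-invariance of the Haar measure $h_{u * I}$ on the compact group $u * I$ together with the fact that $u$ serves as an identity on $u * I$. Given these absorption identities, no serious obstacle remains: the three verifications above are formal manipulations of left ideals, supports, and associativity.
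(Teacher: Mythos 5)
Your proof is correct. It rests on the same key identity as the paper's argument, namely the absorption property $\mu_{u*I} * \nu = \mu_{u*I}$ for all $\nu \in \mathfrak{M}(I)$ obtained by combining Lemmas \ref{eats} and \ref{eats:measures}, and it uses Lemma \ref{ideal} in the same way to see that $\mathfrak{M}(I)$ is a left ideal. Where you diverge is in how minimality is established. The paper first invokes the Ellis-theoretic existence theorem (Fact \ref{fact:Ellis}, which requires compactness of the closed set $\mathfrak{M}(I)$ via Lemma \ref{closed} and left-continuity of $\ast$) to locate some abstract minimal left ideal $L \subseteq \mathfrak{M}(I)$, uses the absorption identity to show $\mu_{u*I} \in L$, and then identifies $L$ with $\mathfrak{M}(I) * \mu_{u*I}$ through Fact \ref{fact:Ellis}(5) and Corollary \ref{needed}. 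You instead verify minimality directly: given a left ideal $J \subseteq \mathfrak{M}(I) * \mu_{u*I}$ and $\nu \in J$, the chain $\lambda * \mu_{u*I} = (\lambda * \mu_{u*I}) * \nu \in J$ recovers all of $\mathfrak{M}(I) * \mu_{u*I}$ inside $J$. This is a purely algebraic argument that avoids any appeal to the topological structure (compactness, closedness, left-continuity) beyond what is already packaged into the absorption identity, so it is somewhat more elementary and self-contained; the paper's route, on the other hand, reuses the general structure theory it has already developed and situates the new ideal within that framework. Both are complete proofs of the statement.
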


\begin{proof} 
We first argue that $\mu_{u*I}$ is an element of some minimal left ideal of $\mathfrak{M}_{x}^{\dagger}(\mathcal{G},G)$. We know that $\mathfrak{M}(I)$ is a closed (by Fact \ref{fact:Ellis}  and Lemma \ref{closed}) left ideal of $\left(\mathfrak{M}_{x}^{\dagger}(\mathcal{G},G), \ast \right)$ (by Proposition \ref{ideal}). Hence there exists some $L \subseteq \mathfrak{M}(I)$ such that $L$ is a minimal left ideal of $ \left( \mathfrak{M}_{x}^{\dagger}(\mathcal{G},G), \ast \right)$, and we show that $\mu_{u*I} \in L$. 
Let $\nu \in \mathfrak{M}(I)$ be arbitrary. If $p \in \sup(\nu)$, then $p \in I$. By Lemma \ref{eats}, we then have $\mu_{u*I} * \delta_{p} = \mu_{u*I}$ for every $p \in \sup(\nu)$. By Lemma \ref{eats:measures} this implies  $\mu_{u*I} * \nu = \mu_{u*I}$, hence $ \mu_{u*I} * \mathfrak{M}(I) = \{\mu_{u*I}\}$. In particular $\mu_{u * I} * L = \{\mu_{u*I}\}$, and since $L$ is a left ideal this implies $\mu_{u*I} \in L$ (and also that $\mu_{u*I}$ is an idempotent). 

Then $\mathfrak{M}^{\dagger}_{x}(\mathcal{G},G) * \mu_{u*I} = L$ by Fact \ref{fact:Ellis}(5). We also have that $L * \mu_{u*I} = L$ since $\mu_{u*I} \in L$ and $L$ is a minimal left-ideal. Thus 
\begin{equation*}
L = L * \mu_{u*I} \subseteq \mathfrak{M}(I) * \mu_{u*I} \subseteq \mathfrak{M}_{x}^{\dagger}(\mathcal{G},G) * \mu_{u*I} = L. 
\end{equation*} 
So $\mathfrak{M}(I) * \mu_{u*I} = L$, hence  $\mathfrak{M}(I) * \mu_{u*I}$  is a minimal left ideal of $\left(\mathfrak{M}_{x}^{\dagger}(\mathcal{G},G), \ast \right)$. 
\end{proof}

\begin{corollary}\label{cor: all ideals in CIG1} Suppose that $\left( S_{x}^{\dagger}(\mathcal{G},G), \ast \right)$ is CIG1. Let $I$ be a minimal left ideal and $u$ an idempotent in $I$ such that $u * I$ is a compact group. Let $J$ be any minimal left ideal of $\left(\mathfrak{M}_{x}^{\dagger}(\mathcal{G},G), \ast \right)$. Then   $J$ and $\mathfrak{M}(I) * \mu_{u*I}$ are affinely homeomorphic. 
\end{corollary}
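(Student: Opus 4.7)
The plan is to combine Theorem \ref{thm: CIG1 min ideal} with Fact \ref{fact:Ellis}(6) applied to the left-continuous compact Hausdorff semigroup $\left( \mathfrak{M}_{x}^{\dagger}(\mathcal{G},G), \ast \right)$ (which has this structure by Fact \ref{fac: conv is a semigroup in NIP}). By Theorem \ref{thm: CIG1 min ideal}, the set $L := \mathfrak{M}(I) \ast \mu_{u \ast I}$ is a minimal left ideal of $\mathfrak{M}_{x}^{\dagger}(\mathcal{G},G)$, and $\mu_{u \ast I}$ is an idempotent in $L$. Since $J$ is another minimal left ideal, Fact \ref{fact:Ellis}(6) supplies a (unique) idempotent $v \in J$ with $\mu_{u \ast I} \ast v = v$ and $v \ast \mu_{u \ast I} = \mu_{u \ast I}$, such that the map
\begin{equation*}
\Phi := (- \ast v)|_{L} : L \to J
\end{equation*}
is a homeomorphism (and a semigroup isomorphism).

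It remains to observe that $\Phi$ is affine. This is exactly the right-hand affine computation already carried out inside the proof of Lemma \ref{convex:1}: for any $\lambda_1, \lambda_2 \in L$ and $r,s \in \mathbb{R}_{\geq 0}$ with $r+s = 1$, the identity
\begin{equation*}
\left( r\lambda_1 + s\lambda_2 \right) \ast v = r (\lambda_1 \ast v) + s (\lambda_2 \ast v)
\end{equation*}
holds in $\mathfrak{M}_{x}^{\dagger}(\mathcal{G},G)$ (since convolution against a fixed measure on the right is computed by an integral that is linear in the left argument, via the formula $\left( \lambda \ast v \right)(\varphi(x)) = \int F_{\lambda}^{\varphi'} d v_{G'}$ where $F_{r\lambda_1 + s\lambda_2}^{\varphi'} = r F_{\lambda_1}^{\varphi'} + s F_{\lambda_2}^{\varphi'}$). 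Extending linearly to finite convex combinations in the usual way, $\Phi$ satisfies the defining property of an affine map from Definition \ref{def: affine homeo}.

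Combining these two observations, $\Phi: L \to J$ is a bijective, continuous, affine map between closed convex subsets of $\mathfrak{M}_{x}^{\dagger}(\mathcal{G},G)$ with continuous inverse, i.e.~an affine homeomorphism in the sense of Definition \ref{def: affine homeo}. There is no substantive obstacle: all the semigroup-theoretic machinery is packaged in Fact \ref{fact:Ellis}(6), Theorem \ref{thm: CIG1 min ideal} delivers the distinguished minimal left ideal and its idempotent, and affinity of right convolution is already implicit in Section \ref{sec: struct of conv semigrps}.
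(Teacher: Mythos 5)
Your proposal is correct and follows essentially the same route as the paper, whose proof is just the citation ``By Fact \ref{fact:Ellis}(6), Lemma \ref{convex:1}, and Theorem \ref{thm: CIG1 min ideal}'': Theorem \ref{thm: CIG1 min ideal} identifies $\mathfrak{M}(I) \ast \mu_{u\ast I}$ as a minimal left ideal, Fact \ref{fact:Ellis}(6) gives the homeomorphism $(-\ast v)$ onto $J$, and the convexity computation from Lemma \ref{convex:1} gives affinity. You have merely spelled out the details the paper leaves implicit.
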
 
\begin{proof}
By Fact \ref{fact:Ellis}	(6), Lemma \ref{convex:1}, and Theorem \ref{thm: CIG1 min ideal}. 
\end{proof}
\subsection{Compact ideal subgroups in minimal ideals with Hausdorff quotients (CIG2)} In this section we define \emph{CIG2 semigroups} and show that under this stronger assumption, any minimal left ideal of $\left( \mathfrak{M}_{x}^{\dagger}(\mathcal{G},G), \ast \right)$ is affinely homeomorphic to the space of regular Borel probability measures over a certain compact Hausdorff space given by  a  quotient of a minimal left ideal in $\left( S_{x}^{\dagger}(\mathcal{G},G), \ast \right)$.

\begin{definition} Let $I$ be a minimal left ideal in $\left(S_{x}^{\dagger}(\mathcal{G},G), \ast \right)$. We define the quotient space $K_{I} := I/\sim$, where $p \sim q$ if and only if $p$ and $q$ are elements of the same ideal subgroup of $I$, i.e.~there exists some idempotent $u \in I$ such that $p,q \in u *I$. We endow $K_{I}$ with the induced quotient topology and write elements of $K$ as $[u * I]$, where $u$ is an idempotent in $I$.  
\end{definition} 
\noindent The quotient topology on $K_{I}$ is automatically compact, but may not be Hausdorff. CIG2 stipulates that this quotient is Hausdorff.
\begin{definition}\label{def: CIG2} We say that the semigroup $\left( S_{x}^{\dagger}(\mathcal{G},G), \ast \right)$ is \emph{CIG2} if there exists a minimal left ideal $I$ such that:
\begin{enumerate}[($i$)]
\item for any idempotent $u \in I$, $u*I$ is compact; 
\item for any  $p \in I$ and $u' \in \id(I)$, the map $\left(p*-\right)|_{u' \ast I}$ is continuous (note that the range of this map is $u \ast I$, where $u\in \id(I)$ is such that $p \in u \ast I$);
\item $K_{I}$ is Hausdorff. 
\end{enumerate} 
\end{definition}

We remark that in the above definition, $(i)$ follows from $(iii)$ since each $u *I$ is a preimage of a point (hence a  closed set) in $K_I$ under the quotient map. 

%

\begin{lemma}\label{FIG:CIG2}\footnote{We thank the referee for pointing out a more general version of Lemma \ref{FIG:CIG2}, as well as Remark \ref{rem: CIG2 char}.}
The semigroup $\left( S_{x}^{\dagger}(\mathcal{G},G), \ast \right)$ is CIG2 if any of the following holds:
\begin{enumerate}
	\item the ideal group of $\left( S_{x}^{\dagger}(\mathcal{G},G), \ast \right)$ is finite;
	\item for some minimal ideal $I \subseteq S_{x}^{\dagger}(\mathcal{G},G)$, every $p \in I$ is definable.\end{enumerate} 
\end{lemma}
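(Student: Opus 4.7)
My plan is to verify each of the three conditions (i), (ii), (iii) in Definition \ref{def: CIG2} under each of the hypotheses, taking as witnessing minimal left ideal $I$ either the one given in (2) or an arbitrary one in case (1).

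For case (2), all three conditions fall out quickly. Condition (i) is exactly Lemma \ref{def:ellis}(1): definability of every $p \in I$ forces each $u \ast I$ to be a compact topological group. For condition (ii), fix $p \in I$, definable by hypothesis; then $\delta_{p}$ is a definable Keisler measure, so $\delta_{p} \ast -$ is continuous on $\mathfrak{M}_{x}^{\dagger}(\mathcal{G},G)$ by Lemma \ref{lem: right cont for def meas}, and restricting along the embedding $q \mapsto \delta_{q}$ yields that $q \mapsto p \ast q$ is continuous on $S_{x}^{\dagger}(\mathcal{G},G)$, and a fortiori on $u' \ast I$ for every $u' \in \id(I)$. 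For condition (iii), $K_{I}$ is finite by hypothesis, and the fibers of $I \twoheadrightarrow K_{I}$ are the sets $u \ast I$, which by (i) are compact, hence closed in the Hausdorff space $I$; so $K_{I}$ is $T_{1}$, and a finite $T_{1}$ space is discrete and therefore Hausdorff.

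For case (1), write $n$ for the common finite cardinality of the ideal subgroups (they are all isomorphic by Fact \ref{fact:Ellis}(6), hence equicardinal). Condition (i) is trivial: a finite subset of the Hausdorff space $S_{x}^{\dagger}(\mathcal{G},G)$ is compact. For condition (ii), the finite set $u' \ast I$ sits as a discrete subspace of the Hausdorff $I$, so every map out of it is continuous. Hence the only real content of case (1) is condition (iii), the Hausdorffness of $K_{I}$, which is where I expect the main difficulty to lie.

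For the Hausdorffness of $K_{I}$ in case (1), my strategy is to construct a continuous retraction $e \colon I \to \id(I) \subseteq I$ sending $p$ to the unique idempotent $u_{p}$ with $p \in u_{p} \ast I$. This identifies $K_{I}$ with $e(I) = \id(I)$ topologically and reduces the problem to the tautological Hausdorffness of the subspace $\id(I) \subseteq I$. Since every element of the finite group $u \ast I$ has order dividing $n!$, we have $p^{n!} = u_{p}$ for every $p \in I$, so $e$ coincides with the $n!$-fold diagonal product map $p \mapsto p \ast \cdots \ast p$. The hard part is that our semigroup is only \emph{left}-continuous — right translation $(- \ast q)$ by a fixed $q$ is continuous, but the diagonal $p \mapsto p \ast p$ need not be. I plan to circumvent this by working fiberwise along the partition $I = \bigsqcup_{u \in \id(I)} u \ast I$: on each finite (and hence discrete) fiber, iterated multiplication is trivially continuous, and Corollary \ref{action} tells us that right-multiplication by any $p$ permutes each such fiber continuously, so the permutation-in-the-fiber depends only on which fiber $p$ lies in. Combining these two facts with the globally continuous right-translations $(- \ast q)$ by fixed $q$ should let me express $p \mapsto p^{n!}$ locally as a right-translation by a fixed element, making it continuous; wiring the local pieces into a global continuous map is the main obstacle in the proof.
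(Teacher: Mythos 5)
Your treatment of case (2), and of conditions (i) and (ii) in case (1), is correct and matches the paper's argument. The gap is exactly where you locate it: condition (iii) in case (1). Your plan is to realize the quotient $I \to K_I$ as a continuous retraction $e(p) = p^{n!} = u_p$ onto $\id(I)$, but you never establish continuity of $e$, and I do not believe your proposed repair can work. The obstruction is not merely technical: writing $p \mapsto p^{n!}$ ``locally as a right-translation by a fixed element'' would require that all $p$ in some neighborhood of a given $p_0$ lie in the same fiber $u_{p_0} \ast I$, i.e.\ that the fibers are open; but in case (1) the set $\id(I)$ may be infinite, $I$ is partitioned into infinitely many finite fibers, and those fibers cannot all be open in the compact space $I$. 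Relatedly, a continuous retraction of $I$ onto $\id(I)$ would force $\id(I)$ to be closed in $I$ (as the continuous image of a compact space inside a Hausdorff space), which is an additional nontrivial claim you have not addressed. So the ``main obstacle'' you flag is a genuine missing idea, not a wiring issue.

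The paper proves (iii) in case (1) by a different and cleaner route that stays entirely on the continuous side of the left-continuous semigroup. Fix one idempotent $u \in \id(I)$ and let the finite group $u \ast I$ act on $I$ on the right by $p \cdot \mathfrak{g} := p \ast \mathfrak{g}$. Each map $- \ast \mathfrak{g}$ is continuous (this is precisely the left-continuity of Fact \ref{fac: prod of types Ellis}, i.e.\ continuity of right translations), so this is a continuous action of a finite group by homeomorphisms on the compact Hausdorff space $I$. One then checks, using Fact \ref{fact:Ellis}(2),(3), that the orbit equivalence relation of this action coincides with $\sim$: if $p, q$ lie in the same ideal subgroup $u' \ast I$ then $p = q \ast (u \ast r)$ for suitable $r$, and conversely each orbit stays inside a single ideal subgroup. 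Hence $K_I = I/(u \ast I)$, and the quotient of a compact Hausdorff space by a continuous finite group action is Hausdorff (the orbit relation is a finite union of graphs of homeomorphisms, hence closed, and the quotient map is open). No diagonal map $p \mapsto p \ast p$ and no retraction onto $\id(I)$ is ever needed. I recommend replacing your case (1)(iii) argument with this one.
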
 

\begin{proof} 
(1) Assume that the ideal group of $\left( S_{x}^{\dagger}(\mathcal{G},G), \ast \right)$ is finite, then the first two conditions of CIG2 are clearly satisfied, and we show (iii) from Definition \ref{def: CIG2}. Suppose that $I$ is a minimal left ideal in $\left( S_{x}^{\dagger}(\mathcal{G},G), \ast \right)$, and let $u$ be an idempotent in $I$.  Let us denote elements of $u*I$ as $\mathfrak{g}$. Then $u*I$ acts on $I$ on the right via $ p \cdot \mathfrak{g} := p * \mathfrak{g}$, and the orbit equivalence relation under this group action is the same as the equivalence relation $\sim$ in the definition of $K_{I}$. Indeed, $u$ is the identity of $u \ast I$ and $p \ast u = p$ for all $p \in I$ by Fact \ref{fact:Ellis}(2); if $p \cdot \mathfrak{g} = q$ and $p \in u' \ast I$ for some $u' \in \id(I)$, then $q = p \ast \mathfrak{g} \in \left( u' \ast I \right) \ast \mathfrak{g} = u' \ast \left( I \ast \mathfrak{g} \right) \subseteq u' \ast I$; and conversely, if $p,q \in u' \ast I$, using that $u' \ast I$ is a group and Fact \ref{fact:Ellis}(2), we have  $p =\left( q \ast q^{-1} \right) \ast p = q \ast \left( q^{-1} \ast p \right) = q \ast \left( u' \ast r \right) = q \ast (u ' \ast u) \ast r = (q \ast u') \ast (u \ast r) = q \ast (u \ast r) = q \cdot \mathfrak{g}$ for some $r \in I$ and $\mathfrak{g} := u \ast r$. This action is continuous by left continuity of convolution.

So $K_{I} = I/(u*I)$, and the quotient of any Hausdorff space by a continuous finite group action remains Hausdorff. Hence $K_{I}$ is Hausdorff. 

\noindent (2) The conditions (i) and (ii) of CIG2 hold since every type in the minimal left ideal $I$ is definable, as in the proof of Lemma \ref{def:ellis}(1). Let $u \in I$ be an idempotent. Arguing as in (1) we get $K_I = I/\left(u \ast I \right)$. The right action of the group $u \ast I$ on $I$ is continuous on the right, and by the assumption and  Lemma \ref{lem: right cont for def meas} it is also continuous on the left, hence continuous by the Ellis' joint continuity theorem (Fact \ref{Ellis:2}). Thus $I / \left( u \ast I \right)$ is Hausdorff, as the quotient of a Hausdorff space by the continuous action of a compact group.
\end{proof} 

\noindent The next fact follows directly from the definitions and Fact \ref{Ellis:2}. 

\begin{remark}\label{rem: CIG2 implies CIG1} If $\left( S_{x}^{\dagger}(\mathcal{G},G), \ast \right)$ is CIG2, then it is CIG1. Moreover, if $I$ is a minimal left ideal of $\left( S_{x}^{\dagger}(\mathcal{G},G), \ast \right)$ witnessing CIG2, then for any idempotent $u \in I$, $u * I$ is a compact group with the induced topology. Thus for every idempotent $u$ in $I$, the measure $\mu_{u *I}$ is well-defined. 
\end{remark} 

\begin{remark}\label{rem: CIG2 char}
\begin{enumerate}
	\item In the proof of Lemma \ref{FIG:CIG2}(2), it suffices to assume that for some idempotent $u \in I$, $u \ast I$ is closed and that for all $p \in I$, the map $p \ast -|_{u \ast I}$ is continuous.
	\item We also have the following equivalence: CIG2 holds if and only if CIG1 holds, and the map $u' \ast -|_{u \ast I}$ is continuous for some $u$ witnessing CIG1 and every idempotent $u' \in I$.
	
	Indeed, since $u \ast I$ is compact, it follows that each $u' \ast I$ is compact, and thus closed and $u' \ast - |_{u \ast I}$ is a homeomorphism. Since it is also a group isomorphism, each $u' \ast I$ is a compact group. Now, given any $p \in u' \ast I$, we have $p = u' \ast p = u' \ast u \ast p$, so left multiplication by $p$ of elements of $u \ast I$ is the composition of left multiplication by $u \ast p \in u \ast I$ (continuous since $u \ast I$ is a topological group) and left multiplication by $u'$ (continuous by assumption), hence it is continuous and we conclude by (1).
\end{enumerate}
\end{remark}

\begin{example}\label{ex: CIG2 one} Both examples $(1)$ and $(2)$ from Example \ref{example:CIG1} are CIG2. 
\begin{enumerate} 
\item The semigroup $\left( S_{x}^{\inv}(\mathcal{G},\mathbb{Z}), \ast \right)$ is CIG2 by Lemma \ref{FIG:CIG2}(2) as all types in $I$ are definable (note that we have $|K_{I}| = 2$).  
\item The ideal group of $\left( S_{x}^{\fs}(\mathcal{G},\SL_{2}(\mathbb{R})), \ast \right)$ is finite  ($\cong \mathbb{Z}/2\mathbb{Z}$), so it is CIG2 by Lemma \ref{FIG:CIG2}(1). 
\end{enumerate} 
\end{example} 

\begin{lemma}\label{need4surjective} Assume that $\left(S_{x}^{\dagger}(\mathcal{G},G), \ast \right)$ is CIG2, and let $I$ be a minimal left ideal witnessing it. Then for any $p \in I$ and $u \in \id(I)$ we have $\delta_{p}* \mu_{u*I} = \mu_{u'*I}$, where $u'$ is the unique idempotent in $I$ such that $p \in u'*I$. 
\end{lemma}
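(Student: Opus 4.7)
The plan is to unpack the convolution $\delta_p \ast \mu_{u \ast I}$ via Definition \ref{def: def conv} and then to reduce the computation to the translation-invariance of Haar measure on the ideal subgroups. Fix $\varphi(x) \in \mathcal{L}_x(\mathcal{G})$ and a small model $G' \prec \mathcal{G}$ containing $G$ and the parameters of $\varphi$. A direct unpacking shows $F_{\delta_p, G'}^{\varphi'}(q|_{G'}) = 1$ if $\varphi(x) \in p \ast q$ and $0$ otherwise. Letting $\tilde{\mu}_{u \ast I}$ denote the Borel extension of $\mu_{u \ast I}$ to $S_y(\mathcal{G})$ (which is supported on $u \ast I$ with restriction equal to $h_{u \ast I}$), the integral collapses to
\[
\left( \delta_p \ast \mu_{u \ast I} \right)(\varphi(x)) = \tilde{\mu}_{u \ast I}\bigl(\{q : \varphi(x) \in p \ast q \}\bigr) = h_{u \ast I}\bigl(\{q \in u \ast I : p \ast q \in [\varphi(x)] \}\bigr).
\]

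Next I would introduce the map $\alpha : u \ast I \to u' \ast I$ defined by $\alpha(q) := p \ast q$; its range lies in $u' \ast I$ because $u' \ast (p \ast q) = (u' \ast p) \ast q = p \ast q$, using that $u'$ is the two-sided identity of the group $u' \ast I \ni p$, and $\alpha$ is continuous by condition (ii) of CIG2. Hence the set above equals $\alpha^{-1}([\varphi(x)] \cap u' \ast I)$, and the entire problem reduces to proving $\alpha_{\ast}(h_{u \ast I}) = h_{u' \ast I}$.

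To establish this, I would factor $\alpha = L_p \circ \rho_{u, u'}$, where $\rho_{u, u'}(q) := u' \ast q$ is the isomorphism from Fact \ref{fact:Ellis}(3) and $L_p(x) := p \ast x$ denotes left translation by $p$ inside the compact group $u' \ast I$. The identity holds because $p \ast q = (p \ast u') \ast q = p \ast (u' \ast q) = L_p(\rho_{u, u'}(q))$, using $p \ast u' = p$ (Fact \ref{fact:Ellis}(2)). Applying CIG2(ii) with $p := u'$ and again with $p := u$ yields continuity of both $\rho_{u, u'}$ and its inverse $\rho_{u', u}$, so $\rho_{u, u'}$ is a topological group isomorphism between the compact groups $u \ast I$ and $u' \ast I$; by uniqueness of Haar measure on a compact group, $(\rho_{u, u'})_{\ast} h_{u \ast I} = h_{u' \ast I}$. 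Left-invariance of Haar measure gives $(L_p)_{\ast} h_{u' \ast I} = h_{u' \ast I}$. Composing, $\alpha_{\ast} h_{u \ast I} = h_{u' \ast I}$, and therefore
\[
\left( \delta_p \ast \mu_{u \ast I} \right)(\varphi(x)) = h_{u' \ast I}([\varphi(x)] \cap u' \ast I) = \mu_{u' \ast I}(\varphi(x)).
\]

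The main obstacle is the factorization $\alpha = L_p \circ \rho_{u, u'}$: once the two idempotents $u$ and $u'$ are connected via this decomposition, the result is an instance of standard Haar-measure invariance on compact groups, and the continuity of both factors is exactly what CIG2(ii) was designed to supply. Everything else is a routine unpacking of the convolution definition and the fact that $\tilde{\mu}_{u \ast I}$ concentrates on the ideal subgroup $u \ast I$.
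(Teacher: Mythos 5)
Your proof is correct and follows essentially the same route as the paper: both arguments rest on the transition map $\rho_{u,u'}=(u'\ast-)|_{u\ast I}$ being a topological group isomorphism (group isomorphism by Fact \ref{fact:Ellis}(3), continuity of it and its inverse from CIG2(ii)), the resulting push-forward identity $(\rho_{u,u'})_{\ast}h_{u\ast I}=h_{u'\ast I}$, and translation-invariance of the Haar measure. The only difference is organizational: the paper first proves the idempotent case $\delta_{u'}\ast\mu_{u\ast I}=\mu_{u'\ast I}$ and then reduces the general case via associativity and Lemma \ref{haar:semi}, whereas you fuse these two steps into the single factorization $q\mapsto p\ast q = L_p\circ\rho_{u,u'}$, which amounts to the same use of $p=p\ast u'$.
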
 

\begin{proof} Fix $u,u' \in \id(I)$. Then the transition map $\rho_{u,u'}: =(u'*-)|_{u*I}: u \ast I \to u' \ast I$ is an isomorphism of topological groups (it is a group isomorphism by Fact \ref{fact:Ellis}(3), continuous by (ii) in CIG2, and $\rho_{u',u} \circ \rho_{u,u'} = \id_{u*I}$).
 Let $\Phi_{u,u'}: \mathcal{M}(u*I) \to \mathcal{M}(u'*I)$ be the corresponding push-forward map. Note that $\Phi_{u',u} \circ \Phi_{u,u'} = \id_{\mathcal{M}(u*I)}$. Moreover, $\Phi_{u,u'}(h_{u*I}) = h_{u'*I}$ because $\Phi_{u,u'}(h_{u*I})$ is a regular Borel probability measure on $u'*I$ which is right-invariant, and this property characterizes the normalized Haar measure. By a computation similar to the proof of Claim (2) in Lemma \ref{haar:semi}, for any $\varphi(x) \in \mathcal{L}_x(\mathcal{G})$ we have 
\begin{gather*} \left( \delta_{u} * \mu_{u'*I} \right) (\varphi(x)) = h_{u' * I}\left(\left\{ q \in u'*I: \varphi(x) \in u * q \right\} \right)\\
 = \left( \Phi_{u,u'}(h_{u*I}) \right) \left(\left\{ q \in u'*I: \varphi(x) \in u * q \right\}\right) \\
 =  h_{u*I}\left(\rho^{-1}_{u,u'} \left(\left\{q \in u' * I: \varphi(x) \in u * q\right\}\right)\right) \\
= h_{u*I}\left(u *\left\{q \in u' * I: \varphi(x) \in u * q\right\}\right)\\
= h_{u*I}(\{q \in u * I: \varphi(x) \in q\}) = \mu_{u*I}(\varphi(x)),
\end{gather*} 
hence $\delta_{u} * \mu_{u'*I}  = \mu_{u*I}$. Now let  $p \in u' * I$. By Lemma \ref{haar:semi} and the above computation, using that $p = p \ast u'$ by Fact \ref{fact:Ellis}(2), we have
\begin{equation*} \delta_p * \mu_{u*I} = \delta_{p \ast u'} \ast \mu_{u*I} = (\delta_{p} * \delta_{u'}) * \mu_{u*I} = \delta_{p} *( \delta_{u'} * \mu_{u*I}) = \delta_p * \mu_{u'*I} =  \mu_{u'*I}.
\end{equation*} 
\end{proof}

%

\begin{theorem}\label{thm: min ideal in CIG2 affine} Suppose that $\left( S_{x}^{\dagger}(\mathcal{G},G), \ast \right)$ is CIG2. Let $I \subseteq S_{x}^{\dagger}(\mathcal{G},G)$ be a minimal left ideal witnessing CIG2. Then all minimal left ideals of $\left(\mathfrak{M}_{x}^{\dagger}(\mathcal{G},G), \ast \right)$ are affinely homeomorphic to $\mathcal{M}(K_{I})$ (in particular, they are Bauer simplices by Fact \ref{fac: props of Bauer simplex}(2)). 
\end{theorem}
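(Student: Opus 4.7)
The plan is to reduce the theorem to showing that the specific minimal left ideal $L := \mathfrak{M}(I) * \mu_{u*I}$ furnished by Theorem \ref{thm: CIG1 min ideal} (applicable since CIG2 implies CIG1 by Remark \ref{rem: CIG2 implies CIG1}) is affinely homeomorphic to $\mathcal{M}(K_I)$. By Corollary \ref{cor: all ideals in CIG1}, every minimal left ideal is then affinely homeomorphic to $L$ and hence to $\mathcal{M}(K_I)$; the Bauer simplex conclusion follows from Fact \ref{fac: props of Bauer simplex}(2).

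To construct the homeomorphism I first define $\eta : K_I \to L$ by $\eta([u' * I]) := \mu_{u' * I}$, which is well-defined by CIG2(i). By Lemma \ref{need4surjective}, the composition $\eta \circ q : I \to L$ (for $q : I \to K_I$ the quotient map) equals $p \mapsto \delta_p * \mu_{u*I}$, continuous by left continuity of convolution (Fact \ref{fac: conv is a semigroup in NIP}); the universal property of the quotient topology then forces $\eta$ to be continuous. Since $L$ is a closed convex subset of a locally convex topological vector space (Proposition \ref{convex:compact}), integrating $\eta$ against $\lambda \in \mathcal{M}(K_I)$ defines a continuous affine map $\bar\eta : \mathcal{M}(K_I) \to L$ by
\begin{equation*}
\bar\eta(\lambda)(\varphi(x)) := \int_{K_I} \mu_{u' * I}(\varphi(x))\, d\lambda([u' * I]).
\end{equation*}
Noting that $L \subseteq \mathfrak{M}(I)$ (as $\mathfrak{M}(I)$ is a left ideal by Lemma \ref{ideal} containing $\mu_{u*I}$), I set $\Psi : L \to \mathcal{M}(K_I)$ by $\Psi(\nu) := q_*(\gamma(\nu))$, for $\gamma$ the affine homeomorphism of Proposition \ref{closed:homeomorphism}; this is continuous and affine by Fact \ref{meas:facts}(iii)--(iv).

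To verify that $\bar\eta$ and $\Psi$ are mutually inverse, I first check on Dirac measures: $\bar\eta(\delta_{[u' * I]}) = \mu_{u' * I}$, and since $\gamma(\mu_{u' * I})$ concentrates on $u' * I = q^{-1}([u' * I])$, we get $\Psi(\mu_{u' * I}) = \delta_{[u' * I]}$. For general $\lambda \in L$, I approximate $\lambda$ by averages of Dirac measures over $\sup(\lambda) \subseteq I$ via Lemma \ref{lem: limit}, and propagate the inversion identities using left continuity of convolution together with continuity and affineness of $\bar\eta$ and $\Psi$. The main obstacle I anticipate is establishing continuity of $\eta$: this is where all three conditions of CIG2 enter --- condition (i) makes $\mu_{u' * I}$ well-defined, condition (ii) is used (via Lemma \ref{need4surjective}, whose proof invokes the transition maps $\rho_{u,u'}$ being topological group isomorphisms) to identify $\eta \circ q$ with $p \mapsto \delta_p * \mu_{u*I}$, and condition (iii) (Hausdorffness of $K_I$) ensures that $\mathcal{M}(K_I)$ is a well-behaved compact convex space, so that the barycenter construction and the Bauer simplex conclusion both go through.
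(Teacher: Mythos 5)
Your proposal is correct, and it shares the paper's skeleton: the same reduction via Remark \ref{rem: CIG2 implies CIG1} and Corollary \ref{cor: all ideals in CIG1}, the same forward map (your $\Psi$ is exactly the paper's $\Phi = (q_{*}\circ\gamma)|_{L}$), and the same key ingredients (Lemma \ref{need4surjective}, Lemma \ref{lem: limit}, left continuity of $-\ast\mu_{u\ast I}$). Where you genuinely diverge is in how invertibility is established. The paper proves that $\Phi$ is a bijection directly: surjectivity by hitting the dense set of convex combinations of Dirac measures on $K_I$ via $\left(\sum r_i\delta_{u_i}\right)\ast\mu_{u\ast I}=\sum r_i\mu_{u_i\ast I}$ and then using that $\Phi$ is a closed map, and injectivity by building, for each formula $\psi$, a continuous function $f$ on $K_I$ (induced by $f_{\psi}(p)=(\delta_p\ast\mu_{u\ast I})(\psi(x))$, which factors through $q$) satisfying $\int_{K_I}f\,d\Phi(\lambda)=\lambda(\psi(x))$. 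You instead package the same computations as an explicit two-sided inverse: your continuity argument for $\eta$ via the universal property of the quotient is precisely the paper's observation that $f_{\psi}$ factors through $q$, and your identity $\bar\eta(\Psi(\lambda))(\psi(x))=\lambda(\psi(x))$ is the paper's injectivity computation in disguise. Your framing buys a slightly cleaner structural statement ($L$ is the closed convex hull of $\{\mu_{u'\ast I}\}$ and $\bar\eta$ is the barycenter map, which makes the Bauer simplex structure transparent), at the cost of needing the standard fact that barycenters of measures on a compact convex set exist and lie in that set; the paper's route avoids that by only ever using the compact-to-Hausdorff closed-map trick. One small point to make explicit when you propagate the identity $\bar\eta\circ\Psi=\mathrm{id}_L$: the approximants $\Av(\bar p_j)$ from Lemma \ref{lem: limit} do not lie in $L$, so you must use that $\Psi=q_{*}\circ\gamma$ is defined and continuous on all of $\mathfrak{M}(I)$ and that $\lambda\ast\mu_{u\ast I}=\lambda$ for $\lambda\in L$ (Corollary \ref{needed}) to conclude $\lim_j\Av(\bar p_j)\ast\mu_{u\ast I}=\lambda$; this is implicit in your sketch and easily supplied.
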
 

\begin{proof} 

Let $u \in \id(I)$. By	Remark \ref{rem: CIG2 implies CIG1} and Corollary \ref{cor: all ideals in CIG1}, it suffices to show that $\mathfrak{M}(I) * \mu_{u *I} \cong \mathcal{M}(K_{I})$.
For ease of notation, denote the minimal left ideal $\mathfrak{M}(I) * \mu_{u *I}$ as $L$. Let $q:I \to K_{I}$ denote the (continuous) quotient map, and $q_{*}: \mathcal{M}(I) \to \mathcal{M}(K_{I})$ the corresponding push-forward map. Note that $q_{\ast}$ is affine by Fact \ref{meas:facts}(iii). By Proposition \ref{closed:homeomorphism}, we have an affine homeomorphism $\gamma: \mathfrak{M}(I) \to \mathcal{M}(I)$. Let $\Phi := (q_{*} \circ \gamma)|_{L}$. We claim that $\Phi$ is an affine homeomorphism. 
Note that $\Phi$ is the restriction of the composition of two continuous affine maps, hence $\Phi$ itself is a continuous affine map. It suffices to show that $\Phi$ is a bijection (since it is automatically a closed map as $L$ is compact and $\mathcal{M}(K_I)$ is Hausdorff by Fact \ref{meas:facts}(i) as  $K_I$ is compact Hausdorff by CIG2).

~

\noindent \textbf{Claim 1.} $\Phi$ is surjective. 
\begin{proof}
The extreme points of $\mathcal{M}(K
_I)$ are the Dirac measures concentrating on the elements of $K_I$ (see e.g.~\cite[Example 8.16]{simon2011convexity}). By the Krein-Milman theorem, the set 
\begin{equation*} \left\{\sum_{i=1}^{n} r_i\delta_{[u_{i}*I]}: [u_i *I] \in K_{I}, r_i \in \mathbb{R}_{>0}, \sum_{i=1}^{n} r_i = 1, n \in \mathbb{N} \right\}, 
\end{equation*} 
 is dense in $\mathcal{M}(K_{I})$. Fix some $u_1, \ldots ,u_n \in \id(I)$ and $r_1, \ldots ,r_n \in \mathbb{R}_{>0}$ such that $\sum_{i=1}^{n} r_i =1$. It suffices to find some $\mu \in L$ such that $\Phi(\mu) = \sum_{i=1}^{n} r_i \delta_{[u_{i}*I]}$ (as $\Phi$ is a closed map, it will follow that $\Phi(L) = \mathcal{M}(K_I)$). 
 
Let $\lambda := \sum_{i=1}^{n} r_i \delta_{u_{i}} \in \mathfrak{M}_{x}^{\dagger}(\mathcal{G},G)$. Since $\mu_{u*I} \in L$ (by Theorem \ref{thm: CIG1 min ideal}) and $L$ is a left ideal, also  $\lambda * \mu_{u*I} \in L$. By Lemma \ref{convex:1} and Lemma \ref{need4surjective} we have:
\begin{equation*} \lambda *\mu_{u*I} = \left( \sum_{i=1}^{n} r_i \delta_{u_{i}} \right) * \mu_{u*I}=  \sum_{i=1}^{n} r_i (\delta_{u_{i}}  * \mu_{u*I})  = \sum_{i=1}^{n} r_i \mu_{u_i * I},
\end{equation*} 
and as $\gamma$ and $q_{\ast}$ are affine this implies
\begin{equation*} \Phi \left(\lambda * \mu_{u*I} \right) = \Phi \left( \sum_{i=1}^{n} r_i \mu_{u_i * I} \right) = \sum_{i=1}^{n} r_i q_{*}(\tilde{\mu}_{u_i * I}) = \sum_{i=1}^{n} r_i \delta_{[u_i * I]},
\end{equation*} 
where $\tilde{\mu}_{u_i \ast I} \in \mathcal{M}(I)$ is the unique regular Borel probability measure extending $\mu_{u_i \ast I}$, i.e.~$\tilde{\mu}_{u_i \ast I}(X) = h_{u_i \ast I} (X \cap u_i \ast I)$ for any Borel $X \subseteq I$, where $h_{u_i \ast I}$ is the Haar measure on $u_i \ast I$. Hence $\Phi$ is surjective. 
\end{proof}

\noindent \textbf{Claim 2.} $\Phi$ is injective. 
\begin{proof}
 Suppose that $\lambda$ and $\nu$ are in $L$ and $\lambda \neq \nu$. It suffices to find a continuous function $f: K_{I} \to \mathbb{R}$ such that 
\begin{equation*} \int_{K_I} f d(\Phi(\lambda)) \neq \int_{K_I} f d(\Phi(\nu)).
\end{equation*} 
Since $\lambda \neq \nu$, there exists some $\psi(x) \in \mathcal{L}_{x}(\mathcal{G})$ such that $\lambda(\psi(x)) \neq \nu(\psi(x))$. Consider the function $f_{\psi}: I \to \mathbb{R}$ defined via $f_{\psi}(p) := \left( \delta_{p} * \mu_{u * I} \right) (\psi(x))$. This map is continuous since the map $\left( -*\mu_{u*I} \right)(\psi(x)):\mathfrak{M}_{x}^{\dagger}(\mathcal{G},G) \to \mathbb{R}$ is continuous by the ``moreover'' part of Fact \ref{fac: conv is a semigroup in NIP} (and the map $p \in S^{\dagger}_x(\mathcal{G},G) \mapsto \delta_p \in \mathfrak{M}^{\dagger}_x(\mathcal{G},G)$ is continuous). Moreover, $f_{\psi}$ factors through $q$. Indeed, assume that $q(p_1) = q(p_2)$ for some $p_1,p_2 \in I$. Then there exists some $w \in \id(I)$ such that $p_1,p_2 \in w*I$. Then by Lemma \ref{need4surjective} we have:
\begin{equation*} 
f_{\psi}(p_1) = \left( \delta_{p_1} * \mu_{u*I} \right) (\psi(x)) = \mu_{w * I} (\psi(x)) = \left( \delta_{p_2} * \mu_{u*I} \right)(\psi(x)) = f_{\psi}(p_2).
\end{equation*} 
By the universal property of quotient maps, there exists a unique continuous function $f: K_{I} \to \mathbb{R}$ such that $f_{\psi} = f \circ q$. Since $\lambda \in L \subseteq \mathfrak{M}(I)$ (by the proof of Theorem \ref{thm: CIG1 min ideal}), by Lemma \ref{lem: limit} there exists a net of measures $\left(\Av \left(\overline{p}_{j} \right) \right)_{j \in J}$ such that $\overline{p}_{j} = \left(p_{j,1}, \ldots, p_{j,n_j} \right) \in I^{n_j}$ and $\lim_{j \in J} \Av \left(\overline{p}_j \right) = \lambda$ for each $j \in J$.  As $\gamma$ is an affine homeomorphism, we then have $\gamma(\lambda) = \lim_{j \in J} \left(\frac{1}{n_j} \sum_{k=1}^{n_j} \delta_{p_{j,k}} \right)$.
Hence we have the following computation: 
\begin{gather*} \int_{K_I} f d(\Phi(\lambda)) = \int_{K_I} f d \left(q_{*}\left(\gamma(\lambda) \right) \right) = \int_{I} \left( f \circ q \right) d(\gamma(\lambda)) = \int_{I} f_{\psi} d (\gamma(\lambda))\\   
=\int_{S_{x}(\mathcal{G})} f_{\psi} d (\gamma(\lambda))
 = \int_{S_{x}(\mathcal{G})} f_{\psi} d\left( \lim_{j \in J} \left(\frac{1}{n_j} \sum_{k=1}^{n_j} \delta_{p_{j,k}} \right) \right)\\
\overset{\textrm{Fact \ref{meas:facts}(ii)}}{=} \lim_{j \in J} \int_{S_{x}(\mathcal{G})} f_{\psi} d \left(\frac{1}{n_j} \sum_{k=1}^{n_j} \delta_{p_{j,k}} \right) = \lim_{j \in J} \left( \Av(\overline{p}_{j}) * \mu_{u*I}(\psi(x))  \right)\\
 = \left( \left( \lim_{j \in J} \Av(\overline{p}_{j}) \right) * \mu_{u *I} \right)(\psi(x)) = \left(\lambda * \mu_{u*I} \right) (\psi(x)) = \lambda(\psi(x)),
\end{gather*} 
where the last equality holds by Fact \ref{fact:Ellis}(2), as $\mu_{u \ast I}$ is an idempotent in $L$. A similar computation shows that $\int_{K_I} f d(\Phi(\nu))  = \nu(\psi(x)) \neq \lambda(\psi(x))$, so $\Phi$ is injective. 
\end{proof}
\noindent Claims 1 and 2 establish the theorem.
\end{proof} 

\begin{example} \label{ex: CIG2 examples ideal}
\begin{enumerate}

\item Let $G := \left(\mathbb{R}, +, < \right)$. Then the semigroup $S_{x}^{\inv}(\mathcal{G},\mathbb{R})$ is CIG2. 
Indeed, the unique minimal left ideal of $S_{x}^{\inv}(\mathcal{G},\mathbb{R})$ is $I = \{p_{-\infty}, p_{+\infty}\}$, and both elements of $I$ are idempotents (see Example \ref{example:pair}(3)). The ideal subgroups of $I$ are $\{p_{-\infty}\}$ and $\{p_{+ \infty}\}$, both are obviously compact groups under induced topology. We have $\mathfrak{M}(I) = \left\{r\delta_{p_{-\infty}} + (1-r)\delta_{p_{+ \infty}}: r \in [0,1]\right\}$, and if $u = p_{\pm \infty}$ then $\mu_{u * I} = \delta_{p_{\pm \infty}}$.

Now let $\nu \in \mathfrak{M}(I)$, then $\nu = r \delta_{p_{-\infty}} + s \delta_{p_{+\infty}}$ for some $r,s \in [0,1]$ with $r+s=1$. Then $\nu \ast \mu_{p_{\pm \infty} \ast I} = \left( r \delta_{p_{-\infty}} + s \delta_{p_{+\infty}} \right) \ast \mu_{p_{\pm \infty}  \ast I} =  \left( r \delta_{p_{-\infty}} + s \delta_{p_{+\infty}} \right) \ast \delta_{p_{\pm \infty}} = r \left( \delta_{p_{-\infty}} \ast \delta_{p_{\pm \infty}} \right) + s \left( \delta_{p_{+ \infty}} \ast \delta_{p_{\pm \infty}} \right) = r \delta_{p_{-\infty}} + s \delta_{p_{+ \infty}}$. Therefore 
 $\mathfrak{M}(I) * \mu_{p_{\pm \infty} * I} = \mathfrak{M}(I)$, and so $\mathfrak{M}(I) * \mu_{p_{\pm \infty} * I} \cong \mathcal{M}(\{0,1\})$ is a minimal ideal of $\left(\mathfrak{M}_{x}^{\inv}(\mathcal{G},\mathbb{R}), \ast \right)$.

\item Let $G := \left(\mathbb{Z}, +, < \right)$. Then the semigroup $S_{x}^{\inv}(\mathcal{G},\mathbb{Z})$ is CIG2, the unique minimal left ideal of $S_{x}^{\inv}(\mathcal{G},\mathbb{Z})$ is $I = I^{+} \sqcup I^{-}$ and the ideal subgroups of $I$ are $I^{+}$ and $I^{-}$ (see Examples  \ref{example:CIG1} and \ref{ex: CIG2 one}). Both ideal subgroups are compact groups under induced topology, isomorphic to $\hat{\mathbb{Z}}$ as a topological group. 

Let $u^+ \in I^+, u^- \in I^-$ be the identity group elements in $I^-$ and $I^+$, respectively. Then $\mu_{u^{\pm} \ast I}$ is the Haar measure on $I^{\pm} \cong \hat{\mathbb{Z}}$. For every $\nu \in \mathfrak{M}(I)$ we can write $\nu = r \nu^{-} + s \nu^{+}$ for the measures $\nu^-, \nu^+$ defined by $\nu^{-} \left( \varphi(x) \right) = \frac{\nu \left( \varphi(x) \land x <b \right)}{\nu(x<b)}, \nu^{+} \left( \varphi(x) \right) = \frac{\nu \left( \varphi(x) \land x >c \right)}{\nu(x>c)}$ and $b < \mathbb{Z} < c$. We also have $\nu \ast \mu_{u^{\pm} \ast I} = \left( r \nu^{-} + s \nu^{+} \right) \ast \mu_{u^{\pm} \ast I} = r\left( \nu^{-} \ast \mu_{u^{\pm} \ast I} \right) + s \left( \nu^{+} \ast \mu_{u^{\pm} \ast I} \right) = r \mu_{u^{-} \ast I} + s \mu_{u^+ \ast I}$.
Hence we have $\mathfrak{M}(I) * \mu_{u_{\pm \infty} * I} = \left\{ r \mu_{u^{-} \ast I} + s \mu_{u^{+} \ast I} : r + s = 1   \right\} \cong \mathcal{M}(\{0,1\})$ is a minimal ideal of $\left(\mathfrak{M}_{x}^{\inv}(\mathcal{G},\mathbb{R}), \ast \right)$.
\end{enumerate} 
\end{example} 

\begin{fact}\cite{GDP}\label{fact:SL2R} Let $\mathbb{R} \prec \mathcal{R}$ be a saturated real closed field, $G :=\SL_{2}(\mathbb{R})$ and $\mathcal{G} := \SL_{2}\left(\mathcal{R}\right)$. 
Consider the definable subgroups of $\mathcal{G}$ given by
\begin{gather*}
	\mathcal{T} := \left\{\left( \begin{bmatrix}
x & -y \\
y & x \\
\end{bmatrix}  \right) : x^{2} + y^{2} = 1 \right\}, 
\mathcal{H} := \left\{\left( \begin{bmatrix}
b & c \\
0 & b^{-1} \\
\end{bmatrix}  \right) : b \in \mathcal{R}_{>0}, c \in \mathcal{R} \right\}.
\end{gather*}
Let $p_0 := \tp((b,c)/\mathbb{R})$ such that $b > \mathbb{R}$, and $c > \dcl \left(\mathbb{R} \cup \{b\} \right)$. We view $p_0$ as a type in $S_{\mathcal{H}}(\mathbb{R})$ \footnote{As usual, we denote by $S_{\mathcal{H}}(-)$ the space of types concentrating on the definable set $\mathcal{H}$; all of our results can be  modified in an obvious manner to apply to definable groups in an arbitrary theory, as opposed to theories expanding a group.} identifying $(b,c)$ with the matrix $\begin{bmatrix}
b & c \\
0 & b^{-1} \\
\end{bmatrix}$. 
Let $q_0 := \tp((x,y)/\mathbb{R})$, where $y$ is positive infinitesimal and $x > 0$  is the positive square root of $1-y^{2}$. We view $q_0$ as a type in $S_{\mathcal{T}}(\mathbb{R})$ identifying $(x,y)$ with the matrix $\begin{bmatrix}
x & -y \\
y & x \\
\end{bmatrix}$. We let $r_0$ be $\tp(t \cdot h/\mathbb{R}) \in S_{\mathcal{G}}(\mathbb{R})$, where $h \in \mathcal{H}$ and realizes $p_0$ and $t \in \mathcal{T}$ and realizes the unique coheir of $q_0$ over $\mathbb{R} \cup\{h\}$. Then:
\begin{enumerate}

\item $ S_{\mathcal{G}}^{\fs}(\mathcal{R},\mathbb{R}) * r_0$ is a minimal left ideal of $S_{\mathcal{G}}^{\fs}(\mathcal{R},\mathbb{R})$; 
\item any ideal subgroup of $S_{\mathcal{G}}^{\fs}(\mathcal{R},\mathbb{R}) * r_0$ is isomorphic to $\mathbb{Z}/2\mathbb{Z}$; in particular, if we let $r_1$ be the unique element in $ S_{\mathcal{G}}^{\fs}(\mathcal{R},\mathbb{R}) * r_0$ such that $r_1 * r_1 = r_0$ and $r_1 \neq r_0$, then $\{r_0, r_1\}$ is an ideal subgroup. 
\end{enumerate} 
\end{fact}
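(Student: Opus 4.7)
The plan is to exploit the Iwasawa decomposition $\mathcal{G} = \mathcal{T} \cdot \mathcal{H}$ of $\SL_2(\mathcal{R})$ to reduce the analysis of convolution on $\left(S_{\mathcal{G}}^{\fs}(\mathcal{R}, \mathbb{R}), \ast \right)$ to explicit computations in the two factors. Up to the central element $-I \in \mathcal{T} \cap \mathcal{H}$, every element of $\mathcal{G}$ decomposes uniquely as $g = t \cdot h$ with $t \in \mathcal{T}$ and $h \in \mathcal{H}$, so types in $S_{\mathcal{G}}(\mathcal{R})$ can be parametrized by pairs consisting of a type on $\mathcal{T}$ and a type on $\mathcal{H}$. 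I would first verify that $r_0$ lifts canonically to a type in $S^{\fs}_{\mathcal{G}}(\mathcal{R}, \mathbb{R})$, using that $p_0$ and the coheir of $q_0$ are both finitely satisfiable in $\mathbb{R}$.

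For (1), I would compute $\tp(g/\mathcal{R}) \ast r_0$ directly: writing $g = t_1 h_1$ and a realization of $r_0$ as $t_0 h_0$, the product $t_1 (h_1 t_0) h_0$ must be re-Iwasawa-decomposed, and the key claim is that $h_1 t_0$ decomposes as $t' h'$ in such a way that $h' h_0$ still realizes $p_0$ (regardless of $t_1, h_1$). In other words, right multiplication by $r_0$ contracts the $\mathcal{H}$-coordinate onto $p_0$. Granting this, every element of $I := S_{\mathcal{G}}^{\fs}(\mathcal{R}, \mathbb{R}) \ast r_0$ has $\mathcal{H}$-component $p_0$; it then follows formally that $S_{\mathcal{G}}^{\fs}(\mathcal{R}, \mathbb{R}) \ast s = S_{\mathcal{G}}^{\fs}(\mathcal{R}, \mathbb{R}) \ast r_0$ for every $s \in I$, so $I$ is a minimal left ideal by Fact~\ref{fact:Ellis}(5).

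For (2), I would pick an idempotent $u \in I$ and analyze the ideal group $u \ast I$. By the parametrization above, elements of $u \ast I$ are determined by their $\mathcal{T}$-component (their $\mathcal{H}$-component being forced to $p_0$), and the ideal-group multiplication descends to a group operation on a compact quotient of $\mathcal{T}$. The crucial observation is that for rotations $t, t' \in \mathcal{T}$ the types of $t \cdot h_0$ and $t' \cdot h_0$ coincide exactly when $t t'^{-1}$ can be absorbed by conjugation against the generic $h_0$ — which, for the solvable Borel $\mathcal{H}$, reduces to $t t'^{-1} \in \mathcal{T} \cap Z(\mathcal{G}) = \{\pm I\}$. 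Hence $u \ast I$ has exactly two elements, so $u \ast I \cong \mathbb{Z}/2\mathbb{Z}$, and the non-identity $r_1$ is the class represented by $(-I) \cdot t_0 \cdot h_0$.

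The main technical obstacle is the contraction claim at the heart of step (1): explicitly showing that after the KAN re-decomposition of $h_1 t_0$, the new $\mathcal{H}$-component $h'$ combined with $h_0$ on the right still realizes $p_0$. This is where the specific choice of $q_0$ (positive infinitesimal rotation) and the type-definition of $p_0$ (with $c$ infinitely larger than anything in $\dcl(\mathbb{R} \cup \{b\})$) conspire: the infinitesimal $t_0$ perturbs $h_1$ only infinitesimally, so $h'$ inherits an infinite first coordinate from $h_1$, and the enormousness of the second coordinate of $h_0$ then swallows the $h_1$-contribution when multiplying on the right. Making this rigorous requires a careful real-closed-field computation of the KAN decomposition on generic elements, and it is the step where the choice of the witness types $p_0, q_0$ is most delicate.
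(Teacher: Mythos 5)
This statement is not proved in the paper at all: it is imported as a Fact, with a citation to \cite{GDP}, so there is no in-paper argument to compare yours against. Judged on its own terms, your sketch does follow the general strategy of \cite{GDP} --- parametrize types on $\mathcal{G}$ via the Iwasawa decomposition $\mathcal{G}=\mathcal{T}\cdot\mathcal{H}$, show that right convolution by $r_0$ pins the $\mathcal{H}$-coordinate to $p_0$, and read off the ideal group from the residual rotation data --- but three of your steps have genuine problems. First, $-I\notin\mathcal{H}$: the definition of $\mathcal{H}$ requires $b\in\mathcal{R}_{>0}$, so $\mathcal{T}\cap\mathcal{H}=\{I\}$ and the decomposition $g=t\cdot h$ is unique with no central ambiguity. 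This removes the mechanism you propose for part (2): the $\mathbb{Z}/2\mathbb{Z}$ cannot arise as a decomposition ambiguity. (The correct candidate for $r_1$ is indeed $\delta_{-I}\ast r_0=\tp(-t\cdot h/\mathcal{R})$, using that $-I$ is central, but the content of (2) is that the ideal group contains nothing besides these two elements, and your absorption-by-conjugation criterion does not address that.)

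Second, in part (1) the inference ``every element of $I$ has $\mathcal{H}$-component $p_0$, hence $S_{\mathcal{G}}^{\fs}(\mathcal{R},\mathbb{R})\ast s=I$ for every $s\in I$'' is a non sequitur. Minimality means the orbit closure of an \emph{arbitrary} $s\in I$ recovers all of $I$ (Fact \ref{fact:Ellis}(5) gives that $S\ast s$ is \emph{some} minimal ideal contained in $I$, not that it equals $I$), and pinning the $\mathcal{H}$-coordinate says nothing about whether left translation acts with dense orbits on the $\mathcal{T}$-coordinates occurring in $I$; that is precisely where the hard analysis of the circle-group dynamics lives. Third, your injectivity criterion ``$\tp(t\cdot h_0)=\tp(t'\cdot h_0)$ iff $tt'^{-1}\in\mathcal{T}\cap Z(\mathcal{G})$'' is both doubtful (by uniqueness of the Iwasawa decomposition, for standard $t$ the type $\tp(t\cdot h_0/\mathbb{R})$ determines $t$) and not the relevant question: the ideal group is $u\ast I=\{u\ast p:p\in I\}$ for an idempotent $u$, so one must compute which types survive left multiplication by $u$, not which products $t\cdot h_0$ realize equal types. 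You also implicitly assume $r_0$ is itself the idempotent of its ideal subgroup; that is consistent with the statement ($r_1\ast r_1=r_0$) but needs proof. To repair the argument you would essentially have to carry out the explicit real-closed-field computations of \cite{GDP}.
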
 

\begin{example}\label{ex: SL2R} Let $\mathcal{G} = \SL_{2}(\mathcal{R})$ and $S_{\mathcal{G}}^{\fs}(\mathcal{R},\mathbb{R})$ be the collection of global types concentrated on $\mathcal{G}$ which are finitely satisfiable in $\SL_{2}(\mathbb{R})$. By Fact \ref{fact:SL2R}, $\{r_0,r_1\}$ is an ideal subgroup of $S_{\mathcal{G}}^{\fs}(\mathcal{R},\mathbb{R})$ which is trivially a compact group with the induced topology, and $\frac{1}{2}(\delta_{r_{0}} +\delta_{r_{1}}) $ is the normalized Haar measure on it. By Theorem \ref{thm: CIG1 min ideal},  $\mathfrak{M}\left(S_{\mathcal{G}}^{\fs}(\mathcal{R},\mathbb{R}) *r_0 \right) *  \frac{1}{2}(\delta_{r_{0}} +\delta_{r_{1}}) $ is a minimal left ideal in $\mathfrak{M}_{\mathcal{G}}^{\fs}(\mathcal{R},\mathbb{R})$. Moreover, this minimal left ideal is affinely homeomorphic to $\mathcal{M} \left(K_{S_{\mathcal{G}}^{\fs}(\mathcal{R},\mathbb{R}) *r_0 } \right)$ by Theorem \ref{thm: min ideal in CIG2 affine} (see the notation there), which is a Bauer simplex with infinitely many extreme points (by Remark \ref{prop: many idemp in non def am}).  
\end{example} 
More generally, we have\footnote{We thank the referee for suggesting the following two remarks.}:
\begin{remark}\label{rem: Ref 1}
	If $\mathcal{G}$ is NIP, not definably amenable and $\left(S^{\dagger}_{x} \left( \mathcal{G},G \right), \ast \right)$ is CIG2, then the quotient $K_I$ is infinite for each minimal ideal $I$ in $\left(S^{\dagger}_{x} \left( \mathcal{G},G \right), \ast \right)$, and the minimal ideals in $\left(\mathfrak{M}^{\dagger}_{x} \left( \mathcal{G},G \right), \ast \right)$ are Bauer simplices, each with infinitely many extreme points (by Fact \ref{fac: props of Bauer simplex}(2), Remark \ref{prop: many idemp in non def am} and Theorem \ref{thm: min ideal in CIG2 affine}). 
\end{remark}

\begin{remark}\label{rem: Ref2}
Assume that  $\mathcal{G}$ is NIP and $\left( S^{\fs}_{x} \left( \mathcal{G},G \right), \ast \right)$ is CIG2. Then the following are equivalent:
\begin{enumerate}
	\item $\mathcal{G}$ is definably amenable,
	\item $\left|K_I \right| = 1$ for each minimal left ideal $I$ in $S^{\fs}_{x} \left( \mathcal{G},G \right)$,
\item  $K_I$ is finite for some minimal left ideal $I$ in $S^{\fs}_{x} \left( \mathcal{G},G \right)$.
\end{enumerate}
\end{remark}
\begin{proof}
	(1)$\Rightarrow$(2). By definable amenability and Proposition \ref{fs:I}, $|J|=1$ for every minimal left ideal $J$ in $\left( \mathfrak{M}^{\fs}_{x} \left( \mathcal{G},G \right), \ast \right)$. By Theorem \ref{thm: min ideal in CIG2 affine}, $J$ is affinely homeomorphic to $\mathcal{M}(K_{I})$ for some minimal left ideal $I$ of $\left( S^{\fs}_{x} \left( \mathcal{G},G \right), \ast \right)$, hence also $|K_I|=1$. By Fact \ref{fact:Ellis}(6), we have $\left| K_{I'} \right| = 1$ for every minimal left ideal $I'$ of $\left( S^{\fs}_{x} \left( \mathcal{G},G \right), \ast \right)$.
	
	(2)$\Rightarrow$(3) is trivial.
	
	(3)$\Rightarrow$(1). By Remark \ref{rem: Ref 1} applied for $\dagger=\fs$.
\end{proof}

\begin{remark}
	The implication (1)$\Rightarrow$(2) in Remark \ref{rem: Ref2} does not hold when $\left( S^{\fs}_{x} \left( \mathcal{G},G \right), \ast \right)$ is replaced by $\left( S^{\inv}_{x} \left( \mathcal{G},G \right), \ast \right)$. Indeed, $(\mathbb{Z}, +, <)$ is NIP, definably amenable, CIG2, but $|K_I|=2$ (see Example \ref{ex: CIG2 one}(1)).
\end{remark}

\begin{question}
	It would be interesting to describe minimal left ideals of the semigroup $\left( \mathfrak{M}^{\dagger}_{x}(\mathcal{G},G), \ast \right)$ for some non-definably amenable groups $\mathcal{G}$ where a description of the minimal left ideals/ideal subgroups of $\left( S^{\dagger}_{x}(\mathcal{G},G), \ast \right)$ is known (other than $\SL_{2}(\mathbb{R})$), including certain algebraic groups definable in $\mathbb{Q}_p$ \cite{penazzi2019some, bao2022definably} or in certain dp-minimal fields \cite{jagiella2021topological}.
\end{question}

\begin{question}
	Is the set of extreme points of a minimal left ideal of $\left( \mathfrak{M}^{\dagger}_{x}(\mathcal{G},G), \ast \right)$ always closed, or at least Borel, in a (not necessarily definably amenable) NIP group $\mathcal{G}$? 
\end{question}

\bibliographystyle{plain}
\bibliography{refs}

\end{document}